\numberwithin{equation}{section}
\crefname{equation}{}{}
\newtheorem{theorem}{Theorem}[section]
\newtheorem{prop}[theorem]{Proposition}
\newtheorem{cor}[theorem]{Corollary}
\newtheorem{lemma}[theorem]{Lemma}
\theoremstyle{definition}
\newtheorem{dfn}[theorem]{Definition}
\newtheorem{rmk}[theorem]{Remark}
\newtheorem{ex}[theorem]{Example}
\DeclareMathOperator{\Hom}{\mathsf{Hom}}
\DeclareMathOperator{\Ext}{\mathsf{Ext}}
\DeclareMathOperator{\Tor}{\mathsf{Tor}}
\newcommand{\Acal}{\mathcal{A}}
\newcommand{\Bcal}{\mathcal{B}}
\newcommand{\Ical}{\mathcal{I}}
\newcommand{\Xcal}{\mathcal{X}}
\newcommand{\op}{\mathsf{op}}
\newcommand{\Mod}[1]{\mathsf{Mod}\mbox{-}#1}
\newcommand{\lMod}[1]{#1\mbox{-}\mathsf{Mod}}
\newcommand{\rMod}[1]{\mathsf{Mod}\mbox{-}#1}
\newcommand{{\tst}}{\textit{t}-}
\newcommand{\pd}{\mathsf{pd}}%projective dimension
\newcommand{\fd}{\mathsf{fd}}
\newcommand{\id}{\mathsf{id}}
\newcommand{\sfindim}{\mathsf{sfindim}} %small finitistic projective dimension
\newcommand{\wD}{\mathsf{w.D}}%dimension debil global
\newcommand{\gD}{\mathsf{gl.D}}%dimension debil global
\def\nid{\mathop{\rm \mathsf{FP}_{n}\text{-}id}\nolimits}
\def\oneid{\mathop{\rm \mathsf{FP}_{1}\text{-}id}\nolimits}
\def\nfd{\mathop{\rm \mathsf{FP}_{n}\text{-}fd}\nolimits}
\def\glD{\mathop{\rm glD}\nolimits}
\def\gid{\mathop{\rm \mathsf{FP}_n\text{-}ID}\nolimits}
\def\gfd{\mathop{\rm \mathsf{FP}_n\text{-}FD}\nolimits}
\def\gidi{\mathop{\rm \mathsf{FP}_{\infty}\text{-}ID}\nolimits}
\def\gidone{\mathop{\rm \mathsf{FP}_{1}\text{-}ID}\nolimits}
\def\gfdi{\mathop{\rm \mathsf{FP}_{\infty}\text{-}FD}\nolimits}
\newcommand{\newterm}[1]{\textit{#1}}
\title[Regularity and $\mathsf{K}_0$-Regularity under Finiteness Conditions.]
{Regularity and $\mathsf{K}_0$-Regularity under Finiteness Conditions.}
\author{Rafael Parra}
\address[R. Parra]{IMERL, Facultad de Ingenier\'\i a, Universidad de la Rep\'ublica \\
Julio Herrera y Reissig 565, 11.300, Montevideo, Uruguay}
\email{rparra@fing.edu.uy}
\subjclass[2020]{Primary: 19D35, 16P70  Secondary: 16E50}
\thanks{}
\begin{document}

\begin{abstract}
The purpose of this work is to investigate various notions of regularity from the perspective of finiteness conditions, with the ultimate goal of identifying broad classes of rings that are $\mathsf{K}_0$-regular. 
In this direction, we revisit the classical concepts of coherence and von Neumann regularity, and establish new characterizations.  We then focus on the study of \newterm{$n$-coherent regular rings}, recently introduced in \cite{ep}, and analyze their $\mathsf{K}$-theoretic behavior. Finally, we present applications illustrating how these approaches provide examples of $\mathsf{K}_0$-regular rings.
\end{abstract}

\maketitle
\tableofcontents
\section{Introduction}  
Algebraic $\mathsf{K}$-theory associates to a ring $R$ a sequence of groups $\{\mathsf{K}_i(R)\}_{i \in \mathbb{Z}}$. Computing these groups is extremely difficult and only possible in special cases. Given this complexity, modern approaches often focus on understanding how the $\mathsf{K}$-groups behave under equivalences between related categories, allowing the transfer of information from one ring to another. This perspective not only offers practical methods of computation but also contributes to a broader understanding of algebraic $\mathsf{K}$-groups. 

 Let $R$ be an associative ring with unity. An $R$-module $M$ is said to be finitely $n$-presented if there exists an exact sequence
$$F_n \to F_{n-1} \to \cdots \to F_1 \to F_0 \to M \to 0,$$
where each $F_i$ is a finitely generated projective (or free) $R$-module for $0 \leq i \leq n$. When $n=1$, this reduces to the classical notion of a finitely presented module, and for $n=0$, it corresponds to a finitely generated module. It is a well-known result that, for left coherent regular rings, 
$$\mathsf{K}_i(R) \cong \mathsf{K}_i(\mathsf{FP}_1(R)), \quad i \ge 0,$$
where $\mathsf{FP}_1(R)$ denotes the class of finitely presented $R$-modules. Motivated by the principle that many homological results extend by replacing finitely presented modules with finitely $n$-presented ones, the authors show \cite{ep}, using Quillen’s Resolution Theorem, that 
$$\mathsf{K}_i(R) \cong \mathsf{K}_i(\mathsf{FP}_n(R)), \quad i \ge 0,$$
for $R$ a \newterm{left $n$-coherent regular} ring, i.e., a ring in which every finitely $n$-presented module admits a finite resolution by finitely generated projective modules. Here, $\mathsf{FP}_n(R)$ denotes the category of finitely $n$-presented $R$-modules. However, some classical results do not extend directly. For instance, it is well known that $\mathsf{K}_{-1}(R) = 0$ for left coherent regular rings. In contrast, for left $n$-coherent regular rings with $n\ge 2$, this vanishing fails in general: as shown in  Remark \ref{rmk: k-1}, there exists a $2$-coherent regular ring for which $\mathsf{K}_{-1}(R) = \mathbb{Z}.$

These results on $\mathsf{K}_i$ for $n$-coherent regular rings naturally lead to questions about the behavior of $\mathsf{K}_0$ under polynomial extensions. A fundamental problem is to determine when a ring $R$ is $\mathsf{K}_0$-regular, that is, when 
$$\mathsf{K}_0(R[x_1, \dots, x_k]) \cong \mathsf{K}_0(R) \quad \text{for all } k\geq 1.$$
For Noetherian rings, Grothendieck’s Theorem asserts that $R$ is $\mathsf{K}_0$-regular if  every finitely generated $R$-module has finite projective dimension. In the coherent setting, Quillen showed that $\mathsf{K}_0$-regularity holds provided that $R$ is stably coherent (i.e., $R[x_1, \dots, x_k]$ is coherent for all $k>0$) and every finitely presented $R$-module has finite projective dimension. More recently, Swan \cite{Swan} proved in 2019 that for coherent regular rings one has
$$\mathsf{K}_0(R) \cong \mathsf{K}_0(R[x]),$$
but he also observed that, since $R[x]$ need not be coherent even when $R$ is, it remains unclear whether this result extends to polynomial rings in two or more variables $R[x_1, \dots, x_k]$ for $k \ge 2$.  Determining when a ring is coherent is a difficult problem. In this direction, for commutative rings and under the weaker assumption that every finitely $\infty$-presented $R$-module has finite projective dimension— a property we refer to as \newterm{regular} in this work (see Definition \ref{def:regu})—Wang, Qiao, and Kim \cite{Wang} showed that $R$ is $\mathsf{K}_0$-regular. As a consequence, a broad class of rings is $\mathsf{K}_0$-regular, including commutative $n$-coherent regular rings; see Corollary \ref{prop:commutative-K0-regular}.

These results naturally motivate a deeper investigation into the notion of regularity for rings, a concept that has evolved in several forms since the 1960s. The term ``regular'' appears to originate from algebraic geometry: by the classical Zariski--Serre theorem, if $R$ is the local ring of an algebraic variety $V$ at a point $x$, then $R$ is regular if and only if $x$ is a non-singular point of $V$. Furthermore, when $V$ is an affine variety with coordinate ring $B$, $B$ is regular if and only if $V$ itself is non-singular, providing a rich source of commutative regular rings. In a more algebraic framework, a ring $R$ is said to be regular in the sense of Bass if $R$ is left Noetherian and every finitely generated left $R$-module has finite projective dimension. One of the aims of this work is to unify these various notions of regularity and study them from the perspective of finiteness conditions, with the ultimate goal of identifying classes of rings that are $\mathsf{K}_0$-regular.

After setting up the necessary preliminaries in Section \ref{s:preliminaries}, the structure of the paper is as follows. Section \ref{s:Coherence and  von Neumman regularity.} revisits the classical notions of coherence and von Neumann regularity, where we also present new characterizations. In Section~\ref{sec: regularidad}, we investigate the concept of regularity under the assumption of coherence, which allows us to identify a wide family of coherent regular rings. Section \ref{sec: n-coh and regularidad} is concerned with a generalization of this framework: we study and characterize $n$-coherent regular rings, as introduced in \cite{ep}. In Section \ref{sec:Vogel regularidad}, we analyze Vogel’s notion of regularity, which provides an alternative approach to the concept. In Section \ref{sec: uniformly regu} we consider a particularly well-behaved form of regularity, namely uniform $n$-regularity, which exhibits stronger properties than the classical notion. Finally, Section \ref{sec: aplicaciones} contains applications of the preceding results, including examples of rings that are $\mathsf{K}_0$-regular.

\section{Preliminaries}\label{s:preliminaries}
Let \( R \) be an associative ring with identity. We denote by \( \lMod R \) the category of all left \( R \)-modules, and by \( \rMod R \) the category of all right \( R \)-modules. Unless otherwise specified, the terms “module” and “ideal” will always mean left \( R \)-modules and left ideals, respectively.  For an integer \( n \geq 0 \), an \( R \)-module \( M \) is called \newterm{finitely \( n \)-presented} if there exists an exact sequence
$$F_n \longrightarrow F_{n-1} \longrightarrow \cdots \longrightarrow F_1 \longrightarrow F_0 \longrightarrow M \longrightarrow 0,$$
where each \( F_i \) is a finitely generated and projective (or free) \( R \)-module for every \( 0 \leq i \leq n \).  Such a sequence is called a \newterm{finite \( n \)-presentation} of \( M \).  The class of all finitely \( n \)-presented \( R \)-modules is denoted by \( \mathsf{FP}_n(R) \). In particular, \( \mathsf{FP}_0(R) \) consists of all finitely generated \( R \)-modules, and \( \mathsf{FP}_1(R) \) consists of all finitely presented \( R \)-modules. The class \( \mathsf{FP}_\infty(R) \) is defined as the collection of modules that admit a resolution by finitely generated projective (or free) modules. Equivalently, 
$$\mathsf{FP}_\infty(R) = \bigcap_{n \geq 0} \mathsf{FP}_n(R).$$
Clearly, the class of finitely generated projective modules, denoted \( \mathsf{proj}(R) \), is contained in \( \mathsf{FP}_\infty(R) \). We denote by \( \mathsf{Proj}(R) \) the class of all projective \( R \)-modules. For convenience, we also set \( \mathsf{FP}_{-1}(R) := \lMod R \), the entire module category. Moreover, these classes form a chain of inclusions:
$$\mathsf{proj}(R) \subseteq \mathsf{FP}_{\infty}(R) \subseteq \cdots \subseteq \mathsf{FP}_n(R) \subseteq \cdots \subseteq \mathsf{FP}_1(R) \subseteq \mathsf{FP}_0(R)\subseteq \mathsf{FP}_{-1}(R) = \lMod R.$$
The classes \( \mathsf{FP}_n(R) \) have been extensively studied in the literature. In particular, let $0 \rightarrow A \rightarrow B \rightarrow C \rightarrow 0$
be a short exact sequence of \( R \)-modules. Then, by \cite[Theorem 2.1.2]{Glaz1}, the following hold:
\begin{enumerate}
    \item If \( A, B \in \mathsf{FP}_n(R) \), then \( C \in \mathsf{FP}_n(R) \).
    \item If \( A, C \in \mathsf{FP}_n(R) \), then \( B \in \mathsf{FP}_n(R) \).
    \item If \( B \in \mathsf{FP}_n(R) \) and \( C \in \mathsf{FP}_{n+1}(R) \), then \( A \in \mathsf{FP}_n(R) \).
    \item If \( B \cong A \oplus C \), then \( B \in \mathsf{FP}_n(R) \) if and only if \( A, C \in \mathsf{FP}_n(R) \).
\end{enumerate}

As a consequence, the class \( \mathsf{FP}_n(R) \) is closed under extensions, direct summands, and cokernels of monomorphisms between its objects. In contrast, the class \( \mathsf{FP}_{\infty}(R) \) exhibits even stronger closure properties: it is closed under direct summands and forms a \newterm{thick class} of modules; that is, it satisfies the \newterm{2-out-of-3} property with respect to short exact sequences; see \cite[Proposition 1.7 and Theorem 1.8]{BP}. Explicitly, if $0 \rightarrow A \rightarrow B \rightarrow C \rightarrow 0$ is a short exact sequence of \( R \)-modules and two of the modules \( A \), \( B \), or \( C \) lie in \( \mathsf{FP}_{\infty}(R) \), then the third also belongs to this class.

\subsection{Cotorsion pair.}\label{ss:dimensions}
Let \( \mathcal{X} \subseteq \Mod R \). We define
$$\mathcal{X}^{\top_1} \coloneqq \{ M \in \lMod R \mid \Tor^R_1(X, M) = 0 \text{ for all } X \in \mathcal{X} \},$$
$$\mathcal{X}^{\top_{\geq i}} \coloneqq \{ M \in \lMod R \mid \Tor^R_j(X, M) = 0 \text{ for all } X \in \mathcal{X},\, j \geq i \},$$
and for brevity, \( \mathcal{X}^\top \coloneqq \mathcal{X}^{\top_{\geq 1}} \). Dually, for \( \mathcal{Y} \subseteq \lMod R \), we define the classes \( {}^{\top_1}\mathcal{Y} \), \( {}^{\top_{\geq i}}\mathcal{Y} \), and \( {}^{\top}\mathcal{Y} \) in \( \Mod R \) analogously.

Similarly, for \( \mathcal{X} \subseteq \lMod R \), we define
$$\mathcal{X}^{\perp_1} \coloneqq \{ M \in \lMod R \mid \Ext^1_R(X, M) = 0 \text{ for all } X \in \mathcal{X} \},$$
$$\mathcal{X}^{\perp_{\geq i}} \coloneqq \{ M \in \lMod R \mid \Ext^j_R(X, M) = 0 \text{ for all } X \in \mathcal{X},\, j \geq i \},$$
and set \( \mathcal{X}^{\perp} \coloneqq \mathcal{X}^{\perp_{\geq 1}} \). The dual classes \( {}^{\perp_1}\mathcal{Y} \), \( {}^{\perp_{\geq i}}\mathcal{Y} \), and \( {}^{\perp}\mathcal{Y} \) for \( \mathcal{Y} \subseteq \lMod R \) are defined in the same way. If \( \mathcal{X} = \mathsf{FP}_n(R) \), then \( \mathcal{X}^{\perp_1} \) coincides with the class of \( \mathsf{FP}_n \)-injective modules; that is, an \( R \)-module \( M \) is \newterm{\(\mathsf{FP}_n \)-injective} if \( \Ext^1_R(F, M) = 0 \) for all \( F \in \mathsf{FP}_n(R) \), including the case \( n = \infty \). In particular, \( M \) is injective if and only if it is \( \mathsf{FP}_0 \)-injective, and it is \( \mathsf{FP} \)-injective (or absolutely pure) if and only if it is \( \mathsf{FP}_1 \)-injective. The class of \(\mathsf{FP}_n\)-injective modules is denoted by \(\mathsf{FP}_n\text{-}\mathsf{Inj}(R)\). A ring \(R\) is said to be left \newterm{self \(\mathsf{FP}_n\)-injective} if it is \(\mathsf{FP}_n\)-injective as a module over itself. An \(R\)-module \(P\) is \newterm{\(\mathsf{FP}_n\)-projective} if \(\Ext^1_R(P, M) = 0\) for all \(\mathsf{FP}_n\)-injective \(R\)-modules \(M\). The class of \(\mathsf{FP}_n\)-projective modules is denoted by \(\mathsf{FP}_n\text{-}\mathsf{Proj}(R)\). Similarly, if \(\Xcal\) denotes the class of all finitely \(n\)-presented right \(R\)-modules, then \(\Xcal^{\top_{1}}\) coincides with the class of \newterm{\(\mathsf{FP}_n\)-flat} modules. More precisely, an \(R\)-module \(M\) is \newterm{\(\mathsf{FP}_n\)-flat} if \(\Tor_1^R(F, M) = 0\) for every finitely \(n\)-presented right \(R\)-module \(F\) (including the case \(n = \infty\)). Note that $\mathsf{FP}_0$-flat modules coincide with flat modules. Moreover, $\mathsf{FP}_1$-flat modules also coincide with flat modules; see \cite{BP}. Furthermore, \newterm{level modules}, in the sense of \cite{BGH14}, coincide with the $\mathsf{FP}_\infty$-flat modules; that is, those $M \in \lMod R$ such that $\Tor_1^R(F,M)=0$ for all finitely $\mathsf{FP}_\infty$-presented right $R$-modules $F$. The class of \(\mathsf{FP}_n\)-flat modules is denoted by \(\mathsf{FP}_n\text{-}\mathsf{Flat}(R)\).

A \newterm{cotorsion pair} is a pair of classes $(\Acal, \Bcal)$ in $\lMod R$, such that $\Acal = {}^{\perp_1} \Bcal$ and $\Bcal = \Acal^{\perp_1}$. A cotorsion pair is \newterm{hereditary} if moreover $\Acal = {}^{\perp} \Bcal$ and $\Bcal = \Acal^{\perp}$, that is, all the higher $\Ext$-groups vanish as well.  Let $\Xcal$ be a class in $\lMod R$. The cotorsion pair \newterm{generated} by $\Xcal$ is the cotorsion pair $[{}^{\perp_1}(\Xcal^{\perp_1}),\Xcal^{\perp_1}]$. The cotorsion pair \newterm{cogenerated} by $\Xcal$ is the cotorsion pair $[{}^{\perp_1}\Xcal,({}^{\perp_1}\Xcal)^{\perp_1}]$. 

The left-hand class in a cotorsion pair is closed under direct sums and extensions, and contains all the projective modules. The right-hand class in a cotorsion pair is closed under products and extensions, and contains all the injective modules. A class $\Xcal$ in $\lMod R$ is \newterm{resolving} if $\Xcal$ is closed under extensions, contains the projectives, and is closed under kernels of epimorphisms. Dually, a class $\Xcal$ in $\lMod R$ is \newterm{coresolving} if $\Xcal$ is closed under extensions, contains the injectives, and is closed under cokernels of monomorphisms. 
A cotorsion pair $(\Acal, \Bcal)$ is hereditary if and only if $\Acal$ is a resolving class if and only if $\Bcal$ is a coresolving class. A cotorsion pair $(\Acal, \Bcal)$ is \newterm{complete} if $\Acal$ is special precovering, or equivalently, if $\Bcal$ is special preenveloping. In particular, the cotorsion pair \( [\mathsf{FP}_n\text{-}\mathsf{Proj}(R), \mathsf{FP}_n\text{-}\mathsf{Inj}(R)] \) is complete and cogenerated by a set; see \cite[Corollary 4.2]{BP}.

Following \cite{Zhu}, we introduce the notions of dimensions associated with \( \mathsf{FP}_n \)-injective and \( \mathsf{FP}_n \)-flat modules, where \( n \geq 0 \) may also be infinite.

\begin{enumerate}
 \item The \newterm{\( \mathsf{FP}_n \)-flat dimension} of \( N \), denoted by \( \nfd_R(N) \), is the smallest integer \( k \geq 0 \) such that $\Tor^{R}_{k+1}(F, N) = 0$  for every finitely $n$-presented right $R$-module $F$. If no such integer $k$ exists, we define $\nfd_{R}(N) = \infty$.

The \newterm{left \( \mathsf{FP}_n \)-flat global dimension} of \( R \), denoted by \( \gfd(R) \), is defined as:
    $$\gfd(R) = \sup \{ \nfd_R(N) \mid N \text{ is a left \( R \)-module} \}.$$
\item The \newterm{\( \mathsf{FP}_n \)-injective dimension} of an \( R \)-module \( M \), denoted by \( \nid_R(M) \), is the smallest integer \( k \geq 0 \) such that \( \Ext^{k+1}_R(F, M) = 0 \) for all \( F \in \mathsf{FP}_n(R) \). If no such \( k \) exists, \( \nid_R(M) = \infty \).

The \newterm{left \( \mathsf{FP}_n \)-injective global dimension} of \( R \), denoted by \( \gid(R) \), is defined as:
    $$\gid(R) = \sup \{ \nid_R(M) \mid M \text{ is a left \( R \)-module} \}.$$
\end{enumerate}

\section{Pseudo-Coherence, Coherence, and von Neumann Regularity}\label{s:Coherence and  von Neumman regularity.}
In this section, we recall the notions of coherent and pseudo-coherent modules, which generalize the classical Noetherian condition.  An \( R \)-module \( M \) is called \newterm{pseudo-coherent} if every finitely generated submodule of $M$ is finitely presented. Equivalently, $M$ is pseudo-coherent if, for every homomorphism \( R^n \to M \) with \( n < \infty \), the kernel is finitely generated.

\begin{prop}\label{prop:pseudo-coherent-properties}
Let $R$ be a ring. The class of pseudo-coherent $R$-modules is closed under submodules, extensions and direct sums.
\end{prop}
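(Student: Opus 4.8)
The plan is to treat the three closure properties separately, in increasing order of difficulty, and to reduce everything to the behaviour of finitely generated and finitely presented modules under short exact sequences recorded in the preliminaries, namely the properties quoted from \cite[Theorem~2.1.2]{Glaz1} applied in the cases $n=0$ and $n=1$ (recall that $\mathsf{FP}_0(R)$ is the class of finitely generated modules and $\mathsf{FP}_1(R)$ that of finitely presented ones).

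Closure under submodules is immediate from the definition: if $M$ is pseudo-coherent and $N \subseteq M$, then any finitely generated submodule $L \subseteq N$ is in particular a finitely generated submodule of $M$, hence finitely presented; therefore $N$ is pseudo-coherent.

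The extension case is where the real work lies, and I expect it to be the main obstacle. Suppose $0 \to A \to B \to C \to 0$ is exact with $A$ and $C$ pseudo-coherent, and let $L \subseteq B$ be an arbitrary finitely generated submodule; the goal is to show $L \in \mathsf{FP}_1(R)$. Intersecting the sequence with $L$ produces $0 \to A \cap L \to L \to L/(A\cap L) \to 0$, where the quotient $L/(A\cap L) \cong (L+A)/A$ is a finitely generated submodule of $B/A \cong C$, hence finitely presented because $C$ is pseudo-coherent. Now $L \in \mathsf{FP}_0(R)$ and $L/(A\cap L) \in \mathsf{FP}_1(R)$, so property~(3) with $n=0$ forces $A \cap L \in \mathsf{FP}_0(R)$; being a finitely generated submodule of the pseudo-coherent module $A$, it then lies in $\mathsf{FP}_1(R)$. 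Finally, feeding $A\cap L,\ L/(A\cap L) \in \mathsf{FP}_1(R)$ into property~(2) with $n=1$ yields $L \in \mathsf{FP}_1(R)$, as desired, so $B$ is pseudo-coherent.

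For closure under direct sums, let $\{M_i\}_{i\in I}$ be pseudo-coherent and set $M = \bigoplus_{i\in I} M_i$. A finitely generated submodule $L \subseteq M$ has its finitely many generators supported on a finite subset $J \subseteq I$, so $L \subseteq \bigoplus_{i\in J} M_i$. The finite direct sum $\bigoplus_{i\in J} M_i$ is pseudo-coherent by iterating the extension case just established along the split short exact sequences that exhibit a finite direct sum as a successive extension of its summands. Hence $L$ is a finitely generated submodule of a pseudo-coherent module and is therefore finitely presented, which proves that $M$ is pseudo-coherent and completes the argument.
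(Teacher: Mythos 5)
Your proof is correct: the paper itself only says ``It is straightforward,'' and your argument is exactly the standard one that remark alludes to, with the extension case correctly reduced to the $\mathsf{FP}_0$/$\mathsf{FP}_1$ closure properties quoted from \cite[Theorem~2.1.2]{Glaz1} via the sequence $0 \to A\cap L \to L \to (L+A)/A \to 0$. No gaps; the direct-sum case is also handled properly by passing to a finite sub-sum and iterating the extension case.
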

\begin{proof}
    It is straightforward. 
\end{proof}

An \( R \)-module \( M \) is called \newterm{coherent} if it is pseudo-coherent and finitely generated. We denote by \( \mathsf{Coh}(R) \) the class of all coherent \( R \)-modules. For example, over a Noetherian ring, any non-finitely generated projective module is pseudo-coherent but not coherent. A ring \( R \) is called \newterm{left coherent} if \( R \), as an \( R \)-module over itself, is coherent, that is, if every finitely generated ideal of $R$ is finitely presented. The following result has been known since at least the 1960s; for completeness, we include it here and refer to Swan (2019) for a precise statement and proof.

\begin{prop}\cite[Theorem 2.4 and Corollary 2.7]{Swan}\label{prop:coh-subcategory}
Let $R$ be a ring. The class \( \mathsf{Coh}(R) \) of coherent \( R \)-modules is an abelian subcategory of \( \lMod R \). Moreover, \( R \) is left coherent if and only if \(\mathsf{Coh}(R)= \mathsf{FP}_1(R) = \mathsf{FP}_\infty(R)  \).
\end{prop}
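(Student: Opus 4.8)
The plan is to treat the two assertions in turn, first showing that $\mathsf{Coh}(R)$ is an abelian subcategory and then using this to derive the characterization. Throughout I would rely on \cref{prop:pseudo-coherent-properties} (pseudo-coherent modules are closed under submodules, extensions, and direct sums) and on the closure properties (1)--(4) of the classes $\mathsf{FP}_n(R)$ recorded after the chain of inclusions. Two preliminary observations are worth isolating first. The first is that every coherent module is finitely presented: if $M$ is coherent then any epimorphism $R^n \twoheadrightarrow M$ has finitely generated kernel by pseudo-coherence, giving a finite presentation, so $\mathsf{Coh}(R) \subseteq \mathsf{FP}_1(R)$ holds unconditionally. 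The second is the elementary lemma that if $M$ is finitely generated and $p\colon M \twoheadrightarrow Q$ is an epimorphism with $Q$ finitely presented, then $\ker p$ is finitely generated; this follows by choosing a surjection $g\colon R^n \twoheadrightarrow M$, noting that $\ker(p\circ g)$ is finitely generated since $Q$ is finitely presented, and observing that $g$ restricts to a surjection $\ker(p\circ g) \twoheadrightarrow \ker p$.

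For the abelian subcategory claim I would verify closure under finite biproducts, kernels, and cokernels of a morphism $f\colon M \to N$ between coherent modules. Biproducts are immediate, since finite generation and pseudo-coherence are both preserved by finite direct sums. For the image, $\operatorname{im} f$ is a finitely generated submodule of the coherent module $N$, hence finitely presented, and pseudo-coherent as a submodule of the pseudo-coherent $N$, so $\operatorname{im} f \in \mathsf{Coh}(R)$. For the kernel, the elementary lemma applied to $M \twoheadrightarrow \operatorname{im} f$ shows $\ker f$ is finitely generated, and as a submodule of the pseudo-coherent $M$ it is pseudo-coherent, hence coherent. For the cokernel, I would check that $N/\operatorname{im} f$ is pseudo-coherent: an arbitrary finitely generated submodule has the form $N'/\operatorname{im} f$ with $N'$ finitely generated and containing $\operatorname{im} f$; then $N'$ is a finitely generated submodule of $N$, hence coherent and in particular in $\mathsf{FP}_1(R)$, and since $\operatorname{im} f \in \mathsf{FP}_1(R)$ as well, property (1) applied to $0 \to \operatorname{im} f \to N' \to N'/\operatorname{im} f \to 0$ yields $N'/\operatorname{im} f \in \mathsf{FP}_1(R)$. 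Together with finite generation this gives $\operatorname{coker} f \in \mathsf{Coh}(R)$. Being an additive subcategory of $\lMod R$ closed under kernels and cokernels, $\mathsf{Coh}(R)$ is then abelian.

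For the characterization, the reverse implication is immediate: the chain of inclusions gives $R \in \mathsf{FP}_\infty(R) \subseteq \mathsf{FP}_1(R)$, so if the three classes coincide then $R \in \mathsf{Coh}(R)$, which is precisely left coherence. For the forward implication, assume $R$ is left coherent, so each free module $R^m$ is coherent. The inclusion $\mathsf{FP}_\infty(R) \subseteq \mathsf{FP}_1(R)$ is automatic, and $\mathsf{Coh}(R) \subseteq \mathsf{FP}_1(R)$ was noted above. To see $\mathsf{FP}_1(R) \subseteq \mathsf{Coh}(R)$, write a finitely presented $M$ as the cokernel of a map $R^{n_1} \to R^{n_0}$ between coherent modules, which is coherent by the abelian part. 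Finally, for $\mathsf{FP}_1(R) \subseteq \mathsf{FP}_\infty(R)$ I would build a resolution of $M$ by finitely generated frees, noting that each successive syzygy is a finitely generated submodule of a coherent free module $R^{n_i}$, hence again coherent and in particular finitely generated, so the construction never stalls and $M \in \mathsf{FP}_\infty(R)$.

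I expect the main obstacle to be the cokernel step of the abelian subcategory argument, since it is the one point where mere pseudo-coherence of $N$ is not enough and one must combine finite generation of the syzygy with the extension-type closure property (1) of $\mathsf{FP}_1(R)$; the kernel step, by contrast, reduces cleanly to the elementary finite-generation lemma, and the characterization is then largely formal once the abelian structure and the bootstrapping of syzygies over coherent free modules are in place.
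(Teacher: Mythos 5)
Your proof is correct and complete; the paper itself offers no argument here, simply citing Swan's Theorem 2.4 and Corollary 2.7, and your write-up is the standard proof underlying that citation (closure of $\mathsf{Coh}(R)$ under biproducts, images, kernels via the finite-generation lemma, and cokernels via the extension property of $\mathsf{FP}_1(R)$, followed by the syzygy bootstrap over coherent free modules). The one delicate point you correctly identified and handled is the cokernel step, where pseudo-coherence of the quotient requires combining finite generation of the preimage $N'$ with closure property (1) of $\mathsf{FP}_1(R)$.
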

\qed

Let $\mathcal{C}$ and $\mathcal{D}$ be classes of $R$-modules. We say that $\mathcal{D}$ has the \newterm{2-out-of-3 property} with respect to $\mathcal{C}$ if, for every short exact sequence $0 \to C_1 \to C_2 \to C_3 \to 0 $, with $C_1, C_2, C_3 \in \mathcal{C}$, if any two of $C_1, C_2, C_3$ belong to $\mathcal{D}$, then the third also belongs to \( \mathcal{D} \).

\begin{prop} \label{prop:2-out-of-3}
Let $R$ be a ring. For $i = 0, 1$, the class $\mathsf{FP}_i(R)$ has the 2-out-of-3 property with respect to the class of pseudo-coherent $R$-modules.
\end{prop}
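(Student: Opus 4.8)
The plan is to reduce both cases $i=0$ and $i=1$ to a single statement about coherent modules, and then dispatch the three subcases of the 2-out-of-3 property using the closure properties of $\mathsf{FP}_n(R)$ recalled after \cite[Theorem~2.1.2]{Glaz1}. The key preliminary observation is that if $M$ is pseudo-coherent, then
\[
M \in \mathsf{FP}_0(R) \iff M \in \mathsf{FP}_1(R) \iff M \in \mathsf{Coh}(R).
\]
Indeed, $M \in \mathsf{FP}_0(R)$ says $M$ is finitely generated, so $M$ finitely generated and pseudo-coherent is precisely the definition of coherent; and a coherent module is finitely presented (it is a finitely generated submodule of itself), hence lies in $\mathsf{FP}_1(R)$, which in turn sits inside $\mathsf{FP}_0(R)$ by the chain of inclusions. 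Consequently, when restricted to the class of pseudo-coherent modules, both $\mathsf{FP}_0(R)$ and $\mathsf{FP}_1(R)$ coincide with $\mathsf{Coh}(R)$, so it suffices to prove the 2-out-of-3 property for $\mathsf{Coh}(R)$ with respect to pseudo-coherent modules.

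Thus I would take a short exact sequence $0 \to C_1 \to C_2 \to C_3 \to 0$ of pseudo-coherent modules in which two of the $C_j$ are coherent, and show the third is coherent. Since each $C_j$ is pseudo-coherent by hypothesis, by the observation above it is enough to show that the remaining module is finitely generated, and this splits into three cases handled by the closure properties of $\mathsf{FP}_0(R)$. If $C_1,C_2$ are coherent, then $C_3$ is a quotient of the finitely generated module $C_2$, hence finitely generated. If $C_1,C_3$ are coherent, then $C_2$ is finitely generated by item~(2) (extension-closure of $\mathsf{FP}_0(R)$) applied with $n=0$. If $C_2,C_3$ are coherent, then $C_1$ is finitely generated by item~(3) applied with $n=0$, using that $C_3$ is not merely finitely generated but finitely presented, i.e.\ lies in $\mathsf{FP}_1(R)$. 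In each case the third module is finitely generated and pseudo-coherent, hence coherent.

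The main obstacle is exactly this last subcase, where $C_1$ sits as a submodule of $C_2$: neither $\mathsf{FP}_0(R)$ nor $\mathsf{FP}_1(R)$ is closed under submodules in general, so the conclusion would be false without the pseudo-coherence hypothesis. What rescues it is that the reduction to coherence upgrades the finite generation of $C_3$ to finite presentation, which is precisely the input item~(3) requires; this is the one place where pseudo-coherence of the quotient is genuinely used. As a cross-check, I would note that this subcase also follows immediately from \cref{prop:coh-subcategory}: since $\mathsf{Coh}(R)$ is an abelian subcategory of $\lMod R$, the kernel $C_1=\ker(C_2\to C_3)$ of a morphism of coherent modules is again coherent.
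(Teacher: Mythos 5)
Your proof is correct and follows essentially the same route as the paper: both reduce the statement to the observation that a pseudo-coherent module lies in $\mathsf{FP}_0(R)$ or $\mathsf{FP}_1(R)$ exactly when it is coherent, and then invoke the 2-out-of-3 property of $\mathsf{Coh}(R)$. The only difference is that the paper cites \cref{prop:coh-subcategory} wholesale for the coherent 2-out-of-3 step, whereas you re-derive the three subcases from the closure properties of $\mathsf{FP}_n(R)$, which makes the argument more self-contained but does not change its substance.
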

\begin{proof}
It is enough to observe that in any short exact sequence of $R$-modules
$0 \rightarrow M_0 \rightarrow M_1 \rightarrow M_2 \rightarrow 0,$
if two of $M_0, M_1, M_2$ are coherent, then the third is also coherent by Proposition~\ref{prop:coh-subcategory}.
\end{proof}

The next proposition characterizes coherent rings by replacing the class of finitely presented modules with \( \mathsf{FP}_1 \)-projective, projective, or free modules.

\begin{theorem}\label{prop: coh-proj-fp}
For a ring $R$, the following conditions are equivalent:
\begin{enumerate}
    \item $R$ is left coherent.
    \item Every $\mathsf{FP}_1$-projective $R$-module is pseudo-coherent.
    \item Every projective $R$-module is pseudo-coherent.
    \item Every free $R$-module is pseudo-coherent.
\end{enumerate}
\end{theorem}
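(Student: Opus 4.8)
The plan is to establish the cycle of implications $(1)\Rightarrow(2)\Rightarrow(3)\Rightarrow(4)\Rightarrow(1)$, where essentially all the work is concentrated in the single implication $(1)\Rightarrow(2)$; the other three are inclusions of module classes. For the easy steps I would first record that every projective module is $FP_1$-projective: if $P$ is projective then $\Ext^1_R(P,-)$ vanishes on all modules, in particular on all $FP_1$-injective ones, so $P\in\mathsf{FP}_1\text{-}\mathsf{Proj}(R)$; this yields $(2)\Rightarrow(3)$, and $(3)\Rightarrow(4)$ is immediate since free modules are projective. For $(4)\Rightarrow(1)$, note that $R$ is free of rank one, hence pseudo-coherent by hypothesis; since $R$ is finitely generated by $1$, pseudo-coherence of $R$ says precisely that every finitely generated left ideal is finitely presented, i.e. that $R$ is left coherent.

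The substantive step is $(1)\Rightarrow(2)$, and here I would combine the coherence hypothesis with the structure of the cotorsion pair recalled in the preliminaries. Assume $R$ is left coherent. By Proposition~\ref{prop:coh-subcategory} we have $\mathsf{FP}_1(R)=\mathsf{Coh}(R)$, so every finitely presented module is coherent, and in particular pseudo-coherent. By definition the cotorsion pair $[\mathsf{FP}_1\text{-}\mathsf{Proj}(R),\mathsf{FP}_1\text{-}\mathsf{Inj}(R)]$ is generated by the set $\mathsf{FP}_1(R)$, and it is complete (\cite[Corollary~4.2]{BP}); hence, by the Eklof--Trlifaj theorem, every $FP_1$-projective module $P$ is a direct summand of a module $F$ admitting a continuous transfinite filtration $0=F_0\subseteq F_1\subseteq\cdots\subseteq F_\alpha=F$ whose successive quotients $F_{\beta+1}/F_\beta$ lie in $\mathsf{FP}_1(R)$, and are therefore pseudo-coherent. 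I would then show by transfinite induction that each $F_\beta$ is pseudo-coherent, conclude that $F$ is pseudo-coherent, and finally use that $P$ is a direct summand, hence a submodule, of $F$, together with closure of pseudo-coherent modules under submodules (Proposition~\ref{prop:pseudo-coherent-properties}), to obtain $(2)$.

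The heart of the argument — and the point I expect to require the most care — is the stability of pseudo-coherence along the filtration. At a successor ordinal, $F_{\beta+1}$ is an extension of the pseudo-coherent module $F_\beta$ by the pseudo-coherent module $F_{\beta+1}/F_\beta$, so closure under extensions (Proposition~\ref{prop:pseudo-coherent-properties}) applies directly. The delicate case is a limit ordinal $\lambda$, where $F_\lambda=\bigcup_{\beta<\lambda}F_\beta$ is a directed union and closure under such unions is not among the properties listed in Proposition~\ref{prop:pseudo-coherent-properties}. I would prove it by hand: any finitely generated submodule $N\subseteq F_\lambda$ is generated by finitely many elements, each of which lies in some $F_\beta$, so by directedness $N\subseteq F_{\beta_0}$ for a single $\beta_0<\lambda$; since $F_{\beta_0}$ is pseudo-coherent, $N$ is finitely presented, whence $F_\lambda$ is pseudo-coherent. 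This elementary observation — that pseudo-coherence passes to directed unions — is the only ingredient beyond the cited closure properties and the Eklof--Trlifaj filtration, and it is the mild technical obstacle that makes the transfinite induction close.

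One subtlety worth flagging is that the whole strategy hinges on the left class $\mathsf{FP}_1\text{-}\mathsf{Proj}(R)$ being describable as direct summands of $\mathsf{FP}_1(R)$-filtered modules, which is exactly what being \emph{generated} (not merely cogenerated) by the set $\mathsf{FP}_1(R)$ provides. Since $R\in\mathsf{FP}_1(R)$, no separate inclusion of projective generators into the filtration data is needed, so the induction hypothesis stays within $\mathsf{FP}_1(R)$ throughout and the coherence of $R$ is used precisely to guarantee that these filtration quotients are pseudo-coherent.
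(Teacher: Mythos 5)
Your proposal is correct, and its skeleton (the cycle $(1)\Rightarrow(2)\Rightarrow(3)\Rightarrow(4)\Rightarrow(1)$, with all the weight on $(1)\Rightarrow(2)$ and with $(4)\Rightarrow(1)$ reduced to the observation that $R$ itself is free, hence pseudo-coherent, hence coherent) is exactly the paper's. The one place where you genuinely diverge is $(1)\Rightarrow(2)$: the paper disposes of it with a citation to \cite[Lemma~1.5]{Pos}, whereas you prove it from scratch by combining the fact that the pair $[\mathsf{FP}_1\text{-}\mathsf{Proj}(R),\mathsf{FP}_1\text{-}\mathsf{Inj}(R)]$ is generated by (a representative set of) $\mathsf{FP}_1(R)$ with the Eklof--Trlifaj description of the left class as direct summands of $\mathsf{FP}_1(R)$-filtered modules, and then running a transfinite induction using closure of pseudo-coherence under extensions, submodules, and (as you correctly supply by hand) directed unions of chains. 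Your identification of where coherence enters --- namely via Proposition~\ref{prop:coh-subcategory} to guarantee that the filtration quotients, being finitely presented, are coherent and hence pseudo-coherent --- is the right one, and your remark that $R\in\mathsf{FP}_1(R)$ so no extra generator needs to be adjoined to the filtration data is a legitimate and necessary point. The trade-off is the usual one: the paper's version is shorter but opaque at its key step, while yours is self-contained at the cost of invoking the set-theoretic machinery of complete cotorsion pairs (which the paper in any case records in \cref{ss:dimensions}); the mathematical content is the same.
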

\begin{proof}
\((1) \Rightarrow (2)\) follows from \cite[Lemma 1.5]{Pos}. The implications \((2) \Rightarrow (3)\) and \((3) \Rightarrow (4)\) are straightforward. For \((4) \Rightarrow (1)\), note that if every free module is pseudo-coherent, then in particular \( R \) is pseudo-coherent. Hence, every finitely generated ideal of \( R \) is finitely presented, and thus \( R \) is left coherent.
\end{proof}

The following corollary is an immediate consequence of the previous result.

\begin{cor}\label{cor:proj-coh}
For a ring $R$, the following are equivalent:
\begin{enumerate}
    \item \( R \) is left coherent.
    \item All finitely generated projective \( R \)-modules are coherent.
    \item The class of finitely generated projective \( R \)-modules coincides with the class of flat and coherent \( R \)-modules.
\end{enumerate}
\end{cor}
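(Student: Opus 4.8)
The plan is to establish the cycle $(1) \Rightarrow (2) \Rightarrow (1)$ together with $(1) \Rightarrow (3) \Rightarrow (1)$, leaning on \Cref{prop: coh-proj-fp} as the main engine and using repeatedly that, for a finitely generated module, being coherent is the same as being pseudo-coherent (by the very definition of coherence).

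For $(1) \Leftrightarrow (2)$: since a finitely generated projective module is automatically finitely generated, it is coherent precisely when it is pseudo-coherent. Thus $(1) \Rightarrow (2)$ follows at once from the implication $(1) \Rightarrow (3)$ of \Cref{prop: coh-proj-fp}, which guarantees that every projective—in particular every finitely generated projective—module is pseudo-coherent, hence coherent. Conversely, $R$ is itself a finitely generated projective $R$-module (free of rank one), so if $(2)$ holds then $R$ is coherent, which is exactly the definition of $R$ being left coherent; this gives $(2) \Rightarrow (1)$.

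For $(1) \Rightarrow (3)$, I would prove the two inclusions separately. First, every finitely generated projective module is flat, and, by the already-established $(1) \Rightarrow (2)$, coherent; this yields that $\mathsf{proj}(R)$ is contained in the class of flat and coherent modules. For the reverse inclusion, I recall that a coherent module is finitely generated and pseudo-coherent, hence finitely presented (the kernel of any $R^n \to M$ is finitely generated). The crucial input is then the classical fact that a finitely presented flat module is projective; combining these, a flat and coherent module is finitely presented and flat, therefore finitely generated projective. This establishes the equality of the two classes. Finally, $(3) \Rightarrow (1)$ is immediate: viewing $R$ as a module over itself, $R$ is finitely generated projective, hence lies in the class of flat and coherent modules by $(3)$; in particular $R$ is coherent, i.e. $R$ is left coherent.

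The only non-formal ingredient—and hence the main point to be careful about—is the classical implication that a finitely presented flat module is projective, which underlies the reverse inclusion in $(3)$. Everything else reduces to the definition of coherence and to the equivalences already recorded in \Cref{prop: coh-proj-fp}, so the corollary is indeed immediate once these two inputs are in place.
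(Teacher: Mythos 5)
Your proof is correct and follows exactly the route the paper intends: the paper states this corollary without proof as an ``immediate consequence'' of \Cref{prop: coh-proj-fp}, and your argument supplies precisely the missing details (coherent $=$ finitely generated $+$ pseudo-coherent, plus the classical fact that a finitely presented flat module is projective for the reverse inclusion in $(3)$). Nothing to add.
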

\qed

In an additive category $\mathcal{A}$ with arbitrary direct sums, an object $M$ is said to be \newterm{small} if every morphism  $f \colon M \to \bigoplus_{i \in I} A_i$ factors through a finite direct sum of the $A_i$.  For $R$-modules, $M$ is small if and only if there does not exist an infinite family of submodules  $\{M_j \subsetneq M \mid j \in J\}$ such that every  $m \in M$ lies in almost all  $M_j$; see \cite[Exercise 1B, p. 501]{Lam12}.  It is well known that every finitely generated module is small. Moreover, if $M$ is projective, then $M$ is small if and only if it is finitely generated.

\begin{cor}
    Let $R$ be a ring. Then $R$ is left coherent if and only if every small projective $R$-module is coherent.
\end{cor}
\qed

Recall that an \( R \)-module \( M \) is said to be \newterm{\( \mathsf{f} \)-projective} if, for every finitely generated submodule \( C \subseteq M \), the inclusion map \( C \hookrightarrow M \) factors through a finitely generated free module. Every projective module is \( \mathsf{f} \)-projective, and any finitely generated \( \mathsf{f} \)-projective module is projective. Moreover, since flat modules are direct limits of finitely generated free modules, every \( \mathsf{f} \)-projective module is flat, see \cite[Lemma 3.2]{AT}. The following proposition relates \( \mathsf{f} \)-projectivity to coherence.

\begin{prop}\label{prop: f-projective}
A ring \( R \) is left coherent if and only if every \( \mathsf{f} \)-projective \( R \)-module is pseudo-coherent. In this case, every flat and \( \mathsf{FP}_1 \)-projective \( R \)-module is \( \mathsf{f} \)-projective.
\end{prop}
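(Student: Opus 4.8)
The plan is to treat the biconditional and the supplementary claim separately, leaning throughout on \cref{prop: coh-proj-fp}, which already characterises left coherence by the pseudo-coherence of free (equivalently projective) modules. The backward implication is immediate: every projective module---hence every free module---is $\mathsf{f}$-projective, so if all $\mathsf{f}$-projective modules are pseudo-coherent, then in particular every free module is, and left coherence follows from $(4)\Rightarrow(1)$ of \cref{prop: coh-proj-fp}. Equivalently, $R$ itself is free, hence $\mathsf{f}$-projective, hence pseudo-coherent, which is exactly the definition of left coherence.

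For the forward implication I would assume $R$ left coherent, take an $\mathsf{f}$-projective module $M$, and check pseudo-coherence directly. Given a finitely generated submodule $C \subseteq M$, $\mathsf{f}$-projectivity provides a factorisation $C \xrightarrow{\alpha} F \xrightarrow{\beta} M$ of the inclusion through a finitely generated free module $F$. Since the composite $\beta\alpha$ is injective, so is $\alpha$, whence $C \cong \alpha(C)$ is a finitely generated submodule of $F$. By \cref{prop: coh-proj-fp}, coherence makes $F$ pseudo-coherent, so $\alpha(C)$---and therefore $C$---is finitely presented; as $C$ was arbitrary, $M$ is pseudo-coherent.

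For the final assertion I would again assume $R$ left coherent and let $M$ be flat and $\mathsf{FP}_1$-projective, then fix a finitely generated submodule $C \subseteq M$. Two facts feed into one another here: $\mathsf{FP}_1$-projectivity together with coherence makes $M$ pseudo-coherent by \cref{prop: coh-proj-fp}, so $C$ is finitely presented; while flatness lets me write $M = \varinjlim_i F_i$ as a direct limit of finitely generated free modules (Govorov--Lazard). Because $C$ is finitely presented, $\Hom_R(C,-)$ commutes with this direct limit, so the inclusion $C \hookrightarrow M$ factors through some structure map $F_i \to M$, i.e. through a finitely generated free module---which is exactly the defining property of $\mathsf{f}$-projectivity of $M$.

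The only non-formal ingredient is the factorisation of a map out of a finitely presented module through a single term of a direct limit; everything else is bookkeeping around \cref{prop: coh-proj-fp}. The point I would watch most carefully is that both hypotheses in the last statement are genuinely used: flatness alone supplies the colimit presentation of $M$ but not the finite presentation of $C$, whereas $\mathsf{FP}_1$-projectivity (through coherence) supplies the latter but, absent flatness, offers no free module to factor through. Their combination is precisely what closes the argument.
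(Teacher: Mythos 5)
Your argument is correct, and its overall skeleton matches the paper's: the backward implication is handled identically (free modules are $\mathsf{f}$-projective, so Theorem~\ref{prop: coh-proj-fp} applies), and the remaining two claims rest on the same facts. The difference is that where the paper simply cites \cite[Lemma 3.5]{AT} twice --- once for ``coherent $\Rightarrow$ every $\mathsf{f}$-projective module is pseudo-coherent'' and once for ``flat $+$ pseudo-coherent $\Rightarrow$ $\mathsf{f}$-projective'' --- you supply self-contained proofs of both. Your forward direction (factor the inclusion $C\hookrightarrow M$ through a finitely generated free $F$, note the first map is injective, and invoke pseudo-coherence of $F$) and your proof of the final assertion (Govorov--Lazard presentation of the flat module plus the fact that $\Hom_R(C,-)$ commutes with filtered colimits for $C$ finitely presented) are exactly the content of the cited lemma, correctly reconstructed; in particular you correctly isolate that coherence enters only to upgrade $\mathsf{FP}_1$-projectivity to pseudo-coherence, while flatness alone furnishes the colimit to factor through. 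The paper's version is shorter by deferring to the reference; yours has the advantage of being verifiable without consulting \cite{AT}.
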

\begin{proof}
If \( R \) is left coherent, then the result follows directly from \cite[Lemma 3.5]{AT}. Conversely, if every \( \mathsf{f} \)-projective module is pseudo-coherent, then in particular every projective $R$-module is pseudo-coherent. By Theorem \ref{prop: coh-proj-fp}, this implies that \( R \) is left coherent. Finally, by Theorem \ref{prop: coh-proj-fp}, every \( \mathsf{FP}_1 \)-projective \( R \)-module is pseudo-coherent, and by \cite[Lemma 3.5]{AT} again, every flat and \( \mathsf{FP}_1 \)-projective module is \( \mathsf{f} \)-projective.
\end{proof}

\begin{ex}
Over a Prüfer domain, the classes of flat and \( \mathsf{f} \)-projective modules coincide. Moreover, by \cite[Lemma 3.5]{AT}, every flat \( R \)-module is pseudo-coherent. 
\end{ex}

Note that if every flat \( R \)-module is pseudo-coherent, then \( R \) is left coherent by Theorem \ref{prop: coh-proj-fp}.  

\begin{prop}
Let \( R \) be a ring such that every flat \( R \)-module is pseudo-coherent. Then the class of flat \( R \)-modules coincides with the class of \( \mathsf{f} \)-projective modules. In particular, \( R \) contains no infinite set of pairwise orthogonal nonzero idempotents.
\end{prop}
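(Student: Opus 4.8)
The plan is to prove the two assertions in turn, reducing the second to the first. For the equality of the two classes, one inclusion is already available: every $\mathsf{f}$-projective module is flat (recorded before the statement via \cite{AT}), so it remains to show that, under the hypothesis, every flat module is $\mathsf{f}$-projective. First I would take a flat module $M$ and invoke the description $M = \varinjlim_\lambda F_\lambda$ as a directed colimit of finitely generated free modules, already used in the paper. Given a finitely generated submodule $C \subseteq M$, the hypothesis that $M$ is pseudo-coherent forces $C$ to be finitely presented. Since $\Hom_R(C,-)$ commutes with directed colimits for finitely presented $C$, the canonical isomorphism $\Hom_R(C,M) \cong \varinjlim_\lambda \Hom_R(C,F_\lambda)$ lets me write the inclusion $C \hookrightarrow M$ as $C \to F_\lambda \to M$ for some $\lambda$, where $F_\lambda \to M$ is a colimit structure map. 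Thus the inclusion factors through a finitely generated free module, which is exactly $\mathsf{f}$-projectivity of $M$.

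For the ``in particular'' clause I would argue by contraposition: assuming an infinite family $\{e_j\}_{j\ge 1}$ of pairwise orthogonal nonzero idempotents, I produce a flat module that is not pseudo-coherent. Set $f_n = e_1 + \cdots + e_n$ and $I = \sum_{j} Re_j$; orthogonality makes this sum direct, so $I = \bigoplus_{j} Re_j = \bigcup_n Rf_n$ is a directed union of the left ideals $Rf_n$, each a direct summand of $R$ since $f_n$ is idempotent. As an infinite direct sum of nonzero modules, $I$ is not finitely generated. The point is that $I$ is nonetheless a pure submodule: direct summands are pure, and a directed union of pure submodules is pure; since $R$ is flat, purity of $I$ yields that $M := R/I$ is flat. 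By the first part $M$ is $\mathsf{f}$-projective, and since $M$ is cyclic and a finitely generated $\mathsf{f}$-projective module is projective (recorded before the statement), $M$ is projective. Then $0 \to I \to R \to M \to 0$ splits, so $I$ is a direct summand of $R$ and hence finitely generated, a contradiction. (Equivalently, one may bypass the first part and note directly that $M=R/I$ is a finitely generated, non-finitely-presented flat module, so $M\subseteq M$ already shows $M$ is not pseudo-coherent, contradicting the hypothesis.)

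The routine inputs—the colimit description of flat modules, the colimit characterization of finitely presented modules, and the basic purity facts (direct summands are pure, directed unions of pure submodules are pure, and a pure submodule of a flat module has flat quotient)—are standard, so the main conceptual step is the flatness of $R/I$: one must recognize that, although $I$ is not finitely generated, its presentation as a directed union of direct summands makes it pure, which is precisely what keeps $R/I$ inside the class of flat modules where the first part applies. I would also keep careful track of sidedness, working with left ideals and left modules throughout so that the hypothesis ``every flat (left) module is pseudo-coherent'' is the one being used; crucially, no passage to right modules or to infinite products is required, which is what makes the argument uniform across non-commutative rings.
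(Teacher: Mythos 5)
Your proof is correct and follows the same route as the paper: the paper deduces flat $\Rightarrow$ $\mathsf{f}$-projective from \cite[Lemma 3.5]{AT} and then gets the idempotent conclusion from \cite[Lemma 4.5]{PR04} via the observation that every cyclic flat module is projective, which is exactly your logical skeleton. The only difference is that you inline self-contained proofs of both cited lemmas (Lazard's theorem plus the $\varinjlim$-characterization of finitely presented modules for the first, and the pure ideal $I=\bigcup_n Rf_n$ with $R/I$ flat for the second), and these reconstructions are accurate.
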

\begin{proof}
Assume that every flat \( R \)-module is pseudo-coherent. Then, by \cite[Lemma 3.5]{AT}, it follows that every flat module is \( \mathsf{f} \)-projective. In particular, every cyclic flat $R$-module is projective. Then, by \cite[Lemma 4.5]{PR04}, \( R \) does not admit an infinite set of pairwise orthogonal nonzero idempotents.
\end{proof}

Recall from \cite{PR04} that a ring \( R \) is called an \newterm{\( \mathsf{S} \)-ring} if every finitely generated flat \( R \)-module is projective.  Since every finitely generated flat module over a local ring is free, it follows that every local ring is an \( \mathsf{S} \)-ring. 
Other examples of left \( \mathsf{S}\)-rings include left Noetherian rings and semiperfect rings; see \cite{PR04}.

\begin{cor} 
Let \( R \) be a ring such that every flat \( R \)-module is pseudo-coherent. Then \( R \) is an \( \mathsf{S} \)-ring.
\end{cor}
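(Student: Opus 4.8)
The plan is to reduce the statement directly to the preceding Proposition together with the recorded fact that any finitely generated $\mathsf{f}$-projective module is projective. First I would let $M$ be an arbitrary finitely generated flat $R$-module; the goal is to show that $M$ is projective, since this is precisely the defining condition for $R$ to be an $\mathsf{S}$-ring.

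Since $M$ is flat, the standing hypothesis that every flat $R$-module is pseudo-coherent applies to $M$. By the Proposition immediately preceding this corollary (equivalently, by \cite[Lemma 3.5]{AT}), this hypothesis forces the class of flat $R$-modules to coincide with the class of $\mathsf{f}$-projective modules; in particular $M$ is $\mathsf{f}$-projective. I would then invoke the observation recorded just before Proposition~\ref{prop: f-projective}, namely that any finitely generated $\mathsf{f}$-projective module is projective. Concretely, taking the finitely generated submodule $C = M$ in the definition of $\mathsf{f}$-projectivity, the identity map $M \to M$ factors through a finitely generated free module $F$, which exhibits $M$ as a direct summand of $F$ and hence as a projective module. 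Therefore every finitely generated flat $R$-module is projective, i.e.\ $R$ is an $\mathsf{S}$-ring.

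Since each step is an immediate application of results already established in this section, I do not expect any genuine obstacle. The only point requiring care is that the passage from \emph{flat} to \emph{$\mathsf{f}$-projective} uses the global hypothesis (through the preceding Proposition), rather than any property of the individual module $M$; once that identification is in place, finite generation does the rest.
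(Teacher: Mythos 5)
Your proof is correct and is exactly the argument the paper intends: the corollary is stated with no written proof precisely because it follows immediately from the preceding proposition (flat $=$ $\mathsf{f}$-projective under the hypothesis) together with the recorded fact that a finitely generated $\mathsf{f}$-projective module is projective. Your explicit verification of that last fact---taking $C=M$ so that the identity factors through a finitely generated free module, exhibiting $M$ as a direct summand---is the right justification and fills in the only detail the paper leaves implicit.
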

\qed

\subsection{Von Neumann regularity}

Recall that a ring \( R \) is called a \newterm{von Neumann regular ring} if, for every element \( r \in R \), there exists \( x \in R \) such that \( r = rxr \). It is well known that \( R \) is von Neumann regular if and only if every \( R \)-module is \( \mathsf{FP}_1 \)-injective. 
Moreover, this condition can be verified by checking it only on ideals of \( R \).

\begin{prop}\label{prop:von Neumann regula-fp-inj}
Let $R$ be a ring. The following conditions are equivalent:
\begin{enumerate}
    \item $R$ is von Neumann regular.
    \item Every nonzero proper ideal of $R$ is $\mathsf{FP}_1$-injective.
    \item Every nonzero principal ideal of $R$ is $\mathsf{FP}_1$-injective.
\end{enumerate}
\end{prop}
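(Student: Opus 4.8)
My plan is to read the proposition through the cited equivalence ``$R$ is von Neumann regular if and only if every $R$-module is $\mathsf{FP}_1$-injective''. This gives the implications $(1)\Rightarrow(2)$ and $(1)\Rightarrow(3)$ for free: if $(1)$ holds then every module, and hence every nonzero proper ideal and every nonzero principal ideal (including $R={}_RR$ itself, since $R=R\cdot 1$ is principal), is $\mathsf{FP}_1$-injective. The real content is the two converse implications $(2)\Rightarrow(1)$ and $(3)\Rightarrow(1)$, which I would establish by a single mechanism, since each hypothesis supplies $\mathsf{FP}_1$-injectivity of every nonzero \emph{proper} principal left ideal.

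To recover the von Neumann condition, fix $r\in R$ and seek $x$ with $r=rxr$; the case $r=0$ is vacuous, so assume $r\neq 0$ and consider the principal left ideal $Rr$. I would first record that $R/Rr$ is finitely presented: right multiplication $\rho_r\colon R\to R$, $a\mapsto ar$, is left $R$-linear with image $Rr$, so $R\xrightarrow{\rho_r}R\to R/Rr\to 0$ is a presentation by finitely generated free modules and $R/Rr\in\mathsf{FP}_1(R)$. Now, when $Rr$ is $\mathsf{FP}_1$-injective, the short exact sequence $0\to Rr\to R\to R/Rr\to 0$ represents a class in $\Ext^1_R(R/Rr,Rr)$, and this group vanishes by the very definition of $\mathsf{FP}_1$-injectivity together with $R/Rr\in\mathsf{FP}_1(R)$. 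Hence the sequence splits and $Rr$ is a direct summand of ${}_RR$. A splitting retraction $p\colon R\to Rr$ yields the idempotent $e:=p(1)$, and the identities $e^2=e$ and $Rr=Re$ follow from $R$-linearity of $p$ and $p|_{Rr}=\mathrm{id}$. Writing $e=br$ (as $e\in Rr$) and using $re=r$ (as $r\in Re$) gives $r=re=rbr$, so $x=b$ does the job.

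This argument applies verbatim under $(3)$, since there \emph{every} nonzero principal ideal, proper or not, is $\mathsf{FP}_1$-injective. Under $(2)$ the same argument covers every $r$ with $Rr$ a proper ideal, and the only extra case is $Rr=R$; but then $1\in Rr$ gives $br=1$ for some $b$, whence $r=r(br)=rbr$ directly, with no injectivity input. Thus $(2)\Rightarrow(1)$ and $(3)\Rightarrow(1)$, completing the equivalence. I expect no step to be a serious obstacle: the proposition is essentially a repackaging of the classical fact that $R$ is von Neumann regular exactly when every principal left ideal is a direct summand of ${}_RR$, with the $\mathsf{FP}_1$-injectivity of $Rr$ supplying the splitting via the vanishing of $\Ext^1_R(R/Rr,Rr)$. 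The only points requiring care are the bookkeeping of the degenerate case $Rr=R$ when arguing from $(2)$ and the routine verification that the splitting retraction produces an idempotent generator of $Rr$.
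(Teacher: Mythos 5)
Your proof is correct and follows essentially the same route as the paper's: both split the sequence $0\to Rr\to R\to R/Rr\to 0$ using the vanishing of $\Ext^1_R(R/Rr,Rr)$ granted by $\mathsf{FP}_1$-injectivity of $Rr$ and finite presentation of $R/Rr$, and conclude that $Rr$ is generated by an idempotent. You merely spell out two details the paper leaves implicit — extracting $r=rxr$ from $Rr=Re$, and the degenerate case $Rr=R$ when arguing from (2) — which is a reasonable (indeed slightly more careful) elaboration rather than a different argument.
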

\begin{proof}
$(1)\Rightarrow(2)\Rightarrow(3)$ are clear.  

$(3)\Rightarrow(1):$  
It suffices to prove that every principal ideal of $R$ is generated by an idempotent.  
Let $I=(r)$ be a nonzero principal ideal of $R$. Since $I$ is cyclic, the short exact sequence $0 \rightarrow I \rightarrow R \rightarrow R/I \rightarrow 0$
splits, because by assumption $\Ext^1_R(R/I,I)=0$. Hence $I$ is a direct summand of $R$, and therefore $I = Re$ for some idempotent $e \in R$. 
\end{proof}

Recall that a short exact sequence of \( R \)-modules $0 \rightarrow F \rightarrow E \rightarrow G \rightarrow 0$  is called \newterm{pure} if it remains exact under the functor \( \Hom_R(M, -) \) for every finitely presented \( R \)-module \( M \). It is well known that a ring \( R \) is von Neumann regular if and only if every short exact sequence of \( R \)-modules is pure. 

\begin{cor}
Let \( R \) be a ring. Then \( R \) is von Neumann regular if and only if, for every nonzero (principal) ideal \( I \subseteq R \), the short exact sequence $
0 \rightarrow I \rightarrow E(I) \rightarrow E(I)/I \rightarrow 0 $ is pure.
\end{cor}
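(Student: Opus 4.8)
The plan is to deduce the corollary from Proposition~\ref{prop:von Neumann regula-fp-inj} together with the standard observation that purity of the canonical embedding of a module into its injective hull is equivalent to $\mathsf{FP}_1$-injectivity of that module. Concretely, I would first record the following principle: for any $R$-module $M$, the short exact sequence $0 \to M \to E(M) \to E(M)/M \to 0$ is pure if and only if $M$ is $\mathsf{FP}_1$-injective (equivalently, absolutely pure). Granting this, the corollary becomes the assertion that $R$ is von Neumann regular if and only if every nonzero (principal) ideal $I$ is $\mathsf{FP}_1$-injective, which is precisely Proposition~\ref{prop:von Neumann regula-fp-inj}.

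For the forward implication I would invoke the stated characterization that $R$ is von Neumann regular if and only if every short exact sequence of $R$-modules is pure; in that case the displayed sequence is pure for every ideal $I$, whether principal or not, so both forms of the hypothesis hold at once.

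For the converse I would prove the principle above by a direct computation. Assume $0 \to I \to E(I) \to E(I)/I \to 0$ is pure, fix a finitely presented module $F$, and apply $\Hom_R(F,-)$. The resulting long exact sequence reads
\[
\Hom_R(F, E(I)) \longrightarrow \Hom_R(F, E(I)/I) \longrightarrow \Ext^1_R(F, I) \longrightarrow \Ext^1_R(F, E(I)) = 0,
\]
where the last term vanishes because $E(I)$ is injective. Purity of the sequence means exactly that the first map is surjective, which forces $\Ext^1_R(F,I) = 0$. As $F$ was an arbitrary finitely presented module, $I$ is $\mathsf{FP}_1$-injective; applying Proposition~\ref{prop:von Neumann regula-fp-inj} (using only the principal version of the hypothesis) yields that $R$ is von Neumann regular.

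I do not expect a serious obstacle: the only nontrivial ingredient is the equivalence between purity in the injective hull and $\mathsf{FP}_1$-injectivity, and its harder direction is exactly the $\Hom_R(F,-)$ computation above, while the easy direction is already subsumed by the ``every sequence is pure'' characterization. The one bookkeeping point is to confirm that the parenthetical ``(principal)'' is consistent: the forward direction produces purity for all nonzero ideals, whereas the converse uses the hypothesis only for nonzero principal ideals, matching clause~(3) of Proposition~\ref{prop:von Neumann regula-fp-inj}.
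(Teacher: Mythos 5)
Your proposal is correct and follows exactly the route the paper intends: the corollary is stated without proof as an immediate consequence of Proposition~\ref{prop:von Neumann regula-fp-inj} together with the standard equivalence between purity of $0 \to M \to E(M) \to E(M)/M \to 0$ and $\mathsf{FP}_1$-injectivity of $M$, and your $\Hom_R(F,-)$ computation is the right way to verify the nontrivial direction of that equivalence. The bookkeeping on the parenthetical ``(principal)'' is also handled correctly.
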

\qed

For any ring \( R \), the pair \( [\mathsf{FP}_1\text{-}\mathsf{Proj}(R), \mathsf{FP}_1\text{-}\mathsf{Inj}(R)] \) forms a complete cotorsion pair. In particular, every \( R \)-module admits a special \( \mathsf{FP}_1 \)-injective preenvelope and a special \( \mathsf{FP}_1 \)-projective precover; that is, any \( R \)-module can be embedded into an \( \mathsf{FP}_1 \)-injective module with cokernel in \( \mathsf{FP}_1 \)-projectives, and can also be expressed as a quotient of an \( \mathsf{FP}_1 \)-projective module by an \( \mathsf{FP}_1 \)-injective submodule. Moreover, this cotorsion pair is hereditary if and only if \( R \) is a left coherent ring; that is, \( \Ext^i_R(A, B) = 0 \) for all \( i \geq 1 \), whenever \( A \in \mathsf{FP}_1\text{-}\mathsf{Proj}(R) \) and \( B \in \mathsf{FP}_1\text{-}\mathsf{Inj}(R) \).  This follows from \cite[Corollary 4.3]{MD4} together with \cite[Lemma 2.2.10]{Trlifaj}.

We now present a characterization of von Neumann regular rings in terms of the \( \mathsf{FP}_1 \)-injectivity of a specific family of modules constructed from ideals, as shown in the following result.

\begin{theorem}
Let \( R \) be a left coherent ring. For each (principal) ideal \( I \subseteq R \), consider a short exact sequence
$$0 \longrightarrow A_I \longrightarrow B_I \longrightarrow I \longrightarrow 0$$
in \( R\text{-Mod} \), where \( A_I \) is \( \mathsf{FP}_1 \)-injective and \( B_I \) is \( \mathsf{FP}_1 \)-projective. Define the set \( \mathcal{S} := \{ B_I \}_{I \subseteq R} \). Then \( R \) is von Neumann regular if and only if \( \mathcal{S} \subseteq \mathsf{FP}_1\text{-}\mathsf{Inj}(R) \).
\end{theorem}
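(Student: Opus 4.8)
The plan is to treat the two implications separately: the forward direction is immediate from the module-theoretic characterization of von Neumann regularity, while the reverse direction is where all the content lies, and it rests entirely on the heredity of the cotorsion pair $[\mathsf{FP}_1\text{-}\mathsf{Proj}(R), \mathsf{FP}_1\text{-}\mathsf{Inj}(R)]$ guaranteed by coherence.

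For the forward implication, suppose $R$ is von Neumann regular. Then every $R$-module is $\mathsf{FP}_1$-injective; in particular each $B_I$ lies in $\mathsf{FP}_1\text{-}\mathsf{Inj}(R)$, so $\mathcal{S} \subseteq \mathsf{FP}_1\text{-}\mathsf{Inj}(R)$ with nothing further to verify.

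For the reverse implication, I would reduce to Proposition~\ref{prop:von Neumann regula-fp-inj}: it suffices to show that every nonzero principal ideal $I \subseteq R$ is $\mathsf{FP}_1$-injective. Fix such an $I$ and consider the given short exact sequence
\[
0 \longrightarrow A_I \longrightarrow B_I \longrightarrow I \longrightarrow 0,
\]
in which $A_I \in \mathsf{FP}_1\text{-}\mathsf{Inj}(R)$ by construction and $B_I \in \mathsf{FP}_1\text{-}\mathsf{Inj}(R)$ by the hypothesis $\mathcal{S} \subseteq \mathsf{FP}_1\text{-}\mathsf{Inj}(R)$. The key step is to invoke coherence: since $R$ is left coherent, the cotorsion pair $[\mathsf{FP}_1\text{-}\mathsf{Proj}(R), \mathsf{FP}_1\text{-}\mathsf{Inj}(R)]$ is hereditary, so its right-hand class $\mathsf{FP}_1\text{-}\mathsf{Inj}(R)$ is coresolving, and in particular closed under cokernels of monomorphisms. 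Applying this closure to the monomorphism $A_I \hookrightarrow B_I$, whose cokernel is $I$, yields $I \in \mathsf{FP}_1\text{-}\mathsf{Inj}(R)$. By Proposition~\ref{prop:von Neumann regula-fp-inj}, $R$ is then von Neumann regular.

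Alternatively, I could avoid quoting the coresolving property and argue directly with $\Ext$: applying $\Hom_R(F, -)$ for $F \in \mathsf{FP}_1(R)$ to the sequence produces the exact piece $\Ext^1_R(F, B_I) \to \Ext^1_R(F, I) \to \Ext^2_R(F, A_I)$, where the left term vanishes because $B_I$ is $\mathsf{FP}_1$-injective, and the right term vanishes because coherence forces $\mathsf{FP}_1(R) = \mathsf{FP}_\infty(R)$, so a dimension shift $\Ext^2_R(F, A_I) \cong \Ext^1_R(\Omega F, A_I)$ with $\Omega F \in \mathsf{FP}_1(R)$ reduces to the $\mathsf{FP}_1$-injectivity of $A_I$. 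The only genuine input either way is coherence: it is precisely what upgrades the $\Ext^1$-vanishing recorded in the $\mathsf{FP}_1$-injectivity of $A_I$ to the higher vanishing needed to transport $\mathsf{FP}_1$-injectivity across the quotient $B_I/A_I$. I therefore expect no technical obstacle; the subtlety is conceptual, namely recognizing that the substance of the theorem is exactly the coresolving (equivalently, hereditary) behavior supplied by the coherence hypothesis, without which the closure step fails.
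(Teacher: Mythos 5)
Your proposal is correct and follows essentially the same route as the paper: the paper's proof is exactly your ``alternative'' version, applying $\Hom_R(K,-)$ to the sequence $0 \to A_I \to B_I \to I \to 0$ and using $\Ext^1_R(K,A_I)=\Ext^2_R(K,A_I)=0$ (the latter being the hereditary/coresolving consequence of coherence you correctly isolate) to conclude $\Ext^1_R(K,I)\cong\Ext^1_R(K,B_I)=0$, then invoking Proposition~\ref{prop:von Neumann regula-fp-inj}. Your abstract reformulation via closure of $\mathsf{FP}_1\text{-}\mathsf{Inj}(R)$ under cokernels of monomorphisms is just a repackaging of the same step, so there is nothing to add.
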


\begin{proof}
Assume that \( \mathcal{S} \subseteq \mathsf{FP}_1\text{-}\mathsf{Inj}(R) \). Fix a (principal) ideal \( I \subseteq R \) and consider the short exact sequence $0 \rightarrow A_I \rightarrow B_I \rightarrow I \rightarrow 0,$
with \( A_I \in \mathsf{FP}_1\text{-}\mathsf{Inj}(R) \) and \( B_I \in \mathsf{FP}_1\text{-}\mathsf{Proj}(R) \). For any finitely presented \( R \)-module \( K \), the induced long exact sequence in \(\Ext\) gives
$$\cdots \longrightarrow \Ext^1_R(K, A_I) \longrightarrow \Ext^1_R(K, B_I) \longrightarrow \Ext^1_R(K, I) \longrightarrow \Ext^2_R(K, A_I) \longrightarrow \cdots.$$

Since \( A_I \) is \( \mathsf{FP}_1 \)-injective, \(\Ext^1_R(K, A_I) = \Ext^2_R(K, A_I) = 0\), so that
$$\Ext^1_R(K, B_I) \cong \Ext^1_R(K, I).$$

Now, \( B_I \in \mathcal{S} \subseteq \mathsf{FP}_1\text{-}\mathsf{Inj}(R) \) implies \(\Ext^1_R(K, B_I) = 0\), and hence \(\Ext^1_R(K, I) = 0\). Therefore, every (principal) ideal \( I \subseteq R \) is \( \mathsf{FP}_1 \)-injective, and \( R \) is von Neumann regular by Proposition \ref{prop:von Neumann regula-fp-inj}. The converse is immediate.
\end{proof}

\begin{prop} \label{cor:Semisimple}
Let \( R \) be a ring. The following statements are equivalent:
\begin{enumerate}
    \item \( R \) is left Noetherian and von Neumann regular.
    \item Every \( R \)-module is \( \mathsf{FP}_1 \)-projective and flat.
    \item \( R \) is semisimple Artinian.
\end{enumerate}
\end{prop}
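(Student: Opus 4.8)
The plan is to route all implications through condition (3), relying on the two standing facts recalled in this section: that $R$ is von Neumann regular if and only if every $R$-module is $\mathsf{FP}_1$-injective, together with the classical fact that $R$ is von Neumann regular precisely when every $R$-module is flat. The implications out of (3) are immediate. A semisimple Artinian ring has every module projective, and a projective module is both flat and $\mathsf{FP}_1$-projective (the latter because $\Ext^1_R(P,-)=0$ for projective $P$), which gives $(3)\Rightarrow(2)$. Likewise a semisimple Artinian ring is left Noetherian and, having every module flat, is von Neumann regular, which gives $(3)\Rightarrow(1)$.

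The crux is the reverse passage to (3), and the key observation is a \emph{collapse}: over a von Neumann regular ring every module is $\mathsf{FP}_1$-injective, so the defining condition of $\mathsf{FP}_1$-projectivity, namely $\Ext^1_R(P,M)=0$ for all $\mathsf{FP}_1$-injective $M$, becomes $\Ext^1_R(P,M)=0$ for \emph{every} $R$-module $M$; that is, $\mathsf{FP}_1\text{-}\mathsf{Proj}(R)=\mathsf{Proj}(R)$. For $(2)\Rightarrow(3)$ I would then argue as follows: since every module is flat, $R$ is von Neumann regular; by the collapse just described $\mathsf{FP}_1$-projective coincides with projective, so the hypothesis that every module is $\mathsf{FP}_1$-projective forces every $R$-module to be projective, whence $R$ is semisimple Artinian.

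For $(1)\Rightarrow(3)$ I would invoke the classical structure theory of von Neumann regular rings: every finitely generated left ideal is generated by an idempotent and is therefore a direct summand of $R$. Adjoining the left Noetherian hypothesis makes every left ideal finitely generated, hence a direct summand, so $R$ is semisimple as a left module over itself, i.e.\ semisimple Artinian. The main obstacle --- in fact the only nonroutine point --- is the collapse identity $\mathsf{FP}_1\text{-}\mathsf{Proj}(R)=\mathsf{Proj}(R)$ valid over von Neumann regular rings; once this is established, the remaining steps are standard module-theoretic facts about semisimple and von Neumann regular rings, and the equivalences $(1)\Leftrightarrow(3)$ and $(2)\Leftrightarrow(3)$ close up the argument.
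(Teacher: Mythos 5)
Your proof is correct, but it takes a genuinely different route from the paper's. The paper argues in a cycle $(1)\Rightarrow(2)\Rightarrow(3)\Rightarrow(1)$: for $(1)\Rightarrow(2)$ it uses that over a left Noetherian ring the $\mathsf{FP}_1$-injective modules are exactly the injectives (so every module is $\mathsf{FP}_1$-projective), and for the hard step $(2)\Rightarrow(3)$ it first deduces that $R$ is left Noetherian from the hypothesis that every module is $\mathsf{FP}_1$-projective (a theorem of Mao--Ding), then funnels the argument through $\mathsf{f}$-projectivity via Proposition~\ref{prop: f-projective} and an external result of Amzil et al.\ asserting that a ring over which every module is $\mathsf{f}$-projective is semisimple Artinian. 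You instead make $(3)$ the hub and isolate the collapse $\mathsf{FP}_1\text{-}\mathsf{Proj}(R)=\mathsf{Proj}(R)$ over von Neumann regular rings, which is immediate from the fact that every module is then $\mathsf{FP}_1$-injective; combined with ``every module flat $\Rightarrow$ von Neumann regular'' this reduces $(2)\Rightarrow(3)$ to the classical characterization of semisimple rings as those over which every module is projective, and your $(1)\Rightarrow(3)$ is the standard observation that a Noetherian von Neumann regular ring has every left ideal a direct summand. Your argument is more elementary and self-contained --- it needs neither the $\mathsf{f}$-projective machinery nor the Mao--Ding and Amzil citations --- while the paper's version has the side benefit of exercising Proposition~\ref{prop: f-projective} and connecting the statement to the $\mathsf{f}$-projectivity theme of that section. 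Both are complete proofs.
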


\begin{proof}
\((1) \Rightarrow (2):\) 
If \( R \) is left Noetherian, then the classes of injective and \( \mathsf{FP}_1 \)-injective modules coincide, hence every \( R \)-module is \( \mathsf{FP}_1 \)-projective. On the other hand, over a von Neumann regular ring, every module is flat. Therefore, every \( R \)-module is both \( \mathsf{FP}_1 \)-projective and flat.

\((2) \Rightarrow (3):\) Suppose that every \( R \)-module is \( \mathsf{FP}_1 \)-projective and flat. Then \( R \) is left Noetherian, and hence left coherent. Then Proposition \ref{prop: f-projective} shows that every \( R \)-module is \( \mathsf{f} \)-projective. By \cite[Corollary 2.23]{Amzil}, it follows that \( R \) is semisimple Artinian.

\((3) \Rightarrow (1):\) 
It is clear.
\end{proof}

The following result  follows from the previous proposition and \cite[Corollary 4.7]{PR04}.

\begin{cor}\label{cor:S-rings}
    Let \( R \) be a von Neumann regular ring. Then \( R \) is left Noetherian if and only if \( R \) is left \( \mathsf{S} \)-ring.
\end{cor}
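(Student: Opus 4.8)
The plan is to combine the equivalence between left Noetherianness and semisimplicity for von Neumann regular rings — supplied by Proposition~\ref{cor:Semisimple} — with a characterization of the left $\mathsf{S}$-ring condition in the von Neumann regular setting. One implication is essentially free, and all the work sits in the converse.

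For the forward direction I would simply recall that a left Noetherian ring is always a left $\mathsf{S}$-ring, independently of von Neumann regularity: over a left Noetherian ring every finitely generated module is finitely presented, and a finitely presented flat module is automatically projective. This is exactly the fact already recorded in the discussion preceding the statement (see \cite{PR04}), so no new argument is required here.

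The substantive direction is the converse, and the key observation is that over a von Neumann regular ring \emph{every} module is flat. Consequently the left $\mathsf{S}$-ring hypothesis — that every finitely generated flat module is projective — forces every finitely generated $R$-module to be projective. Applying this to the cyclic quotient $R/I$ for an arbitrary left ideal $I$, the short exact sequence $0 \to I \to R \to R/I \to 0$ splits, so that $I$ is a direct summand of ${}_R R$. Since every left ideal is then a direct summand, ${}_R R$ is a semisimple module and $R$ is semisimple Artinian; alternatively, one may invoke \cite[Corollary~4.7]{PR04} to reach semisimplicity directly. Finally, the implication $(3)\Rightarrow(1)$ of Proposition~\ref{cor:Semisimple} upgrades semisimple Artinian to left Noetherian, closing the equivalence.

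The main obstacle I anticipate is precisely the step of the converse that turns the $\mathsf{S}$-ring condition into semisimplicity: everything hinges on the collapse of ``finitely generated flat'' to ``finitely generated'' under von Neumann regularity, after which the splitting of every ideal follows immediately. The only real care needed is bookkeeping — applying Proposition~\ref{cor:Semisimple} and \cite[Corollary~4.7]{PR04} in the correct directions so that no circularity arises among the notions ``left Noetherian'', ``semisimple Artinian'', and ``left $\mathsf{S}$-ring''.
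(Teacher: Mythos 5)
Your proposal is correct and follows essentially the same route as the paper, which simply cites Proposition~\ref{cor:Semisimple} together with \cite[Corollary~4.7]{PR04}; your converse direction is just a self-contained unpacking of that cited result (every module over a von Neumann regular ring is flat, so the $\mathsf{S}$-ring condition makes every cyclic module projective, every left ideal a direct summand, and hence $R$ semisimple Artinian, whence left Noetherian). The forward direction is the standard fact, already recorded in the paper's preceding discussion, that left Noetherian rings are left $\mathsf{S}$-rings.
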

\qed

Recall that a ring \( R \) is called left \newterm{semi-Artinian} if every nonzero cyclic \( R \)-module has a nonzero socle.

\begin{cor}
Let \( R \) be a ring. Then the following are equivalent:
\begin{enumerate}
    \item \( R \) is semisimple Artinian.
    \item \( R \) is left semi-Artinian, von Neumann regular, and every simple \( R \)-module is finitely presented.
\end{enumerate}
\end{cor}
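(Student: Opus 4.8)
The plan is to prove the two implications separately, with $(1)\Rightarrow(2)$ routine and the content concentrated in $(2)\Rightarrow(1)$. For $(1)\Rightarrow(2)$ I would argue that over a semisimple Artinian ring every module is projective and injective; in particular every module is $\mathsf{FP}_1$-injective, so $R$ is von Neumann regular by the characterization recalled before \cref{prop:von Neumann regula-fp-inj}. Since every module is a direct sum of simple modules, every nonzero cyclic module has nonzero socle, giving the semi-Artinian condition. Finally, each simple module is cyclic and projective, hence finitely generated projective and in particular finitely presented.

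For $(2)\Rightarrow(1)$ the first key step is to upgrade the finite presentation of simple modules to genuine projectivity. Since $R$ is von Neumann regular, every $R$-module is flat; as a finitely presented flat module is projective over any ring, every finitely presented $R$-module is finitely generated projective. By hypothesis every simple $R$-module is finitely presented, so \emph{every simple $R$-module is projective}. This is the decisive way in which the three hypotheses interact.

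The second step is to show $\mathsf{soc}({}_R R) = R$. Writing $S = \mathsf{soc}({}_R R)$, suppose $S \neq R$. Then $R/S$ is a nonzero cyclic module, so by semi-Artinianity it has nonzero socle; choosing a simple submodule and pulling back along $R \to R/S$ yields a left ideal $S \subsetneq L \subseteq R$ with $L/S$ simple. As $L/S$ is projective, the sequence $0 \to S \to L \to L/S \to 0$ splits, producing a simple direct summand $C \cong L/S$ of $L$. But $C$ is then a nonzero simple \emph{submodule of $R$}, whence $C \subseteq \mathsf{soc}({}_R R) = S$, contradicting $C \cap S = 0$. Therefore $R = \mathsf{soc}({}_R R)$ is semisimple as a left module over itself; since $1$ lies in a finite subsum of simple left ideals, $R$ is a finite direct sum of simple left ideals and hence semisimple Artinian. (Equivalently, once ${}_R R$ is seen to be semisimple, or once $R$ is shown to be left Noetherian, one may invoke \cref{cor:Semisimple}.)

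The main obstacle I anticipate is conceptual rather than computational: recognizing that the hypothesis ``every simple module is finitely presented'' serves precisely to force the simple modules to be projective in the von Neumann regular setting, and that projectivity of the simples is exactly what collapses the Loewy length of $R$ to one through the splitting argument above. One must also take care to confirm that the summand $C$ is a submodule of $R$ itself, not merely of the quotient $R/S$, so that it is genuinely constrained to lie in $\mathsf{soc}({}_R R)$; this is what delivers the contradiction.
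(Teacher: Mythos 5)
Your argument is correct, and it takes a genuinely different route from the paper. The paper disposes of the corollary in one line by combining \cref{cor:Semisimple} (semisimple Artinian $=$ left Noetherian $+$ von Neumann regular) with an external result of Mao--Ding, whereas you give a self-contained Loewy-length argument: von Neumann regularity makes every module flat, so the finitely presented simple modules are projective, and then the splitting $L = S \oplus C$ forces $\mathsf{soc}({}_RR) = R$, since any simple complement $C$ would have to sit inside the socle it was chosen to avoid. Your identification of the role of the finite-presentation hypothesis --- it exists solely to upgrade simples from flat to projective --- is exactly the right conceptual reading, and your care in checking that $C$ is a submodule of $R$ itself (not merely of $R/S$) is the step on which the contradiction genuinely depends. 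What your approach buys is transparency and independence from the literature; what the paper's citation buys is brevity and the reuse of \cref{cor:Semisimple}, which it has already established. One could shorten your write-up slightly by noting that once every simple is projective and $R$ is left semi-Artinian, the conclusion is a standard characterization of semisimplicity, but the explicit splitting argument is worth keeping as it is the only nontrivial content.
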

\begin{proof}
Follows from Proposition~\ref{cor:Semisimple}  and \cite[Corollary~3.11]{MD6}.
\end{proof}

\section{Regularity and Regular Coherence}\label{sec: regularidad}

\begin{dfn}\label{def:regu}
    Let $R$ be a ring. We say that:
    \begin{enumerate}
     \item \( R \)  is \newterm{left regular} if every finitely \( \infty \)-presented \( R \)-module has finite projective dimension.
     \item  \( R \) is \newterm{left uniformly regular} if there exists an integer \( k \geq 0 \) such that every finitely \( \infty \)-presented \( R \)-module has projective dimension at most \( k \).
    
    \item \( R \) is \newterm{left Bertin-regular} if every finitely generated left ideal of \( R \) has finite projective dimension, see \cite{Bertin71}.
    \end{enumerate}
\end{dfn}

For a ring $R$, we denote by $\mathcal{F}_k(R)$ (resp. $\mathcal{P}_k(R)$) the class of $R$-modules of flat (resp. projective) dimension at most $k$.

\begin{prop}\label{prop: regula flat proj}
    Let \( R \) be a ring. Then:  
    \begin{enumerate}
        \item \( R \) is left regular if and only if every finitely \(\infty\)-presented \( R \)-module has finite flat dimension.  
        \item \( R \) is left uniformly regular if and only if there exists an integer \( k \geq 0 \) such that  $
        \mathsf{FP}_{\infty}(R) \subseteq \mathcal{F}_k(R).$
        In the commutative local case, this is equivalent to  
        $ \Tor_k^R(M, R/\mathsf{m}) = 0$ 
        for every finitely \(\infty\)-presented \( R \)-module \( M \), where \( \mathsf{m} \) is the maximal ideal of \( R \).
  \end{enumerate}
\end{prop}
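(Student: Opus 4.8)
The plan is to prove each equivalence by reducing the characterization of regularity (stated in terms of projective dimension) to the corresponding statement for flat dimension, exploiting the fact that $\mathsf{FP}_\infty(R)$ consists of modules admitting resolutions by finitely generated projectives. The crucial observation throughout is that for a module $M \in \mathsf{FP}_\infty(R)$, one always has $\fd_R(M) \le \pd_R(M)$, and the nontrivial direction requires promoting finite flat dimension to finite projective dimension. The key tool is that finitely $\infty$-presented modules have syzygies that again lie in $\mathsf{FP}_\infty(R)$ (since this class is resolving and closed under kernels of epimorphisms between its objects, by the $2$-out-of-$3$ property recorded in Section~\ref{s:preliminaries}); combined with the classical fact that a finitely presented flat module is projective, this lets us convert flatness into projectivity after passing to a suitable syzygy.

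\textbf{Part (1).} The forward implication is immediate since $\fd_R(M) \le \pd_R(M)$ for every module. For the converse, suppose every $M \in \mathsf{FP}_\infty(R)$ has finite flat dimension, say $\fd_R(M) = k$. I would take a resolution of $M$ by finitely generated projectives and let $N$ be the $k$-th syzygy; then $N \in \mathsf{FP}_\infty(R)$ as well, since $\mathsf{FP}_\infty(R)$ is closed under kernels of epimorphisms between its objects (this follows from the $2$-out-of-$3$ property, as each term of the resolution lies in $\mathsf{proj}(R) \subseteq \mathsf{FP}_\infty(R)$). By the standard dimension-shifting argument, $N$ is flat; but $N$ is also finitely presented, hence finitely presented and flat, and therefore projective. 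This shows $\pd_R(M) \le k < \infty$, establishing left regularity.

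\textbf{Part (2), first equivalence.} The same syzygy argument applies uniformly: if there is a single bound $k$ with $\mathsf{FP}_\infty(R) \subseteq \Fcal_k(R)$, then for each $M \in \mathsf{FP}_\infty(R)$ the $k$-th syzygy $N$ is simultaneously finitely presented and flat, hence projective, so $\pd_R(M) \le k$ with the same uniform bound; the reverse inclusion $\Pcal_k(R) \supseteq \mathsf{FP}_\infty(R)$ trivially forces $\mathsf{FP}_\infty(R) \subseteq \Fcal_k(R)$. For the commutative local characterization, I would use the local criterion for flatness: over a commutative Noetherian—or more generally a local—ring, a finitely presented module $M$ has $\fd_R(M) \le k$ if and only if $\Tor^R_{k+1}(M, R/\mm) = 0$, and since $\mathsf{FP}_\infty(R)$ modules are in particular finitely presented, the vanishing of $\Tor^R_k(M, R/\mm)$ for the appropriate syzygy (equivalently, the stated vanishing after re-indexing) detects the uniform flat bound. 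Concretely, $\Tor_k^R(M, R/\mm) = 0$ for all $M \in \mathsf{FP}_\infty(R)$ forces every $k$-th syzygy to be free over the local ring, giving the uniform bound.

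\textbf{The main obstacle} I anticipate is the commutative local case, specifically justifying that the single residue-field $\Tor$ condition $\Tor_k^R(M, R/\mm) = 0$ suffices to bound the flat dimension uniformly by $k$ across \emph{all} of $\mathsf{FP}_\infty(R)$, rather than merely bounding each module individually. The subtlety is that the local flatness criterion in its cleanest form requires either Noetherianity or finite presentation together with a Mittag-Leffler/coherence-type hypothesis to guarantee that vanishing of a single $\Tor$ with the residue field controls all higher $\Tor$'s; I would need to invoke that finitely $\infty$-presented modules have finitely generated syzygies at every stage, so that Nakayama's lemma applies to the syzygy and upgrades $\Tor_k^R(M, R/\mm) = 0$ to freeness of the $k$-th syzygy. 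The rest of the argument is routine dimension-shifting once the passage from flat to projective via the ``finitely presented $+$ flat $\Rightarrow$ projective'' principle is in place.
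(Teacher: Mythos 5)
Your proposal is correct and follows essentially the same route as the paper: the paper's proof also reduces everything to showing $\fd_R(M)=\pd_R(M)$ for $M\in\mathsf{FP}_\infty(R)$ by taking a resolution by finitely generated projectives, noting the syzygies stay in $\mathsf{FP}_\infty(R)$, and upgrading the $n$-th syzygy from finitely presented and flat to projective. Your additional care about the commutative local case (finite presentation of the syzygy plus Nakayama to pass from $\Tor$-vanishing against $R/\mm$ to freeness) is exactly the standard argument the paper leaves implicit, so there is no gap.
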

\begin{proof}
It suffices to show that \( \fd_R(M)=\pd_R(M) \) for every finitely 
\(\infty\)-presented \(R\)-module. Let \(M\) be a finitely \(\infty\)-presented \(R\)-module, and consider a projective resolution 
$$\cdots \longrightarrow P_n \longrightarrow P_{n-1} \longrightarrow \cdots \longrightarrow P_1 \longrightarrow P_0 \longrightarrow M \longrightarrow 0,$$
where each \(P_i\) is finitely generated projective and each syzygy 
\(K_i=\ker(f_{i-1})\) is finitely $\infty$-presented. If \(\fd_R(M)=n<\infty\), then \(K_n\) is finitely presented and flat, hence projective. Thus \(\pd_R(M)\leq n\).
\end{proof}

Classical examples of uniformly regular, regular, or Bertin-regular rings appear among several well-known families. For instance, semihereditary and von Neumann regular rings are uniformly regular, while rings of finite weak dimension are regular.

\begin{prop}\label{prop:regular 0}
Let \( R \) be a ring. Then \( R \) is left regular if and only if every finitely \( \infty \)-presented submodule of a projective \( R \)-module has finite projective dimension.
\end{prop}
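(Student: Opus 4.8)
The statement is an equivalence, and one direction is immediate: if $R$ is left regular, then every finitely $\infty$-presented module has finite projective dimension, and in particular so does any finitely $\infty$-presented submodule of a projective module, since such a submodule is itself finitely $\infty$-presented. So the content lies entirely in the converse, and the plan is to show that the hypothesis about submodules of projectives suffices to control all of $\mathsf{FP}_\infty(R)$.

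For the nontrivial direction, let $M$ be an arbitrary finitely $\infty$-presented $R$-module. The natural move is to build a projective resolution by finitely generated projectives, exactly as in the proof of \Cref{prop: regula flat proj}:
\[
\cdots \longrightarrow P_1 \longrightarrow P_0 \longrightarrow M \longrightarrow 0,
\]
with each $P_i \in \mathsf{proj}(R)$ and each syzygy $K_i = \ker(P_{i-1} \to P_{i-2})$ again finitely $\infty$-presented (this is the standard stability of $\mathsf{FP}_\infty$ under syzygies, consistent with the closure properties recalled in \Cref{s:preliminaries}). The key observation is that each syzygy $K_i$ sits as a submodule of the finitely generated projective module $P_i$ via the inclusion $K_i \hookrightarrow P_i$, while simultaneously being finitely $\infty$-presented. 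Thus $K_i$ is a finitely $\infty$-presented submodule of a projective module, so the hypothesis applies directly to it: $\pd_R(K_i) < \infty$ for every $i$.

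It then remains to deduce $\pd_R(M) < \infty$ from the finiteness of the projective dimension of its syzygies. I would use the first syzygy $K_0 = \ker(P_0 \to M)$: from the short exact sequence $0 \to K_0 \to P_0 \to M \to 0$ with $P_0$ projective, dimension shifting gives $\pd_R(M) \leq \pd_R(K_0) + 1$. Since $K_0$ is a finitely $\infty$-presented submodule of the projective module $P_0$, the hypothesis yields $\pd_R(K_0) < \infty$, and hence $\pd_R(M) < \infty$. This shows every finitely $\infty$-presented module has finite projective dimension, i.e. $R$ is left regular.

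I do not anticipate a genuine obstacle here; the argument is essentially a one-step syzygy reduction. The only point requiring mild care is the assertion that the syzygy $K_0$ is again finitely $\infty$-presented: this follows because $M \in \mathsf{FP}_\infty(R)$ admits a resolution by finitely generated projectives, so $P_0$ may be taken finitely generated projective and $K_0$ is then the image of $P_1 \to P_0$, hence finitely $\infty$-presented. Once that is in place, the single application of the hypothesis to $K_0$ closes the argument, so the proof is short and the main reduction is simply recognizing the first syzygy as the relevant submodule of a projective.
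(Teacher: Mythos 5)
Your argument is correct and coincides with the paper's own proof: the paper simply writes $M = F/N$ with $F$ finitely generated projective and $N \in \mathsf{FP}_\infty(R)$, which is exactly your first-syzygy reduction, and the forward direction is immediate in both. The only cosmetic difference is that you set up the full resolution and observe the hypothesis applies to every syzygy $K_i$, whereas only the single application to $K_0$ is needed.
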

\begin{proof}
Since any \( M \in \mathsf{FP}_\infty(R) \) can be expressed as a quotient \( F/N \), where \( F \) is a finitely generated projective \( R \)-module and \( N \in \mathsf{FP}_\infty(R) \), the result follows.
\end{proof}

Inspired by \cite[Theorem 1.7]{Costa}, we can characterize uniform regularity in terms of Ext vanishing between finitely \(\infty\)-presented modules.

\begin{theorem}
    Let \( R \) be a ring. Then \( R \) is left uniformly regular if and only if there exists \( k \geq 0 \) such that \(\Ext_R^{k+1}(M, N) = 0\) for all  finitely $\infty$-presented \( R \)-modules \( M \) and \( N \).
\end{theorem}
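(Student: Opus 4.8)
The plan is to prove the forward implication directly from the definition of projective dimension, and to establish the converse by a dimension-shifting argument whose decisive feature is feeding a syzygy back into the second argument of $\Ext$.

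For the implication $(\Rightarrow)$, suppose $R$ is left uniformly regular with bound $k$, so that $\pd_R(M) \leq k$ for every $M \in \mathsf{FP}_\infty(R)$. Then $\Ext^{k+1}_R(M,-)$ vanishes on the entire module category, and in particular $\Ext^{k+1}_R(M,N) = 0$ for all $M, N \in \mathsf{FP}_\infty(R)$. This direction requires no further work.

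For the converse $(\Leftarrow)$, I would fix an arbitrary $M \in \mathsf{FP}_\infty(R)$ and choose a resolution $\cdots \to P_1 \to P_0 \to M \to 0$ by finitely generated projective modules. Writing $K_i$ for the $i$-th syzygy, each short exact sequence $0 \to K_{i+1} \to P_i \to K_i \to 0$, combined with the exact-class (2-out-of-3) property of $\mathsf{FP}_\infty(R)$ recalled in Section~\ref{s:preliminaries}, shows inductively that every $K_i$ again lies in $\mathsf{FP}_\infty(R)$. Standard dimension shifting along this resolution then yields $\Ext^{k+1}_R(M,N) \cong \Ext^1_R(K_k,N)$ for every module $N$. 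The crucial step is to apply the hypothesis with the specific choice $N = K_{k+1}$: since both $K_k$ and $K_{k+1}$ belong to $\mathsf{FP}_\infty(R)$, the hypothesis forces $\Ext^1_R(K_k, K_{k+1}) = 0$, so the defining extension $0 \to K_{k+1} \to P_k \to K_k \to 0$ splits. Hence $K_k$ is a direct summand of the projective module $P_k$ and is therefore itself projective, whence $\pd_R(M) \leq k$. Because the bound $k$ does not depend on $M$, the ring $R$ is left uniformly regular.

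The main conceptual obstacle is recognizing the self-referential choice $N = K_{k+1}$: vanishing of $\Ext^1_R(K_k, N)$ for an arbitrary $N \in \mathsf{FP}_\infty(R)$ does not by itself force $K_k$ to be projective, but feeding in the very next syzygy splits the canonical presentation $0 \to K_{k+1} \to P_k \to K_k \to 0$ of $K_k$. Everything else—the closure of $\mathsf{FP}_\infty(R)$ under syzygies and the routine dimension shift—is standard, and it is precisely the closure property established earlier that guarantees all the relevant syzygies stay within the class on which the $\Ext$-hypothesis is assumed.
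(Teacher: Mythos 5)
Your proof is correct and follows essentially the same route as the paper: the paper handles $k=0$ by splitting $0 \to K \to F \to M \to 0$ via the hypothesis applied to the pair $(M,K)$ and then invokes dimension shifting and induction for general $k$, which is exactly the argument you carry out in full by shifting to $\Ext^1_R(K_k,-)$ and choosing $N=K_{k+1}$. Your write-up is in fact more complete than the paper's sketch (which also contains a typo, writing $\Ext^1_R(K,M)$ where $\Ext^1_R(M,K)$ is meant), but there is no substantive difference in method.
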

\begin{proof}
    We only need to prove that if \( \Ext_R^{k+1}(M, N) = 0 \) for all finitely \(\infty\)-presented modules \( N \), then \( \pd_R(M) \leq k \) for every finitely \(\infty\)-presented module \( M \). For \( k = 0 \), take a short exact sequence \( 0 \to K \to F \to M \to 0 \), where \( F \) is finitely generated and projective, and \( K \) is finitely \(\infty\)-presented. Since \( \Ext_R^1(K, M) = 0 \), the sequence splits and \( M \) is projective.  For general \( k \), apply dimension shifting and argue by induction.
\end{proof}

The following proposition is well known.

\begin{prop}
Let \( R \) be a ring. The following conditions are equivalent:
\begin{enumerate}
    \item Every ideal of \( R \) has finite projective dimension.
    \item Every cyclic \( R \)-module has finite projective dimension.
    \item Every finitely generated \( R \)-module has finite projective dimension.
\end{enumerate}
Moreover, if \( R \) is left Noetherian, then the above are also equivalent to:
\begin{enumerate}
    \setcounter{enumi}{3}
    \item Every finitely generated ideal of \( R \) has finite projective dimension (i.e., \( R \) is left Bertin-regular).
\end{enumerate}
\end{prop}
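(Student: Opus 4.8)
The plan is to prove the chain of equivalences $(1)\Leftrightarrow(2)\Leftrightarrow(3)$ for an arbitrary ring, and then the additional equivalence with $(4)$ under the left Noetherian hypothesis. The implications $(3)\Rightarrow(2)\Rightarrow(1)$ are trivial, since cyclic modules are finitely generated and ideals are cyclic up to the distinction between left ideals and quotients $R/I$; more precisely, $(1)$ should be read as a statement about all left ideals $I$, and every cyclic module is of the form $R/I$, so I would first note that $\pd_R(I)$ and $\pd_R(R/I)$ differ by exactly one whenever both are finite (from the short exact sequence $0\to I\to R\to R/I\to 0$), which makes $(1)$ and $(2)$ interchangeable. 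The content is therefore in $(1)\Rightarrow(3)$, i.e.\ passing from ideals to arbitrary finitely generated modules.

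For $(1)\Rightarrow(3)$ I would argue by induction on the number of generators of a finitely generated module $M$. The base case is $M$ cyclic, which is $(2)$. For the inductive step, write $M$ as generated by $m$ elements and let $M'\subseteq M$ be the submodule generated by the first $m-1$ of them, so that there is a short exact sequence $0\to M'\to M\to M/M'\to 0$ with $M/M'$ cyclic and $M'$ generated by $m-1$ elements. By the inductive hypothesis $\pd_R(M')<\infty$, and $\pd_R(M/M')<\infty$ by $(2)$; finiteness of projective dimension is preserved under extensions (from the long exact sequence in $\Ext$, $\pd_R(M)\le \max\{\pd_R(M'),\pd_R(M/M')\}$), so $\pd_R(M)<\infty$. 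This completes the induction and establishes $(1)\Leftrightarrow(2)\Leftrightarrow(3)$.

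For the Noetherian case, $(1)\Rightarrow(4)$ is immediate since finitely generated ideals are in particular ideals. For $(4)\Rightarrow(1)$ I would use that over a left Noetherian ring every left ideal is finitely generated, so the hypothesis in $(4)$ applies to all ideals and gives $(1)$ directly. The role of the Noetherian assumption is precisely to collapse the distinction between ``finitely generated ideal'' and ``ideal'': without it, $(4)$ is genuinely weaker (it is the Bertin-regular condition), since a non-finitely-generated ideal need not be controlled by the finitely generated ones.

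I do not expect a serious obstacle here, as the result is standard; the only point requiring mild care is the bookkeeping in $(1)\Leftrightarrow(2)$ between $\pd_R(I)$ and $\pd_R(R/I)$ and the convention about whether $(1)$ quantifies over left ideals or their quotients. The inductive step of $(1)\Rightarrow(3)$ relies only on the behaviour of projective dimension under extensions, which is elementary. The Noetherian equivalence is a one-line consequence of the ascending chain condition, so the substance of the proposition is entirely in the induction on the number of generators.
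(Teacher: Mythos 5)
Your argument is correct. The only place where it genuinely diverges from the paper is the step from cyclic modules to arbitrary finitely generated modules: the paper invokes a lemma of Takahashi to produce a chain of short exact sequences $0 \to R/I_k \to M_{k-1} \to M_k \to 0$ in which the cyclic pieces appear as \emph{submodules}, whereas you run the elementary induction on the number of generators, placing the cyclic piece as a \emph{quotient} in $0 \to M' \to M \to M/M' \to 0$. Both arguments then close with the same extension inequality $\pd_R(M) \le \max\{\pd_R(M'),\pd_R(M/M')\}$, so the homological content is identical; what your version buys is self-containedness (no external citation is needed, since the filtration by the first $m-1$ generators is immediate), while the paper's citation packages the decomposition in a form that is reused elsewhere in the commutative-algebra literature. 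Your handling of the interchange between $\pd_R(I)$ and $\pd_R(R/I)$ via $0 \to I \to R \to R/I \to 0$, and of the Noetherian equivalence $(4)\Leftrightarrow(1)$, matches the paper exactly.
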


\begin{proof}
\((1) \Rightarrow (2):\) Is clear.\\
\((2) \Rightarrow (3):\) Let \( M \) be a finitely generated \( R \)-module. By \cite[Lemma 3.5]{Ryo}, there exists a finite sequence of short exact sequences
$$0 \longrightarrow R/I_k \longrightarrow M_{k-1} \longrightarrow M_k \longrightarrow 0 \quad (1 \leq k \leq n),$$
with \( M_0 = M \) and \( M_n = 0 \), for some ideals \( I_k \subseteq R \). Since each \( R/I_k \) has finite projective dimension by hypothesis, it follows that \( \pd_R(M) < \infty \).\\
\((3) \Rightarrow (1):\)  Is clear.\\
\((4) \Leftrightarrow (1):\) If \( R \) is left Noetherian, then every ideal of $R$ is finitely generated.
\end{proof} 

 An example of a regular ring that is not Noetherian is the polynomial ring in countably many variables over a principal ideal domain.

 \begin{rmk} The condition of Bertin-regularity on a ring \( R \), or even the stronger condition that every finitely generated ideal has uniformly bounded projective dimension (specifically bounded by 1), does not guarantee coherence. For example, consider the quiver \( Q \) with three vertices: one arrow \( \alpha \) from vertex 1 to vertex 2, and infinitely many arrows \( \{ \beta_s \mid s \in S \} \) from vertex 2 to vertex 3. Let \( R \) denote the quotient of the path algebra of \( Q \) over a field \( k \) by the ideal generated by \( \{ \alpha \beta_s \mid s \in S \} \). In this case, the supremum of the projective dimensions of finitely generated right ideals of $R$ is 1. Furthermore, \( \gD(R) = 2 \); see \cite[Example 3]{LGO}, yet \( R \) is not right coherent.
 \end{rmk}

We denote the cardinality of a ring \(R\) by \(|R|\).

\begin{prop}
Let \(R\) be a ring with \(|R|\leq \aleph_k\) for some integer \(k \geq 0\). If every finitely generated ideal of \(R\) has finite flat dimension, then \(R\) is left Bertin-regular.
\end{prop}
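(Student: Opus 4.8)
The plan is to reduce the claim to a classical cardinality bound for flat modules: the theorem of Jensen and of Gruson--Raynaud asserting that a flat $R$-module generated by at most $\aleph_k$ elements has projective dimension at most $k+1$. Since left Bertin-regularity asks that every finitely generated left ideal have finite projective dimension, it suffices to fix such an ideal $I$ and bound $\pd_R(I)$.

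First I would set $d := \fd_R(I)$, which is finite by hypothesis, and build a projective resolution of $I$ all of whose terms are bounded in cardinality by $\aleph_k$. Start with a surjection $P_0 \twoheadrightarrow I$ from a finitely generated free module and let $K_1 \subseteq P_0$ be its kernel; proceed inductively, so that given the syzygy $K_i \subseteq P_{i-1}$ one has $|K_i| \leq |P_{i-1}| \leq \aleph_k$, whence $K_i$ is generated by at most $\aleph_k$ of its own elements. Choosing $P_i$ to be a free module on such a generating set gives $|P_i| \leq \aleph_k$ as well (using $|R| \leq \aleph_k$). Here the projectives need \emph{not} be finitely generated — this is precisely the point at which the absence of any coherence hypothesis is absorbed — but their cardinalities stay controlled throughout.

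Because $\fd_R(I) = d$, the $d$-th syzygy $K_d$ is flat, and by the construction it is $\aleph_k$-generated. Applying the Jensen/Gruson--Raynaud bound gives $\pd_R(K_d) \leq k+1$, and splicing this into the resolution yields
\[
\pd_R(I) \;\leq\; d + \pd_R(K_d) \;\leq\; d + k + 1 \;<\; \infty.
\]
As $I$ was an arbitrary finitely generated left ideal, $R$ is left Bertin-regular. The one genuinely nontrivial ingredient is the inequality $\pd_R(K_d) \leq k+1$; the rest is cardinality bookkeeping designed to keep every syzygy $\aleph_k$-generated so that this external theorem applies. The main obstacle is thus concentrated entirely in invoking the flat-module cardinality theorem correctly, in particular in verifying that a flat module of size $\leq \aleph_k$ genuinely satisfies its hypotheses and in confirming that the running cardinality estimate $|P_i| \leq \aleph_k$ survives the (possibly infinite) inductive construction.
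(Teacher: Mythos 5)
Your argument is correct and rests on the same external ingredient as the paper's proof, namely the classical cardinality bound on the projective dimension of flat modules; the paper cites Simson's note \cite{S74} in the form ``over a ring of cardinality at most $\aleph_k$ every flat module has projective dimension at most $k+1$,'' while you invoke the Jensen/Gruson--Raynaud form bounding the projective dimension of a flat module by the size of a generating set. The deployment is genuinely different, though. The paper starts from a finite \emph{flat} resolution $0 \to F_n \to \cdots \to F_0 \to I \to 0$ of length $n = \fd_R(I)$, applies the cited theorem to every flat term $F_i$, and assembles the resulting finite projective resolutions of the $F_i$ into one of $I$ via the Horseshoe Lemma. You instead build a cardinality-controlled \emph{projective} resolution of $I$, observe that the $d$-th syzygy $K_d$ is flat and $\aleph_k$-generated, apply the theorem once to $K_d$, and splice. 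Your route needs only the standard generator-bounded version of the theorem, and the bookkeeping you worry about does go through: the induction is finite (you only need terms up to $K_d$), and a free module on at most $\aleph_k$ generators over a ring with $|R| \le \aleph_k$ again has cardinality at most $\aleph_k$, so each syzygy is $\aleph_k$-generated and the hypothesis of the flat-module theorem is satisfied trivially by taking the whole module as a generating set. You also get the explicit estimate $\pd_R(I) \le \fd_R(I) + k + 1$, whereas the paper's version is shorter but leans on the stronger-sounding formulation of the cardinality theorem with no restriction on the module.
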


\begin{proof}
By \cite[Theorem]{S74}, over any ring of cardinality at most \(\aleph_k\), every flat module has projective dimension at most \(k+1\). Let \(I \subseteq R\) be a finitely generated left ideal with \(\fd_R(I) = n < \infty\).  Consider a finite flat resolution
$$0 \longrightarrow F_n \xrightarrow{d_n} F_{n-1} \longrightarrow \cdots 
\longrightarrow F_1 \xrightarrow{d_1} F_0 \xrightarrow{d_0} I \longrightarrow 0,$$ where each \(F_i\) is flat. Since each \(F_i\) admits a finite projective resolution of length at most \(k+1\), the Horseshoe Lemma produces a finite projective resolution of \(I\). Hence \(\pd_R(I)<\infty\), and therefore \(R\) is left Bertin-regular. 
\end{proof}

\begin{cor}
Let \(R\) be a ring with \(|R|\leq \aleph_k\) for some integer \(k\ge 0\), and set \(S=R[x_1,x_2,\dots]\). If every finitely generated ideal of \(S\) has finite flat dimension, then \(S\) is left Bertin-regular. \end{cor}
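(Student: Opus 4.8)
The plan is to deduce this directly from the preceding proposition, applied to the ring \(S\) itself rather than to \(R\). The only thing that needs checking is that the cardinality hypothesis is inherited by \(S\), namely that \(|S| \le \aleph_k\); once this is in place, the assumption that every finitely generated ideal of \(S\) has finite flat dimension is precisely the remaining hypothesis needed to invoke the proposition with \(R\) replaced by \(S\).

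First I would estimate \(|S|\). The ring \(S = R[x_1, x_2, \dots]\) consists of polynomials, each of which is a finitely supported \(R\)-linear combination of monomials in \(x_1, x_2, \dots\). Every monomial involves only finitely many variables, each with a finite exponent, so the set \(\mathcal{M}\) of all monomials is a countable union of countable sets, whence \(|\mathcal{M}| = \aleph_0\). A polynomial is determined by a finite subset of \(\mathcal{M}\) together with an assignment of coefficients in \(R\) to the chosen monomials. Counting over the (countably many) finite subsets of \(\mathcal{M}\) and over the at most \(|R|^n\) coefficient choices for a subset of size \(n\) gives
\[
|S| \;\le\; \aleph_0 \cdot \sup_{n<\omega} |R|^n \;=\; \aleph_0 \cdot |R| \;\le\; \aleph_0 \cdot \aleph_k \;=\; \aleph_k,
\]
where the middle equality uses \(|R|^n = |R|\) for infinite \(R\) (and when \(R\) is finite one has instead \(|S| = \aleph_0 \le \aleph_k\)). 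In all cases \(|S| \le \aleph_k\).

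With this estimate in hand, I would simply apply the preceding proposition to the ring \(S\): since \(|S| \le \aleph_k\) and, by hypothesis, every finitely generated ideal of \(S\) has finite flat dimension, the proposition yields that \(S\) is left Bertin-regular.

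The argument presents no serious obstacle; the only point requiring care is the cardinality count, where the key observation is that adjoining countably many variables does not push the cardinality beyond \(\aleph_k\), because the monomial set remains countable and \(|R|^n = |R|\) for infinite \(R\). Everything else is a direct appeal to the previous proposition.
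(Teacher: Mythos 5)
Your proposal is correct and follows exactly the paper's route: the paper's entire proof is the observation that $|S|=\max\{|R|,\aleph_0\}\leq\aleph_k$, after which the preceding proposition applies to $S$. You simply spell out the cardinality count (including the finite-$R$ case) in more detail than the paper does.
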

\begin{proof}
It suffices to observe that $|S| = \max\{|R|,\aleph_0\} = \aleph_k.$
\end{proof}

\begin{rmk}
Let \(R\) be a commutative ring and let \(\mathsf{Q}(R)\) denote its total ring of fractions.  By \cite[Theorem 4.12]{Tar20}, \(|R|=|\mathsf{Q}(R)|\).  Hence, for any \(R\) with \(|R|\leq \aleph_k\), if every finitely generated ideal of \(\mathsf{Q}(R)\) has finite flat dimension, then \(\mathsf{Q}(R)\) is left Bertin-regular.
\end{rmk}

\begin{prop}\label{prop: equi regu}
Let \(R\) be a left coherent ring. The following conditions are equivalent:
\begin{enumerate}
    \item \(R\) is a left regular ring.  
    \item \(R\) is a left Bertin-regular ring.
    \item Every finitely generated ideal of \(R\) has finite flat dimension.
    \item Every cyclic finitely presented  \(R\)-module has finite flat dimension.
\end{enumerate}
\end{prop}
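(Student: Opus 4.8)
The plan is to prove the cycle of implications $(1) \Rightarrow (2) \Rightarrow (3) \Rightarrow (4) \Rightarrow (1)$. The decisive leverage throughout is the coherence hypothesis: by Proposition~\ref{prop:coh-subcategory} we have $\mathsf{Coh}(R) = \mathsf{FP}_1(R) = \mathsf{FP}_\infty(R)$, so over $R$ the finitely presented, finitely $\infty$-presented, and coherent modules all coincide, and the class $\mathsf{Coh}(R)$ is an abelian subcategory closed under finitely generated submodules and quotients. Combined with Proposition~\ref{prop: regula flat proj}(1), which identifies regularity with finiteness of flat dimension on $\mathsf{FP}_\infty(R)$, this allows me to pass freely between the projective and flat finiteness conditions.

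For $(1) \Rightarrow (2)$: if $R$ is left regular and $I \subseteq R$ is a finitely generated left ideal, then coherence makes $I$ finitely presented, hence finitely $\infty$-presented, so regularity forces $\pd_R(I) < \infty$; thus $R$ is left Bertin-regular. The implication $(2) \Rightarrow (3)$ is immediate, since finite projective dimension entails finite flat dimension. For $(3) \Rightarrow (4)$: a cyclic finitely presented module has the form $M = R/I$ with $I$ a finitely generated left ideal (the kernel of $R \twoheadrightarrow M$ is finitely generated because $M$ is finitely presented), and the short exact sequence $0 \to I \to R \to M \to 0$ together with $\fd_R(I) < \infty$ and $\fd_R(R) = 0$ yields $\fd_R(M) \leq \fd_R(I) + 1 < \infty$.

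The substantive step is $(4) \Rightarrow (1)$, which I would carry out by dévissage. By Proposition~\ref{prop: regula flat proj}(1) it suffices to show every $M \in \mathsf{FP}_\infty(R)$ has finite flat dimension, and by coherence such an $M$ is coherent. Choosing generators $m_1, \dots, m_k$ of $M$ and setting $M_j = \langle m_1, \dots, m_j \rangle$ produces a filtration $0 = M_0 \subseteq M_1 \subseteq \cdots \subseteq M_k = M$ in which each $M_j$, being a finitely generated submodule of the coherent module $M$, is itself coherent, so each quotient $M_j/M_{j-1}$ is a cyclic coherent—hence cyclic finitely presented—module. Hypothesis $(4)$ then gives $\fd_R(M_j/M_{j-1}) < \infty$ for every $j$, and since the class of modules of finite flat dimension is closed under extensions, an induction up the filtration yields $\fd_R(M) < \infty$. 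A final application of Proposition~\ref{prop: regula flat proj}(1) gives left regularity.

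The main obstacle is precisely this dévissage: one must verify that every subquotient in the filtration really is a \emph{cyclic finitely presented} module, so that hypothesis $(4)$ applies—this is exactly where coherence is indispensable, via the closure of $\mathsf{Coh}(R)$ under finitely generated submodules and quotients—and then assemble the term-by-term finiteness of flat dimension into a global bound for $M$ through the extension-closure of $\bigcup_k \mathcal{F}_k(R)$.
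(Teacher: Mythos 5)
Your proof is correct, but it takes a more self-contained route than the paper. The paper disposes of the equivalence $(1)\Leftrightarrow(2)$ by citing \cite[Theorem~6.2.1]{Glaz1} as a black box, obtains $(2)\Leftrightarrow(3)$ from Proposition~\ref{prop: regula flat proj} (since coherence makes finitely generated ideals finitely $\infty$-presented, so their flat and projective dimensions agree), and gets $(3)\Leftrightarrow(4)$ by the same dimension shift along $0\to I\to R\to R/I\to 0$ that you use. You instead close the cycle with an explicit d\'evissage for $(4)\Rightarrow(1)$: filtering a coherent module by the submodules generated by initial segments of a generating set, observing that each subquotient is cyclic and coherent (hence cyclic finitely presented, by the abelian-subcategory property of $\mathsf{Coh}(R)$ from Proposition~\ref{prop:coh-subcategory}), and propagating finite flat dimension up the filtration by extension-closure before converting back to projective dimension via Proposition~\ref{prop: regula flat proj}. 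In effect you are reproving the content of the cited Glaz theorem; what this buys is a proof that does not lean on an external reference for the only substantive implication, at the cost of a longer argument. Both the identification of the subquotients as cyclic finitely presented modules and the passage from flat to projective dimension for finitely $\infty$-presented modules are correctly justified, so there is no gap.
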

\begin{proof}
The equivalence \((1)\Leftrightarrow(2)\) is given by \cite[Theorem 6.2.1]{Glaz1}. Since \(R\) is left coherent, every finitely generated  ideal is finitely presented, hence finitely \(\infty\)-presented. Thus, by Proposition \ref{prop: regula flat proj}, we obtain \((2)\Leftrightarrow(3)\).   Finally, \((3)\Leftrightarrow(4)\) holds in general, since cyclic finitely presented modules are exactly of the form \(R/I\) with \(I\) finitely generated.
\end{proof}

Recall that an \( R \)-module \( M \) is said to be \newterm{\( \mathcal{C} \)-periodic} if there exists a short exact sequence $0 \rightarrow M \rightarrow C \rightarrow M \rightarrow 0$
with \( C \in \mathcal{C} \). For a ring \( R \), the injectivity of an \( R \)-module \( M \) can be characterized via \cite[Theorem 3.2]{CKWZ}: \( M \) is injective if and only if it is \( \mathsf{Inj}(R) \)-periodic with respect to a pure exact sequence; that is, there exists a pure exact sequence $
0 \rightarrow M \rightarrow C \rightarrow M \rightarrow 0$
with \( C \) an injective module.  However, when  $R$ is left Bertin-regular, the purity assumption can be dropped, as the following result shows:

\begin{prop}
Let \( R \) be a left Bertin-regular ring, and let \( M \) be an \( R \)-module. Then \( M \) is injective if and only if it is \( \mathsf{Inj}(R) \)-periodic. In particular, every \( \mathsf{Inj}(R) \)-periodic module over a left coherent regular ring is injective.
\end{prop}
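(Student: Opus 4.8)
The plan is to reduce to the pure-periodicity criterion of \cite[Theorem 3.2]{CKWZ}: once the given periodicity sequence is shown to be \emph{pure}, the injectivity of $M$ is immediate. The whole point of left Bertin-regularity is to force this purity, so the argument splits into a soft homological computation (which produces vanishing of $\Ext^1$ against cyclic finitely presented modules) followed by a promotion of that vanishing to absolute purity of $M$.

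The forward implication is essentially free: if $M$ is injective then $M\oplus M$ is injective (a finite direct sum of injectives is injective over any ring), and the split sequence $0\to M\to M\oplus M\to M\to 0$ exhibits $M$ as $\mathsf{Inj}(R)$-periodic.

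For the converse, fix a periodicity sequence $0\to M\xrightarrow{\iota} C\xrightarrow{\pi} M\to 0$ with $C$ injective. Applying $\Hom_R(N,-)$ for an arbitrary $R$-module $N$ and using $\Ext^i_R(N,C)=0$ for $i\ge 1$, the connecting homomorphisms of the resulting long exact sequence yield isomorphisms $\Ext^i_R(N,M)\cong \Ext^{i+1}_R(N,M)$ for every $i\ge 1$. Now let $I\subseteq R$ be a finitely generated left ideal. By Bertin-regularity $\pd_R(I)<\infty$, whence $\pd_R(R/I)=d<\infty$ from the sequence $0\to I\to R\to R/I\to 0$, so $\Ext^{d+1}_R(R/I,M)=0$; the periodicity isomorphisms collapse this all the way down to $\Ext^1_R(R/I,M)=0$. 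Hence $\Ext^1_R(R/I,M)=0$ for every finitely generated left ideal $I$.

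The key step is to promote this vanishing on cyclic finitely presented modules to genuine $\mathsf{FP}_1$-injectivity of $M$, that is, $\Ext^1_R(F,M)=0$ for all finitely presented $F$. Once that is in hand, $M$ is absolutely pure, so $\iota\colon M\hookrightarrow C$ is a pure monomorphism, the periodicity sequence is pure with $C$ injective, and \cite[Theorem 3.2]{CKWZ} gives that $M$ is injective. I expect this promotion to be the main obstacle in the general Bertin-regular case: the natural Baer-type induction extends a map on a finitely generated submodule $K\subseteq R^n$ one free coordinate at a time, but the relevant subideal is $K\cap R^{n-1}$, which over a noncoherent ring need not be finitely generated, so the reduction of the extension problem to the cyclic case breaks. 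For the ``in particular'' clause this difficulty disappears: by Proposition~\ref{prop: equi regu} a left coherent regular ring is left coherent Bertin-regular, and over a left coherent ring $K\cap R^{n-1}$ is automatically finitely generated, so the induction goes through and $\Ext^1_R(-,M)=0$ may indeed be tested on the cyclic quotients $R/I$; hence every $\mathsf{Inj}(R)$-periodic module over a left coherent regular ring is injective.
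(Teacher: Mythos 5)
Your forward direction and your dimension-shifting computation are both correct, and your treatment of the coherent case does go through: over a left coherent ring the Baer-type induction on the number of free generators works because $K\cap R^{n-1}$ is finitely generated, so $\Ext^1_R(-,M)$ vanishes on all finitely presented modules, $M$ is absolutely pure, the periodicity sequence is pure, and \cite[Theorem 3.2]{CKWZ} applies. (You could in fact shortcut the induction: by Proposition~\ref{prop: equi regu} every finitely presented module over a left coherent regular ring already has finite projective dimension, so dimension shifting gives $\Ext^1_R(F,M)=0$ for every finitely presented $F$ directly.)

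However, the proposition is stated for arbitrary left Bertin-regular rings, and there your argument does not close --- as you yourself observe. Bertin-regularity only yields $\Ext^1_R(R/I,M)=0$ for finitely generated ideals $I$, and without coherence this vanishing on cyclic finitely presented modules cannot be promoted to $\mathsf{FP}_1$-injectivity of $M$: the class of modules satisfying the cyclic condition properly contains the absolutely pure ones in general, so the purity of the periodicity sequence, and with it the appeal to \cite[Theorem 3.2]{CKWZ}, is unavailable. What you have actually proved is only the ``in particular'' clause. The paper's proof of the converse is a one-line citation of \cite[Proposition 4.8]{BCE}, which gives injectivity of $\mathsf{Inj}(R)$-periodic modules under the Bertin-regularity hypothesis without passing through purity at all; it is precisely your detour through pure periodicity that forces the extra coherence assumption. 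To repair the write-up you would need either to invoke (or reprove) that result for the general case, or to weaken the statement to coherent regular rings.
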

\begin{proof}
If \(M\) is injective, the canonical short exact sequence  $0 \to M \to M \oplus M \to M \to 0$ shows that \(M\) is \( \mathsf{Inj}(R) \)-periodic. The converse follows from \cite[Proposition 4.8]{BCE}.  
\end{proof}

\begin{rmk}
We denote by \( \mathsf{FP}_1(R)^{<\infty} \) the class of finitely presented \( R \)-modules with finite projective dimension. A module \( M \) belongs to the class \( \varinjlim \mathsf{FP}_1(R)^{<\infty} \) if it can be expressed as a direct limit of modules from \( \mathsf{FP}_1(R)^{<\infty} \). The class \( \varinjlim \mathsf{FP}_1(R)^{<\infty} \) is closed under arbitrary direct sums, pure submodules, pure extensions, and pure epimorphic images; in particular, it is closed under direct limits and forms a covering class; see \cite{PPT}. Moreover, if \( R \) is coherent regular, then \( \varinjlim \mathsf{FP}_1(R)^{<\infty} = \lMod R \), since every module is a filtered colimit of finitely presented modules. 
\end{rmk}

\begin{prop}
    Let \(R\) be a left coherent ring. Then
$$\varinjlim \mathsf{FP_1(R)}^{<\infty} = [^{\top_1}(\mathsf{FP_1(R)}^{<\infty})]^{\top_1}.$$
\end{prop}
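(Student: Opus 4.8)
The plan is to prove the equality of classes by a double inclusion, exploiting the general Tor-orthogonality formalism set up in \Cref{ss:dimensions}. Write $\mathcal{L} \coleq \varinjlim \mathsf{FP}_1(R)^{<\infty}$ for the class of direct limits of finitely presented modules of finite projective dimension, and let $\mathcal{W} \coleq [{}^{\top_1}(\mathsf{FP}_1(R)^{<\infty})]^{\top_1}$ be the double-Tor-perpendicular. The inclusion $\mathcal{L} \subseteq \mathcal{W}$ should be the routine direction: for any $M \in \mathsf{FP}_1(R)^{<\infty}$ and any $X \in {}^{\top_1}(\mathsf{FP}_1(R)^{<\infty})$, one has $\Tor^R_1(X,M)=0$ by definition, and this Tor-vanishing is preserved under the direct limits defining $\mathcal{L}$ because $\Tor$ commutes with filtered colimits in each variable; hence every element of $\mathcal{L}$ lies in $\mathcal{W}$.

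The reverse inclusion $\mathcal{W} \subseteq \mathcal{L}$ is where the real content lies, and the coherence hypothesis enters decisively. First I would identify the class ${}^{\top_1}(\mathsf{FP}_1(R)^{<\infty})$ concretely. Since $R$ is left coherent, $\mathsf{FP}_1(R) = \mathsf{FP}_\infty(R)$ by \Cref{prop:coh-subcategory}, so the finitely presented modules of finite projective dimension form a well-behaved generating family; the key is that for a finitely presented module $F$ of finite projective dimension, $\Tor^R_1(X,F)=0$ detecting membership in the left-orthogonal is governed by a \emph{finite} projective resolution of $F$ by finitely generated projectives. The strategy is to recognise $\mathcal{W}$ as the right-hand (or appropriately the double-perpendicular) class of a cotorsion-type / Tor-pair that is \emph{generated by a set}, namely the set of syzygies appearing in the finite resolutions of a representative set of modules in $\mathsf{FP}_1(R)^{<\infty}$, and then invoke the deconstructibility/definability machinery referenced in the preceding remark (the results of \cite{PPT}).

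Concretely, the plan is to use that $\mathcal{L}$ is a definable class: by the cited results of \cite{PPT}, $\mathcal{L} = \varinjlim \mathsf{FP}_1(R)^{<\infty}$ is closed under direct limits, pure submodules, and pure epimorphic images, and forms a covering class. A definable class closed under these operations is determined by the vanishing of $\Tor$ against a set of finitely presented modules; the canonical candidate for that set is $\mathsf{FP}_1(R)^{<\infty}$ itself, realised through the double-perpendicular. Thus I would argue that $\mathcal{W}$, being $[{}^{\top_1}(\mathsf{FP}_1(R)^{<\infty})]^{\top_1}$, is exactly the definable closure of $\mathsf{FP}_1(R)^{<\infty}$ under the relevant Tor-duality, and that this definable closure coincides with $\mathcal{L}$. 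The technical heart is to show every $M \in \mathcal{W}$ is a direct limit of modules in $\mathsf{FP}_1(R)^{<\infty}$: here one takes a pure-injective / elementary-duality argument, writing $M$ as a filtered colimit of finitely presented modules (possible since $R$ is coherent, so finitely presented modules are closed under the operations needed) and using the $\Tor$-vanishing against ${}^{\top_1}(\mathsf{FP}_1(R)^{<\infty})$ to force each finitely presented approximant — or a cofinal system of them — to have finite projective dimension.

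The main obstacle I anticipate is precisely this last step: upgrading abstract membership in the double-Tor-perpendicular $\mathcal{W}$ to an explicit presentation as a filtered colimit of \emph{finite-projective-dimension} finitely presented modules. The clean way around it is to appeal to the equivalence, valid over coherent rings, between (i) closure properties characterising $\mathcal{L}$ as a definable class and (ii) its description as a double Tor-orthogonal; this equivalence is essentially a manifestation of the fact that over a coherent ring the finitely presented modules form an abelian category and $\varinjlim$ of a resolving subcategory is again closed under the pure operations, so that the smallest definable class containing $\mathsf{FP}_1(R)^{<\infty}$ is simultaneously $\mathcal{L}$ and $\mathcal{W}$. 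I would therefore organise the proof so that the nontrivial inclusion is reduced to citing the definability statement from \cite{PPT} together with the standard identification of definable classes with double Tor-perpendiculars of their finitely presented members, rather than constructing the colimit by hand.
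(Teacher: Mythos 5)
Your overall strategy coincides with the paper's: the easy inclusion $\varinjlim \mathsf{FP}_1(R)^{<\infty} \subseteq [{}^{\top_1}(\mathsf{FP}_1(R)^{<\infty})]^{\top_1}$ via commutation of $\Tor$ with filtered colimits, and a reduction of the hard inclusion to the machinery of \cite{PPT}. The paper's entire proof of the hard direction is a citation of \cite[Lemma~2.1]{PPT}, which states that for a class $\mathcal{S}$ of modules contained in $\mathsf{FP}_2(R)$ and closed under extensions and direct summands, one has $\varinjlim \mathcal{S} = [{}^{\top_1}\mathcal{S}]^{\top_1}$. Coherence enters only to guarantee the containment $\mathsf{FP}_1(R)^{<\infty} \subseteq \mathsf{FP}_2(R)$ (indeed $\mathsf{FP}_1(R) = \mathsf{FP}_\infty(R)$ over a left coherent ring), and the remaining hypothesis is the elementary check that $\mathsf{FP}_1(R)^{<\infty}$ is closed under extensions and direct summands.

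The gap in your write-up is that your sketch of the reverse inclusion never becomes an argument. The assertion that ``the smallest definable class containing $\mathsf{FP}_1(R)^{<\infty}$ is simultaneously $\mathcal{L}$ and $\mathcal{W}$'' is essentially a restatement of the proposition, not a proof of it, and the appeal to ``the standard identification of definable classes with double Tor-perpendiculars of their finitely presented members'' is not a theorem in that generality: a definable class is cut out by coherent functors, not automatically by $\Tor_1$ against a set of finitely presented modules, and the class ${}^{\top_1}(\mathsf{FP}_1(R)^{<\infty})$ over which the outer perpendicular is taken is a large class of right modules, not a set of finitely presented ones. The pure-injective/elementary-duality argument you defer to is never carried out, and you identify the wrong closure properties as the relevant ones (closure under pure submodules and pure epimorphic images, rather than closure under extensions and direct summands together with membership in $\mathsf{FP}_2(R)$, which are the actual hypotheses of the lemma being used). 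To repair the proof, replace the definability discussion with: (i) $R$ left coherent implies $\mathsf{FP}_1(R)^{<\infty} \subseteq \mathsf{FP}_2(R)$; (ii) $\mathsf{FP}_1(R)^{<\infty}$ is closed under extensions and direct summands; (iii) apply \cite[Lemma~2.1]{PPT}.
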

\begin{proof}
    Suppose that \( R \) is left coherent. Then \( \mathsf{FP_1(R)}^{<\infty} \subseteq \mathsf{FP_2(R)} \).  
    Since \( \mathsf{FP_1(R)}^{<\infty} \) is closed under extensions and direct summands, it follows from \cite[Lemma 2.1]{PPT} that $\varinjlim \mathsf{FP_1(R)}^{<\infty} =[^{\top_1}(\mathsf{FP_1(R)}^{<\infty})]^{\top_1}.$
\end{proof}

\begin{theorem}
Let \( R \) be a left coherent ring. The following are equivalent:
\begin{enumerate}
    \item Every finitely presented \( R \)-module has finite projective dimension; that is, \( R \) is left regular.
    \item Every finitely generated submodule of a projective \( R \)-module has finite projective dimension.
    \item The kernel of any map between finitely generated projective \( R \)-modules has finite projective dimension.
\end{enumerate}
\end{theorem}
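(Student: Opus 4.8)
The plan is to establish the cycle of implications $(1) \Rightarrow (2) \Rightarrow (3) \Rightarrow (1)$, using throughout that over a left coherent ring one has $\mathsf{Coh}(R) = \mathsf{FP}_1(R) = \mathsf{FP}_\infty(R)$ (\Cref{prop:coh-subcategory}). Consequently ``finitely presented'' and ``finitely $\infty$-presented'' coincide, so that condition (1) is literally the assertion that $R$ is left regular. Two further consequences of coherence I would keep at hand are: every projective module is pseudo-coherent (\Cref{prop: coh-proj-fp}), and every finitely generated projective module is coherent (\Cref{cor:proj-coh}), the latter combined with the fact that $\mathsf{Coh}(R)$ is an \emph{abelian} subcategory of $\lMod R$ whose kernels agree with those computed in $\lMod R$.

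For $(1) \Rightarrow (2)$, let $M$ be a finitely generated submodule of a projective module $P$. Since $P$ is pseudo-coherent, $M$ is finitely presented, hence finitely $\infty$-presented, and regularity gives $\pd_R(M) < \infty$. (I would note that the equivalence $(1) \Leftrightarrow (2)$ is in fact an immediate reformulation of \Cref{prop:regular 0} once one observes that, under coherence, the finitely generated submodules of projectives are exactly the finitely $\infty$-presented ones.) For $(2) \Rightarrow (3)$, let $f\colon P \to Q$ be a map of finitely generated projective modules. Both $P$ and $Q$ are coherent, so since $\mathsf{Coh}(R)$ is abelian the kernel $K = \ker f$ is again coherent; in particular it is a finitely generated submodule of the projective module $P$, and (2) yields $\pd_R(K) < \infty$. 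Alternatively, one sees directly that $\mathrm{im}(f)$ is a finitely generated submodule of the pseudo-coherent module $Q$, hence finitely presented, whence $K$ is finitely generated from the sequence $0 \to K \to P \to \mathrm{im}(f) \to 0$ (property (3) of $\mathsf{FP}_n$-classes with $n=0$).

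For $(3) \Rightarrow (1)$, let $M$ be finitely presented and fix a presentation $P_1 \xrightarrow{f} P_0 \to M \to 0$ with $P_0, P_1$ finitely generated projective. By (3) the kernel $K = \ker f$ has finite projective dimension. Since $\mathrm{im}(f) \cong P_1/K$ fits in $0 \to K \to P_1 \to \mathrm{im}(f) \to 0$ with $P_1$ projective, dimension shifting gives $\pd_R(\mathrm{im}(f)) \le \pd_R(K) + 1 < \infty$; the sequence $0 \to \mathrm{im}(f) \to P_0 \to M \to 0$ then gives $\pd_R(M) \le \pd_R(\mathrm{im}(f)) + 1 < \infty$. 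Hence every finitely presented module has finite projective dimension, i.e. $R$ is left regular.

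The only genuinely non-formal step is the finiteness of $K = \ker f$ in $(2) \Rightarrow (3)$: to apply (2) one must know that $K$ is finitely generated, and this is precisely where coherence is indispensable, since a map between finitely generated projectives over a non-coherent ring may have an infinitely generated kernel. The abelianity of $\mathsf{Coh}(R)$ (equivalently, pseudo-coherence of the finitely generated projective $Q$) disposes of this cleanly, and everything else reduces to routine dimension shifting along short exact sequences with a projective middle or end term.
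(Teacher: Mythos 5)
Your proof is correct and follows essentially the same route as the paper: coherence (via abelianity of $\mathsf{Coh}(R)$ and $\mathsf{FP}_1(R)=\mathsf{FP}_\infty(R)$) guarantees that the relevant kernels are finitely presented, and the rest is dimension shifting. The only cosmetic difference is in $(3)\Rightarrow(1)$, where you argue directly from a presentation of an arbitrary finitely presented module, while the paper reduces to finitely generated ideals and invokes \Cref{prop: equi regu}; both are equally valid.
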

\begin{proof}
(1) $\Leftrightarrow$ (2): Follows from Proposition~\ref{prop:regular 0}.\\
(1) $\Rightarrow$ (3): Assume that $R$ be a left  coherent regular ring. By the corollary \ref{cor:proj-coh}, every finitely generated projective $R$-module is coherent. Thus, for any map \( P_0 \to P_1 \) with \( P_0 \) and \( P_1 \) finitely generated and projective, the kernel \( K \) is coherent, and therefore finitely presented. It follows that \( \pd_R(K) < \infty \).\\
(3) $\Rightarrow$ (1): Let \( I \subseteq R \) be a finitely generated  ideal. Since \( R \) is left coherent, \( I \) is finitely presented, so there exists an exact sequence of $R$-modules $0 \to K \to P_0 \to I \to 0,$ where \( K \) is finitely generated and \( P_0 \) is finitely generated projective. Extending this sequence, we obtain   $0 \to K \to P_0 \to R \to R/I \to 0.$
By assumption, \( \pd_R(K) < \infty \), which implies \( \pd_R(I) < \infty \), and thus \( R \) is left regular by Proposition \ref{prop: equi regu}. 
\end{proof}

It is well known that a ring \(R\) is coherent regular if and only if the matrix ring \(M_n(R)\) is coherent regular for every \(n \geq 1\). This follows from the fact that the categories \(\lMod R \) and $M_n(R)$-$\mathsf{Mod}$ are equivalent, and such equivalences preserve homological properties such as projectivity, exactness, and finite presentation. Following \cite{MD5}, a ring $R$ is  called  left \newterm{$\mathsf{P}$-coherent} if every principal ideal of $R$ is finitely presented. Clearly, every left coherent ring is left $\mathsf{P}$-coherent, but the converse does not hold in general; see \cite[Example 2.3]{MD5}. 

\begin{theorem}
Let \( R \) be a ring. Then \( R \) is left  coherent regular if and only if, for every \( n \geq 1 \), the matrix ring \( M_n(R) \) is left \( \mathsf{P} \)-coherent and every principal  ideal of \( M_n(R) \) has finite projective dimension as an \( M_n(R) \)-module.  
\end{theorem}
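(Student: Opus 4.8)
The plan is to deduce everything from the idempotent Morita equivalence between $\lMod{M_n(R)}$ and $\lMod R$, which lets me translate the stated matrix conditions into the defining properties of a left coherent regular ring. The forward implication is the routine one. If $R$ is left coherent regular, then by the Morita invariance of coherent regularity recorded immediately before the statement, each $M_n(R)$ is again left coherent regular. Being left coherent, $M_n(R)$ is in particular left $\mathsf{P}$-coherent, since its principal ideals are finitely generated and hence finitely presented; and by Proposition~\ref{prop: equi regu}, applied to the coherent ring $M_n(R)$, every finitely generated, hence every principal, ideal of $M_n(R)$ has finite projective dimension.

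For the converse, which is the substantial direction, I would fix $n \geq 1$ and set $e = e_{11} \in M_n(R)$. Since $e$ is a full idempotent with $e\,M_n(R)\,e \cong R$ (via $r \mapsto r e$), the functor $N \mapsto eN$ is an exact equivalence $\lMod{M_n(R)} \xrightarrow{\simeq} \lMod R$, preserving projectivity, finite generation, finite presentation, and projective dimension. The crux is the following dictionary. Given $r_1,\dots,r_n \in R$, set $A = \sum_{i=1}^n r_i e_{i1} \in M_n(R)$, the matrix with first column $(r_1,\dots,r_n)$ and all other columns zero. A direct matrix-unit computation gives $e\,M_n(R)\,A = I e$, where $I = \sum_{i=1}^n R r_i$: indeed $e_{11} B A = \bigl(\sum_i b_{1i} r_i\bigr) e_{11}$, and $\sum_i b_{1i} r_i$ ranges over $I$ as $B$ ranges over $M_n(R)$. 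Under the identification $Ie \cong I$ of left $R$-modules, the principal left ideal $M_n(R)A$ of $M_n(R)$ corresponds to the finitely generated left ideal $I$ of $R$; and conversely every finitely generated left ideal of $R$ arises in this way for a suitable $n$.

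With this dictionary the conclusion is immediate. For coherence, any finitely generated left ideal $I$ of $R$, say on $n$ generators, corresponds to the principal ideal $M_n(R)A$, which is finitely presented by the assumed $\mathsf{P}$-coherence of $M_n(R)$; transporting back along the equivalence shows $I$ is finitely presented, so $R$ is left coherent. For regularity, the same $M_n(R)A$ has finite projective dimension by hypothesis, and since the equivalence preserves projective dimension, $\pd_R(I) < \infty$. Thus every finitely generated left ideal of $R$ has finite projective dimension, i.e.\ $R$ is left Bertin-regular; being already left coherent, $R$ is left regular by Proposition~\ref{prop: equi regu}. Hence $R$ is left coherent regular.

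I expect the main obstacle to be setting up the dictionary correctly and rigorously: pinning down the idempotent Morita equivalence together with the ring isomorphism $e\,M_n(R)\,e \cong R$ and the left $R$-module identification $Ie \cong I$, verifying by the matrix-unit computation that the specific principal ideals $M_n(R)A$ are carried to the finitely generated ideals $I$, and confirming that the equivalence preserves both finite presentation and projective dimension. Once this correspondence is in place, coherence and regularity of $R$ follow directly from the two matrix hypotheses, and the remaining bookkeeping is routine.
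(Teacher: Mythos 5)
Your proof is correct, and the converse direction travels a genuinely different road from the paper's. Both arguments hinge on the same matrix $A=\sum_i r_ie_{i1}$ and the same principal ideal $J=M_n(R)A$, but they differ in how homological information about $J$ is transported down to the ideal $I=\sum_i Rr_i$. The paper handles coherence by simply citing \cite[Proposition~2.4]{MD5}, and for regularity it restricts scalars along $R\to M_n(R)$: since $M_n(R)$ is free over $R$, a finite projective resolution of $J$ over $M_n(R)$ restricts to one over $R$, and $J\cong I^n$ as $R$-modules forces $\pd_R(I)<\infty$. You instead invoke the corner Morita equivalence $N\mapsto e_{11}N$ from $\lMod{M_n(R)}$ to $\lMod{e_{11}M_n(R)e_{11}}\cong \lMod R$ (valid because $e_{11}$ is a full idempotent), verify by the matrix-unit computation that $e_{11}\,(M_n(R)A)=Ie_{11}\cong I$, and then read off both finite presentation and finite projective dimension of $I$ from the corresponding hypotheses on $J$. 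Your route is more uniform — it proves the $\mathsf{P}$-coherence half and the regularity half by the same mechanism and dispenses with the external citation — at the cost of having to justify that a full-idempotent corner equivalence preserves finite presentation and projective dimension (standard, but it is the load-bearing step you rightly flag). The paper's restriction-of-scalars argument for the pd statement is more elementary, needing only that free $M_n(R)$-modules are free over $R$ and that $\pd_R(I^n)=\pd_R(I)$. Your forward direction matches the paper's (which is dismissed as straightforward) and is fine as written.
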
   
\begin{proof}
The necessity is straightforward, and we thus prove only the sufficiency. Assume that \( M_n(R) \) is left \( \mathsf{P} \)-coherent for every \( n \geq 1 \) and that every principal  ideal of \( M_n(R) \) has finite projective dimension as an \( M_n(R) \)-module. By \cite[Proposition 2.4]{MD5}, \( R \) is left coherent.  

Now, let \( I = Rr_1 + Rr_2 + \cdots + Rr_n \) be a finitely generated  ideal of \( R \). Consider the matrix  
$$A = 
\begin{pmatrix}
    r_1 & 0   & \cdots & 0 \\
    r_2 & 0   & \cdots & 0 \\
    \vdots & \vdots & \ddots & \vdots \\
    r_n & 0   & \cdots & 0
\end{pmatrix}
\in M_n(R),$$
where \( r_1, r_2, \dots, r_n \in R \). Then, the principal left ideal  
$$J = M_n(R) A =
\begin{pmatrix}
    I & 0 & \cdots & 0 \\
    I & 0 & \cdots & 0 \\
    \vdots & \vdots & \ddots & \vdots \\
    I & 0 & \cdots & 0
\end{pmatrix}$$
of \( M_n(R) \) has finite projective dimension by assumption. Therefore, there exists a projective resolution  $0 \to P_k \to P_{k-1} \to \cdots \to P_0 \to J \to 0$
where each \( P_i \) is a projective \( M_n(R) \)-module.  Since \( M_n(R) \) is a free \( R \)-module generated by the \( n^2 \) matrix units, every free \( M_n(R) \)-module is also a free  \( R \)-module. Hence, each \( P_i \) is projective as \( R \)-module, implying that \( J \) has finite projective dimension over \( R \). Since \( J = M_n(R) A \cong I^n \) as \( R \)-modules, it follows that \( I \) also has finite projective dimension.
\end{proof}

Recall that for a ring \( R \), the \newterm{dual} of an \( R \)-module \( M \) is the right \( R \)-module \( M^* = \Hom_R(M, R) \), where the action is given by \( (f \cdot a)(m) = f(m)a \) for all \( f \in M^* \), \( a \in R \), and \( m \in M \). It is well known that an \( R \)-module \( M \) is finitely generated projective of rank \( 1 \) if and only if the canonical map \( M \otimes_R M^* \to R \) is an isomorphism; in this case, \( M^* \) is also finitely generated projective of rank \( 1 \). Moreover, an \( R \)-module \( M \) is said to be \newterm{torsionless} if the natural map \( \phi: M \to \Hom_R(M^*, R) \), given by \( \phi(m)(f) = f(m) \), is injective. Equivalently, \( M \) is torsionless if it embeds into a free \( R \)-module. 

A ring \( R \) is von Neumann regular  if and only if it is left coherent regular and left self \( \mathsf{FP}_1 \)-injective. Although this characterization will be established later in a more general setting; see Proposition \ref{prop: coherencia self fp}, we can already deduce the following consequence from \cite[Proposition 2.8 and Example 2.9(2)]{MD1} and the previous result.

\begin{cor}  
Let \( R \) be a ring. Then \( R \) is von Neumann regular if and only if, for every \( n \geq 1 \), the following conditions hold:  
\begin{enumerate}  
    \item Every finitely presented cyclic right \( M_n(R) \)-module is torsionless.  
    \item The matrix ring \( M_n(R) \) is left \( \mathsf{P} \)-coherent, and every principal ideal of \( M_n(R) \) has finite projective dimension as an \( M_n(R) \)-module.  
\end{enumerate}  
\end{cor}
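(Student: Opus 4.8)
The plan is to split von Neumann regularity into two independent features, match each to one of the two listed families of conditions, and exploit that both von Neumann regularity and self $\mathsf{FP}_1$-injectivity are Morita invariant. I would organize the argument around three biconditionals whose conjunction proves the corollary in both directions at once: first, by Proposition~\ref{prop: coherencia self fp}, $R$ is von Neumann regular if and only if $R$ is left coherent regular and left self $\mathsf{FP}_1$-injective; second, by the preceding theorem, $R$ is left coherent regular if and only if condition~(2) holds for every $n \ge 1$; and third, by \cite[Proposition 2.8 and Example 2.9(2)]{MD1} applied to each matrix ring, $R$ is left self $\mathsf{FP}_1$-injective if and only if condition~(1) holds for every $n \ge 1$.

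The coherent-regular half is immediate: condition~(2), quantified over all $n \ge 1$, is verbatim the hypothesis of the preceding theorem, so it is equivalent to $R$ being left coherent regular, and nothing further is needed there.

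The self-injective half is the substantive step. For a fixed $S = M_n(R)$, I would invoke \cite[Proposition 2.8]{MD1}, which in the relevant form identifies left self $\mathsf{FP}_1$-injectivity of $S$ with the requirement that every finitely presented cyclic right $S$-module be torsionless; concretely this is the double-annihilator condition $r(l(J)) = J$ for finitely generated right ideals $J$, i.e.\ torsionlessness of $S/J$. Thus condition~(1) at the index $n$ says exactly that $M_n(R)$ is left self $\mathsf{FP}_1$-injective. I would then transfer this to $R$ itself: under the Morita equivalence $\lMod{R} \simeq \lMod{M_n(R)}$ the module ${}_R R$ corresponds to the column module $R^n$ and ${}_{M_n(R)}M_n(R) \cong (R^n)^{\oplus n}$, so since the equivalence preserves $\mathsf{FP}_1$-injectivity and this class is closed under finite direct sums and direct summands, $M_n(R)$ is left self $\mathsf{FP}_1$-injective if and only if $R$ is. Hence condition~(1) for all $n \ge 1$ is equivalent to left self $\mathsf{FP}_1$-injectivity of $R$.

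Combining the three biconditionals then gives the result. The main obstacle is this middle identification: one must be careful that the torsionless hypothesis is placed on the right-module side while the injectivity conclusion concerns ${}_R R$, and that the Morita transfer is legitimate — which it is, precisely because $\mathsf{FP}_1$-injectivity is a module-theoretic property preserved by the equivalence and by the finite biproducts relating ${}_R R$ to ${}_{M_n(R)}M_n(R)$. A secondary, purely expository, subtlety is that Proposition~\ref{prop: coherencia self fp} is proved only later; the deduction here is logically contingent on that characterization, as the surrounding text makes explicit.
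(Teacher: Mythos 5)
Your decomposition --- von Neumann regular $=$ left coherent regular $+$ left self $\mathsf{FP}_1$-injective (Proposition~\ref{prop: coherencia self fp}), with condition~(2) handled verbatim by the preceding theorem and condition~(1) by the Mao--Ding torsionless criterion --- is exactly the argument the paper intends; the paper supplies no written proof, only the surrounding remarks pointing to precisely these three ingredients. One caveat: your intermediate claim that condition~(1) at a \emph{single} index $n$ is already equivalent to left self $\mathsf{FP}_1$-injectivity of $M_n(R)$ cannot be right as stated, since together with your (correct) Morita-invariance observation it would make the quantifier over $n$ redundant --- finitely presented cyclic right $M_n(R)$-modules only account for the $n$-generated finitely presented right $R$-modules, so it is the conjunction over all $n$ that yields self $\mathsf{FP}_1$-injectivity of $R$; as that conjunction is all your final chain actually uses, the proof still goes through.
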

\qed

Recall that an ideal \( I \) of \( R \) is called a \newterm{\( \mathsf{W} \)-ideal} if it is isomorphic to a quotient of the dual module of a finitely generated right \( R \)-module. By \cite[Theorem 1]{Wang93}, a ring \( R \) is left coherent if and only if every \( \mathsf{W} \)-ideal and finitely generated ideal of \( R \) is finitely presented. This allows us to characterize coherent regular rings in terms of \( \mathsf{W} \)-ideals.

\begin{prop}
Let \( R \) be a ring. Then \( R \) is left coherent regular if and only if every \( \mathsf{W} \)-ideal and finitely generated ideal of \( R \) is finitely presented and has finite projective dimension.
\end{prop}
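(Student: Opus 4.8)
The plan is to combine the Wang characterization of coherence via $\mathsf{W}$-ideals with the established equivalences for regularity from \Cref{prop: equi regu}. The statement to prove is:
\begin{quote}
$R$ is left coherent regular if and only if every $\mathsf{W}$-ideal and finitely generated ideal of $R$ is finitely presented and has finite projective dimension.
\end{quote}
The strategy is to separate the biconditional into its two halves and treat the coherence condition and the finite-projective-dimension condition somewhat independently, using the cited theorem of Wang \cite{Wang93} to handle the coherence bookkeeping and \Cref{prop: equi regu} to handle the regularity bookkeeping.

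For the forward implication, I would assume $R$ is left coherent regular. By Wang's theorem \cite[Theorem~1]{Wang93}, coherence is exactly equivalent to the statement that every $\mathsf{W}$-ideal and every finitely generated ideal is finitely presented, so the ``finitely presented'' half of the conclusion is immediate. For the finite projective dimension half, I would note that every $\mathsf{W}$-ideal and every finitely generated ideal, being finitely presented over a coherent ring, actually lies in $\mathsf{FP}_\infty(R)$ by \Cref{prop:coh-subcategory}; since $R$ is left regular, every such module has finite projective dimension by definition. This direction is essentially a direct translation through the two cited characterizations.

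For the reverse implication, I would assume that every $\mathsf{W}$-ideal and every finitely generated ideal is finitely presented and has finite projective dimension. The finite-presentation hypothesis, via \cite[Theorem~1]{Wang93}, immediately gives that $R$ is left coherent. It then remains to deduce that $R$ is left regular. Here I would invoke \Cref{prop: equi regu}: for a left coherent ring, left regularity is equivalent to left Bertin-regularity, i.e.\ to the condition that every finitely generated ideal of $R$ has finite projective dimension. But this is precisely part of our hypothesis (restricted to finitely generated ideals), so $R$ is left Bertin-regular and hence left regular. Combining coherence and regularity yields the claim.

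The only genuine subtlety, and the step I would watch most carefully, is the reverse direction's appeal to \Cref{prop: equi regu}: that equivalence presupposes left coherence, so the argument must first establish coherence (from the finite-presentation hypothesis) before the Bertin-regularity criterion can be applied. Once coherence is in hand, the finite-projective-dimension hypothesis on finitely generated ideals is exactly condition (2) of \Cref{prop: equi regu}, and the deduction is formal. The $\mathsf{W}$-ideal hypothesis, by contrast, is used only to secure coherence via Wang's theorem and plays no separate role in the regularity deduction; I would make sure the write-up is clear that the two families of ideals serve distinct purposes rather than both feeding into the projective-dimension argument.
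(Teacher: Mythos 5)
Your proof has the same skeleton as the paper's: \cite[Theorem~1]{Wang93} supplies coherence, and Proposition~\ref{prop: equi regu} converts ``every finitely generated ideal has finite projective dimension'' (Bertin-regularity) into regularity. The forward direction, which the paper dismisses as straightforward, you handle correctly. The one substantive divergence is in the converse, precisely at the point you single out and then resolve in the opposite way from the paper. You declare that the $\mathsf{W}$-ideal clause ``plays no separate role in the regularity deduction'' and apply the finite-projective-dimension hypothesis directly to an arbitrary finitely generated ideal. The paper instead devotes essentially the whole of its proof of the converse to justifying that application: having obtained coherence, it takes a finitely generated (hence finitely presented) ideal $I$, writes $I \cong R^n/K$, identifies $R^n$ with $\Hom_R(R^n_R,R)$, the dual of the finitely generated right module $R^n_R$, concludes that $I$ is a $\mathsf{W}$-ideal, and only then invokes the hypothesis to obtain $\pd_R(I)<\infty$.

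That step is not decoration. Nothing in the definition forces a $\mathsf{W}$-ideal to be finitely generated, hence not every $\mathsf{W}$-ideal can be expected to be finitely presented; so the hypothesis cannot safely be read as two independent universal statements (one over all $\mathsf{W}$-ideals, one over all finitely generated ideals) --- under that reading the forward implication itself would be in doubt. The condition is meant to apply to ideals that are simultaneously $\mathsf{W}$-ideals and finitely generated, and on that reading your direct appeal to ``every finitely generated ideal has finite projective dimension'' assumes more than is given. The missing link is exactly the paper's observation that every finitely generated ideal is a $\mathsf{W}$-ideal; once that one line is inserted, your argument coincides with the paper's.
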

\begin{proof} We only need to prove the converse. Assume that every \( \mathsf{W} \)-ideal and finitely generated  ideal of \( R \) is finitely presented and has finite projective dimension. Then \( R \) is left coherent by \cite[Theorem 1]{Wang93}. Now, let \( I \subseteq R \) be a finitely generated left ideal. Since \( I \) is finitely presented, there exists a short exact sequence  of $R$-modules
$ 0 \to K \to R^n \to I \to 0,$
where \( K \) is finitely generated. Thus, \( I \cong R^n/K \). Note that \( R^n \cong \Hom_R(R^n_R, R) \), where \( R^n_R \) denotes \( R^n \) considered as a right \( R \)-module. Hence, \( I \) is a \( \mathsf{W} \)-ideal. By assumption, \( I \) has finite projective dimension. Therefore, \( R \) is left regular.
\end{proof}

Recall that the \newterm{small finitistic projective dimension} of a ring $R$, denoted by $\sfindim (R)$, is equal to the supremum of the projective dimensions of $R$-modules $M$, which satisfy $\pd_R M < \infty$, and $M$ is of type $\mathsf{FP}_{\infty}(R)$. 

\begin{prop}
Let \( R \) be a ring. Then the following hold:
\begin{enumerate}
    \item If \( R \) is a left regular ring, then the following are identical.
    \begin{enumerate}[label=(\roman*)]
        \item \( \sfindim(R) \).
        \item \( \gidi(R) \).
        \item \( \gfdi(R^{\op}) \).
    \end{enumerate}
     \item If \( R \) is a left coherent regular ring, then the following are identical.
    \begin{enumerate}[label=(\roman*)]
        \item \( \gidone(R) \).
        \item  \( \sfindim(R) \).
        \item  \( \displaystyle\sup\{\pd_R(M) : M \in \mathsf{FP}_1\text{-}\mathsf{Proj}(R),\ \pd_R(M) < \infty\} \).
       \item   \( \displaystyle\sup\{\pd_R(M) : M \in \mathsf{FP}_1\text{-}\mathsf{Proj}(R)\} \).
        \item \( \displaystyle\sup\{\oneid_R(M) : M \in \mathsf{FP}_1\text{-}\mathsf{Proj}(R)\} \).
        \item \( \wD(R) \).
    \end{enumerate}
\end{enumerate}
\end{prop}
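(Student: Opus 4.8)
The plan is to reduce every quantity appearing in both parts to the single invariant $\sup\{\pd_R(F) : F \in \mathsf{FP}_\infty(R)\}$, using throughout the standard Ext/Tor characterizations of relative global dimensions together with the identity $\pd_R(F) = \fd_R(F)$ valid on $\mathsf{FP}_\infty(R)$ over a regular ring (this is the content of the proof of Proposition~\ref{prop: regula flat proj}). The first observation, which needs no hypothesis on $R$, is that $\gidi(R) = \sup\{\pd_R(F) : F \in \mathsf{FP}_\infty(R)\}$: indeed $\gidi(R) \le k$ means $\Ext^{k+1}_R(F, M) = 0$ for every $F \in \mathsf{FP}_\infty(R)$ and every $M$, and for fixed $F$ the vanishing against all $M$ is exactly $\pd_R(F) \le k$. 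Likewise, transporting along the natural isomorphism $\Tor^{R^{\op}}_j(F, N) \cong \Tor^R_j(N, F)$, one obtains $\gfdi(R^{\op}) = \sup\{\fd_R(F) : F \in \mathsf{FP}_\infty(R)\}$.

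For Part (1), regularity does two things at once. It forces $\pd_R(F) = \fd_R(F)$ on $\mathsf{FP}_\infty(R)$, so the two suprema above coincide and $\gidi(R) = \gfdi(R^{\op})$; and it guarantees $\pd_R(F) < \infty$ for every $F \in \mathsf{FP}_\infty(R)$, so the finiteness constraint in the definition of $\sfindim(R)$ is vacuous and $\sfindim(R) = \sup\{\pd_R(F) : F \in \mathsf{FP}_\infty(R)\}$ as well. This closes Part (1).

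For Part (2), I would first invoke coherence to replace $\mathsf{FP}_\infty(R)$ by $\mathsf{FP}_1(R)$ everywhere (Proposition~\ref{prop:coh-subcategory}); call the common value $d$. Then (i) $=$ (ii) is Part (1), while (vi) follows from the ring-independent identity $\wD(R) = \sup\{\fd_R(F) : F \in \mathsf{FP}_1(R)\}$, obtained by writing every module as a filtered colimit of finitely presented modules and using that flat dimension does not grow under direct limits; regularity then converts $\fd$ to $\pd$. For (iv), the inclusion $\mathsf{FP}_1(R) \subseteq \mathsf{FP}_1\text{-}\mathsf{Proj}(R)$ (every finitely presented module is $\mathsf{FP}_1$-projective, since $\Ext^1_R(F,M)=0$ for $M$ $\mathsf{FP}_1$-injective) gives (iv) $\ge d$; for the reverse inequality I would take $M \in \mathsf{FP}_1\text{-}\mathsf{Proj}(R)$ and an arbitrary $N$, resolve $N$ by a length-$d$ $\mathsf{FP}_1$-injective coresolution (possible since $\gidone(R) = d$), and dimension-shift: because the cotorsion pair $[\mathsf{FP}_1\text{-}\mathsf{Proj}(R), \mathsf{FP}_1\text{-}\mathsf{Inj}(R)]$ is hereditary in the coherent case, all higher $\Ext$ of $M$ into $\mathsf{FP}_1$-injectives vanish, forcing $\Ext^{d+1}_R(M, N) = 0$ and hence $\pd_R(M) \le d$. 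Item (iii) is then sandwiched between (i) and (iv), since for regular $R$ the modules of $\mathsf{FP}_1(R)$ already have finite projective dimension and are therefore admissible in the supremum defining (iii).

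The one genuinely different step is (v). The bound (v) $\le d$ is immediate, as the supremum there runs over a subclass of all modules. For (v) $\ge d$ I would use the special $\mathsf{FP}_1$-projective precover guaranteed by completeness of the cotorsion pair: for any $N$ choose a short exact sequence $0 \to B \to P \to N \to 0$ with $P \in \mathsf{FP}_1\text{-}\mathsf{Proj}(R)$ and $B \in \mathsf{FP}_1\text{-}\mathsf{Inj}(R)$. Applying $\Ext^\bullet_R(F, -)$ for $F \in \mathsf{FP}_1(R)$ and using heredity (so that $\Ext^{j}_R(F, B) = 0$ for all $j \ge 1$) yields $\Ext^{k}_R(F, P) \cong \Ext^{k}_R(F, N)$ for every $k \ge 1$, whence $\oneid_R(P) = \oneid_R(N)$. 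Taking the supremum over $N$ recovers $\gidone(R) = d$, so (v) $= d$. I expect the bookkeeping in this last isomorphism---checking that heredity kills both $\Ext^{k}_R(F,B)$ and $\Ext^{k+1}_R(F,B)$ so that the connecting maps vanish on both sides---to be the most delicate point, together with keeping the left/right and $R$ versus $R^{\op}$ conventions straight in the Tor computation underlying Part (1).
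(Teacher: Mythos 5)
Your argument is correct; every step checks out. The normalization of $\gidi(R)$ and $\gfdi(R^{\op})$ to $\sup\{\pd_R(F) : F \in \mathsf{FP}_\infty(R)\}$ and $\sup\{\fd_R(F) : F \in \mathsf{FP}_\infty(R)\}$ is valid because $\mathsf{FP}_\infty(R)$ is closed under syzygies, which gives the monotonicity of the Ext/Tor vanishing needed to commute the two suprema; the dimension-shifting argument for (iv) $\le$ (i) works because the $d$-th cosyzygy of an injective coresolution of $N$ is $\mathsf{FP}_1$-injective once $\oneid_R(N) \le d$; and the special $\mathsf{FP}_1$-projective precover argument for (v) $\ge$ (i) is exactly what is needed, with heredity (equivalently, syzygy-closure of $\mathsf{FP}_1(R)$ over a coherent ring) killing $\Ext^{j}_R(F,B)$ for all $j \ge 1$ so that $\Ext^{k}_R(F,P) \cong \Ext^{k}_R(F,N)$. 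Your route is, however, genuinely different from the paper's: the paper proves Part (1) by invoking \cite[Theorem~3.8]{Gao} and obtains (i) $=$ (ii) $=$ (iv) $=$ (v) in Part (2) from \cite[Lemma~1.1 and Theorem~2.5]{AM}, writing out only the short chain of inequalities that sandwiches (iii) and (vi) between the quantities already identified. You instead reprove those external results from scratch by reducing everything to the single invariant $\sup\{\pd_R(F) : F \in \mathsf{FP}_\infty(R)\}$. This buys a self-contained proof that makes transparent exactly where coherence (syzygy-closure of $\mathsf{FP}_1(R)$, heredity and completeness of the cotorsion pair) and regularity (finiteness of $\pd_R$ on $\mathsf{FP}_\infty(R)$, hence vacuity of the finiteness constraint in $\sfindim$) enter, at the cost of length. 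One small attribution to correct: the identity $\pd_R(F) = \fd_R(F)$ for $F \in \mathsf{FP}_\infty(R)$ holds over an arbitrary ring (a finitely presented flat syzygy is projective), so regularity is not needed for the identity itself, only to guarantee that these common values are finite.
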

\begin{proof}
Part (1) follows from \cite[Theorem 3.8]{Gao}.  

For part (2), assume that \( R \) is a left coherent regular ring.  
\textbf{(i) $=$ (ii):} by \cite[Lemma 1.1]{AM}.  
\textbf{(i) $=$ (iv) $=$ (v):} by \cite[Theorem 2.5]{AM}.  
\textbf{(i) $=$ (vi):} well known.  
\textbf{(iii) $=$ (vi):} observe that
$$\begin{aligned}
    \wD(R) 
    &= \sup\{\pd_R(M) : M \in \mathsf{FP}_1(R)\} \\
    &= \sup\{\pd_R(M) : M \in \mathsf{FP}_1(R), \, \pd_R(M) < \infty\} \\
    &\le \sup\{\pd_R(M) : M \in \mathsf{FP}_1\text{-}\mathsf{Proj}(R), \, \pd_R(M) < \infty\} \\
    &\le \sup\{\pd_R(M) : M \in \mathsf{FP}_1\text{-}\mathsf{Proj}(R)\} \\
    &= \wD(R).
\end{aligned}$$
 \end{proof}

\begin{cor}
Let $R$ be a ring. The following conditions are equivalent:
\begin{enumerate}
    \item $R$ is von Neumann regular.
    \item $R$ is left coherent, $\wD(R) < \infty$, and every $\mathsf{FP}_1$-projective $R$-module with finite projective dimension is projective.
\end{enumerate}
\end{cor}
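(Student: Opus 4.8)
The plan is to prove the two implications separately, with the main engine being the equality established in the preceding Proposition: for a left coherent regular ring one has $\wD(R) = \sup\{\pd_R(M) : M \in \mathsf{FP}_1\text{-}\mathsf{Proj}(R)\}$. The secondary ingredient is the identification of the $\mathsf{FP}_1$-projective modules over a von Neumann regular ring.

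For $(1)\Rightarrow(2)$ I would argue directly. First, over a von Neumann regular ring every finitely generated left ideal is generated by an idempotent, hence is a direct summand of $R$ and so finitely presented; thus $R$ is left coherent. Second, every $R$-module is flat, so $\wD(R)=0<\infty$. For the last clause, recall that $R$ is von Neumann regular precisely when every $R$-module is $\mathsf{FP}_1$-injective; consequently an $R$-module $P$ is $\mathsf{FP}_1$-projective if and only if $\Ext^1_R(P,M)=0$ for every $R$-module $M$, i.e.\ if and only if $P$ is projective. Hence every $\mathsf{FP}_1$-projective module is already projective, and the requirement that those of finite projective dimension be projective holds vacuously.

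For $(2)\Rightarrow(1)$ the first step is to upgrade the hypotheses to coherent regularity. Since $\wD(R)<\infty$, the ring $R$ has finite weak dimension and is therefore left regular; being also left coherent, $R$ is left coherent regular, which is exactly the hypothesis needed to invoke the preceding Proposition. That proposition yields
\[
\wD(R) = \sup\{\pd_R(M) : M \in \mathsf{FP}_1\text{-}\mathsf{Proj}(R)\}.
\]
Now $\wD(R)<\infty$ forces $\pd_R(M)\le \wD(R)<\infty$ for every $\mathsf{FP}_1$-projective module $M$; by hypothesis each such $M$ is then projective, so $\pd_R(M)=0$. Therefore the supremum above equals $0$, whence $\wD(R)=0$ and $R$ is von Neumann regular.

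The routine parts are the standard facts that von Neumann regular rings are coherent and of weak dimension zero, and that finite weak dimension implies regularity (already recorded in the text). The point requiring care, and the crux of the argument, is that condition (2) must be combined with the equality $\wD(R)=\sup\{\pd_R(M): M\in\mathsf{FP}_1\text{-}\mathsf{Proj}(R)\}$ rather than with the naive supremum over all finitely presented modules: it is precisely the restriction to $\mathsf{FP}_1$-projective modules, whose finite projective dimension collapses to zero under the hypothesis, that forces the weak dimension to vanish.
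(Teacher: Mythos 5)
Your proof is correct and follows the route the paper intends: the corollary is stated as an immediate consequence of the preceding proposition, and you supply exactly that argument, deducing coherent regularity from $\wD(R)<\infty$ and then collapsing $\wD(R)=\sup\{\pd_R(M): M\in\mathsf{FP}_1\text{-}\mathsf{Proj}(R)\}$ to zero via the hypothesis on $\mathsf{FP}_1$-projectives of finite projective dimension. The forward direction, identifying $\mathsf{FP}_1$-projectives with projectives over a von Neumann regular ring, is also the standard observation the paper relies on.
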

\qed

\begin{cor}
Let $R$ be a ring. The following conditions are equivalent:
\begin{enumerate}
    \item $R$ is left semihereditary.
    \item $R$ is left coherent, $\wD(R) < \infty$,  and \( \sfindim(R)\le 1 \).
    \item $R$ is left coherent regular with \( \sfindim(R)\le 1 \).
\end{enumerate}
\end{cor}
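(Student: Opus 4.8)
The plan is to prove the cyclic chain $(1)\Rightarrow(3)\Rightarrow(2)\Rightarrow(1)$, leaning on two ingredients: the classical fact that a ring is left semihereditary precisely when it is left coherent with $\wD(R)\le 1$, and the identity $\sfindim(R)=\wD(R)$ for left coherent regular rings, which is clauses (ii) and (vi) of part~(2) of the preceding Proposition.

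For $(1)\Rightarrow(3)$, I would first note that if $R$ is left semihereditary then every finitely generated left ideal is projective, hence finitely presented, so $R$ is left coherent; moreover each such ideal is flat, and since flatness passes to direct limits and $\wD(R)$ can be tested on cyclic modules $R/I$ with $I$ finitely generated, this yields $\wD(R)\le 1$. In particular $\wD(R)<\infty$, so every finitely $\infty$-presented module has finite flat dimension, whence $R$ is left regular by Proposition~\ref{prop: regula flat proj}(1). Thus $R$ is left coherent regular, and the preceding Proposition gives $\sfindim(R)=\wD(R)\le 1$.

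For $(3)\Rightarrow(2)$, the preceding Proposition again supplies $\sfindim(R)=\wD(R)$, so the hypothesis $\sfindim(R)\le 1$ forces $\wD(R)\le 1<\infty$; together with coherence this is exactly~(2). For $(2)\Rightarrow(1)$, finiteness of $\wD(R)$ makes every finitely $\infty$-presented module of finite flat dimension, so $R$ is regular by Proposition~\ref{prop: regula flat proj}(1) and hence left coherent regular; then $\sfindim(R)=\wD(R)$ converts the bound $\sfindim(R)\le 1$ into $\wD(R)\le 1$. Finally, over a left coherent ring with $\wD(R)\le 1$ every finitely generated left ideal is simultaneously finitely presented (coherence) and flat ($\wD(R)\le 1$), hence projective, so $R$ is left semihereditary.

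The step I expect to be most delicate is the passage between $\sfindim(R)$ and $\wD(R)$: this equality is available only once both coherence and regularity have been secured, so in each implication I must first establish "left coherent regular" before invoking it. The auxiliary facts---that a finitely presented flat module is projective, and that $\wD(R)\le 1$ is detected on finitely generated left ideals---are standard and can be cited from \cite{Glaz1}.
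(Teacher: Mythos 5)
Your proof is correct and is essentially the argument the paper intends: the corollary is left as an immediate consequence of the preceding proposition, whose chain of equalities gives $\sfindim(R)=\wD(R)$ for left coherent regular rings, combined with the classical characterization of left semihereditary rings as the left coherent rings with $\wD(R)\le 1$. Your care in establishing coherent regularity before invoking that identity in each implication is exactly the point that makes the cycle $(1)\Rightarrow(3)\Rightarrow(2)\Rightarrow(1)$ close.
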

\qed

\subsection{Commuatative case}
Recall that an ideal of \(R\) is \newterm{regular} if it contains a regular element, i.e., a non-zerodivisor element. In the context of rings containing regular elements, a ring \(R\) is coherent if and only if every finitely generated ideal of \(R\) that is contained in a finitely presented proper ideal of \(R\) is itself finitely presented; see \cite[Proposition 2.2]{BM}. The same argument as in the proof of \cite[Proposition 2.2]{BM} shows that a ring \(R\) is Bertin-regular if and only if any finitely generated ideal of \(R\) contained in a finitely presented proper ideal of \(R\) has finite projective dimension. Thus, we have the following proposition:

\begin{prop}
    Let \(R\) be a commutative ring containing a regular element. Then \(R\) is  coherent regular  if and only if any finitely generated ideal of \(R\) contained in a finitely presented proper ideal of \(R\) is itself finitely presented and has finite projective dimension.
\end{prop}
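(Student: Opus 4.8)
The plan is to decouple the two requirements in the hypothesis---being finitely presented and having finite projective dimension---to match each with an existing characterization, and then to recombine them via Proposition~\ref{prop: equi regu}. Since $R$ is commutative, the left/right distinction plays no role.

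First I would record that the hypothesis is exactly the conjunction of the following two statements, both quantified over the same class of ideals:
\begin{enumerate}
    \item[(a)] every finitely generated ideal of $R$ contained in a finitely presented proper ideal is finitely presented;
    \item[(b)] every finitely generated ideal of $R$ contained in a finitely presented proper ideal has finite projective dimension.
\end{enumerate}
The forward implication then needs no regular element: if $R$ is coherent regular, coherence gives that every finitely generated ideal is finitely presented, so (a) holds, while Proposition~\ref{prop: equi regu} upgrades regularity to Bertin-regularity, so every finitely generated ideal has finite projective dimension and (b) holds.

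For the converse I would invoke the two characterizations available over a ring with a regular element. By \cite[Proposition 2.2]{BM}, statement (a) is equivalent to $R$ being coherent; by the analogue recorded just before the proposition---obtained by rerunning the argument of \cite[Proposition 2.2]{BM} with projective dimension in place of finite presentation---statement (b) is equivalent to $R$ being Bertin-regular. Hence the hypothesis forces $R$ to be simultaneously coherent and Bertin-regular, and Proposition~\ref{prop: equi regu} identifies this combination with $R$ being coherent regular, completing the equivalence.

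The assembly itself is purely formal, so I expect the only real content to sit in the Bertin-regular analogue of \cite[Proposition 2.2]{BM}. Its crux is the transfer step of the original argument: a finitely generated ideal $J$ is compared, through multiplication by a suitable regular element, to an ideal lying inside a finitely presented proper ideal, and this comparison is realized by an isomorphism of $R$-modules. Because an isomorphism preserves projective dimension exactly as it preserves finite presentation, the very same chain of reductions that derives coherence from (a) derives finiteness of projective dimension from (b). Verifying that this transfer goes through in full---including the boundary situations where the auxiliary ideal may fail to be proper---is the step I would treat with the most care; once it is secured, the three cited facts combine immediately.
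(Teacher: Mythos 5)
Your proposal matches the paper's own (essentially implicit) argument: the paper likewise derives the result by combining \cite[Proposition~2.2]{BM} for the coherence half, the stated Bertin-regular analogue of that proposition for the projective-dimension half, and Proposition~\ref{prop: equi regu} to identify coherent plus Bertin-regular with coherent regular. The decomposition into the two quantified statements and the reassembly are exactly the intended proof, so nothing further is needed.
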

\qed

Recall that a domain \( R \) is a \newterm{\(\mathsf{G\text{-}GCD}\) domain} (Generalized Greatest Common Divisor) if the intersection of any two invertible ideals of \( R \) is itself invertible. A fractional ideal \( I \) of \( R \) is \newterm{invertible} if \( I I^{-1} = R \). It is well known that \( I \) is invertible if and only if it is a projective \( R \)-module containing a non-zerodivisor. Following Glaz \cite{Glaz6}, a ring \(R\) is a $\mathsf{G\text{-}GCD}$ ring if every principal ideal is projective and the class of finitely generated flat (equivalently, projective or invertible) ideals is closed under finite intersections. Note that the classes of coherent rings and \(\mathsf{G\text{-}GCD}\) rings are not comparable.  However, every  coherent regular ring is a \(\mathsf{G\text{-}GCD}\) ring; see \cite[Proposition 3.4]{Glaz6}. The following implications always hold: 
$$\text{Coherent regular rings}\Rightarrow \mathsf{G\text{-}GCD}\text{ rings} \Rightarrow \mathsf{p.p.}\text{-rings} \Rightarrow \mathsf{p.f.}\text{-rings} $$
where \(\mathsf{p.p.}\)-rings and \(\mathsf{p.f.}\)-rings are those whose principal ideals are projective and flat, respectively.

\begin{prop}\label{prop: red}
    Let \( R \) be a commutative coherent regular ring. Then:
    \begin{enumerate}
        \item \( R \) is a reduced ring, i.e., it has no non-zero nilpotent elements.
        \item \( R \) is a nonsingular ring, i.e., every essential ideal of \( R \) has zero annihilator.
        \item For each \( r \in R \), \( \mathsf{Ann}_R(r) = \mathsf{Ann}_R(r^k) \) for all \( k \geq 2 \), and \( \mathsf{Ann}_R(r) \) is a pure ideal of $R$ generated by an idempotent element of \( R \).
        \item For each ideal \( I \) of \( R \) and each \( r \in R \), \( \mathsf{Ann}_R(r) \cap I = \mathsf{Ann}_R(r) I \).  
    \end{enumerate}
\end{prop}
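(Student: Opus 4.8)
The plan is to reduce the entire statement to one structural fact and then read off each item from it. Since $R$ is coherent regular, the chain of implications recorded just above shows that $R$ is a $\mathsf{p.p.}$-ring, so every principal ideal $Rr$ is projective. Applying this to the short exact sequence
\[
0 \longrightarrow \mathsf{Ann}_R(r) \longrightarrow R \xrightarrow{\ \cdot\, r\ } Rr \longrightarrow 0,
\]
projectivity of $Rr$ forces the sequence to split, so $\mathsf{Ann}_R(r)$ is a direct summand of $R$ and hence $\mathsf{Ann}_R(r) = Re$ for some idempotent $e$ with $er = 0$. This already delivers the second assertion of (3): a direct summand is a pure submodule, so $Re$ is a pure ideal generated by the idempotent $e$. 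Every remaining claim will be extracted from this idempotent description.

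For (1) I would first rule out nonzero square-zero elements. If $r^2 = 0$, then $r \in \mathsf{Ann}_R(r) = Re$, so $r = re$; but $er = 0$ gives $re = 0$, whence $r = 0$. A standard reduction then upgrades this to full reducedness: if $x^n = 0$ with $x \neq 0$ and $n \geq 2$ minimal, the element $y = x^{n-1}$ is nonzero (by minimality) and satisfies $y^2 = x^{2n-2} = 0$ since $2n-2 \geq n$, contradicting the previous step. Hence $R$ is reduced.

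With (1) in hand, the first claim of (3) is the usual reduced-ring computation: the inclusion $\mathsf{Ann}_R(r) \subseteq \mathsf{Ann}_R(r^k)$ is automatic, and conversely $x r^2 = 0$ gives $(xr)^2 = x(x r^2) = 0$, so $xr = 0$ by reducedness; an easy induction yields $\mathsf{Ann}_R(r) = \mathsf{Ann}_R(r^k)$ for all $k \geq 2$. Part (2) likewise follows from reducedness: if $E$ is an essential ideal and $0 \neq a \in \mathsf{Ann}_R(E)$, then the nonzero ideal $Ra$ meets $E$ in some $0 \neq b = ca$; since $b \in E$ and $a \in \mathsf{Ann}_R(E)$ we have $ab = 0$, so $b^2 = c(ab) = 0$, contradicting reducedness. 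Therefore $\mathsf{Ann}_R(E) = 0$.

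Finally, for (4) I would compute directly from $\mathsf{Ann}_R(r) = Re$. One has $\mathsf{Ann}_R(r)\, I = Re\cdot I = eI$, so the claim becomes $Re \cap I = eI$. The inclusion $eI \subseteq Re \cap I$ is clear, and conversely any $z \in Re \cap I$ satisfies $z = ze$ (because $z \in Re$ and $e^2 = e$); since $z \in I$ this exhibits $z = ez \in eI$. The arguments are all short, and the genuine content is concentrated entirely in the splitting step, i.e. in invoking the $\mathsf{p.p.}$-property to identify $\mathsf{Ann}_R(r)$ with an idempotent-generated summand. The only points demanding mild care are the passage from ``no square-zero element'' to ``reduced'' and the bookkeeping that items (2)--(4) each rest on (1) together with the idempotent decomposition, rather than being independent; once that dependency is respected, each assertion is a one-line consequence.
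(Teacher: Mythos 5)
Your proof is correct and rests on the same foundation as the paper's: the implication ``coherent regular $\Rightarrow$ principal ideals projective'' (via the $\mathsf{G\text{-}GCD}$/$\mathsf{p.p.}$ chain). The difference is one of execution rather than strategy. The paper deduces reducedness from projectivity of principal ideals without detail, quotes the folklore equivalence of ``reduced'' and ``nonsingular'' for commutative rings to get (2), and cites two results of Tarizadeh for the purity and idempotent-generation claims in (3). You instead split the sequence $0 \to \mathsf{Ann}_R(r) \to R \to Rr \to 0$ once, obtain $\mathsf{Ann}_R(r) = Re$ directly, and then read off every item --- reducedness, nonsingularity, stability of annihilators under powers, purity, and the intersection formula in (4) --- from that single idempotent decomposition. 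This makes your argument fully self-contained where the paper's proof leans on external references, at the cost of a few extra lines; the mathematical content is the same, and all your elementary verifications (the square-zero reduction, the essential-ideal argument for (2), and the computation $Re \cap I = eI$) check out.
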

\begin{proof}  
     Since \( R \) is a \(\mathsf{G\text{-}GCD}\) ring, its principal ideals are projective, which implies that \( R \) is reduced. In commutative rings, reduced and nonsingular properties are equivalent, so (2) follows from (1).  
     For (3), we have \( \mathsf{Ann}(r) = \mathsf{Ann}(r^k) \) by (1). Additionally, by \cite[Corollary 2.3]{Tar}, \( \mathsf{Ann}(r) \) is a pure ideal of $R$, and by \cite[Corollary 3.3]{Tar}, it is generated by an idempotent element.  
     Finally, (4) follows from (3).  
\end{proof}

\begin{rmk}
For any element \( r \in R \), define \( \mathsf{Supp}(r) = \{ P \in \mathsf{Spec}(R) : r \notin P \} \). Here, \( \mathsf{Spec}(R) \) denotes the set of all prime ideals of \( R \). For any (not necessarily commutative) ring \( R \), the collection \( \{ \mathsf{Supp}(r) : r \in R \} \) forms a basis for the open sets of the topological space \( \mathsf{Spec}(R) \).  We view \( \mathsf{Min}(R) \), the set of minimal prime ideals of \( R \), as a subspace of \( \mathsf{Spec}(R) \) with the induced topology. This topology is known as the \newterm{hull-kernel} topology, and in the commutative setting, it coincides with the Zariski topology. In the commutative case, coherent regular rings are reduced and have compact \( \mathsf{Min}(R) \); see \cite[Proposition 3.1]{Glaz6}. Moreover, \( \mathsf{Min}(R) \) is compact for every reduced left coherent ring, see \cite[Corollary 2.13]{DT21}. However, we do not know whether every left coherent regular ring is reduced, or whether \( \mathsf{Min}(R) \) is compact in general.
\end{rmk}

By Goldie's Theorem, a ring has a semisimple total ring of quotients if it is semiprime, satisfies the ascending chain condition on annihilators, and has no infinite direct sums of nonzero ideals. For the case of commutative coherent regular rings, we now have:

\begin{prop}
Let \( R \) be a commutative coherent regular ring. If every flat \( R \)-module is \( \mathsf{FP} \)-projective, then the total ring of quotients \( \mathsf{Q}(R) \) is semisimple Artinian. In particular, the total ring of quotients of every commutative Noetherian regular ring is semisimple Artinian.
\end{prop}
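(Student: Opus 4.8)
The plan is to show that $\mathsf{Q}(R)$ is von Neumann regular and that, in addition, every $\mathsf{Q}(R)$-module is projective; since a projective module is in particular $\mathsf{FP}_1$-projective and (over a von Neumann regular ring) flat, this lets me invoke Proposition~\ref{cor:Semisimple} applied to the ring $\mathsf{Q}(R)$, forcing $\mathsf{Q}(R)$ to be semisimple Artinian. Throughout, write $S \subseteq R$ for the multiplicative set of non-zerodivisors, so that $\mathsf{Q}(R) = S^{-1}R$ and $R \to \mathsf{Q}(R)$ is a flat ring epimorphism.

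First I would establish that $\mathsf{Q}(R)$ is von Neumann regular, a step that uses only that $R$ is commutative coherent regular and not the standing hypothesis on flat modules. By Proposition~\ref{prop: red}(3), for each $a \in R$ one has $\mathsf{Ann}_R(a) = Re$ for some idempotent $e$; then $ae = 0$ and $a(1-e) = a$, and a short computation (if $(a+e)x = 0$, multiply by $e$ to get $ex=0$, whence $ax=0$ and $x \in Re$, forcing $x=0$) shows $a+e$ is a non-zerodivisor, hence a unit in $\mathsf{Q}(R)$. From $(a+e)(1-e) = a$ I get $a\,(a+e)^{-1}a = a(1-e) = a$ in $\mathsf{Q}(R)$, so $a$ is von Neumann regular there; multiplying by units then handles an arbitrary element $a/t$. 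Consequently every $\mathsf{Q}(R)$-module is flat and every finitely presented $\mathsf{Q}(R)$-module is projective.

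The heart of the argument will be to promote the hypothesis from $R$ to $\mathsf{Q}(R)$. Let $M$ be an arbitrary $\mathsf{Q}(R)$-module. It is flat over $\mathsf{Q}(R)$, hence flat over $R$ (as $\mathsf{Q}(R)$ is $R$-flat), so the standing hypothesis gives that $M$ is $\mathsf{FP}$-projective over $R$, i.e. $\Ext^1_R(M, A) = 0$ for every $\mathsf{FP}$-injective $R$-module $A$. I then claim that \emph{every} $\mathsf{Q}(R)$-module $A$ is $\mathsf{FP}$-injective over $R$: for a finitely presented $R$-module $F$, the group $\Ext^1_R(F, A)$ is already an $S^{-1}R$-module (since $S$ acts invertibly on $A$), so it equals its own localization, and because $F$ is finitely presented one has $\Ext^1_R(F, A) \cong S^{-1}\Ext^1_R(F, A) \cong \Ext^1_{\mathsf{Q}(R)}(S^{-1}F, A)$, which vanishes as $S^{-1}F$ is finitely presented, hence projective, over the von Neumann regular ring $\mathsf{Q}(R)$. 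Therefore $\Ext^1_R(M, A) = 0$ for every $\mathsf{Q}(R)$-module $A$. Choosing a short exact sequence $0 \to A \to P \to M \to 0$ of $\mathsf{Q}(R)$-modules with $P$ free over $\mathsf{Q}(R)$, this sequence splits as $R$-modules; and since restriction of scalars along the ring epimorphism $R \to \mathsf{Q}(R)$ is fully faithful, the $R$-linear section is automatically $\mathsf{Q}(R)$-linear, so the sequence splits over $\mathsf{Q}(R)$ and $M$ is projective. As $M$ was arbitrary, $\mathsf{Q}(R)$ is semisimple Artinian.

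For the final assertion, a commutative Noetherian regular ring is in particular coherent regular, and over any Noetherian ring the $\mathsf{FP}$-injective modules coincide with the injective ones, so $\Ext^1_R(P,A)=0$ for every module $P$ and every $\mathsf{FP}$-injective $A$; hence every module (a fortiori every flat module) is $\mathsf{FP}$-projective and the hypothesis holds automatically, yielding that $\mathsf{Q}(R)$ is semisimple Artinian. I expect the main obstacle to be the transfer step: one must check carefully that $\mathsf{FP}$-injectivity and $\mathsf{FP}$-projectivity over $R$ interact correctly with the localization $R \to \mathsf{Q}(R)$, the essential inputs being the commutation of $\Ext^1$ with localization for a finitely presented first argument and the full faithfulness of restriction along the ring epimorphism $R \to \mathsf{Q}(R)$.
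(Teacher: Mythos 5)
Your argument is correct, but it takes a genuinely different route from the paper. The paper's proof is a two-line reduction: by Proposition~\ref{prop: f-projective} (really \cite[Lemma~3.5]{AT}), over a coherent ring every flat $\mathsf{FP}_1$-projective module is $\mathsf{f}$-projective, so the hypothesis forces every flat module to be $\mathsf{f}$-projective, and then \cite[Corollary~3.2]{BCS} is invoked to conclude that $\mathsf{Q}(R)$ is semisimple Artinian. You instead give a self-contained argument: you first prove directly that $\mathsf{Q}(R)$ is von Neumann regular from the idempotent-generated annihilators of Proposition~\ref{prop: red}(3) (recovering the quasi-regularity of coherent regular rings that the paper only quotes from Glaz), and then transfer the $\mathsf{FP}$-projectivity hypothesis along the flat ring epimorphism $R \to \mathsf{Q}(R)$ to show every $\mathsf{Q}(R)$-module is projective. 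What your approach buys is independence from \cite{BCS} and \cite{AT} and an explicit mechanism (full faithfulness of restriction along an epimorphism, plus compatibility of $\Ext^1$ with localization) in place of the $\mathsf{f}$-projectivity machinery; what it costs is length and one point you should make explicit: the isomorphism $S^{-1}\Ext^1_R(F,A) \cong \Ext^1_{\mathsf{Q}(R)}(S^{-1}F, S^{-1}A)$ requires $F$ to lie in $\mathsf{FP}_2(R)$, not merely $\mathsf{FP}_1(R)$ (one needs the degree-$2$ term of a resolution of $F$ to be finitely generated to identify $H^1$ after localizing); this is harmless here precisely because $R$ is coherent, so $\mathsf{FP}_1(R)=\mathsf{FP}_\infty(R)$, but as written the step would fail over a non-coherent ring, so the appeal to coherence should be stated. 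Your treatment of the Noetherian case matches the paper's.
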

\begin{proof}
Follows from \cite[Corollary 3.2]{BCS} and Proposition \ref{prop: f-projective}. In the Noetherian case, it suffices to observe that every module is $\mathsf{FP}$-projective.
\end{proof}

\begin{rmk}
    Coherent rings with finite weak global dimension are regular. Von Neumann regular and semihereditary rings are examples. However, there are coherent rings  with infinite weak global dimension which are regular. For example, let $\mathsf{k}$ be a field and $\mathsf{k}[x_1,\cdots,x_n]$ be the polynomial ring in $n$ variables over $\mathsf{k}$. Then $R=\varinjlim_{n\leq \omega}\mathsf{k}[x_1,\cdots,x_n]$ is a coherent regular domain with $\wD(R)=\infty$, see \cite{Glaz1}. However, if a local coherent ring \( (R, \mathsf{m}) \) has quasi-compact punctured spectrum \( \mathsf{Spec}(R) \setminus \{\mathsf{m}\} \), then the regularity condition implies that \( R \) has finite weak dimension; see \cite[Theorem 2.17]{GL}. In particular, any local coherent ring with quasi-compact punctured spectrum and nonzero nilpotent maximal ideal cannot be regular \cite[Proposition 6.3]{BG}.
\end{rmk}

\begin{prop}\label{prop:local regula}
    Let $(R,\mathsf{m})$ be a local coherent regular ring. Then $\mathsf{m}^2\neq 0$. 
\end{prop}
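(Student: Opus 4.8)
The claim is that a local coherent regular ring $(R,\mathsf{m})$ cannot have $\mathsf{m}^2 = 0$. The plan is to argue by contradiction: assume $\mathsf{m}^2 = 0$ and derive a violation of regularity, i.e.\ produce a finitely $\infty$-presented $R$-module of infinite projective dimension. The natural candidate is the residue module $k = R/\mathsf{m}$, or a finitely generated ideal built from $\mathsf{m}$, whose minimal free resolution I expect to be periodic and hence infinite.

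First I would observe that over a local ring, $k = R/\mathsf{m}$ is finitely presented, and since $R$ is coherent, by Proposition~\ref{prop:coh-subcategory} we have $\mathsf{FP}_1(R) = \mathsf{FP}_\infty(R)$, so $k$ is automatically finitely $\infty$-presented (every syzygy of a finitely presented module is again finitely presented, hence finitely $\infty$-presented). By the definition of regularity, $k$ must then have finite projective dimension. The heart of the argument is to show that the assumption $\mathsf{m}^2 = 0$ forces $\pd_R(k) = \infty$, giving the contradiction. The mechanism is the standard periodicity phenomenon for rings with square-zero maximal ideal: from the short exact sequence $0 \to \mathsf{m} \to R \to k \to 0$ together with $\mathsf{m}^2 = 0$, one sees that $\mathsf{m}$ is annihilated by $\mathsf{m}$, so $\mathsf{m}$ is itself a $k$-vector space. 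I would then examine the first syzygy of $k$: if $R$ is not already a field (which is excluded since $\mathsf{m} \neq 0$, as $\mathsf{m}^2 = 0$ would make $R$ a field only if $\mathsf{m}=0$), the minimal resolution of $k$ cannot terminate, because each syzygy module is again a nonzero $k$-vector space sitting inside a free module, and the square-zero condition prevents the resolution from stabilizing.

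The key step, and the main obstacle, is establishing the infinitude of $\pd_R(k)$ cleanly in the possibly non-Noetherian coherent setting, where $\mathsf{m}$ need not be finitely generated and $k$ need not be finite-dimensional over itself. I would handle this by reducing to a finitely generated piece: pick $0 \neq t \in \mathsf{m}$, so that $\mathsf{m} t = 0$ forces $\mathsf{Ann}_R(t) \supseteq \mathsf{m}$, whence $\mathsf{Ann}_R(t) = \mathsf{m}$ (it is a proper ideal containing $\mathsf{m}$). Then the principal ideal $Rt \cong R/\mathsf{Ann}_R(t) = R/\mathsf{m} = k$, and $Rt$ is finitely presented by coherence (indeed $R$ is $\mathsf{P}$-coherent). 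The short exact sequence $0 \to \mathsf{m} \to R \to k \to 0$ has $\mathsf{m}$ as first syzygy of $k$; since $k \cong Rt \subseteq \mathsf{m}$, the module $k$ is a direct summand candidate inside its own syzygy, and chasing the minimal resolution yields a copy of $k$ as a direct summand of every syzygy, so no syzygy is ever projective (projectives over a local ring are free, and $k$ is not free since $\mathsf{m} \neq 0$). This produces a finitely $\infty$-presented module of infinite projective dimension, contradicting regularity. Therefore $\mathsf{m}^2 \neq 0$.

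I expect the delicate point to be ensuring that the relevant syzygy modules genuinely remain finitely $\infty$-presented and nonzero at each stage without invoking Noetherian hypotheses; the coherence of $R$, via $\mathsf{FP}_1(R) = \mathsf{FP}_\infty(R)$, is exactly what licenses this, and the square-zero hypothesis is what forces the periodic, non-terminating behavior of the minimal free resolution.
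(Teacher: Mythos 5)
Your proof is correct, but it takes a genuinely different route from the paper's. The paper's argument is a one-liner riding on Proposition~\ref{prop: red}: since a commutative coherent regular ring is a $\mathsf{G\text{-}GCD}$ ring, the principal ideal $Rx$ is projective, so $0\to\mathsf{Ann}_R(x)\to R\to Rx\to 0$ splits and $\mathsf{Ann}_R(x)=\mathsf{m}$ is generated by an idempotent --- impossible in a local ring. You instead attack regularity head-on: from $\mathsf{Ann}_R(t)=\mathsf{m}$ you get $Rt\cong R/\mathsf{m}=k$, coherence makes this finitely presented (hence finitely $\infty$-presented, and it also forces $\mathsf{m}$ to be finitely generated, so $\mathsf{m}\cong k^d$ with $d\ge 1$ as $\mathsf{m}$ is a $k$-vector space), and then the first syzygy of $k$ is a nonzero direct sum of copies of $k$, so no syzygy is ever projective and $\pd_R(k)=\infty$, contradicting regularity. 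Both proofs pivot on the same observation that $\mathsf{Ann}_R(t)=\mathsf{m}$ for $0\neq t\in\mathsf{m}$; yours is more self-contained (it does not route through Glaz's $\mathsf{p.p.}$ property or Tarizadeh's results on pure ideals and idempotents) and in fact only uses that $R$ is left coherent and left regular, whereas the paper's proof is genuinely commutative. Two small points of polish: the phrase ``direct summand candidate'' should be replaced by the clean dimension shift --- if $\pd_R(k)=n<\infty$ then $n\ge 1$ (as $k$ is not free) and $n-1=\pd_R(\mathsf{m})=\pd_R(k^d)=n$, a contradiction; and note that, exactly as in the paper's own proof, your argument silently assumes $\mathsf{m}\neq 0$ (the statement is vacuously false for a field), which you do acknowledge but could state once and move on.
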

\begin{proof}
If $\mathsf{m}^2 = 0$, then for any nonzero $x \in \mathsf{m}$ we have $\mathsf{Ann}_R(x) = \mathsf{m}$, so by Proposition \ref{prop: red}, $\mathsf{m}$ is generated by an idempotent, a contradiction.
\end{proof}

\begin{prop}
    Let \( R \) be a commutative coherent regular ring. If \( R \) is indecomposable, then \( R \) is a domain.
\end{prop}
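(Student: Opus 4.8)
The plan is to exploit the structural information about annihilators recorded in Proposition~\ref{prop: red}(3), which asserts that for every $r \in R$ the annihilator $\mathsf{Ann}_R(r)$ is generated by an idempotent of $R$. Since $R$ is commutative, indecomposability is equivalent to the statement that the only idempotents of $R$ are $0$ and $1$; I would note this equivalence at the outset, as it is the bridge between the hypothesis and the structure theorem.

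First I would fix a nonzero element $r \in R$ and invoke Proposition~\ref{prop: red}(3) to write $\mathsf{Ann}_R(r) = Re$ for some idempotent $e \in R$. By indecomposability, $e \in \{0,1\}$, so $\mathsf{Ann}_R(r)$ is either $0$ or all of $R$. Because $r \neq 0$, we have $1 \cdot r = r \neq 0$, so $1 \notin \mathsf{Ann}_R(r)$ and therefore $\mathsf{Ann}_R(r) \neq R$. This forces $e = 0$, and hence $\mathsf{Ann}_R(r) = 0$.

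Next I would translate this into the absence of zero-divisors: if $rs = 0$ with $r \neq 0$, then $s \in \mathsf{Ann}_R(r) = 0$, so $s = 0$. Combined with the fact that an indecomposable ring is by convention nonzero, this shows that $R$ is an integral domain.

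There is no substantial obstacle here beyond correctly quoting Proposition~\ref{prop: red}(3): the whole content is that coherence together with regularity makes each annihilator $\mathsf{Ann}_R(r)$ a direct summand of $R$ cut out by an idempotent, and indecomposability collapses those idempotents to $0$ and $1$. The only point requiring minor care is the \emph{direction} of the argument — one uses $r \neq 0$ to eliminate the possibility $\mathsf{Ann}_R(r) = R$, thereby pinning the idempotent at $0$ rather than at $1$.
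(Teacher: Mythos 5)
Your proposal is correct and follows essentially the same route as the paper: both arguments reduce to the fact that $\mathsf{Ann}_R(a)$ is a direct summand of $R$ for each $a \in R$ (the paper obtains this by splitting $0 \to \mathsf{Ann}_R(a) \to R \to aR \to 0$ using projectivity of $aR$, while you cite Proposition~\ref{prop: red}(3), which packages the same fact as ``generated by an idempotent''), and then indecomposability forces $\mathsf{Ann}_R(a)=0$. Your handling of the two cases $e=0$ and $e=1$ via $r\neq 0$ is correct.
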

\begin{proof}
    Let \( a \in R \) be nonzero. Since \(R\) is coherent regular the ideal \(aR\) is projective, so the sequence $0 \rightarrow \mathsf{Ann}_R(a) \rightarrow R \rightarrow aR \rightarrow 0$
    splits. Hence \( \mathsf{Ann}_R(a) \) is a direct summand of \( R \). If \( R \) is indecomposable, it follows that \( \mathsf{Ann}_R(a) = 0 \), and thus \( a \) is a non-zerodivisor. Therefore, \( R \) is a domain.
\end{proof}

It is well known that in a coherent integral domain, every prime ideal whose power is finitely generated is itself finitely generated. This property, however, does not extend to arbitrary coherent rings. In \cite{Cou5}, it is shown that in a coherent ring, any maximal or non-minimal prime ideal with a finitely generated power is also finitely generated; see \cite[Theorem 3.3]{Cou5}. In the case of regular coherent rings, we have the following result:

\begin{prop}[\cite{Cou5}, Corollary 4.5]
    Let \( R \) be a commutative coherent regular ring. Then every prime ideal with a finitely generated power is also finitely generated.
\end{prop}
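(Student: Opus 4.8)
The plan is to split the argument according to the location of $P$ in $\mathsf{Spec}(R)$ and to reduce the whole statement to the single observation that, under regularity, a minimal prime is generated by idempotents and is therefore an idempotent ideal. First I would dispose of the two easy cases: if $P$ is maximal or non-minimal, then already in the coherent setting \cite[Theorem 3.3]{Cou5} shows that the hypothesis ``$P^n$ is finitely generated for some $n$'' forces $P$ to be finitely generated. Hence the entire content of the proposition lies in the remaining case, where $P$ is a \emph{minimal} prime of $R$, and it is precisely here that regularity (rather than mere coherence) is needed.

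So suppose $P$ is a minimal prime with $P^n$ finitely generated. Since $R$ is coherent regular, it is reduced by Proposition~\ref{prop: red}; consequently $R_P$ is a reduced local ring whose unique prime is $PR_P$, which must then be its nilradical, so $R_P$ is a field and $PR_P = 0$. This yields the standard description of minimal primes over a reduced ring: for each $x \in P$ there is $s \notin P$ with $sx = 0$, that is, $\mathsf{Ann}_R(x) \not\subseteq P$. Next I would invoke the annihilator structure of Proposition~\ref{prop: red}: for each $x \in P$ one may write $\mathsf{Ann}_R(x) = Re_x$ with $e_x$ idempotent. As $Re_x \not\subseteq P$ we have $e_x \notin P$, and from $e_x(1-e_x)=0 \in P$ together with primeness it follows that $1 - e_x \in P$; since $x e_x = 0$, we obtain $x = x(1-e_x) \in R(1-e_x)$. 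Therefore $P = \sum_{x \in P} R(1-e_x)$ is generated by idempotents.

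To finish, note that any ideal generated by idempotents is idempotent: each generator $f$ satisfies $f = f^2 \in P^2$, so $P \subseteq P^2 \subseteq P$, whence $P = P^2$, and by induction $P = P^2 = \cdots = P^n$. Since $P^n$ is finitely generated by hypothesis, so is $P$, completing the minimal case and the proof. The main obstacle is the middle step: seeing that regularity, through the idempotent-generated annihilators of Proposition~\ref{prop: red}, is exactly what upgrades the coherent result \cite[Theorem 3.3]{Cou5} so as to cover minimal primes. Once the chain $\mathsf{Ann}_R(x) = Re_x \Rightarrow 1-e_x \in P \Rightarrow x = x(1-e_x)$ is established, showing that $P$ is idempotent and collapsing the tower of powers down to $P = P^n$ is entirely routine.
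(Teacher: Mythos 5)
Your proof is correct. Note that the paper does not actually prove this statement at all: it simply cites \cite[Corollary~4.5]{Cou5} and marks it \qedsymbol, so any genuine argument is already ``a different route.'' Your decomposition is the natural one and is consistent with the surrounding text: the paper itself records, just before the proposition, that \cite[Theorem~3.3]{Cou5} handles maximal and non-minimal primes over a merely coherent ring, so the only case needing regularity is a minimal prime $P$. Your treatment of that case checks out step by step: reducedness (Proposition~\ref{prop: red}(1)) gives $PR_P=0$, hence $\mathsf{Ann}_R(x)\not\subseteq P$ for each $x\in P$; Proposition~\ref{prop: red}(3) gives $\mathsf{Ann}_R(x)=Re_x$ with $e_x$ idempotent, forcing $e_x\notin P$, hence $1-e_x\in P$ and $x=x(1-e_x)$; and since each $1-e_x$ is itself idempotent, $P$ is generated by idempotents, so $P=P^2=\cdots=P^n$ and the finite generation of $P^n$ transfers to $P$. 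The one cosmetic remark is that a prime can be simultaneously minimal and maximal, but your minimal-prime argument does not use non-maximality, so the two cases genuinely cover $\mathsf{Spec}(R)$. What your approach buys over the paper's bare citation is a short, self-contained derivation from results already established in the paper (Proposition~\ref{prop: red}), isolating exactly where regularity enters beyond coherence.
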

\qed

Let \( R \) be a commutative ring, and let \( x \) be an indeterminate over \( R \). For a polynomial \( f \in R[x] \), the \newterm{content} of \( f \), denoted by \( c(f) \), is the ideal of \( R \) generated by the coefficients of \( f \). For any two polynomials \( f, g \in R[x] \), it holds that \( c(fg) \subseteq c(f)c(g) \). A polynomial \( f \) is called a \newterm{Gaussian polynomial} if this inclusion is an equality for every polynomial \( g \in R[x] \), i.e., \( c(fg) = c(f)c(g) \). It is known that \( R \) is a Gaussian reduced ring if and only if \( \wD(R) \leq 1 \). Moreover, by \cite[Theorem 3.3]{Glaz4}, every coherent Gaussian ring satisfies \( \wD(R) \leq 1 \) or \( \wD(R) = \infty \). As a consequence, we obtain:

\begin{prop}  
Let \( R \) be a  Gaussian ring. Then \( R \) is  coherent regular if and only if \( R \) is either von Neumann regular or semihereditary.  
\end{prop}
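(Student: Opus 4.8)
The plan is to pass through the weak global dimension $\wD(R)$ in both directions, combining the two facts recorded just before the statement with the reducedness of coherent regular rings from Proposition~\ref{prop: red}. For the sufficiency, suppose $R$ is von Neumann regular or semihereditary: in the first case every module is flat and $\wD(R)=0$, and in the second every finitely generated ideal is projective, so $\wD(R)\le 1$. Since both kinds of rings are coherent, $R$ is in either case a coherent ring of finite weak global dimension, and I would conclude that $R$ is regular by invoking the earlier remark that coherent rings of finite weak global dimension are regular. Thus this direction needs no new argument.

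For the necessity, assume $R$ is coherent regular. The crucial step is Proposition~\ref{prop: red}, which shows that $R$ is reduced. Being Gaussian and reduced, $R$ satisfies $\wD(R)\le 1$ by the quoted equivalence ``$R$ is a Gaussian reduced ring if and only if $\wD(R)\le 1$''; equivalently, one may appeal to \cite[Theorem~3.3]{Glaz4}, which confines a coherent Gaussian ring to $\wD(R)\le 1$ or $\wD(R)=\infty$, and then use reducedness to discard the infinite value. It then remains only to split on the value of $\wD(R)$: if $\wD(R)=0$ then every module is flat and $R$ is von Neumann regular, whereas if $\wD(R)=1$ then, since $R$ is coherent, every finitely generated ideal is finitely presented and flat, hence projective, so that $R$ is semihereditary.

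The one substantive point --- and the reason the Gaussian hypothesis cannot be omitted --- is the control of $\wD(R)$. A coherent regular ring may well have infinite weak dimension (for instance $\varinjlim_{n}\mathsf{k}[x_1,\dots,x_n]$), so the essential move is to rule this out. This is exactly where Proposition~\ref{prop: red} intervenes: reducedness collapses the Gaussian trichotomy $\wD(R)\in\{0,1,\infty\}$ down to $\wD(R)\le 1$, after which the dichotomy between von Neumann regular and semihereditary rings follows immediately. I expect every remaining step --- the sufficiency direction and the passage from ``coherent with $\wD(R)\le 1$'' to semihereditary --- to be entirely routine.
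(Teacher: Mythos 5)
Your argument is correct and is essentially the proof the paper intends (the paper leaves the proposition as an immediate consequence of the two facts about Gaussian rings quoted just before it together with Proposition~\ref{prop: red}): reducedness of a commutative coherent regular ring forces $\wD(R)\le 1$ for a Gaussian ring, and the dichotomy $\wD(R)\in\{0,1\}$ then yields von Neumann regular or semihereditary, while the converse follows since both classes are coherent of finite weak dimension. No gaps.
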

\qed

\begin{rmk}
Let \( R \) be a Gaussian ring (not necessarily coherent). By \cite[Proposition 5.3]{BG}, if \( R \) is regular, then it is uniformly regular. 
\end{rmk}

 Given an \( R \)-module \( M \), let \( \mathsf{S}^0 M \) denote the complex concentrated in degree zero with \( M \) in degree zero and zero elsewhere. The \newterm{derived category} \( \mathsf{D}(R) \) is obtained from the category of chain complexes of \( R \)-modules by formally inverting quasi-isomorphisms. An object \( X \in \mathsf{D}(R) \) is called \newterm{small or compact} if the canonical map
\[
\bigoplus_{i \in I} \Hom_{\mathsf{D}(R)}(X, Y_i) \longrightarrow \Hom_{\mathsf{D}(R)}\left(X, \bigoplus_{i \in I} Y_i\right)
\]
is an isomorphism for every family \( \{Y_i\}_{i \in I} \) in \( \mathsf{D}(R) \); that is, \( \Hom_{\mathsf{D}(R)}(X, -) \) commutes with arbitrary coproducts. It is well know that a complex of $R$-modules is quasi-isomorphic to a perfect complex if and only if it is a small object of $\mathsf{D}(R)$.  According to \cite[Lemma 3.3]{Hov}, if the complex \( \mathsf{S}^0M \) is small in \( \mathsf{D}(R) \), then \( M \) has finite projective dimension. Moreover, if \( R \) is coherent and \( M \) is a finitely presented $R$-module with finite projective dimension, then \( \mathsf{S}^0M \) is a small object in \( \mathsf{D}(R) \) \cite[Proposition 3.4]{Hov}. 

\begin{prop}\cite[Lemma 3.3 and Proposition 3.4]{Hov}
    Let \( R \) be a commutative coherent ring. Then \( R \) is regular if and only if \( \mathsf{S}^0M \) is a small object in the derived category \( \mathsf{D}(R) \) for every finitely presented \( R \)-module \( M \).
\end{prop}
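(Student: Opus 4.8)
The plan is to assemble the proposition directly from the two cited results of Hovey together with the coherence identity recorded in Proposition~\ref{prop:coh-subcategory}. The key preliminary observation is that for a commutative coherent ring one has $\mathsf{FP}_1(R) = \mathsf{FP}_\infty(R)$, so that the notions of finitely presented and finitely $\infty$-presented module coincide. Consequently the defining condition ``$R$ is regular'' (every finitely $\infty$-presented module has finite projective dimension) is equivalent to the statement that every finitely presented $R$-module has finite projective dimension, and it is this reformulation that I would match against the smallness criterion in $\mathsf{D}(R)$.

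For the forward implication, I would start from an arbitrary finitely presented module $M$ and use coherence to regard $M$ as finitely $\infty$-presented; regularity then yields $\pd_R(M) < \infty$. Since $R$ is coherent and $M$ is finitely presented of finite projective dimension, \cite[Proposition 3.4]{Hov} applies verbatim and gives that $\mathsf{S}^0 M$ is a small object of $\mathsf{D}(R)$.

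For the converse, I would take an arbitrary finitely $\infty$-presented module $M$, identify it with a finitely presented module via coherence, and apply the hypothesis to obtain that $\mathsf{S}^0 M$ is small in $\mathsf{D}(R)$. Then \cite[Lemma 3.3]{Hov} yields $\pd_R(M) < \infty$ directly, so every finitely $\infty$-presented module has finite projective dimension and $R$ is regular by definition.

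The argument is essentially a bookkeeping exercise, so I do not expect a serious obstacle; the only point that requires genuine care is the dual role played by coherence. It enters not merely through \cite[Proposition 3.4]{Hov}, whose hypothesis explicitly demands it, but also through Proposition~\ref{prop:coh-subcategory}: without the equality $\mathsf{FP}_1(R) = \mathsf{FP}_\infty(R)$ the smallness criterion, phrased over finitely presented modules, would not control all of the finitely $\infty$-presented modules whose projective dimension the definition of regularity constrains. Over a coherent ring these two classes agree, and the equivalence closes up cleanly.
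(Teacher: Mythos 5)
Your argument is correct and is exactly the paper's (the paper leaves the proposition as an immediate consequence of the two cited results of Hovey together with the identification $\mathsf{FP}_1(R)=\mathsf{FP}_\infty(R)$ over a coherent ring). One small remark on your closing commentary: since $\mathsf{FP}_\infty(R)\subseteq\mathsf{FP}_1(R)$ always holds, the converse direction does not actually need coherence; the equality $\mathsf{FP}_1(R)=\mathsf{FP}_\infty(R)$ is what makes the \emph{forward} direction work, where regularity only constrains the finitely $\infty$-presented modules a priori.
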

\qed

\subsubsection{Semi-Regularity}
Matlis introduced an alternative notion of regularity, defining a ring to be \newterm{semi-regular} if every module embeds into a flat module, or equivalently, if every injective module is flat. It is well known that von Neumann regular rings are semi-regular, and that every semi-regular ring is coherent and self \(\mathsf{FP}_1\)-injective.

\begin{prop}\label{prop: semi-regular}
    Let \( R \) be a commutative regular ring. Then \( R \) is semi-regular if and only if it is von Neumann regular.
\end{prop}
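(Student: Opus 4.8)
The plan is to handle the two implications separately, placing all the real content in the direction semi-regular $\Rightarrow$ von Neumann regular. The forward direction is immediate: over a von Neumann regular ring every module is flat, so every module (in particular every injective) embeds into a flat module, which is Matlis' definition of semi-regular. For the converse I would begin from the two facts recorded just before the statement, namely that every semi-regular ring is coherent and self $\mathsf{FP}_1$-injective. Coherence together with the standing hypothesis that $R$ is regular makes $R$ coherent regular; and since for a coherent ring $\mathsf{FP}_1(R)=\mathsf{FP}_\infty(R)$ by Proposition~\ref{prop:coh-subcategory}, regularity says precisely that every finitely presented $R$-module has finite projective dimension. The goal then reduces to upgrading ``finite projective dimension'' to ``projective'' for finitely presented modules, and the whole point is that self $\mathsf{FP}_1$-injectivity is exactly what forces this.

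The key step uses coherence to keep syzygies finitely presented: since the finitely presented modules form an abelian subcategory (Proposition~\ref{prop:coh-subcategory}), every syzygy of a finitely presented module is again finitely presented, so the self $\mathsf{FP}_1$-injectivity condition $\Ext^1_R(F,R)=0$ for finitely presented $F$ propagates by dimension shifting to $\Ext^i_R(F,R)=0$ for all $i\ge 1$; as every finitely generated projective is a direct summand of some free module $R^m$, this yields $\Ext^i_R(F,P)=0$ for all finitely presented $F$ and all finitely generated projective $P$. Now if $M$ is finitely presented with $\pd_R(M)=n\ge 1$, truncating a resolution by finitely generated projectives gives a short exact sequence $0\to P_n\to P_{n-1}\to K\to 0$ with $K$ the $(n-1)$-st syzygy, finitely presented and of projective dimension $1$; but $\Ext^1_R(K,P_n)=0$, so the sequence splits and $K$ is projective, contradicting $\pd_R(K)=1$. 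Hence $n=0$ and every finitely presented module is projective.

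To finish, I would observe that if every finitely presented module is projective then $R/I$ is projective for every finitely generated (in particular principal) ideal $I$, so $0\to I\to R\to R/I\to 0$ splits and each principal ideal is a direct summand, i.e.\ generated by an idempotent; this is von Neumann regularity, exactly as in Proposition~\ref{prop:von Neumann regula-fp-inj}. A faster route simply feeds ``coherent regular and self $\mathsf{FP}_1$-injective'' into the characterization of von Neumann regular rings recorded above (Proposition~\ref{prop: coherencia self fp}). I expect the splitting argument to be the main obstacle: one must verify that self $\mathsf{FP}_1$-injectivity of $R$, through coherence, really delivers the vanishing $\Ext^1_R(K,P_n)=0$ needed to split off the top of a finite projective resolution—every other step is formal.
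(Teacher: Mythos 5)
Your proof is correct, but it follows a genuinely different route from the paper's. The paper's argument for the nontrivial direction stays entirely at the level of cyclic modules: it invokes \cite[Proposition~1.3]{BC} to see that over a semi-regular ring $R/I$ embeds into a free module for every finitely generated ideal $I$, then uses that a (coherent) regular commutative ring is a $\mathsf{p.p.}$-ring (via the chain coherent regular $\Rightarrow$ $\mathsf{G\text{-}GCD}$ $\Rightarrow$ $\mathsf{p.p.}$) together with \cite[Lemma~15]{Cou3} to conclude that $R/I$ is projective, whence $I$ is a direct summand of $R$. You instead extract from semi-regularity the pair of facts ``coherent and self $\mathsf{FP}_1$-injective'' and run a dimension-shifting and splitting argument to show that every finitely presented module of finite projective dimension is projective. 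Your route is more self-contained (it avoids the torsionless/$\mathsf{p.p.}$ machinery and the two external citations) and in fact establishes the stronger general statement that a coherent regular self $\mathsf{FP}_1$-injective ring is von Neumann regular --- precisely the $n=1$ case of Proposition~\ref{prop: coherencia self fp}, whose proof in the paper is itself deferred to an external reference; your ``faster route'' remark is therefore legitimate but mildly circular in spirit, since you would essentially be supplying the missing proof of that proposition. The one step you flag as the main obstacle --- that self $\mathsf{FP}_1$-injectivity of $R$ propagates to $\Ext^i_R(F,P)=0$ for all $i\ge 1$ and all finitely generated projective $P$ --- does go through: over a coherent ring the syzygies of a finitely presented module in a resolution by finitely generated projectives are again finitely presented, so dimension shifting applies, and $\Ext$ into finite direct sums and their summands behaves as you claim. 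The paper's proof buys brevity at the cost of leaning on the commutative $\mathsf{p.p.}$/torsionless theory; yours buys transparency and a noncommutative-flavoured argument at the cost of length.
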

\begin{proof}
    Assume that \( R \) is semi-regular, and let \( I \) be a finitely generated ideal of \( R \). By \cite[Proposition 1.3]{BC}, the module \( R/I \) embeds into a free module. Since \( R \) is a $\mathsf{p.p.}$-ring, it follows from \cite[Lemma 15]{Cou3} that \( R/I \) is projective. Hence \( I \) is a direct summand of \( R \). The converse is clear.
\end{proof}

Recall that if \( A \) is a ring and \( E \) is an \( A \)-module, the \newterm{trivial ring extension} of \( A \) by \( E \), denoted \( A \ltimes E \), is defined as the additive group \( A \times E \) with multiplication given by $(a, e)(a', e') = (aa', ae' + a'e).$  The following example shows that the regularity assumption in Proposition \ref{prop: semi-regular} is essential.

\begin{ex}
Let \( A \) be an integrally closed Prüfer domain that is not a field (e.g., \( A := \mathbb{Z} + X \mathbb{Q}[X] \)), and consider the trivial ring extension \( R := A \ltimes \frac{Q(A)}{A} \), where \( Q(A) \) denotes the field of fractions of \( A \). Then, by \cite[Example 2.12]{AK}, the ring \( R \) is semi-regular, but not von Neumann regular.
\end{ex}

\begin{cor}
    Let \( R \) be a commutative coherent regular ring. The following conditions are equivalent:
    \begin{enumerate}
         \item \( R \) is a von Neumann regular ring.
        \item \( R \) is a semi-regular ring.
        \item \( R \) is self \(\mathsf{FP_1}\)-injective ring.
    \end{enumerate}
\end{cor}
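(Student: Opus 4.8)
The plan is to prove the cycle of equivalences for a commutative coherent regular ring $R$: von Neumann regular $\Rightarrow$ semi-regular $\Rightarrow$ self $\mathsf{FP}_1$-injective $\Rightarrow$ von Neumann regular. The key observation is that two of these three implications hold for arbitrary rings, so the regularity and coherence hypotheses are only needed to close the loop.

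\begin{proof}
The implication $(1)\Rightarrow(2)$ holds in general, since von Neumann regular rings are semi-regular, as recalled in the text preceding Proposition~\ref{prop: semi-regular}. For $(2)\Rightarrow(3)$, recall that every semi-regular ring is coherent and self $\mathsf{FP}_1$-injective; in particular a semi-regular ring is self $\mathsf{FP}_1$-injective. It therefore remains only to prove $(3)\Rightarrow(1)$. Assume that $R$ is self $\mathsf{FP}_1$-injective. Let $I\subseteq R$ be a finitely generated ideal; since $R$ is coherent, $I$ is finitely presented. Because $R$ is regular, $I$ has finite projective dimension, so $\pd_R(R/I)<\infty$ as well. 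The plan is to show that this forces $R/I$ to be projective, whence $I$ is a direct summand of $R$, generated by an idempotent, and so $R$ is von Neumann regular by Proposition~\ref{prop:von Neumann regula-fp-inj}.
\end{proof}

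The main obstacle, and the crux of the argument, is the step $(3)\Rightarrow(1)$: reducing finite projective dimension to projectivity. First I would note that for a finitely presented module $M$ of finite projective dimension over a commutative ring, self $\mathsf{FP}_1$-injectivity should be leveraged via a dimension-shifting argument. Concretely, take a finite projective resolution $0\to P_n\to\cdots\to P_0\to R/I\to 0$ with each $P_i$ finitely generated projective, and let $K$ denote the $(n-1)$-st syzygy, so that $0\to P_n\to P_{n-1}\to K\to 0$ is exact with $K$ finitely presented and $\pd_R K\le 1$. The aim is to show $K$ is projective, which then collapses the resolution inductively. Since $R$ is self $\mathsf{FP}_1$-injective, the class of $\mathsf{FP}_1$-injective modules contains $R$ and is closed under finite direct sums, so $\Ext^1_R(F,R)=0$ for all finitely presented $F$; the regularity of the syzygies should let me identify $\pd_R K\le 1$ with a Tate/duality condition, forcing the syzygy to split off.

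An alternative and likely cleaner route, which I would pursue in parallel, parallels the proof of Proposition~\ref{prop: semi-regular} itself: there, semi-regularity is used to embed $R/I$ into a free module and then $\mathsf{p.p.}$-ring structure (available since coherent regular rings are $\mathsf{G\text{-}GCD}$ and hence $\mathsf{p.p.}$, as established in the text) promotes the embedded finitely presented module of finite projective dimension to a projective one via \cite[Lemma~15]{Cou3}. For the self $\mathsf{FP}_1$-injective hypothesis I would instead show directly that $R/I$ is torsionless: self $\mathsf{FP}_1$-injectivity of $R$ implies that every finitely presented module embeds into a power of $R$, and then the $\mathsf{p.p.}$-property of $R$ (Proposition~\ref{prop: red}) together with finite projective dimension forces projectivity. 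I expect the bookkeeping of the syzygy induction to be routine once the base case $\pd_R K\le 1 \Rightarrow K$ projective is secured, so the entire difficulty is concentrated in that single homological reduction, which is precisely where coherence, regularity, and self $\mathsf{FP}_1$-injectivity must all be brought to bear simultaneously.
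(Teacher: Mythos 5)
Your route is sound but genuinely different from the paper's. The paper disposes of the corollary in one line: $(1)\Leftrightarrow(2)$ is exactly Proposition~\ref{prop: semi-regular}, and $(2)\Leftrightarrow(3)$ is imported from Couchot \cite[Theorem II.5]{Cou2}, which characterizes the commutative coherent rings that are semi-regular as the self $\mathsf{FP}_1$-injective ones. You instead keep $(1)\Rightarrow(2)\Rightarrow(3)$ as the trivial general implications and attack $(3)\Rightarrow(1)$ directly with a homological argument; this is more self-contained, avoids Couchot's theorem entirely, and only invokes Proposition~\ref{prop:von Neumann regula-fp-inj} at the end.

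The one defect is that your proof environment stops at ``the plan is to show that this forces $R/I$ to be projective,'' and the ensuing discussion hedges on precisely the step that is in fact immediate from facts you already stated. You observed that self $\mathsf{FP}_1$-injectivity gives $\Ext^1_R(F,R)=0$ for every finitely presented $F$; since the right-hand class of a cotorsion pair is closed under finite direct sums and direct summands, every finitely generated projective module is then $\mathsf{FP}_1$-injective. So if $M=R/I$ is finitely presented with $\pd_R(M)=n\geq 1$, take a resolution $0\to P_n\to\cdots\to P_0\to M\to 0$ by finitely generated projectives (available by coherence and regularity); the last syzygy sequence $0\to P_n\to P_{n-1}\to \Omega^{n-1}M\to 0$ has $\Omega^{n-1}M$ finitely presented by coherence, hence $\Ext^1_R(\Omega^{n-1}M,P_n)=0$, the sequence splits, and $\pd_R(M)\le n-1$, a contradiction. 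No Tate duality is involved, and the alternative detour through torsionless modules and the $\mathsf{p.p.}$ property is unnecessary; that second route moreover rests on the unjustified assertion that self $\mathsf{FP}_1$-injectivity forces every finitely presented module to embed in a power of $R$ (true over a coherent ring via vanishing of $\Ext^1_R$ on the Auslander transpose, but more work than the direct splitting).
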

\begin{proof}
    Follows from Proposition \ref{prop: semi-regular} and \cite[Theorem II.5]{Cou2}.
\end{proof}

\begin{cor}  
    Let \( R \) be a commutative coherent regular ring. Then \( R \) is a von Neumann regular ring if and only if \( \mathsf{Ann}_R(\mathsf{Ann}_R(I)) = I \) for every finitely generated ideal \( I \) of \( R \).  
\end{cor}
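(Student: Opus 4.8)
The plan is to prove the equivalence by combining the previous corollary (which characterizes von Neumann regularity among commutative coherent regular rings as equivalence with semi-regularity and self $\mathsf{FP}_1$-injectivity) with a double-annihilator condition. Since $R$ is commutative coherent regular, by \cref{prop: red} it is a $\mathsf{p.p.}$-ring, so every principal ideal $rR$ is projective and $\mathsf{Ann}_R(r)$ is generated by an idempotent; more generally every finitely generated ideal is finitely presented and projective-or-at-least has finite projective dimension. The forward direction should be the easy one: if $R$ is von Neumann regular, then every finitely generated ideal $I$ is generated by an idempotent, hence $I=Re$ with $e^2=e$, and for such ideals the double-annihilator identity $\mathsf{Ann}_R(\mathsf{Ann}_R(I))=I$ is a direct computation, since $\mathsf{Ann}_R(Re)=R(1-e)$ and $\mathsf{Ann}_R(R(1-e))=Re=I$.

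For the converse I would argue that the double-annihilator condition forces every finitely generated ideal to be a direct summand of $R$. First I would use commutative coherent regularity: a finitely generated ideal $I$ is finitely presented with finite projective dimension. The aim is to show $I$ is in fact projective of projective dimension zero, i.e. a direct summand. The key step is to relate $\mathsf{Ann}_R(\mathsf{Ann}_R(I))=I$ to purity or idempotent generation. Concretely, I would first handle the finitely generated case by reducing, via the structure of $\mathsf{p.p.}$-rings and \cref{prop: red}(4) (the identity $\mathsf{Ann}_R(r)\cap J=\mathsf{Ann}_R(r)J$), to the statement that $\mathsf{Ann}_R(I)$ is generated by an idempotent; once $\mathsf{Ann}_R(I)=Rf$ for some idempotent $f$, the hypothesis gives $I=\mathsf{Ann}_R(Rf)=R(1-f)$, which is generated by the idempotent $1-f$, hence a direct summand. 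Summing over all finitely generated ideals then yields that $R$ is von Neumann regular by \cref{prop:von Neumann regula-fp-inj}.

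The main obstacle I anticipate is establishing that $\mathsf{Ann}_R(I)$ is generated by an idempotent for an arbitrary finitely generated ideal $I$, rather than merely for a principal ideal where \cref{prop: red}(3) applies directly. For a finitely generated $I=Rr_1+\cdots+Rr_n$ one has $\mathsf{Ann}_R(I)=\bigcap_{i}\mathsf{Ann}_R(r_i)$, an intersection of idempotent-generated ideals; in a commutative ring a finite intersection of idempotent-generated ideals is again idempotent-generated (the product of the complementary idempotents), so this intersection is $\mathsf{Ann}_R(I)=Rf$ with $f$ idempotent. This observation, which hinges on the $\mathsf{p.p.}$-property guaranteed by \cref{prop: red}, is the technical heart of the argument. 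I would then combine it with the double-annihilator hypothesis exactly as above to conclude that each finitely generated ideal is idempotent-generated, and invoke \cref{prop:von Neumann regula-fp-inj} to finish. The converse, that von Neumann regularity gives the double-annihilator identity, I expect to require only the idempotent description of ideals and a short direct verification, so the real content lies in the converse direction's reduction to idempotent generation.
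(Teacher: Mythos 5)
Your argument is correct, but it takes a genuinely different route from the paper. The paper disposes of the corollary in one line: it combines Proposition~\ref{prop: semi-regular} (for a commutative \emph{regular} ring, semi-regularity is equivalent to von Neumann regularity) with the Bennis--Couchot characterization \cite[Proposition~1.3]{BC} of semi-regular rings via the double-annihilator condition on finitely generated ideals; coherence enters through that cited result. You instead give a direct, self-contained idempotent computation: coherent regularity makes $R$ a $\mathsf{p.p.}$-ring, so by \cref{prop: red}(3) each $\mathsf{Ann}_R(r_i)$ is idempotent-generated, the finite intersection $\mathsf{Ann}_R(I)=\bigcap_i \mathsf{Ann}_R(r_i)=R(f_1\cdots f_n)=Rf$ is again idempotent-generated, and the hypothesis then forces $I=\mathsf{Ann}_R(Rf)=R(1-f)$ to be a direct summand, whence $R$ is von Neumann regular; the forward direction is the routine check $\mathsf{Ann}_R(\mathsf{Ann}_R(Re))=Re$. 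Your identification of the intersection step as the technical heart is accurate, and your handling of it ($Re\cap Rf=Ref$ for commuting idempotents) is sound. What your approach buys is independence from the semi-regularity machinery and from the external reference \cite{BC}; what the paper's approach buys is brevity and the conceptual link to Matlis semi-regularity already established in the preceding subsection. One cosmetic remark: once every principal ideal is idempotent-generated you already have von Neumann regularity by definition, so the final appeal to \cref{prop:von Neumann regula-fp-inj} is harmless but unnecessary.
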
  
\begin{proof}
    Follows from  Proposition \ref{prop: semi-regular}  and \cite[Proposition 1.3]{BC}.
\end{proof}  

\subsubsection{Stably Coherent Rings}
In several ways, coherent rings differ significantly from Noetherian rings, most notably in the failure of the "basis theorem". Specifically, if a ring \( R \) is coherent, the polynomial ring \( R[x] \) in one indeterminate over \( R \) is not necessarily coherent. For instance, Soublin showed that \( R = \prod_{i \in \mathbb{N}} \mathbb{Q}[[x, y]] \) is coherent, but \( R[x] \) is not; see \cite[Example 7.3.13]{Glaz1}. A ring \( R \) is called \newterm{\( n \)-stably coherent} if \( R[x_1, \dots, x_n] \) is coherent, and \newterm{stably coherent} if this holds for every \( n \geq 1 \). In fact, if \( R \) is \( n \)-stably coherent for all \( n \), then the polynomial ring in countably many indeterminates \( R[x_1, x_2, \dots] \) is also coherent \cite[Example 2.3.3]{Glaz1}. The class of stably coherent rings is stable under localization, quotients by finitely generated ideals, and finitely presented extensions \cite[Theorems 2.4.1–2.4.2]{Glaz1}. Moreover, any coherent ring of global dimension at most \( 2 \), or of weak global dimension at most \( 1 \) (i.e., semihereditary), is stably coherent. On the other hand, a commutative ring of global dimension \( 2 \) need not be coherent.

\begin{prop}\label{prop:domain poli}
Let \( R \) be a domain with global dimension \( 2 \). Then \( R \) is stably coherent.
\end{prop}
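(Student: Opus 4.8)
The plan is to deduce the proposition from the coherence of $R$ together with the fact, recorded just before the statement, that every coherent ring of global dimension at most $2$ is stably coherent. Thus the whole task reduces to proving that a commutative domain $R$ with $\gD(R)\le 2$ is coherent; once this is established, applying that fact to $R$ (which has $\gD(R)\le 2$ by hypothesis) finishes the argument. So from the outset I would treat the proposition as a corollary of a single coherence claim.

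To prove coherence it suffices, by Chase's criterion together with the fact that in a domain the annihilator of every nonzero element is zero, to show that the intersection $I\cap J$ of any two finitely generated ideals is finitely generated; equivalently, that every finitely generated ideal is finitely presented. Here the hypothesis $\gD(R)\le 2$ would enter as follows: if $I$ is a finitely generated ideal, then from the sequence $0\to I\to R\to R/I\to 0$ and $\gD(R)\le 2$ one gets $\pd_R(I)\le 1$, so in any presentation $0\to K\to R^n\to I\to 0$ the syzygy $K$ is projective. Since $R$ is a domain and $I\neq 0$, extending scalars to the fraction field shows that $K$ has finite rank $n-1$; by Kaplansky's structure theorem a projective module of finite rank is a finite direct sum of countably generated projectives, and hence $K$ is countably generated.

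The hard part will be the final step: upgrading $K$ from countably generated to finitely generated, i.e.\ showing that a finitely generated ideal of projective dimension at most $1$ is finitely presented. This is precisely where the global dimension bound must be used in an essential way, because finite projective dimension of a finitely generated module does not by itself force finite presentation in the absence of coherence. I would expect to settle this either by invoking the known theorem that a commutative domain of global dimension at most $2$ is coherent, or by a direct argument showing that a finite-rank projective submodule $K\subseteq R^n$ whose quotient $R^n/K$ is finitely generated must itself be finitely generated over such a ring. With coherence in hand, the proposition follows at once from the stable-coherence statement for coherent rings of global dimension at most $2$.
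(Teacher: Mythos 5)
Your reduction is legitimate as far as it goes: the paper does record, immediately before the statement, that any coherent ring of global dimension at most $2$ is stably coherent, so the proposition would indeed follow once you show that a domain $R$ with $\gD(R)\le 2$ is coherent. The problem is that this coherence claim is the entire content of the argument, and your proof of it stops exactly at the decisive point. From $0\to K\to R^n\to I\to 0$ you correctly get that $K$ is projective of generic rank at most $n-1$, and Kaplansky's decomposition does give that $K$ is countably generated (each nonzero summand has positive rank over a domain, so there are only finitely many summands). But countably generated is not finitely generated, and the implication ``projective of finite rank over a domain $\Rightarrow$ finitely generated'' is a genuine theorem, not a formality: it fails for finite-rank projectives over general commutative rings and, over domains, the standard proof goes through Vasconcelos's trace-ideal criterion (the trace ideal of a positive-constant-rank projective localizes to the unit ideal, hence equals $R$, hence $P$ is finitely generated). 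You neither prove this nor cite it; instead you offer as a fallback ``invoking the known theorem that a commutative domain of global dimension at most $2$ is coherent,'' which is precisely the statement you set out to establish. As written, the proposal therefore has a gap at its only nontrivial step.

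For comparison, the paper's proof is a one-line appeal to a structural dichotomy: by Zhao Yicai's result (\cite[Corollary~2 and Remark]{Yicai}), a domain of global dimension $2$ is either a Pr\"ufer domain or a Noetherian domain, and each class is classically stably coherent (Pr\"ufer domains are semihereditary, Noetherian rings satisfy the Hilbert basis theorem). That route avoids having to prove coherence of $R$ directly. Your approach is salvageable---either import Vasconcelos's theorem on finite-rank projectives to finish the syzygy argument, or cite the known coherence of domains of global dimension $\le 2$ outright---but in its current form the key step is asserted rather than proved.
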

\begin{proof}
By \cite[Corollary 2 and Remark]{Yicai}, \( R \) is either a Prüfer domain or a Noetherian domain. 
\end{proof}

Coherent rings with weak global dimension \( 2 \), including certain domains, need not be stably coherent; see \cite[Theorem 7.3.13]{Glaz1}.

\begin{ex}
\begin{enumerate}
\item Let $R$ be a commutative ring  and $G$ an abelian multiplicative group. The \newterm{group ring} $RG$ is the free $R$-module with basis $G$ and multiplication induced by $G$, where each $x \in RG$ has a unique expression $x = \sum_{g \in G} x_g g$ with finitely many $x_g \neq 0$. The \newterm{rank} of $G$, denoted $\mathsf{rank} G$, is defined as follows:   A set of non-identity elements \( \{ g_1, \dots, g_k \} \) in \( G \) is calle \newterm{independent} if the equation $g_1^{n_1} g_2^{n_2} \cdots g_k^{n_k} = 1,$ where \( 0 < n_i \in \mathbb{Z} \), implies that  $g_1^{n_1} = g_2^{n_2} = \cdots = g_k^{n_k} = 1.$  An infinite set of elements of \( G \) is called \newterm{independent} if every finite subset of it is independent.  For every group \( G \), there exists a maximal independent set consisting only of elements of infinite order. The cardinality of this set is $\mathsf{rank} G$. Following \cite[Theorem 8.2.4]{Glaz1}, $RG$ is coherent if and only if $R$ is $n$-stably coherent when $0 < \mathsf{rank} G = n < \infty$, and if $\mathsf{rank} G = \infty$, then $RG$ is coherent if and only if $R$ is stably coherent.

\item A ring \( R \) is called a \newterm{valuation ring} if the \( R \)-module \( R \) is uniserial; that is, for all \( x, y \in R \), either \( x \in Ry \) or \( y \in Rx \). Equivalently, \( R \) is local and every finitely generated ideal is principal. If the set \( \mathsf{Z} \) of zero divisors of \( R \) is trivial (i.e., \( \mathsf{Z} = 0 \)), then \( R \) is a domain. In this case, \( R \) is stably coherent; see \cite[Theorem 7.3.3]{Glaz1}.  Moreover, any smooth algebra over a valuation ring is also stably coherent; see \cite[Corollary 2.3]{ant2}. Although frequently non-Noetherian, valuation rings exhibit many characteristics similar to Noetherian regular rings.
\end{enumerate}
\end{ex}

\begin{prop}
    Let \( (R,\mathsf{m}) \) be a local commutative ring with \( \gD(R) \leq 2 \). Then \( R \) is stably coherent.
\end{prop}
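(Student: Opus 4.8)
The statement to prove is: \emph{Let $(R,\mathsf{m})$ be a local commutative ring with $\gD(R) \leq 2$. Then $R$ is stably coherent.}

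The plan is to reduce this to the two previously established facts about global dimension $2$ rings, namely Proposition~\ref{prop:domain poli} (a \emph{domain} of global dimension $2$ is stably coherent) and the earlier remark that any coherent ring of global dimension at most $2$ is stably coherent. First I would observe that a commutative ring of finite global dimension is regular in the classical sense, and in particular reduced; a local ring of global dimension $2$ therefore has a well-controlled structure. The crucial dichotomy to exploit is whether $R$ is a domain or not: if $R$ happens to be a domain, Proposition~\ref{prop:domain poli} applies directly and we are done. So the real content lies in handling the non-domain case.

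The key step is to show that a \emph{local} commutative ring of global dimension at most $2$ is automatically coherent, after which the stated remark that coherent rings of global dimension $\leq 2$ are stably coherent finishes the argument. To establish coherence, I would use that over a local ring finitely generated projective modules are free, and that finite global dimension forces every finitely generated ideal $I$ to have a finite free resolution of length at most $2$,
\[
0 \longrightarrow F_2 \longrightarrow F_1 \longrightarrow I \longrightarrow 0,
\]
with the $F_i$ finitely generated free. This exhibits $I$ as finitely presented, which is precisely the defining condition for $R$ to be coherent. Thus every local ring with $\gD(R) \leq 2$ is coherent, and combining this with the remark yields stable coherence regardless of whether $R$ is a domain.

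The main obstacle I anticipate is making the finite-free-resolution argument airtight in the non-Noetherian setting: finite global dimension guarantees that $\pd_R(M) \leq 2$ for \emph{every} module $M$, but to conclude that a finitely generated ideal is finitely \emph{presented} one needs the syzygies in a resolution by finitely generated frees to remain finitely generated. Over a local ring this is where I would invoke that a finitely generated projective module is free together with the fact that finite projective dimension over a coherent-type context keeps syzygies finitely generated; more directly, finite global dimension bounds the length of the minimal free resolution of any finitely generated module, and the terms are finitely generated because $\mathsf{m}$ is finitely generated is \emph{not} assumed, so care is needed. The cleanest route avoiding this subtlety is to verify coherence via the characterization in Proposition~\ref{prop:coh-subcategory}, or simply to split into the domain case (Proposition~\ref{prop:domain poli}) and the non-domain case, where one shows $R$ decomposes or reduces to a situation already covered; I expect the paper's proof to lean on the existing classification of global dimension $2$ local rings. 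Once coherence is in hand, the passage to stable coherence is immediate from the cited remark.
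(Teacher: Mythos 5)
Your overall strategy has a genuine gap at its central step. The claim that every local commutative ring with $\gD(R)\leq 2$ is coherent is exactly what you would need, but the argument you sketch for it is circular: finite global dimension gives $\pd_R(I)\leq 1$ for a finitely generated ideal $I$, hence a projective resolution $0\to P_1\to P_0\to I\to 0$ exists, but nothing forces $P_1$ to be finitely generated; producing a resolution $0\to F_2\to F_1\to I\to 0$ with \emph{finitely generated} free terms is precisely the coherence statement you are trying to establish. You flag this yourself (``care is needed'') but never close it. Over a local ring one can say that the kernel $K$ of a surjection $F_1\twoheadrightarrow I$ from a finitely generated free module is projective (Schanuel) and hence free (Kaplansky), but bounding its rank requires $R$ to be a domain --- and the non-domain case is the one your dichotomy was supposed to handle separately, so the argument never gets off the ground there. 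Note also that the analogous statement fails one step up: the paper records that there exist local rings with $\glD(R)=3$ that are not coherent, so no soft homological argument of this kind can succeed.

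The paper's proof is a one-line appeal to Vasconcelos's classification \cite{Vas}: a local commutative ring of global dimension at most two is either a two-dimensional regular Noetherian local ring, a valuation ring, or an umbrella ring; each of these is a coherent \emph{domain}, so Proposition~\ref{prop:domain poli} applies directly. In particular the non-domain branch of your case split is empty, but that is a consequence of the classification, not something you may assume. To repair your argument you would either have to invoke \cite{Vas} explicitly (at which point the domain/non-domain dichotomy becomes unnecessary) or give an independent proof that such rings are coherent domains, which is essentially the content of Vasconcelos's theorem.
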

\begin{proof}
    By \cite{Vas}, local rings of global dimension at most two are precisely the following: two-dimensional regular Noetherian local rings, valuation rings, and umbrella rings. Each of these is a coherent domain, and therefore, by Proposition \ref{prop:domain poli}, \( R \) is stably coherent.
\end{proof}

There exist commutative local rings \( R \) with \( \glD(R) = 3 \) that are not coherent, see \cite[Remark 2.15(4)]{CKWZ}.

\begin{prop}\label{prop:reg poli}
    Let \( R \) be a commutative ring. If \( R \) is regular, then \( R[x_1, \dots, x_n] \) is regular for every \( n \geq 1 \).
\end{prop}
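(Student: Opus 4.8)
The plan is to reduce to one variable and then control the restriction of scalars to $R$. Since $R[x_1,\dots,x_n]\cong (R[x_1,\dots,x_{n-1}])[x_n]$, an immediate induction on $n$ reduces the statement to the case $n=1$: it suffices to show that $S\coloneqq R[x]$ is regular whenever $R$ is. By Proposition~\ref{prop: regula flat proj}, regularity is equivalent to every finitely $\infty$-presented module having finite \emph{flat} dimension, and over any ring a finitely $\infty$-presented module of finite flat dimension automatically has finite projective dimension (a finitely presented flat syzygy is projective). I would therefore work throughout with flat dimension and aim to show that every $M\in\mathsf{FP}_\infty(S)$ satisfies $\fd_S(M)<\infty$.

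The main tool is the change-of-rings short exact sequence of $S$-modules
\[
0 \longrightarrow S\otimes_R M \xrightarrow{\ x\otimes 1 - 1\otimes x\ } S\otimes_R M \longrightarrow M \longrightarrow 0,
\]
where $S\otimes_R M$ is formed using the underlying $R$-module $M|_R$. Since $S$ is free, hence flat, over $R$, base change gives $\fd_S(S\otimes_R M)=\fd_R(M|_R)$, and the sequence then yields
\[
\fd_S(M)\le \fd_R(M|_R)+1 .
\]
Thus everything reduces to showing that the restriction $M|_R$ has finite flat dimension over $R$.

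To handle $M|_R$ I would use the degree filtration. Writing $M^{(d)}$ for the $R$-submodule of $M$ generated by the elements $x^{j}m_i$ with $j\le d$, over a finite generating set $m_1,\dots,m_r$, each $M^{(d)}$ is a finitely generated $R$-module, $x\,M^{(d)}\subseteq M^{(d+1)}$, and $M|_R=\varinjlim_d M^{(d)}$. The associated graded module $\bigoplus_d M^{(d)}/M^{(d-1)}$ is a finitely generated graded module over $\mathrm{gr}(S)=R[x]$, so its graded pieces stabilize: for $d\gg 0$ they are all isomorphic to a fixed finitely generated $R$-module, and only finitely many isomorphism types occur. The essential point is that, because $M\in\mathsf{FP}_\infty(S)$ and not merely finitely generated, these graded pieces can be arranged to lie in $\mathsf{FP}_\infty(R)$; regularity of $R$ then forces each to have finite flat dimension, and stabilization makes these dimensions uniformly bounded, say by $e$. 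Since flat dimension does not increase along extensions, $\fd_R(M^{(d)})\le e$ for every $d$; and since $\Tor$ commutes with filtered colimits, the uniform bound passes to the colimit, giving $\fd_R(M|_R)\le e<\infty$. Combined with the displayed inequality this gives $\fd_S(M)\le e+1$, whence $M$ has finite projective dimension over $S$, so $S$ is regular.

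The step I expect to be the main obstacle is precisely the claim that the graded pieces $M^{(d)}/M^{(d-1)}$ belong to $\mathsf{FP}_\infty(R)$: finite generation over $R$ is automatic, but promoting it to the $\mathsf{FP}_\infty$ condition, so that regularity of $R$ applies, uses in an essential way that $M$ together with all its higher $S$-syzygies is finitely presented, and it must be carried out \emph{without} assuming that $S=R[x]$ is coherent. This is exactly the difficulty that makes the $\mathsf{FP}_\infty$ formulation, rather than the classical finitely presented one, necessary here.
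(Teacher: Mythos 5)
Your reduction to $n=1$ and the use of the characteristic sequence $0 \to S\otimes_R M \to S\otimes_R M \to M \to 0$ are fine, and they correctly reduce the problem to showing that $M|_R$ has finite flat dimension over $R$. But the argument you give for that reduction has a genuine gap at exactly the point you flag yourself: you never prove that the graded pieces $M^{(d)}/M^{(d-1)}$ of the degree filtration belong to $\mathsf{FP}_\infty(R)$. Finite generation over $R$ is automatic, but membership in $\mathsf{FP}_\infty(R)$ is the entire content of the problem --- without it, regularity of $R$ gives you nothing about these pieces, and the rest of the argument collapses. Announcing that this step ``uses in an essential way'' the $\mathsf{FP}_\infty$ hypothesis on $M$ is not a proof of it. In addition, your stabilization claim is false in general: for a finitely generated graded module over $R[x]$ with $R$ non-Noetherian, one only gets surjections $M^{(d)}/M^{(d-1)} \twoheadrightarrow M^{(d+1)}/M^{(d)}$ for $d$ large (since $M^{(d+1)} = M^{(d)} + xM^{(d)}$), and this descending chain of quotients need not stabilize, so ``only finitely many isomorphism types occur'' and the uniform bound $e$ on the flat dimensions are both unjustified.

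The paper avoids this filtration entirely. After reducing to $n=1$ and passing to the first syzygy $M \subseteq F[x]$ of $N$ (which preserves both the $\mathsf{FP}_\infty$ condition and finiteness of projective dimension), it invokes \cite[Lemma~3.3]{Wang}, which produces an exact sequence $0 \to A[x] \to B[x] \to M \to 0$ with $A, B \in \mathsf{FP}_\infty(R)$. Regularity of $R$ then gives $\pd_R(A), \pd_R(B) < \infty$, hence $\pd_{R[x]}(A[x]), \pd_{R[x]}(B[x]) < \infty$ by flat base change, and therefore $\pd_{R[x]}(M) < \infty$. That cited lemma is precisely the $\mathsf{FP}_\infty$-analogue of the classical ``characteristic sequence'' trick and is the content your sketch is missing; if you want a self-contained argument you would need to prove something equivalent to it, which your degree-filtration approach as written does not do.
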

\begin{proof}
    It suffices to prove the case \( n = 1 \), as the general case follows by induction. Let \( N \) be a finitely $\infty$-presented \( R[x] \)-module. Then there is a short exact sequence \( 0 \to M \to F[x] \to N \to 0 \), where \( F \) is a finitely generated free \( R \)-module and \( M \) is finitely $\infty$-presented. Since \( \pd_{R[x]}(N) < \infty \) if and only if \( \pd_{R[x]}(M) < \infty \), so it suffices to prove the claim for \( M \). By \cite[Lemma 3.3]{Wang}, there is an exact sequence \( 0 \to A[x] \to B[x] \to M \to 0 \) with \( A \), \( B \)  finitely $\infty$-presented \( R \)-modules. Since \( R \) is regular, we have \( \pd_R(A) < \infty \) and \( \pd_R(B) < \infty \), and hence \( \pd_{R[x]}(A[x]) < \infty \) and \( \pd_{R[x]}(B[x]) < \infty \). Therefore, \( \pd_{R[x]}(M) < \infty \), and \( R[x] \) is regular.
\end{proof}

If \( R \) is a coherent regular ring, then \( R[x] \) is a \( \mathsf{G\text{-}GCD} \) ring by \cite[Theorem 4.3]{Glaz6}. Moreover, \( R \) is quasi-regular, i.e., its total ring of quotients \( \mathsf{Q}(R) \) is von Neumann regular \cite[Proposition 3.1]{Glaz6}, and thus \( \mathsf{Q}(R)[x] \) is semihereditary. Consequently, \( \mathsf{Q}(R)[x] \) is stably coherent.

\begin{cor}
    Let \( R \) be a commutative coherent regular ring with nonzero nilradical, and set \( R_{\mathrm{red}} := R / \mathrm{Nil}(R) \). 
    If \( \mathrm{Nil}(R) \) is finitely presented and \( \wD(R_{\mathrm{red}}) \leq 1 \), then \( R[x_1,\dots,x_n] \) is coherent regular for all \( n \geq 1 \).
\end{cor}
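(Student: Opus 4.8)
The plan is to treat coherence and regularity separately. Regularity is immediate: since $R$ is (commutative) coherent regular it is in particular regular, so Proposition~\ref{prop:reg poli} gives that $R[x_1,\dots,x_n]$ is regular for every $n\ge 1$. Once coherence of $R[x_1,\dots,x_n]$ is established, one has $\mathsf{FP}_1=\mathsf{FP}_\infty$ there (Proposition~\ref{prop:coh-subcategory}), so ``regular'' and ``coherent regular'' coincide and the corollary follows. Thus the whole task reduces to proving that $R$ is stably coherent, i.e.\ that $S:=R[x_1,\dots,x_n]$ is coherent for all $n$.

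The first key observation is that $N:=\mathrm{Nil}(R)$ is \emph{nilpotent}: being finitely presented it is in particular finitely generated, say by nilpotent elements $a_1,\dots,a_m$, and in a commutative ring a finitely generated nil ideal is nilpotent (a sufficiently long product of the $a_i$ must repeat some generator beyond its nilpotency index, whence $N^k=0$ for suitable $k$). Next I would record that $R_{\mathrm{red}}=R/N$ is coherent: $N$ is a finitely generated ideal of the coherent ring $R$, and a quotient of a coherent ring by a finitely generated ideal is again coherent. Indeed $R/N$ is finitely presented over $R$, hence a coherent $R$-module (Proposition~\ref{prop:coh-subcategory}), and since $N$ is finitely generated a module annihilated by $N$ is coherent over $R/N$ iff it is coherent over $R$; applied to $R/N$ this yields coherence of the ring $R_{\mathrm{red}}$. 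As $\wD(R_{\mathrm{red}})\le 1$ and $R_{\mathrm{red}}$ is coherent, $R_{\mathrm{red}}$ is semihereditary, hence stably coherent, so $\bar S:=R_{\mathrm{red}}[x_1,\dots,x_n]=S/N_S$ is coherent, where $N_S:=\mathrm{Nil}(S)=N[x_1,\dots,x_n]=N\otimes_R S$.

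The heart of the proof is a dévissage along the finite filtration
\[
S\supseteq N_S\supseteq N_S^{\,2}\supseteq\cdots\supseteq N_S^{\,k}=0,
\]
finite because $N$, and hence $N_S$, is nilpotent. For each $j$ one has $N_S^{\,j}=N^j[x_1,\dots,x_n]$, so the graded piece $N_S^{\,j}/N_S^{\,j+1}$ is the $\bar S$-module $(N^j/N^{j+1})\otimes_{R_{\mathrm{red}}}\bar S$. Since $R$ is coherent, each finitely generated ideal $N^j\subseteq R$ is a coherent $R$-module, so $N^j/N^{j+1}$ is coherent, in particular finitely presented over $R$; being annihilated by $N$ it is finitely presented over $R_{\mathrm{red}}$, and base change along the free extension $R_{\mathrm{red}}\to\bar S$ shows $N_S^{\,j}/N_S^{\,j+1}$ is finitely presented over $\bar S$. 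As $\bar S$ is coherent, finitely presented $\bar S$-modules are coherent (Proposition~\ref{prop:coh-subcategory}), and coherence is preserved under the change of rings $\bar S\to S$ (because $\bar S=S/N_S$ is finitely presented over $S$); thus each $N_S^{\,j}/N_S^{\,j+1}$, and likewise $\bar S$ itself, is a coherent $S$-module. Finally I would climb the filtration using that coherent $S$-modules satisfy the $2$-out-of-$3$ property in short exact sequences (Proposition~\ref{prop:coh-subcategory}): from coherence of the successive quotients one deduces inductively that every $N_S^{\,j}$ is coherent, and then $0\to N_S\to S\to\bar S\to 0$ shows that $S$ is a coherent $S$-module, i.e.\ that $S$ is coherent.

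The main obstacle is precisely this dévissage and its two change-of-rings subtleties: checking that a finitely presented $R$-module annihilated by the finitely generated ideal $N$ stays finitely presented over $R_{\mathrm{red}}$, and that the coherence of a module does not depend on whether it is computed over $\bar S$ or over $S$. Both hinge on $N$ (equivalently $N_S$) being finitely generated, which is exactly where the hypothesis that $\mathrm{Nil}(R)$ is finitely presented enters—through its consequence of nilpotency it guarantees both that the filtration is finite and that the graded pieces are genuinely finitely presented.
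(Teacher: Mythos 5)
Your argument is correct, and it follows the paper's proof for the first two thirds: regularity of $R[x_1,\dots,x_n]$ comes from Proposition~\ref{prop:reg poli}, and $R_{\mathrm{red}}$ is coherent (quotient of a coherent ring by a finitely generated ideal) with $\wD(R_{\mathrm{red}})\le 1$, hence semihereditary and stably coherent. Where you diverge is the last step. The paper concludes coherence of $R[x_1,\dots,x_n]$ in one line by citing \cite[Corollary~4.8]{Asc}; you instead prove the transfer of stable coherence from $R_{\mathrm{red}}$ back to $R$ yourself, by a d\'evissage along the finite filtration of $S=R[x_1,\dots,x_n]$ by the powers of $N_S=\mathrm{Nil}(R)S$ (finite because a finitely generated nil ideal of a commutative ring is nilpotent), identifying the graded pieces with $(N^j/N^{j+1})\otimes_{R_{\mathrm{red}}}R_{\mathrm{red}}[x_1,\dots,x_n]$ and climbing the filtration with the 2-out-of-3 property of Proposition~\ref{prop:coh-subcategory}. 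The two change-of-rings points you isolate (a finitely presented $R$-module killed by the finitely generated ideal $N$ is finitely presented over $R/N$, and coherence of a module is the same whether computed over $S$ or over $S/N_S$) are standard and correctly deployed, so the d\'evissage closes. Your route buys a self-contained proof using only the paper's own toolkit, essentially reproving the special case of Aschenbrenner's result that is actually needed; the paper's route buys brevity, and implicitly the greater generality of the cited statement, at the cost of an external reference.
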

\begin{proof}
    By Proposition \ref{prop:reg poli}, \( R[x_1,\dots,x_n] \) is regular for all \( n \geq 1 \). Since \( \mathrm{Nil}(R) \) is finitely generated, it follows that \( R_{\mathrm{red}} \) is coherent. Moreover, the assumption \( \wD(R_{\mathrm{red}}) \leq 1 \) implies that \( R_{\mathrm{red}} \) is semihereditary, and hence    \( R[x_1,\dots,x_n] \) is coherent for all \( n \geq 1 \) by \cite[Corollary 4.8]{Asc}. 
\end{proof}

\begin{rmk}
Let \( \alpha : \mathbb{N} \to \mathbb{N} \) be a function. A finitely generated \( R \)-module \( M \) is said to be \newterm{\( \alpha \)-uniformly coherent} if, for every \( n \in \mathbb{N} \), the kernel of every \( R \)-module homomorphism \( R^n \to M \) is generated by at most \( \alpha(n) \) elements. A ring \( R \) is called \newterm{uniformly coherent} if it is \( \alpha \)-uniformly coherent for some function \( \alpha : \mathbb{N} \to \mathbb{N} \). Clearly, every uniformly coherent ring is coherent. Moreover, by \cite[Lemma 3.8]{Asc}, if \( R \) is uniformly coherent and \( R[x_1,\dots,x_n]^{\mathbb{N}} \) is flat over \( R^{\mathbb{N}}[x_1,\dots,x_n] \) for every \( n \geq 1 \), then \( R \) is \( n \)-stably coherent for all \( n \geq 1 \).
\end{rmk}

\section{$n$-Coherence and Regularity}\label{sec: n-coh and regularidad}

Let \( n \geq 0 \) be a non-negative integer, and let \( R \) be a ring. A finitely \( n \)-presented \( R \)-module \( M \) is said to be \newterm{\( n \)-coherent} if every finitely \( (n{-}1) \)-presented submodule of \( M \) is finitely \( n \)-presented. We denote by \( \mathsf{Coh}_n(R) \) the class of all \( n \)-coherent \( R \)-modules. Note that \( \mathsf{Coh}_0(R) \) coincides with the class of Noetherian $R$-modules, and \( \mathsf{Coh}_1(R) \) with the class of coherent $R$-modules. The ring \( R \) is called \newterm{left weakly \( n \)-coherent} if every finitely \( (n{-}1) \)-presented left ideal is finitely \( n \)-presented, and \newterm{left \( n \)-coherent} if every finitely \( n \)-presented left \( R \)-module is finitely \( (n{+}1) \)-presented. For \( n = 0 \) and \( n = 1 \), both notions coincide and recover the classical definitions of Noetherian and coherent rings, respectively. For \( n \geq 2 \), it remains an open problem whether weak \( n \)-coherence implies \( n \)-coherence.

It is well known that a ring \( R \) is left \( n \)-coherent if and only if \( \mathsf{FP}_n(R) = \mathsf{FP}_\infty(R) \), in which case we also have \( \mathsf{FP}_n(R) = \mathsf{Coh}_n(R) \); see \cite[Proposition 2.5]{ep}. In particular, when \( n = 1 \), the class \( \mathsf{Coh}_1(R) \) is abelian, so \( \mathsf{FP}_1(R) \) is an abelian category if and only if \( R \) is left coherent. More generally, \( \mathsf{FP}_n(R) \) is abelian for some \( n \geq 1 \) if and only if \( R \) is left coherent; see \cite[Proposition 5.1]{ep}.

\begin{theorem}
Let \( R \) be a left \( (n+1) \)-coherent ring. Then \( R \) is left \( n \)-coherent if and only if every finitely \( n \)-presented \( R \)-module embeds into a finitely \( (n+1) \)-presented \( R \)-module.
\end{theorem}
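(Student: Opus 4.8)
The plan is to work throughout with the characterization of left $n$-coherence recalled above, namely that $R$ is left $n$-coherent if and only if $\mathsf{FP}_n(R) = \mathsf{FP}_\infty(R)$; since $R$ is assumed left $(n+1)$-coherent we already have $\mathsf{FP}_{n+1}(R) = \mathsf{FP}_\infty(R)$. As the inclusion $\mathsf{FP}_{n+1}(R) \subseteq \mathsf{FP}_n(R)$ is automatic, establishing left $n$-coherence amounts to proving the reverse inclusion $\mathsf{FP}_n(R) \subseteq \mathsf{FP}_{n+1}(R)$. The forward implication is immediate: if $R$ is left $n$-coherent then $\mathsf{FP}_n(R) = \mathsf{FP}_\infty(R) = \mathsf{FP}_{n+1}(R)$, so any $M \in \mathsf{FP}_n(R)$ is already finitely $(n+1)$-presented and embeds into itself via the identity. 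All the content is in the converse.

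For the converse I would fix $M \in \mathsf{FP}_n(R)$ and, using the hypothesis, choose an embedding $0 \to M \to E \to C \to 0$ with $E \in \mathsf{FP}_{n+1}(R)$. Since $E \in \mathsf{FP}_{n+1}(R) \subseteq \mathsf{FP}_n(R)$ and $M \in \mathsf{FP}_n(R)$, the closure property~(1) recalled in Section~\ref{s:preliminaries} yields $C \in \mathsf{FP}_n(R)$ for free. The goal is then to upgrade $C$ from $\mathsf{FP}_n(R)$ to $\mathsf{FP}_{n+1}(R) = \mathsf{FP}_\infty(R)$, after which the original sequence, having two of its terms in the exact class $\mathsf{FP}_\infty(R)$, forces $M \in \mathsf{FP}_\infty(R) \subseteq \mathsf{FP}_{n+1}(R)$ by the 2-out-of-3 property.

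The crux is therefore to prove $C \in \mathsf{FP}_{n+1}(R)$, and this is where I expect the main difficulty to lie, since the listed closure properties do not directly promote a cokernel one step up the scale. My plan is to reduce it to property~(3) by a generalized Schanuel argument: choosing a finitely generated projective $P$ with an epimorphism $P \twoheadrightarrow C$ and kernel $K$, the pullback of $E \to C \leftarrow P$ splits off $P$ (because $P$ is projective) and produces an exact sequence $0 \to K \to M \oplus P \to E \to 0$. Now $M \oplus P \in \mathsf{FP}_n(R)$ by property~(4), while $E \in \mathsf{FP}_{n+1}(R)$, so property~(3) delivers $K \in \mathsf{FP}_n(R)$; splicing a finite $n$-presentation of $K$ onto $0 \to K \to P \to C \to 0$ then exhibits $C$ as finitely $(n+1)$-presented. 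Invoking left $(n+1)$-coherence to identify $C \in \mathsf{FP}_{n+1}(R) = \mathsf{FP}_\infty(R)$ and closing with the 2-out-of-3 property of $\mathsf{FP}_\infty(R)$ completes the argument. The only points demanding care are the bookkeeping in the Schanuel step, verifying that the two pullback sequences are as claimed and that $K$ is genuinely a first syzygy of $C$, but no input deeper than properties~(1), (3), (4) and the standing hypothesis of $(n+1)$-coherence is needed.
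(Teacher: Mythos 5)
Your proof is correct. The forward direction is the same triviality as in the paper. For the converse, however, you take a genuinely different (and more self-contained) route: the paper simply cites \cite[Proposition~2.5]{ep} to identify $\mathsf{FP}_{n+1}(R)$ with $\mathsf{Coh}_{n+1}(R)$ under the $(n+1)$-coherence hypothesis, so that the ambient module $E$ is $(n+1)$-coherent and its finitely $n$-presented submodule $M$ is \emph{by definition} finitely $(n+1)$-presented --- a one-line argument once the citation is in place. You instead unpack that content: the pullback of $E \twoheadrightarrow C \twoheadleftarrow P$ splits off $P$ and yields $0 \to K \to M \oplus P \to E \to 0$, property~(3) gives $K \in \mathsf{FP}_n(R)$, splicing promotes $C$ to $\mathsf{FP}_{n+1}(R) = \mathsf{FP}_\infty(R)$, and the 2-out-of-3 property of $\mathsf{FP}_\infty(R)$ (or, more cheaply, one more application of property~(3) to $0 \to M \to E \to C \to 0$ with $C \in \mathsf{FP}_{n+2}(R)$) returns $M \in \mathsf{FP}_{n+1}(R)$. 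All steps check out: the two pullback sequences are as you claim, $M \oplus P \in \mathsf{FP}_n(R)$ by property~(4), and $K$ is indeed a first syzygy of $C$ along a finitely generated projective. What your version buys is independence from the external characterization of $\mathsf{Coh}_{n+1}(R)$, at the cost of a slightly longer argument; the paper's version buys brevity by delegating exactly this Schanuel-type work to the cited proposition.
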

\begin{proof}
Suppose \( R \) is left \( (n+1) \)-coherent, and every module in \( \mathsf{FP}_n(R) \) embeds into one in \( \mathsf{FP}_{n+1}(R) \). Let \( M \in \mathsf{FP}_n(R) \), there exists a monomorphism \( M \hookrightarrow P \) with \( P \in \mathsf{FP}_{n+1}(R) \). Since \( R \) is \( (n+1) \)-coherent, it follows from \cite[Proposition~2.5]{ep} that \( \mathsf{FP}_{n+1}(R) = \mathsf{Coh}_{n+1}(R) \), so \( P \) is \( (n+1) \)-coherent. In particular, every finitely \( n \)-presented submodule of \( P \), and thus \( M \), belongs to \( \mathsf{FP}_{n+1}(R) \). Conversely, if \( R \) is left \( n \)-coherent, then \( \mathsf{FP}_n(R) = \mathsf{FP}_{n+1}(R) \), and each module in \( \mathsf{FP}_n(R) \) trivially embeds into itself.
\end{proof}

\begin{prop}
    Let \( R \) be a left \( n \)-coherent ring. Then:
    \begin{enumerate}
        \item $\mathsf{FP}_n\text{-}\mathsf{Inj}(R)\cap [\mathsf{FP}_n\text{-}\mathsf{Inj}(R)]^{\perp_{1}}=\mathsf{Inj}(R)$.
        \item $\mathsf{FP}_n\text{-}\mathsf{Proj}(R)\cap {}^{\perp_1}[\mathsf{FP}_n\text{-}\mathsf{Proj}(R)]=\mathsf{Proj}(R)$
    \end{enumerate}    
\end{prop}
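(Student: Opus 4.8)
The plan is to read both identities as statements about the complete cotorsion pair $\mathfrak{C} = (\mathsf{FP}_n\text{-}\mathsf{Proj}(R),\, \mathsf{FP}_n\text{-}\mathsf{Inj}(R))$ and to reduce each reverse inclusion to a one-line splitting argument, once we know that $\mathfrak{C}$ is \emph{hereditary}. The inclusions $\supseteq$ are immediate and require no hypothesis: an injective module lies in $\mathsf{FP}_n\text{-}\mathsf{Inj}(R)$ and, being right $\Ext^1$-orthogonal to everything, also lies in $[\mathsf{FP}_n\text{-}\mathsf{Inj}(R)]^{\perp_1}$; dually a projective module lies in $\mathsf{FP}_n\text{-}\mathsf{Proj}(R)$ and in ${}^{\perp_1}[\mathsf{FP}_n\text{-}\mathsf{Proj}(R)]$. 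So the whole content is in the reverse inclusions.

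The key step, and the place where left $n$-coherence is used, is to prove that $\mathfrak{C}$ is hereditary. By \cite[Proposition 2.5]{ep} one has $\mathsf{FP}_n(R) = \mathsf{FP}_\infty(R)$, and since $\mathsf{FP}_\infty(R)$ is an exact class it is closed under syzygies: for $F \in \mathsf{FP}_n(R)$ and a presentation $0 \to \Omega F \to P_0 \to F \to 0$ with $P_0$ finitely generated projective, the $2$-out-of-$3$ property gives $\Omega F \in \mathsf{FP}_\infty(R) = \mathsf{FP}_n(R)$. I would use this to show $\mathsf{FP}_n\text{-}\mathsf{Inj}(R)$ is coresolving: given a short exact sequence $0 \to A \to B \to C \to 0$ with $A, B \in \mathsf{FP}_n\text{-}\mathsf{Inj}(R)$ and any $F \in \mathsf{FP}_n(R)$, the long exact sequence squeezes $\Ext^1_R(F,C)$ between $\Ext^1_R(F,B) = 0$ and $\Ext^2_R(F,A) \cong \Ext^1_R(\Omega F, A) = 0$, where the dimension shift is legitimate precisely because $\Omega F \in \mathsf{FP}_n(R)$. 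Hence $C \in \mathsf{FP}_n\text{-}\mathsf{Inj}(R)$, and by the criterion recorded in the preliminaries (hereditary $\iff$ the right class is coresolving $\iff$ the left class is resolving), $\mathfrak{C}$ is hereditary; in particular $\mathsf{FP}_n\text{-}\mathsf{Proj}(R)$ is resolving.

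Granting this, part (1) is a splitting argument: for $M \in \mathsf{FP}_n\text{-}\mathsf{Inj}(R) \cap [\mathsf{FP}_n\text{-}\mathsf{Inj}(R)]^{\perp_1}$, embed $M$ into an injective envelope, $0 \to M \to E \to E/M \to 0$. Since $M$ and $E$ lie in the coresolving class $\mathsf{FP}_n\text{-}\mathsf{Inj}(R)$, so does $E/M$; then $M \in [\mathsf{FP}_n\text{-}\mathsf{Inj}(R)]^{\perp_1}$ forces $\Ext^1_R(E/M, M) = 0$, the sequence splits, and $M$ is a direct summand of the injective $E$, hence injective. Part (2) is the exact dual: for $M \in \mathsf{FP}_n\text{-}\mathsf{Proj}(R) \cap {}^{\perp_1}[\mathsf{FP}_n\text{-}\mathsf{Proj}(R)]$ take an epimorphism $P \twoheadrightarrow M$ from a projective with kernel $K$; resolvingness of $\mathsf{FP}_n\text{-}\mathsf{Proj}(R)$ gives $K \in \mathsf{FP}_n\text{-}\mathsf{Proj}(R)$, the membership $M \in {}^{\perp_1}[\mathsf{FP}_n\text{-}\mathsf{Proj}(R)]$ yields $\Ext^1_R(M,K) = 0$, the sequence $0 \to K \to P \to M \to 0$ splits, and $M$ is projective. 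The only genuine obstacle is the hereditary step: without $\mathsf{FP}_n(R) = \mathsf{FP}_\infty(R)$ the class $\mathsf{FP}_n(R)$ need not be closed under syzygies, the vanishing $\Ext^2_R(F,A) = 0$ can fail, and the two intersections may be strictly larger than $\mathsf{Inj}(R)$ and $\mathsf{Proj}(R)$; everything after it is formal.
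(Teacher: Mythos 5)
Your proposal is correct and follows essentially the same route as the paper: embed $M$ into its injective envelope, use left $n$-coherence to see that the cokernel stays in $\mathsf{FP}_n\text{-}\mathsf{Inj}(R)$ (equivalently, that the cotorsion pair is hereditary), and then split off $M$ as a direct summand of an injective via the orthogonality hypothesis, with part (2) as the exact dual. You simply make explicit the syzygy/dimension-shifting justification for coresolvingness that the paper leaves implicit.
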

\begin{proof}
$(1)$ We prove only the inclusion \( \subseteq \). Let \( M \in \mathsf{FP}_n\text{-}\mathsf{Inj}(R) \cap [\mathsf{FP}_n\text{-}\mathsf{Inj}(R)]^{\perp_1} \), and consider its injective envelope \( M \hookrightarrow E(M) \). Since \( R \) is left \( n \)-coherent, the quotient \( E(M)/M \) is also \( \mathsf{FP}_n \)-injective. Applying  \( \Hom_R(-, M) \) to the short exact sequence
$0 \to M \to E(M) \to E(M)/M \to 0,$
we obtain the exact sequence 
$\Hom_R(E(M), M) \to \Hom_R(M, M) \to 0.$ Hence, \( M \) is injective.\\
$(2)$ The proof is analogous to part (1). Note that if \( R \) is left \( n \)-coherent, then \( \mathsf{FP}_n\text{-}\mathsf{Proj}(R) \) is closed under kernels of epimorphisms.
\end{proof}

\begin{dfn}\label{def:regular}
Let \( R \) be a ring and let \( n \geq 0 \). We say that: 
\begin{itemize}
    %\item \( R \) is said to be \newterm{left regular} if every finitely \( \infty \)-presented \( R \)-module has finite projective dimension.
    \item \( R \) is \newterm{left \( n \)-regular} if every finitely \( n \)-presented \( R \)-module has finite projective dimension.
    \item \( R \) is \newterm{left \( n \)-von Neumann regular} if every finitely \( n \)-presented \( R \)-module is projective.
    %\item \( R \) is \newterm{left uniformly regular} if there exists an integer \( k \geq 0 \) such that every finitely \( \infty \)-presented \( R \)-module has projective dimension at most \( k \).
    \item \( R \) is \newterm{left uniformly \( n \)-regular} if there exists an integer \( k \geq 0 \) such that every finitely \( n \)-presented \( R \)-module has projective dimension at most \( k \).
\end{itemize}
\end{dfn}

The term $n$-von Neumann regular ring already appears in the literature; see \cite{Zhu}. However, it is used there to refer to commutative rings for which every finitely $n$-presented $R$-module is projective.

The relationships between these notions are summarized as follows:
\[
\begin{tikzcd}[column sep=small, row sep=small]
\text{$n$-von Neumann regular} \arrow[r, Rightarrow] 
& \text{uniformly $n$-regular} \arrow[r, Rightarrow] \arrow[d, Rightarrow] 
& \text{uniformly regular} \arrow[d, Rightarrow] \\
& \text{$n$-regular} \arrow[r, Rightarrow] 
& \text{regular}
\end{tikzcd}
\]

We make some elementary observations.

\begin{prop}\label{prop:regularity-hierarchy-unified}
Let \( R \) be a ring and let \( n \geq 0 \). Then:
\begin{enumerate}
    \item If \( R \) is left \( n \)-regular (respectively, uniformly \( n \)-regular), then it is left \( k \)-regular (respectively, uniformly \( k \)-regular) for all \( k > n \).
    \item If \( R \) is left \( n \)-regular (respectively, uniformly \( n \)-regular) for some \( n \geq 0 \), then it is left regular (respectively, uniformly regular).
    \item If \( R \) is left \( n \)-coherent, then the following are equivalent:
    \begin{enumerate}
        \item[(a)] \( R \) is left regular;
        \item[(b)] \( R \) is left  \( n \)-regular;
        \item[(c)] \( R \) is  left \( k \)-regular for some \( k > n \);
        \item[(d)] \( R \) is left \( k \)-regular for all \( k > n \).
    \end{enumerate}
\end{enumerate}
\end{prop}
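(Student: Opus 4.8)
The plan is to deduce all three parts from the single inclusion chain
\[
\mathsf{FP}_\infty(R) \subseteq \cdots \subseteq \mathsf{FP}_k(R) \subseteq \mathsf{FP}_n(R) \subseteq \cdots \subseteq \mathsf{FP}_0(R),
\]
recalled in the preliminaries, together with the characterization that \( R \) is left \( n \)-coherent if and only if \( \mathsf{FP}_n(R) = \mathsf{FP}_\infty(R) \) (by \cite[Proposition 2.5]{ep}). The whole statement is essentially a bookkeeping exercise with these two facts, so I expect no genuine difficulty; the only non-formal point is locating precisely where the coherence hypothesis is used.

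For part (1), I would simply observe that whenever \( k > n \) we have \( \mathsf{FP}_k(R) \subseteq \mathsf{FP}_n(R) \). Consequently, a finiteness condition on the projective dimensions of the modules in \( \mathsf{FP}_n(R) \), or a uniform bound on them, is automatically inherited by the smaller class \( \mathsf{FP}_k(R) \); in the uniform case the very same bound \( k \) is preserved. This yields left \( n \)-regular \( \Rightarrow \) left \( k \)-regular and uniformly \( n \)-regular \( \Rightarrow \) uniformly \( k \)-regular. Part (2) is the identical observation applied one step further: since \( \mathsf{FP}_\infty(R) \subseteq \mathsf{FP}_n(R) \), every finitely \( \infty \)-presented module is in particular finitely \( n \)-presented, so finiteness (respectively, uniform boundedness) of \( \pd_R \) on \( \mathsf{FP}_n(R) \) passes to \( \mathsf{FP}_\infty(R) \), giving left regularity (respectively, uniform regularity).

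For part (3), under the standing assumption that \( R \) is left \( n \)-coherent, I would close the cycle (b) \( \Rightarrow \) (d) \( \Rightarrow \) (c) \( \Rightarrow \) (a) \( \Rightarrow \) (b). Here (b) \( \Rightarrow \) (d) is exactly part (1), (d) \( \Rightarrow \) (c) is trivial since there always exists some \( k > n \), and (c) \( \Rightarrow \) (a) is exactly part (2). The crux — and the only step that invokes the \( n \)-coherence hypothesis — is (a) \( \Rightarrow \) (b). For this I would use \( \mathsf{FP}_n(R) = \mathsf{FP}_\infty(R) \), so that the assertion ``every finitely \( \infty \)-presented module has finite projective dimension'' is \emph{verbatim} the assertion ``every finitely \( n \)-presented module has finite projective dimension''. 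The point worth emphasizing is that without \( n \)-coherence the inclusion \( \mathsf{FP}_\infty(R) \subseteq \mathsf{FP}_n(R) \) runs the wrong way for this direction, and regularity need not force \( n \)-regularity; the hypothesis is precisely what collapses the two classes and makes (a) and (b) equivalent.
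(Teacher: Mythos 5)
Your proof is correct and follows essentially the same route as the paper: parts (1) and (2) from the inclusions \( \mathsf{FP}_{\infty}(R) \subseteq \mathsf{FP}_k(R) \subseteq \mathsf{FP}_n(R) \), and part (3) from the collapse \( \mathsf{FP}_n(R) = \mathsf{FP}_\infty(R) \) under left \( n \)-coherence. Your explicit cycle (b) \( \Rightarrow \) (d) \( \Rightarrow \) (c) \( \Rightarrow \) (a) \( \Rightarrow \) (b) and the remark on where coherence is genuinely needed are just a more detailed write-up of the same argument.
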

\begin{proof}
Parts (1) and (2) follow from the inclusion \( \mathsf{FP}_{\infty}(R) \subseteq \mathsf{FP}_k(R) \subseteq \mathsf{FP}_n(R) \) for all \( k > n \).  
For part (3), if \( R \) is left \( n \)-coherent, then \( \mathsf{FP}_{\infty}(R) = \mathsf{FP}_k(R) = \mathsf{FP}_n(R) \) for all \( k \geq n \).
\end{proof}

As a consequence of the previous result and Proposition~\ref{prop: regula flat proj}, we have:

\begin{cor}\label{cor:regular glaz}
    Let \( R \) be a left \( n \)-coherent ring and let \( n \geq 0 \). Then:
    \begin{enumerate}
        \item \( R \) is left \( n \)-regular if and only if every \( R \)-module finitely $n$-presented  \( R \)-module has finite flat dimension.
        \item \( R \) is left uniformly \( n \)-regular if and only if there exists an integer \( k \geq 0 \) such that  
        $\mathsf{FP}_n(R) \subseteq \mathcal{F}_k(R).$  
        In the commutative local case, this is equivalent to  $
        \Tor_k^R(M, R/\mathsf{m}) = 0 \quad \text{for all } M \in \mathsf{FP}_n(R),$
        where \( \mathsf{m} \) is the maximal ideal of \( R \).
    \end{enumerate}
\end{cor}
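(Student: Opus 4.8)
The plan is to reduce both assertions to their $\infty$-presented counterparts recorded in Proposition~\ref{prop: regula flat proj}, exploiting the fact that left $n$-coherence collapses the class $\mathsf{FP}_n(R)$ onto $\mathsf{FP}_\infty(R)$. Concretely, since $R$ is left $n$-coherent we have $\mathsf{FP}_n(R) = \mathsf{FP}_\infty(R)$ by \cite[Proposition~2.5]{ep}, so every condition phrased in terms of finitely $n$-presented modules is literally the same as the corresponding condition phrased in terms of finitely $\infty$-presented modules. The only homological input beyond this identification is the equality $\fd_R(M) = \pd_R(M)$ valid for every finitely $\infty$-presented module $M$, which was already extracted in the proof of Proposition~\ref{prop: regula flat proj}; under $n$-coherence it applies verbatim to every $M \in \mathsf{FP}_n(R)$.

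For part~(1), I would first invoke Proposition~\ref{prop:regularity-hierarchy-unified}(3), which guarantees that under left $n$-coherence the notions of left $n$-regularity and left regularity coincide. By Proposition~\ref{prop: regula flat proj}(1), left regularity is equivalent to the statement that every finitely $\infty$-presented $R$-module has finite flat dimension. Substituting $\mathsf{FP}_\infty(R) = \mathsf{FP}_n(R)$ then yields exactly the claim that $R$ is left $n$-regular if and only if every finitely $n$-presented $R$-module has finite flat dimension.

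For part~(2), I would argue directly from $\fd_R(M) = \pd_R(M)$ on the common class $\mathsf{FP}_n(R) = \mathsf{FP}_\infty(R)$. Because projective and flat dimensions agree on every member of this class, a uniform bound $\pd_R(M) \le k$ holds for all $M \in \mathsf{FP}_n(R)$ if and only if $\fd_R(M) \le k$ holds for all such $M$, that is, if and only if $\mathsf{FP}_n(R) \subseteq \mathcal{F}_k(R)$. This is precisely the definition of left uniform $n$-regularity paired with the desired flat-dimension reformulation. The commutative local refinement involving $\Tor_k^R(M, R/\mathsf{m}) = 0$ then transfers without change from Proposition~\ref{prop: regula flat proj}(2) after the same substitution $\mathsf{FP}_\infty(R) = \mathsf{FP}_n(R)$.

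There is no genuine obstacle here: the entire content lies in recognizing that $n$-coherence forces $\mathsf{FP}_n(R) = \mathsf{FP}_\infty(R)$, after which Propositions~\ref{prop:regularity-hierarchy-unified} and~\ref{prop: regula flat proj} do all the work and the proof is a matter of translating indices. The mildest point requiring care is ensuring that the equality of flat and projective dimensions, rather than merely the equivalence of their finiteness, is what licenses the transfer of the \emph{uniform} bound in part~(2); this is exactly what the proof of Proposition~\ref{prop: regula flat proj} supplies.
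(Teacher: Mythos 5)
Your proposal is correct and follows exactly the route the paper intends: the corollary is stated as an immediate consequence of Proposition~\ref{prop:regularity-hierarchy-unified} and Proposition~\ref{prop: regula flat proj}, and your argument simply spells out the identification $\mathsf{FP}_n(R)=\mathsf{FP}_\infty(R)$ under $n$-coherence together with the equality $\fd_R(M)=\pd_R(M)$ already extracted there. Your remark that the \emph{equality} of the two dimensions (not merely equivalence of finiteness) is what transfers the uniform bound in part~(2) is exactly the right point of care.
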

\qed

We say that a ring \( R \) is \newterm{left \( n \)-coherent regular} if it is both left \( n \)-coherent and left regular. By Proposition \ref{prop:regularity-hierarchy-unified}, this is equivalent to being both left \( n \)-coherent and left \( n \)-regular. Rings with these properties were studied in \cite{ep}. By \cite[Theorem 2.1]{Zhu}, a ring \( R \) is left \( n \)-coherent if and only if every finitely \( (n{-}1) \)-presented submodule of a projective \( R \)-module is finitely \( n \)-presented. Combining this with Proposition \ref{prop:regular 0} and Proposition \ref{prop:regularity-hierarchy-unified}, we obtain the following result.

\begin{prop}\label{prop:regular 1}
Let \( R \) be a ring. Then \( R \) is left \( n \)-coherent regular if and only if every finitely \( (n{-}1) \)-presented submodule of a projective \( R \)-module is finitely \( n \)-presented and has finite projective dimension.
\end{prop}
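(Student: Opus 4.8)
The plan is to deduce the statement directly by combining the characterization of $n$-coherence from \cite[Theorem~2.1]{Zhu} with the characterization of regularity established in Proposition~\ref{prop:regular 0}, using Proposition~\ref{prop:regularity-hierarchy-unified} to identify the relevant presentation classes. Recall that, by definition, $R$ is left $n$-coherent regular precisely when it is both left $n$-coherent and left regular, and that left $n$-coherence forces $\mathsf{FP}_n(R) = \mathsf{FP}_\infty(R)$.

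For necessity, I would suppose $R$ is left $n$-coherent regular. Since $R$ is left $n$-coherent, \cite[Theorem~2.1]{Zhu} shows that every finitely $(n-1)$-presented submodule $M$ of a projective module is finitely $n$-presented. By Proposition~\ref{prop:regularity-hierarchy-unified}, $n$-coherence gives $\mathsf{FP}_n(R) = \mathsf{FP}_\infty(R)$, so $M$ is in fact finitely $\infty$-presented; and regularity, equivalently $n$-regularity by the same proposition, forces $\pd_R(M) < \infty$. This yields both required conclusions at once.

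For sufficiency, I would assume that every finitely $(n-1)$-presented submodule of a projective module is finitely $n$-presented and of finite projective dimension. The first half of this hypothesis is exactly the criterion of \cite[Theorem~2.1]{Zhu}, so $R$ is left $n$-coherent. It then remains to verify regularity via Proposition~\ref{prop:regular 0}, i.e. to check that every finitely $\infty$-presented submodule $N$ of a projective module satisfies $\pd_R(N) < \infty$. Here I would invoke the inclusion $\mathsf{FP}_\infty(R) \subseteq \mathsf{FP}_{n-1}(R)$: such an $N$ is in particular finitely $(n-1)$-presented, so the hypothesis applies and gives $\pd_R(N) < \infty$. Hence $R$ is left regular, and therefore left $n$-coherent regular.

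The only point requiring care, and the closest thing to an obstacle, is keeping the direction of the presentation-class inclusions straight: the hypothesis is phrased for finitely $(n-1)$-presented submodules, whereas Proposition~\ref{prop:regular 0} tests finitely $\infty$-presented ones, and the argument goes through precisely because $\mathsf{FP}_\infty(R) \subseteq \mathsf{FP}_{n-1}(R)$ makes every finitely $\infty$-presented module finitely $(n-1)$-presented. No homological computation beyond this dimension bookkeeping is needed.
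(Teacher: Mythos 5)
Your proposal is correct and follows exactly the route the paper intends: combining \cite[Theorem~2.1]{Zhu} for the $n$-coherence half with Proposition~\ref{prop:regular 0} and Proposition~\ref{prop:regularity-hierarchy-unified} for the regularity half, with the inclusion $\mathsf{FP}_\infty(R) \subseteq \mathsf{FP}_{n-1}(R)$ bridging the two presentation classes. The paper leaves these details implicit; you have simply written them out.
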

\qed

Recall that if \( \mathcal{X} \) is a class of \( R \)-modules and $M$ is an \( R \)-module, the \newterm{relative projective dimension} of \( M \) with respect to \( \mathcal{X} \) is defined as
\[
\pd_{\mathcal{X}}(M) := \min\{ n \geq 0 : \Ext^j_R(M, -)|_{\mathcal{X}} = 0 \text{ for all } j > n \}.
\]
Dually, we denote by \( \id_{\mathcal{X}}(M) \) the \newterm{relative injective dimension} of \( M \) with respect to \( \mathcal{X} \). Furthermore, for any class \( \mathcal{Y} \subseteq \Mod R \), we define
\[
\pd_{\mathcal{X}}(\mathcal{Y}) := \sup\{ \pd_{\mathcal{X}}(Y) : Y \in \mathcal{Y} \} \quad \text{and} \quad \id_{\mathcal{X}}(\mathcal{Y}) := \sup\{ \id_{\mathcal{X}}(Y) : Y \in \mathcal{Y} \}.
\]
If \( \mathcal{X} = \Mod R \), we simply write \( \pd(\mathcal{Y}) \) and \( \id(\mathcal{Y}) \); see \cite{AM}.

\begin{prop} 
Let \( R \) be a left \( n \)-coherent ring. Then \( R \) is left \( n \)-regular if and only if every finitely \( n \)-presented \( R \)-module has finite projective dimension relative to the class \( \mathsf{FP}_n\text{-}\mathsf{Proj}(R) \).
\end{prop}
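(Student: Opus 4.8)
The plan is to treat the two implications asymmetrically: the forward direction is immediate from the definitions, while the converse rests on a single closure observation about the cotorsion pair $[\mathsf{FP}_n\text{-}\mathsf{Proj}(R),\mathsf{FP}_n\text{-}\mathsf{Inj}(R)]$ combined with dimension shifting. The observation I would isolate at the outset is that $\mathsf{FP}_n(R)\subseteq \mathsf{FP}_n\text{-}\mathsf{Proj}(R)$: if $F\in\mathsf{FP}_n(R)$ and $N$ is $\mathsf{FP}_n$-injective, then $\Ext^1_R(F,N)=0$ holds by the very definition of $\mathsf{FP}_n$-injectivity, so $F$ is $\mathsf{FP}_n$-projective. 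This is exactly what will let a syzygy serve as a test object later.

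For the implication $(\Rightarrow)$, suppose $R$ is left $n$-regular and let $M\in\mathsf{FP}_n(R)$. Then $\pd_R(M)<\infty$, so $\Ext^j_R(M,-)=0$ for all large $j$, and in particular $\Ext^j_R(M,X)=0$ for every $X\in\mathsf{FP}_n\text{-}\mathsf{Proj}(R)$ in that range; hence $\pd_{\mathsf{FP}_n\text{-}\mathsf{Proj}(R)}(M)\le\pd_R(M)<\infty$. No coherence hypothesis is needed here.

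For the converse, fix $M\in\mathsf{FP}_n(R)$ and assume $\pd_{\mathsf{FP}_n\text{-}\mathsf{Proj}(R)}(M)=k<\infty$. Since $R$ is left $n$-coherent, $\mathsf{FP}_n(R)=\mathsf{FP}_\infty(R)$, so $M$ admits a resolution $\cdots\to P_1\to P_0\to M\to 0$ by finitely generated projectives whose syzygies $K_i$ all lie in $\mathsf{FP}_\infty(R)=\mathsf{FP}_n(R)$, fitting into short exact sequences $0\to K_{i+1}\to P_i\to K_i\to 0$ with $K_0=M$. Dimension shifting along the $P_i$ gives $\Ext^1_R(K_k,X)\cong\Ext^{k+1}_R(M,X)=0$ for every $X\in\mathsf{FP}_n\text{-}\mathsf{Proj}(R)$. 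Now apply the opening observation to the next syzygy: $K_{k+1}\in\mathsf{FP}_n(R)\subseteq\mathsf{FP}_n\text{-}\mathsf{Proj}(R)$ is an admissible test module, so $\Ext^1_R(K_k,K_{k+1})=0$ and the sequence $0\to K_{k+1}\to P_k\to K_k\to 0$ splits. Thus $K_k$ is a direct summand of the projective $P_k$, whence $\pd_R(M)\le k<\infty$; as $M$ was arbitrary, $R$ is left $n$-regular.

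The only nonformal point, and hence the step I expect to be the crux, is realizing that the relative vanishing may be tested against the syzygy $K_{k+1}$ itself. This is licensed precisely by the inclusion $\mathsf{FP}_n(R)\subseteq\mathsf{FP}_n\text{-}\mathsf{Proj}(R)$ together with left $n$-coherence, the latter guaranteeing that every syzygy of a finitely $n$-presented module is again finitely $n$-presented, so that $K_{k+1}$ indeed lands in $\mathsf{FP}_n\text{-}\mathsf{Proj}(R)$. Once this is in place the conclusion is a one-line splitting argument, and I would only double-check that the relative projective dimension is computed by evaluating $\Ext^\bullet_R(M,-)$ on the \emph{second} variable, so that shifting in the first variable is unobstructed.
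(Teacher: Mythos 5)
Your proof is correct, but it takes a genuinely different route from the paper's. The paper invokes the fact that $[\mathsf{FP}_n\text{-}\mathsf{Proj}(R), \mathsf{FP}_n\text{-}\mathsf{Inj}(R)]$ is a complete cotorsion pair and cites a general formula of Angeleri--Mendoza, $\pd_R(M) = \max\{\pd_{\mathsf{FP}_n\text{-}\mathsf{Proj}(R)}(M),\ \pd_{\mathsf{FP}_n\text{-}\mathsf{Inj}(R)}(M)\}$, then observes that left $n$-coherence forces $\pd_{\mathsf{FP}_n\text{-}\mathsf{Inj}(R)}(M)=0$ for $M\in\mathsf{FP}_n(R)$ (since $\mathsf{FP}_n(R)=\mathsf{FP}_\infty(R)$, all higher $\Ext$'s into $\mathsf{FP}_n$-injectives vanish), obtaining the exact equality $\pd_R(M)=\pd_{\mathsf{FP}_n\text{-}\mathsf{Proj}(R)}(M)$ in one stroke. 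You replace the external citation with a self-contained syzygy argument: the inclusion $\mathsf{FP}_n(R)\subseteq\mathsf{FP}_n\text{-}\mathsf{Proj}(R)$ lets the $(k{+}1)$-st syzygy serve as a test object against the $k$-th, and the resulting splitting shows the $k$-th syzygy is projective. Your identification of the crux is accurate: $n$-coherence is needed precisely so that the syzygies stay in $\mathsf{FP}_n(R)$, and the relative dimension is indeed computed in the second variable, so shifting in the first is unobstructed. What the paper's route buys is brevity and the explicit equality of dimensions; what yours buys is independence from the completeness of the cotorsion pair and from the cited max-formula --- and in fact your two directions combined also recover the equality $\pd_R(M)=\pd_{\mathsf{FP}_n\text{-}\mathsf{Proj}(R)}(M)$, though you do not state it.
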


\begin{proof} 
Since \( (\mathsf{FP}_n\text{-}\mathsf{Proj}(R), \mathsf{FP}_n\text{-}\mathsf{Inj}(R)) \) forms a complete cotorsion pair by \cite[Theorem 2.2]{AM}, it follows that
\[
\pd_R(M) \;=\; \max\!\left\{ \pd_{\mathsf{FP}_n\text{-}\mathsf{Proj}(R)}(M),\ \pd_{\mathsf{FP}_n\text{-}\mathsf{Inj}(R)}(M) \right\}
\]
for every \( R \)-module \( M \).  

Now, if \( M \) is finitely \( n \)-presented, the assumption that \( R \) is \( n \)-coherent implies that 
$
\pd_{\mathsf{FP}_n\text{-}\mathsf{Inj}(R)}(M) = 0.$
Consequently, $$
\pd_R(M) \;=\; \pd_{\mathsf{FP}_n\text{-}\mathsf{Proj}(R)}(M).$$
\end{proof}

It is clear that \( R \) is left \( n \)-coherent regular if and only if every finitely \( n \)-presented \( R \)-module admits a finite resolution by finitely generated projective modules.  Recall that for any \( R \)-module \( M \), we denote by \( \mathsf{S}^n(M) \) the complex concentrated in degree \( n \), and by \( \mathsf{D}^n(M) \) the complex with \( M \xrightarrow{1_M} M \) in degrees \( n \) and \( n{-}1 \). A morphism of complexes is a quasi-isomorphism if and only if its mapping cone is acyclic. A complex is said to be \emph{perfect} if it is quasi-isomorphic to a bounded complex of finitely generated projective modules. Now, let \( M \) be a finitely \( n \)-presented module. Then \( \mathsf{S}^0(M) \) is quasi-isomorphic to a bounded complex \( \mathbf{P} \) of finitely generated projective modules. Since \( H_i(\mathbf{P}) = 0 \) for all \( i \neq 0 \) and \( M \cong H_0(\mathbf{P}) = Z_0\mathbf{P}/B_0\mathbf{P} \), we obtain a finite resolution
$$0 \longrightarrow P_n \longrightarrow \cdots \longrightarrow P_1 \longrightarrow Z_0\mathbf{P} \longrightarrow M \longrightarrow 0,$$
with each term finitely generated and projective.

Conversely, if every bounded complex of finitely \( n \)-presented modules is perfect, then in particular, each \(  \mathsf{S}^0(M) \), for \( M \in \mathsf{FP}_n(R) \), is quasi-isomorphic to a bounded complex of finitely generated projectives. Following the same argument as in the case \( n = 1 \); see \cite[Theorem 2.2]{GI}, we deduce that \( M \) admits a finite projective resolution. Therefore, we obtain the following characterization:

\begin{theorem}
Let \( R \) be a ring. The following statements are equivalent:
\begin{enumerate}
    \item \( R \) is left $n$-coherent regular.
    \item Every finitely $n$-presented \( R \)-module admits a finite resolution by finitely generated projective modules.
    \item Every bounded chain complex of finitely $n$-presented \( R \)-modules is perfect.
\end{enumerate}
\end{theorem}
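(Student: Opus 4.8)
The plan is to prove the equivalence (1) $\Leftrightarrow$ (2) directly from the definitions and then to establish (2) $\Leftrightarrow$ (3), where the genuine work lies in the passage between a single module and a bounded complex.

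For (1) $\Leftrightarrow$ (2), recall that left $n$-coherence is equivalent to $\mathsf{FP}_n(R) = \mathsf{FP}_\infty(R)$ by \cite[Proposition~2.5]{ep}. Given $M \in \mathsf{FP}_n(R)$, $n$-coherence provides a resolution $\cdots \to P_1 \to P_0 \to M \to 0$ by finitely generated projectives whose syzygies all lie in $\mathsf{FP}_\infty(R)$ (using the $2$-out-of-$3$ property of $\mathsf{FP}_\infty(R)$), while regularity forces $\pd_R(M) = d < \infty$. The $d$-th syzygy $K_d$ is therefore simultaneously finitely generated---being a member of $\mathsf{FP}_\infty(R)$---and projective, so truncation yields the finite resolution $0 \to K_d \to P_{d-1} \to \cdots \to P_0 \to M \to 0$ by finitely generated projectives. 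Conversely, such a resolution exhibits each $M \in \mathsf{FP}_n(R)$ as an object of $\mathsf{FP}_\infty(R)$ of finite projective dimension, returning both $n$-coherence and regularity.

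For (2) $\Rightarrow$ (3), condition (2) says exactly that the stalk $\mathsf{S}^0(M)$ is a perfect complex for every $M \in \mathsf{FP}_n(R)$. Since the perfect complexes are precisely the compact objects of $\mathsf{D}(R)$ and hence form a thick subcategory stable under shifts, it suffices to build an arbitrary bounded complex $\mathbf{C}$ of finitely $n$-presented modules from its stalks $\mathsf{S}^i(C_i)$ by finitely many distinguished triangles. These are provided by the stupid truncations: the degreewise split short exact sequences $0 \to \sigma_{\geq k}\mathbf{C} \to \mathbf{C} \to \sigma_{\leq k-1}\mathbf{C} \to 0$ induce triangles in $\mathsf{D}(R)$, and an induction on the number of nonzero terms---each step splitting off a single stalk---shows $\mathbf{C}$ is perfect. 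This d\'evissage is the main obstacle: upgrading perfectness of the individual stalks to that of an arbitrary bounded complex genuinely uses the triangulated structure of $\mathsf{D}(R)$ rather than any module-level manipulation, the case $n = 1$ being \cite[Theorem~2.2]{GI}.

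For (3) $\Rightarrow$ (2), apply (3) to the one-term complex $\mathsf{S}^0(M)$ with $M \in \mathsf{FP}_n(R)$, obtaining a quasi-isomorphism with a bounded complex $\mathbf{P}$ of finitely generated projectives having $H_i(\mathbf{P}) = 0$ for $i \neq 0$ and $H_0(\mathbf{P}) \cong M$. As in the discussion preceding the statement, the vanishing of homology in negative degrees yields a finite exact sequence $0 \to Z_0\mathbf{P} \to P_0 \to P_{-1} \to \cdots \to P_a \to 0$ of finitely generated projectives; splitting it from the right shows that each successive kernel, and ultimately the cycle module $Z_0\mathbf{P}$, is a direct summand of a finitely generated projective and hence finitely generated projective. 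The non-negative part then reads as the finite resolution $0 \to P_b \to \cdots \to P_1 \to Z_0\mathbf{P} \to M \to 0$ by finitely generated projectives, which is (2). Alternatively, \cite[Lemma~3.3]{Hov} gives $\pd_R(M) < \infty$ directly, and the quasi-isomorphism places $M$ in $\mathsf{FP}_\infty(R)$.
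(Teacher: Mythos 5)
Your proposal is correct and follows essentially the same route as the paper: (1)$\Leftrightarrow$(2) via the syzygy argument, (3)$\Rightarrow$(2) by extracting a finite projective resolution from a bounded complex of finitely generated projectives quasi-isomorphic to $\mathsf{S}^0(M)$, and (2)$\Rightarrow$(3) by d\'evissage in $\mathsf{D}(R)$. The only difference is one of completeness: the paper declares (1)$\Leftrightarrow$(2) ``clear'' and delegates (2)$\Rightarrow$(3) to the $n=1$ argument of \cite[Theorem~2.2]{GI}, whereas you spell out the stupid-truncation induction and the right-to-left splitting that shows $Z_0\mathbf{P}$ is finitely generated projective.
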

\qed

Next, we explore the connection between \( n \)-coherence and regularity in extensions of rings.  The following lemma establishes a well-known relationship between finitely \( n \)-presented modules and faithfully flat ring extensions. Its proof is straightforward.

\begin{lemma}\label{lemm:faithfully}
Let $\varphi: R \to S$ be a ring homomorphism, and let $M$ be a right $R$-module.
\begin{enumerate}
    \item If $S$ is flat as a left $R$-module and $M$ is a finitely $n$-presented right $R$-module, then $M \otimes_R S$ is a finitely $n$-presented right $S$-module.
    \item If $S$ is faithfully flat as a left $R$-module and $M \otimes_R S$ is a finitely $n$-presented right $S$-module, then $M$ is a finitely $n$-presented right $R$-module.
\end{enumerate}
\end{lemma}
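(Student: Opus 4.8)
The plan is to treat the two statements separately: part (1) by direct base change along $\varphi$, and part (2) by induction on $n$ using faithfully flat descent, at each stage leaning on the closure properties of the classes $\mathsf{FP}_m$ recorded after \cite[Theorem~2.1.2]{Glaz1}.

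For part (1), I would start from a finite $n$-presentation
\[
F_n \longrightarrow \cdots \longrightarrow F_0 \longrightarrow M \longrightarrow 0
\]
of $M$ by finitely generated free (or projective) right $R$-modules and apply $-\otimes_R S$. Since $S$ is flat as a left $R$-module this functor is exact, so the sequence
\[
F_n\otimes_R S \longrightarrow \cdots \longrightarrow F_0\otimes_R S \longrightarrow M\otimes_R S \longrightarrow 0
\]
remains exact. As each $F_i\otimes_R S$ is finitely generated free (or a direct summand of a finitely generated free, hence projective) over $S$, this is a finite $n$-presentation of $M\otimes_R S$ as a right $S$-module. This is the easy direction, requiring only flatness.

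For part (2) the argument proceeds by induction on $n$. The base case $n=0$ is descent of finite generation: if $M\otimes_R S$ is finitely generated over $S$, only finitely many simple tensors occur in a generating set, so there is a finitely generated $R$-submodule $N\subseteq M$ whose image generates $M\otimes_R S$. Tensoring $0\to N\to M\to M/N\to 0$ with the flat module $S$ gives $(M/N)\otimes_R S=0$, and faithful flatness forces $M/N=0$, whence $M=N$ is finitely generated. For the inductive step, assuming the claim for $n-1$, I would use the case $n=0$ to see that $M$ is finitely generated, choose a short exact sequence $0\to K\to F_0\to M\to 0$ with $F_0$ finitely generated free over $R$, and tensor with the flat module $S$ to obtain
\[
0\longrightarrow K\otimes_R S\longrightarrow F_0\otimes_R S\longrightarrow M\otimes_R S\longrightarrow 0.
\]
Here $F_0\otimes_R S$ is finitely generated projective and $M\otimes_R S\in\mathsf{FP}_n(S)$, so the closure property along short exact sequences (if $B\in\mathsf{FP}_{n-1}(S)$ and $C\in\mathsf{FP}_n(S)$, then $A\in\mathsf{FP}_{n-1}(S)$) yields $K\otimes_R S\in\mathsf{FP}_{n-1}(S)$. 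By the inductive hypothesis $K\in\mathsf{FP}_{n-1}(R)$, and splicing a finite $(n-1)$-presentation of $K$ with $0\to K\to F_0\to M\to 0$ produces a finite $n$-presentation of $M$.

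The main obstacle is the descent direction: all the content sits in the base case, where faithful flatness is essential (plain flatness cannot detect finite generation of $M$ itself), together with the bookkeeping of the index shift in passing from $M\otimes_R S\in\mathsf{FP}_n(S)$ down to $K\otimes_R S\in\mathsf{FP}_{n-1}(S)$, which must align exactly with the closure property quoted above for the induction to close.
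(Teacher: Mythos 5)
Your proof is correct and is precisely the standard argument the paper has in mind (the paper declares the proof straightforward and omits it): base change along the flat map for (1), and for (2) induction on $n$ with faithfully flat descent of finite generation as the base case, plus the closure property of $\mathsf{FP}_{n-1}$ under kernels of epimorphisms from $\mathsf{FP}_{n-1}$ onto $\mathsf{FP}_n$ to drive the induction. The index bookkeeping in your inductive step aligns correctly with property (3) quoted after \cite[Theorem~2.1.2]{Glaz1}, so there is nothing to add.
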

\qed

  \begin{prop}
Let $\varphi: R \to S$ be a ring homomorphism, and assume that $S$ is faithfully flat as a left $R$-module.
\begin{enumerate}
    \item If $S$ is a right $n$-coherent regular ring, then $R$ is also a right $n$-coherent regular ring.
    \item If $S$ is a right $n$-coherent uniformly regular ring, then $R$ is also a right $n$-coherent uniformly regular ring.
\end{enumerate}
\end{prop}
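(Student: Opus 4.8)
The plan is to transport finiteness conditions across $\varphi$ with Lemma~\ref{lemm:faithfully}, and to reduce the regularity statements to a descent of (projective) dimension along $\varphi$. Throughout, all modules are right modules, and $(-)\otimes_R S$ is exact because $S$ is flat as a left $R$-module.

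First I would prove that $R$ is right $n$-coherent, which is common to both parts. Let $M\in\mathsf{FP}_n(R)$. By Lemma~\ref{lemm:faithfully}(1), $M\otimes_R S\in\mathsf{FP}_n(S)$, and since $S$ is right $n$-coherent we have $\mathsf{FP}_n(S)=\mathsf{FP}_\infty(S)$, so $M\otimes_R S\in\mathsf{FP}_k(S)$ for every $k\ge n$. Applying Lemma~\ref{lemm:faithfully}(2) for each such $k$ gives $M\in\mathsf{FP}_k(R)$ for all $k$, i.e. $M\in\mathsf{FP}_\infty(R)$. Hence $\mathsf{FP}_n(R)=\mathsf{FP}_\infty(R)$ and $R$ is right $n$-coherent. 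In particular every $M\in\mathsf{FP}_n(R)$ admits a resolution by finitely generated projective $R$-modules, so all of its syzygies are finitely presented over $R$.

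Next I would establish, for every $M\in\mathsf{FP}_n(R)$, the equality
\[
\pd_R(M)=\pd_S(M\otimes_R S).
\]
The inequality $\pd_S(M\otimes_R S)\le\pd_R(M)$ is immediate: a resolution of $M$ by finitely generated projectives becomes, after the exact functor $(-)\otimes_R S$, such a resolution of $M\otimes_R S$ over $S$. For the reverse inequality set $d=\pd_S(M\otimes_R S)$ (finite in part~(1) by the regularity of $S$, and $\le k$ in part~(2)). Choosing a resolution of $M$ by finitely generated projectives and letting $\Omega$ be its $d$-th syzygy, the module $\Omega$ is finitely presented over $R$ by the first paragraph, and $\Omega\otimes_R S$ is the $d$-th syzygy of $M\otimes_R S$, hence projective over $S$. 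It then suffices to know that $\Omega\otimes_R S$ projective over $S$ forces $\Omega$ projective over $R$, which yields $\pd_R(M)\le d$. Granting the displayed equality, part~(1) follows at once: regularity of $S$ gives $\pd_S(M\otimes_R S)<\infty$, so $\pd_R(M)<\infty$ for all $M\in\mathsf{FP}_n(R)$, i.e. $R$ is right $n$-regular, and together with right $n$-coherence this is exactly right $n$-coherent regularity. For part~(2), uniform regularity of $S$ supplies a single $k$ with $\pd_S(N)\le k$ for all $N\in\mathsf{FP}_n(S)$, whence $\pd_R(M)=\pd_S(M\otimes_R S)\le k$ for every $M\in\mathsf{FP}_n(R)$, with the \emph{same} bound $k$; thus $R$ is right $n$-coherent uniformly regular.

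The main obstacle is the faithfully flat descent of projectivity for the finitely presented syzygy $\Omega$, and the delicate point is to match the one-sidedness of the hypothesis ($S$ faithfully flat as a \emph{left} $R$-module) against the flatness of the right module $\Omega$. I would handle it as follows. Faithful flatness of ${}_R S$ makes $R\to S$ a pure monomorphism, so the unit $C\to C\otimes_R S$ is injective for every right $R$-module $C$ (concretely, if $c\otimes 1=0$ then the right annihilator $\mathfrak a$ of $c$ satisfies $\mathfrak a S=S$, forcing $R/\mathfrak a=0$ by faithful flatness, hence $c=0$). Now projectivity of the finitely presented $\Omega$ is equivalent to the splitting of a presentation $0\to K\to R^{b}\to\Omega\to 0$ with $K$ finitely generated, and such a splitting is precisely a solution of a finite inhomogeneous system of $R$-linear equations in the entries of a $b\times b$ matrix; the solvability of that system is the vanishing of a fixed element in the cokernel of a map between finitely generated free modules. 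Base-changing the system along $\varphi$ commutes with forming this cokernel, and the existence of a solution over $S$ is guaranteed by the projectivity of $\Omega\otimes_R S$; the injectivity of $C\to C\otimes_R S$ applied to that cokernel then descends solvability from $S$ to $R$, giving a splitting over $R$ and hence $\Omega$ projective. This completes the descent and, with it, both parts of the proposition.
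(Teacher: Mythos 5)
Your overall strategy is the same as the paper's: use Lemma~\ref{lemm:faithfully} in both directions to transport $n$-coherence, base-change a resolution of $M$ by finitely generated projectives along the flat map, observe that some syzygy becomes projective over $S$, and then descend projectivity of that syzygy along the faithfully flat extension. Up to and including the reduction to the statement ``$\Omega\otimes_R S$ projective over $S$ and $\Omega$ finitely presented imply $\Omega$ projective over $R$,'' your argument is correct and is essentially what the paper does (the paper asserts this descent in one line without proof; you also obtain the cleaner equality $\pd_R(M)=\pd_S(M\otimes_R S)$, which is what makes part~(2) and its uniform bound immediate).

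The gap is in your justification of the descent step, and it is specific to the noncommutative setting in which the proposition is stated. Writing the presentation as $R^a\xrightarrow{A}R^b\to\Omega\to 0$, a splitting of $R^b\to\Omega$ amounts to solving $ATA=A$ for a matrix $T$ (equivalently, finding $\sigma$ with $\sigma A=0$ and $\sigma-I=AT$). This is not a one-sided system of $R$-linear equations: the unknown entries of $T$ are multiplied by ring elements on \emph{both} sides. Consequently the map $T\mapsto ATA$ from $M_{a\times b}(R)$ to $M_{b\times a}(R)$ is only additive (it is linear over the centre, not over $R$ on either side), its cokernel carries no $R$-module structure, and neither ``base change commutes with this cokernel'' nor the injectivity of the unit $C\to C\otimes_R S$ can be applied to it. The same obstruction appears in any reformulation: the relevant group $\Ext^1_R(\Omega,K)$, for $\Omega$ and $K$ both right $R$-modules, is merely an abelian group, so the faithful flatness of ${}_RS$ cannot be fed into it directly. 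Your argument is perfectly fine when $R$ is commutative (there $T\mapsto ATA$ is $R$-linear and everything you wrote goes through), but for general rings the one-sided faithfully flat descent of projectivity of a finitely presented module needs a genuinely different argument (or an additional hypothesis, e.g.\ that $\varphi$ splits as a map of right $R$-modules); as it stands this step would fail. To be clear, the paper's own proof invokes exactly the same descent without justification, so your proposal is not behind the paper in substance --- but the one place where you go beyond the paper is the place where the argument given does not work.
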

\begin{proof}    
\begin{enumerate}
    \item Assume that \( S \) is a right \( n \)-coherent regular ring. Then, by Lemma \ref{lemm:faithfully}, \( R \) is also right \( n \)-coherent regular. Let \( M \) be a finitely \( n \)-presented right \( R \)-module. This means there exists an exact sequence:
    $$\cdots \xrightarrow{d_{n+1}} F_n \xrightarrow{d_n} F_{n-1} \xrightarrow{d_{n-1}} \cdots \xrightarrow{d_1} F_0 \xrightarrow{u_0} M \to 0,$$
    where each \( F_i \) is a finitely generated free right \( R \)-module. Tensoring this sequence with \( S \) yields the following presentation for \( M \otimes_R S \):
    $$\cdots \xrightarrow{d_{n+1} \otimes 1_S} F_n \otimes_R S \xrightarrow{d_n \otimes 1_S} F_{n-1} \otimes_R S \to \cdots \to M \otimes_R S \to 0.$$
    It follows that \( \operatorname{pd}_S(M \otimes_R S) = m < \infty \), and hence:
    $$K_{m-1} \otimes_R S \cong \ker(d_{m-1} \otimes 1_S),$$
    where \( K_{m-1} = \ker(d_{m-1}) \) is finitely generated and projective.  

    Since \( S \) is faithfully flat over \( R \), it follows that \( K_{m-1} \) is a projective \( R \)-module. Thus, \( \operatorname{pd}_R(M) < \infty \), completing the argument.  
    \item The proof is similar to part (1).
\end{enumerate}
\end{proof}

Recall that if \( R \) and \( S \) are two rings, then for any \( T \)-module \( M \), where \( T = R \oplus S \), there exists a unique decomposition \( M = M_1 \oplus M_2 \). Here, \( M_1 = (R, 0)M \) is an \( R \)-module, and \( M_2 = (0, S)M \) is an \( S \)-module. For \( x \in M_1 \) and \( r \in R \), the action is given by \( rx = (r, 0)x \), and for \( y \in M_2 \) and \( s \in S \), the action is \( sy = (0, s)y \). Following \cite[Lemma 6.4]{Li}, the module \( M = M_1 \oplus M_2 \in  \mathsf{FP}_n(T) \) if and only if \( M_1 \in \mathsf{FP}_n(R) \) and \( M_2 \in  \mathsf{FP}_n(S) \). Furthermore, \( \pd_T(M) < \infty \) if and only if \( \pd_R(M_1) < \infty \) and \( \pd_S(M_2) < \infty \).  The following theorem, which follows immediately, shows that the class of \( n \)-coherent regular rings is closed under finite direct products.

\begin{theorem}\label{theorem:finite direct products}
 Let \( R \) and \( S \) be rings. Then \( T = R \oplus S \) is a left \( n \)-coherent regular ring if and only if both \( R \) and \( S \) are left \( n \)-coherent regular rings.
\end{theorem}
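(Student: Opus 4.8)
The plan is to reduce every assertion about finitely $n$-presented $T$-modules and their projective dimensions to the two factors $R$ and $S$, using the direct-sum decomposition of $T$-modules recalled just above the statement. Every $T = R \oplus S$-module $M$ splits uniquely as $M = M_1 \oplus M_2$ with $M_1$ an $R$-module and $M_2$ an $S$-module, and the cited result of \cite{Li} supplies exactly the two facts I need: $M$ lies in $\mathsf{FP}_n(T)$ if and only if $M_1 \in \mathsf{FP}_n(R)$ and $M_2 \in \mathsf{FP}_n(S)$, and $\pd_T(M) < \infty$ if and only if $\pd_R(M_1) < \infty$ and $\pd_S(M_2) < \infty$. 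Since these two properties are precisely what govern $n$-coherence and $n$-regularity, the equivalence should follow formally.

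First I would use Proposition~\ref{prop:regularity-hierarchy-unified} to replace ``left $n$-coherent regular'' by the conjunction of ``left $n$-coherent'' and ``left $n$-regular'', and then show that each of these two conditions decomposes over $R$ and $S$ separately. For $n$-regularity, I would run the $\pd$-equivalence of \cite{Li}: $T$ is $n$-regular exactly when every $M \in \mathsf{FP}_n(T)$ has finite projective dimension, which by the decomposition amounts to $\pd_R(M_1) < \infty$ and $\pd_S(M_2) < \infty$ for all $M_1 \in \mathsf{FP}_n(R)$ and $M_2 \in \mathsf{FP}_n(S)$. Testing against $M_2 = 0$ isolates $n$-regularity of $R$ and, symmetrically, $M_1 = 0$ isolates that of $S$; the converse is immediate, since if both factors are $n$-regular then each summand, hence $M$, has finite projective dimension. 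For $n$-coherence I would use the characterization $\mathsf{FP}_n(T) = \mathsf{FP}_{n+1}(T)$ together with the $\mathsf{FP}_n$-equivalence of \cite{Li} applied at both levels $n$ and $n+1$; the same test modules $M_1 \oplus 0$ and $0 \oplus M_2$ separate the condition into $n$-coherence of $R$ and of $S$. Combining the two decompositions gives the theorem, with symmetry between $R$ and $S$ handled once.

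There is no genuinely hard step here---the author already signals that the result ``follows immediately''---so the only thing requiring attention is the bookkeeping: carrying each test-module argument through in both directions and noting that the zero module is (trivially) finitely $n$-presented over each factor, so that the substitutions $M_2 = 0$ and $M_1 = 0$ are legitimate. An alternative, more constructive route would invoke instead the earlier characterization that a ring is $n$-coherent regular if and only if every finitely $n$-presented module admits a finite resolution by finitely generated projective modules: one would splice finite projective resolutions of $M_1$ over $R$ and $M_2$ over $S$ into a single resolution over $T$, and conversely apply the exact idempotent-component functors $(1,0)\cdot(-)$ and $(0,1)\cdot(-)$ to a $T$-resolution. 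That route is equally valid but forces an extra routine verification---that a finitely generated projective module over a factor is finitely generated projective over $T$ and conversely, via its realization as a direct summand of $T^{k} = R^{k} \oplus S^{k}$---which the first route avoids by treating \cite{Li} as a black box.
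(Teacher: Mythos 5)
Your proposal is correct and follows exactly the route the paper intends: the theorem is stated as an immediate consequence of the decomposition $M = M_1 \oplus M_2$ and the two equivalences from \cite{Li} ($M \in \mathsf{FP}_n(T)$ iff both components are finitely $n$-presented, and $\pd_T(M) < \infty$ iff both components have finite projective dimension), and your test-module bookkeeping with $M_1 = 0$ and $M_2 = 0$ is precisely the omitted verification. No discrepancy with the paper's (essentially one-line) argument.
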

\qed
   
Theorem \ref{theorem:finite direct products} gives more examples of $n$-coherent regular rings. \\

\subsection{Commutative case}
The notion of \newterm{Gorenstein dimension} (or \newterm{G-dimension}) for finitely generated modules over a Noetherian ring was introduced by Auslander and Bridger; see \cite{AB69}, and later extended by Enochs and Jenda to arbitrary rings via the \newterm{Gorenstein projective dimension}; see \cite{EJ95}.  Given a commutative ring \( R \), a finitely generated \( R \)-module \( M \) belongs to the \(\mathsf{G}\)-class \( \mathsf{G}(R) \) if  
$$\Ext_R^i(M, R) = 0 \quad \text{and} \quad \Ext_R^i(M^*, R) = 0 \quad \text{for all } i \geq 1,$$
and the canonical map \( M \to M^{**} \) is an isomorphism.  
The \newterm{restricted} \(\mathsf{G}\)-class, denoted \( \widetilde{\mathsf{G}}(R) \), consists of modules in \( \mathsf{G}(R) \) that are of type \(\mathsf{FP}_{\infty}(R)\), together with their duals. A complex \( \mathbf{G} \) is called a \newterm{G-resolution} (resp. \(\widetilde{G}\)-resolution) of \( M \) if each \( G_i \) is a member of the \(\mathsf{G}\)-class (respectively, \( \widetilde{\mathsf{G}} \)-class), \( G_n = 0 \) for \( n < 0 \), \( H_i(\mathbf{G}) = 0 \) for \( i \neq 0 \) and \( H_0(\mathbf{G}) \cong M \). For a resolution \( \mathbf{G} \), we define its \newterm{length} as the infimum of the integers \( n \) such that \( G_n \neq 0 \).  
If \( M \) admits a \( G \)-resolution (resp. \( \widetilde{G} \)-resolution), the \newterm{G-dimension} \( \mathsf{Gdim}_R M \) (resp. \( \mathsf{\widetilde{G}dim}_{R} M \)) is the infimum of the lengths of all such resolutions of \( M \). If no such resolution exists, the corresponding dimension is said to be \newterm{undefined}. By \cite[Proposition 3.8]{HM}, if \( M \) is a finitely \( \infty \)-presented \( R \)-module with finite projective dimension, then its restricted Gorenstein dimension coincides with its projective dimension, i.e., $\mathsf{\widetilde{G}dim}_{R}(M) = \pd_R(M).$ Then by \cite[Corollary 3.6]{HM} we have:

\begin{prop}
Let \( R \) be a commutative \( n \)-coherent regular ring with \( n \geq 0 \). 
Then for every nonzero finitely \( n \)-presented \( R \)-module \( M \), 
$$\pd_R(M) \;=\; \sup \{\, i \geq 0 \mid \Ext_R^i(M, R) \neq 0 \,\}.$$
In particular, a nonzero finitely presented \( R \)-module \( M \) is projective 
if and only if 
\( R \in {\{M\}}^{\perp_{\infty}} \).
\end{prop}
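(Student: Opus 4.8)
The plan is to recognize this statement as an immediate consequence of the two results of \cite{HM} recalled just above, once the hypotheses have been matched to the present setting. First I would check that $M$ lands in the class to which those results apply. Since $R$ is left $n$-coherent, \cite[Proposition~2.5]{ep} gives $\mathsf{FP}_n(R) = \mathsf{FP}_\infty(R)$, so the nonzero finitely $n$-presented module $M$ is in fact finitely $\infty$-presented. By Proposition~\ref{prop:regularity-hierarchy-unified}, left $n$-coherent regularity is equivalent to left $n$-coherence together with left $n$-regularity, and the latter forces $\pd_R(M) < \infty$. Thus $M$ is a finitely $\infty$-presented module of finite projective dimension, which is exactly the hypothesis required.

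Next I would invoke the two cited formulas. By \cite[Proposition~3.8]{HM}, the restricted Gorenstein dimension of $M$ coincides with its projective dimension, $\mathsf{\widetilde{G}dim}_R(M) = \pd_R(M)$. On the other hand, \cite[Corollary~3.6]{HM} computes the restricted Gorenstein dimension cohomologically as $\mathsf{\widetilde{G}dim}_R(M) = \sup\{\, i \geq 0 \mid \Ext^i_R(M,R) \neq 0 \,\}$. Chaining these two equalities yields the desired identity $\pd_R(M) = \sup\{\, i \geq 0 \mid \Ext^i_R(M,R) \neq 0 \,\}$.

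For the ``in particular'' statement I would specialize to projective dimension zero. A nonzero module $M$ is projective if and only if $\pd_R(M) = 0$, and by the formula just proved this holds if and only if $\sup\{\, i \geq 0 \mid \Ext^i_R(M,R) \neq 0 \,\} = 0$, i.e. if and only if $\Ext^i_R(M,R) = 0$ for all $i \geq 1$, which is precisely the condition $R \in \{M\}^{\perp_\infty}$. Note that the degree-zero term $\Hom_R(M,R)$ plays no role in this equivalence, since the inequality $\pd_R(M) \leq 0$ already forces projectivity of the nonzero module $M$.

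The argument is therefore routine once the machinery of \cite{HM} is granted; the only step requiring genuine care is the hypothesis verification, namely the reduction $M \in \mathsf{FP}_n(R) = \mathsf{FP}_\infty(R)$ together with the finiteness of $\pd_R(M)$. After that reduction, no further work is needed, and the reflexivity subtleties intrinsic to restricted Gorenstein dimension are entirely absorbed into \cite[Corollary~3.6]{HM}.
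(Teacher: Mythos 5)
Your proposal is correct and follows exactly the route the paper takes: the text preceding the proposition derives it by combining \cite[Proposition~3.8]{HM} (restricted Gorenstein dimension equals projective dimension for finitely $\infty$-presented modules of finite projective dimension) with \cite[Corollary~3.6]{HM}, which is precisely your chain of equalities. Your explicit verification that $\mathsf{FP}_n(R)=\mathsf{FP}_\infty(R)$ under $n$-coherence and that regularity supplies $\pd_R(M)<\infty$ is left implicit in the paper but is the right hypothesis check.
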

\qed

Recall that if $
\cdots \longrightarrow P_i \xrightarrow{f_i} P_{i-1} \longrightarrow \cdots 
\longrightarrow P_1 \xrightarrow{f_1} P_0 \xrightarrow{f_0} M \longrightarrow 0 $
is a projective resolution of \( M \), then the module \(\operatorname{Im}(f_i)\) is called the 
\(i\)-th \newterm{syzygy} of \( M \). We denote it by \(\Omega^i(M)\), with the convention 
\(\Omega^0(M) = M\). For such a resolution we set 
$$S(M) \;=\; \bigoplus_{i \geq 0} \Omega^i(M).$$
Moreover, 
$$\{M\}^{\perp_{\infty}} \;=\; \{S(M)\}^{\perp_1}.$$

\begin{cor}
Let \( R \) be a commutative \( n \)-coherent regular ring with \( n \geq 0 \). 
Then a nonzero finitely presented \( R \)-module \( M \) is projective 
if and only if 
\[
R \in \{S(M)\}^{\perp_{1}}.
\]
\end{cor}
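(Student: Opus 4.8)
The plan is to obtain the corollary as a direct reformulation of the ``in particular'' clause of the preceding Proposition, using the displayed identity $\{M\}^{\perp_{\infty}} = \{S(M)\}^{\perp_1}$. The Proposition already asserts that, under the standing hypotheses (namely $R$ commutative $n$-coherent regular), a nonzero finitely presented $R$-module $M$ is projective if and only if $R \in \{M\}^{\perp_{\infty}}$. Substituting the identity on the right-hand side then yields at once that $M$ is projective if and only if $R \in \{S(M)\}^{\perp_1}$, which is exactly the assertion to be proved. In this sense the whole argument is a single substitution, and the substantive content is inherited from the Proposition.

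For completeness I would recall why the identity $\{M\}^{\perp_{\infty}} = \{S(M)\}^{\perp_1}$ holds, which is pure dimension shifting. Fixing a projective resolution $\cdots \to P_1 \xrightarrow{f_1} P_0 \xrightarrow{f_0} M \to 0$ with syzygies $\Omega^i(M) = \operatorname{Im}(f_i)$, the short exact sequences $0 \to \Omega^{i+1}(M) \to P_i \to \Omega^i(M) \to 0$ and the projectivity of each $P_i$ give isomorphisms $\Ext^1_R(\Omega^i(M), N) \cong \Ext^{i+1}_R(M, N)$ for every $R$-module $N$ and every $i \geq 0$ (the case $i=0$ being trivial). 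Since $\Ext^1_R(-, N)$ converts the coproduct $S(M) = \bigoplus_{i \geq 0} \Omega^i(M)$ into the product $\prod_{i \geq 0} \Ext^1_R(\Omega^i(M), N)$, membership $N \in \{S(M)\}^{\perp_1}$ holds precisely when $\Ext^{i+1}_R(M, N) = 0$ for all $i \geq 0$, i.e. when $\Ext^j_R(M, N) = 0$ for all $j \geq 1$, which is the defining condition for $N \in \{M\}^{\perp_{\infty}}$.

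Taking $N = R$ then finishes the argument. I do not expect any real obstacle here: the entire substance lives in the preceding Proposition, whose ``in particular'' clause characterizes projectivity of $M$ through the vanishing of all higher $\Ext$-groups into $R$. The only points requiring a modicum of care are the conversion of the infinite direct sum defining $S(M)$ into a product of $\Ext^1$-groups, and the re-indexing in the dimension-shifting isomorphism; both are entirely routine.
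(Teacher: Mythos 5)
Your proposal is correct and matches the paper's intended argument: the paper states the identity $\{M\}^{\perp_{\infty}} = \{S(M)\}^{\perp_1}$ immediately before the corollary and then derives the corollary directly from the ``in particular'' clause of the preceding proposition, exactly as you do. Your dimension-shifting justification of that identity is the standard one and fills in the only detail the paper leaves implicit.
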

\qed

\begin{prop}\label{prop: pair projectivo}
Let \( R \) be a commutative \( n \)-coherent regular ring with \( n \geq 0 \). Then,  
$$\mathsf{FP}_n\text{-}\mathsf{Inj}(R) \bigcap \big(\mathsf{FP}_n(R) \setminus \mathsf{proj}(R)\big) = \emptyset.$$
\end{prop}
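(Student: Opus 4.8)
The strategy is to prove the equivalent statement that every $\mathsf{FP}_n$-injective module $M$ lying in $\mathsf{FP}_n(R)$ is finitely generated projective; emptiness of the intersection then follows at once, since no module in $\mathsf{FP}_n(R)\setminus\mathsf{proj}(R)$ can be $\mathsf{FP}_n$-injective. The first step is to extract self-orthogonality of $M$. Since $R$ is $n$-coherent we have $\mathsf{FP}_n(R)=\mathsf{FP}_\infty(R)$, so every syzygy $\Omega^j F$ of a module $F\in\mathsf{FP}_n(R)$ again lies in $\mathsf{FP}_n(R)$. Because $M$ is $\mathsf{FP}_n$-injective, $\Ext^1_R(G,M)=0$ for all $G\in\mathsf{FP}_n(R)$, and dimension shifting along a resolution of $F$ by finitely generated projectives gives $\Ext^i_R(F,M)\cong\Ext^1_R(\Omega^{i-1}F,M)=0$ for every $F\in\mathsf{FP}_n(R)$ and every $i\ge 1$. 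Taking $F=M$ (which lies in $\mathsf{FP}_n(R)$) yields $\Ext^i_R(M,M)=0$ for all $i\ge 1$.

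Next, because $R$ is $n$-coherent regular, $M$ admits a finite resolution by finitely generated projectives, so $\pd_R(M)=k<\infty$. Suppose for contradiction that $k\ge 1$. Using commutativity, I would locate a prime at which this projective dimension is attained: the syzygy $\Omega^{k-1}M$ is finitely presented and not projective (otherwise $\pd_R(M)<k$), hence not locally free, so there is a prime $\pp$ for which $(\Omega^{k-1}M)_\pp$ is not free over the local ring $R_\pp$. Since localizing a resolution by finitely generated projectives yields again such a resolution over $R_\pp$, this gives $\pd_{R_\pp}(M_\pp)=k$.

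Finally I would run a minimal free resolution argument over $R_\pp$. A finitely generated module over any local ring has a minimal free resolution $0\to F_k\to\cdots\to F_0\to M_\pp\to 0$ of length $k$, all of whose differentials have entries in the maximal ideal $\pp R_\pp$. Applying $\Hom_{R_\pp}(-,M_\pp)$, the image of the top differential lands in $\pp M_\pp$ in each coordinate, so the top cohomology $\Ext^k_{R_\pp}(M_\pp,M_\pp)$ surjects onto $(M_\pp/\pp M_\pp)^{\,r_k}$ with $r_k=\operatorname{rank}F_k\ge 1$; this is nonzero by Nakayama, as $M_\pp\ne 0$. Hence $\Ext^k_{R_\pp}(M_\pp,M_\pp)\ne 0$. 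On the other hand, since $M\in\mathsf{FP}_\infty(R)$, the functor $\Ext^k_R(M,-)$ commutes with localization, so $\Ext^k_R(M,M)_\pp\cong\Ext^k_{R_\pp}(M_\pp,M_\pp)\ne 0$, whence $\Ext^k_R(M,M)\ne 0$, contradicting the vanishing established in the first step. Therefore $k=0$, and $M$ is finitely generated projective.

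The main obstacle is the interaction of the localization step with the minimal-resolution computation: one must verify both that the projective dimension is genuinely attained at some prime (via the locally-free criterion for finitely presented projectives, together with the fact that localization preserves resolutions by finitely generated projectives) and that the top self-$\Ext$ over the local ring is nonzero, which is exactly where minimality of the resolution is used. The variance of $\mathsf{FP}_n$-injectivity controls $\Ext_R(-,M)$ rather than $\Ext_R(M,-)$, so passing through the self-$\Ext$ group $\Ext^{\ge 1}_R(M,M)$—rather than through $\Ext^{\ge 1}_R(M,R)$ as in the preceding proposition—is the key structural choice that makes the contradiction available.
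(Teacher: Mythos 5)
Your proof is correct and follows essentially the same route as the paper's: both derive $\Ext^i_R(M,M)=0$ for all $i\ge 1$ from $\mathsf{FP}_n$-injectivity together with $n$-coherence, and then contradict this with the nonvanishing of $\Ext^{s}_R(M,M)$ for $s=\pd_R(M)\ge 1$. The only difference is that the paper obtains this nonvanishing by citing \cite[Lemma~1.2]{PT}, whereas you give a self-contained proof of that lemma via localization at a prime where the projective dimension is attained and a minimal free resolution over the local ring --- an argument that is legitimate here precisely because $M\in\mathsf{FP}_\infty(R)$ guarantees finitely generated syzygies, hence the existence of the minimal resolution and the compatibility of $\Ext^k_R(M,-)$ with localization.
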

\begin{proof}
Suppose there exists a non-projective \( R \)-module \( M \in \mathsf{FP}_n(R) \) such that \( M \in \mathsf{FP}_n\text{-}\mathsf{Inj}(R) \). Then \( \Ext_R^1(M, M) = 0 \), and consequently \( \Ext_R^k(M, M) = 0 \) for all \( k \geq 2 \). Since \( M \) is finitely \( n \)-presented and non-projective, we have \( 1 \leq \pd_R(M) = s < \infty \). However, by \cite[Lemma 1.2]{PT}, this implies \( \Ext_R^s(M, M) \neq 0 \), a contradiction.
\end{proof}

The following result is an immediate consequence of the previous proposition, together with the fact that in the \(n\)-coherent case the inclusion $\varinjlim \mathsf{FP}_n\text{-}\mathsf{Inj}(R) \subseteq \mathsf{FP}_n\text{-}\mathsf{Inj}(R)$
holds, see \cite[Theorem 4.1]{MD4}. 

\begin{cor} 
Let \( R \) be a commutative \( n \)-coherent regular ring with \( n \geq 0 \). Then, 
$$\varinjlim \mathsf{FP}_n\text{-}\mathsf{Inj}(R) \bigcap \big(\mathsf{FP}_n(R) \setminus \mathsf{proj}(R)\big) = \emptyset.$$
\end{cor}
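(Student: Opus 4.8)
The plan is to deduce the statement directly from Proposition~\ref{prop: pair projectivo} by a monotonicity argument, so the whole proof reduces to a single containment. First I would recall the inclusion
\[
\varinjlim \mathsf{FP}_n\text{-}\mathsf{Inj}(R) \;\subseteq\; \mathsf{FP}_n\text{-}\mathsf{Inj}(R),
\]
which holds here because $R$ is $n$-coherent; this is exactly \cite[Theorem~4.1]{MD1}. The point is that under $n$-coherence the class of $\mathsf{FP}_n$-injective modules is already closed under direct limits, so passing to $\varinjlim$ does not enlarge it.

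Next I would intersect both sides of this inclusion with the fixed class $\mathsf{FP}_n(R)\setminus\mathsf{proj}(R)$. Since intersection with a fixed class is monotone with respect to inclusion, I obtain
\[
\varinjlim \mathsf{FP}_n\text{-}\mathsf{Inj}(R)\;\cap\;\big(\mathsf{FP}_n(R)\setminus\mathsf{proj}(R)\big)
\;\subseteq\;
\mathsf{FP}_n\text{-}\mathsf{Inj}(R)\;\cap\;\big(\mathsf{FP}_n(R)\setminus\mathsf{proj}(R)\big).
\]
The right-hand side is empty by Proposition~\ref{prop: pair projectivo}, whose proof uses that any non-projective $M\in\mathsf{FP}_n(R)$ over an $n$-coherent regular ring has finite positive projective dimension $s$ and therefore satisfies $\Ext^s_R(M,M)\neq 0$, contradicting $\mathsf{FP}_n$-injectivity. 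Hence the left-hand side is empty as well, which is precisely the assertion.

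Phrased concretely, the argument I would write is contrapositive: take $M$ in the intersection on the left, use the inclusion to conclude $M\in\mathsf{FP}_n\text{-}\mathsf{Inj}(R)$ while retaining $M\in\mathsf{FP}_n(R)\setminus\mathsf{proj}(R)$, and then invoke Proposition~\ref{prop: pair projectivo} to reach a contradiction. There is no genuine obstacle here: the entire content is already packaged in Proposition~\ref{prop: pair projectivo} and in the closure-under-direct-limits property of $\mathsf{FP}_n\text{-}\mathsf{Inj}(R)$. The only ingredient one must be careful to justify is that this closure genuinely requires the $n$-coherence hypothesis—it is what allows the cited inclusion from \cite{MD1} to be applied—so I would flag that dependence explicitly rather than treat the inclusion as automatic.
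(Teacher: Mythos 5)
Your proof is correct and matches the paper's own argument exactly: the paper likewise derives the corollary by combining the inclusion $\varinjlim \mathsf{FP}_n\text{-}\mathsf{Inj}(R) \subseteq \mathsf{FP}_n\text{-}\mathsf{Inj}(R)$ (valid under $n$-coherence, via the cited result of Mao--Ding) with Proposition~\ref{prop: pair projectivo}. Nothing further is needed.
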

\qed

We now consider properties of the trivial extension \( R := A \ltimes M \).  Recall that if \( I \subseteq A \) is an ideal and \( M' \subseteq M \) is a submodule such that \( IM \subseteq M' \), 
then the subset \( J := I \ltimes M' \subseteq R \) is an ideal of \( R \), though not all ideals of \( R \) are of this form. However, prime (resp. maximal) ideals of \( R \) have the form \( P \ltimes M \), where \( P \) is a prime (resp. maximal) ideal of \( A \).  The ring \( R \) is isomorphic to the subring
$$\Gamma = \left\{
\begin{pmatrix}
a & m \\
0 & a
\end{pmatrix}
\;\middle|\;
a \in A,\, m \in M
\right\}
\subseteq
\begin{pmatrix}
A & M \\
0 & A
\end{pmatrix}$$
via the correspondence
$$(a, m) \mapsto
\begin{pmatrix}
a & m \\
0 & a
\end{pmatrix}.$$

\begin{prop}
Let \( A \) be a domain that is not a field, \( K = \mathsf{Q}(A) \), and \( R = A \ltimes K \). For \( n \ge 2 \), \( R \) is \( n \)-coherent regular if and only if \( A \) is.  
\end{prop}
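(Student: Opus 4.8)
The plan is to first record the structure of $R=A\ltimes K$ and then handle the two implications separately: the forward one by flat descent, the converse by transferring both $n$-coherence and regularity from $A$. Write $\mathfrak n=0\ltimes K$. Since $A$ is a domain, $\mathfrak n$ is exactly the nilradical of $R$; it satisfies $\mathfrak n^2=0$, $R/\mathfrak n\cong A$, and $\mathfrak n\cong K$ as $R$-modules with the action factoring through $R\to A$. As $A$ is not a field we have $K=\mathsf Q(A)\neq 0$, so $R$ is \emph{not} reduced, and by Proposition~\ref{prop: red} this already shows $R$ can never be $(1$-$)$coherent regular; this is precisely why the statement is restricted to $n\ge 2$. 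Finally, viewing $R$ as an $A$-module through $A\hookrightarrow R$, $a\mapsto(a,0)$, we get $R\cong A\oplus K$; since $K=\mathsf Q(A)$ is a localization of $A$ it is flat, and the free summand $A$ makes $R$ \emph{faithfully} flat over $A$.

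For the forward implication, suppose $R$ is $n$-coherent regular. Applying the preceding Proposition on faithfully flat extensions (together with Lemma~\ref{lemm:faithfully}) to the inclusion $\varphi\colon A\hookrightarrow R$, whose target is faithfully flat over $A$, I conclude immediately that $A$ is $n$-coherent regular. This direction uses nothing about $n\ge 2$.

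For the converse, assume $A$ is $n$-coherent regular with $n\ge 2$. By Proposition~\ref{prop:regularity-hierarchy-unified} it suffices to show $R$ is both $n$-coherent and $n$-regular, and by Corollary~\ref{cor:regular glaz} the regularity part reduces, once $n$-coherence is known, to showing that every $M\in\mathsf{FP}_n(R)$ has finite flat dimension. The engine on the positive side is extension of scalars $-\otimes_A R\colon\lMod A\to\lMod R$: it carries finitely generated projectives to finitely generated projectives and $\mathsf{FP}_k(A)$ into $\mathsf{FP}_k(R)$, and because $R$ is flat over $A$ it sends a finite resolution of $N$ by finitely generated projective $A$-modules to such a resolution of $N\otimes_A R$, whence $\pd_R(N\otimes_A R)\le\pd_A N$. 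Thus every module induced from an $A$-module of finite projective dimension is harmless. The whole converse therefore rests on a structural claim: for $n\ge 2$, every finitely $n$-presented $R$-module is, up to the relevant finiteness data, induced from a finitely $n$-presented $A$-module. Granting this, $n$-coherence of $R$ follows from $n$-coherence of $A$ (via Proposition~\ref{prop:regular 1}, testing on finitely $(n-1)$-presented submodules of projectives), and finite projective—hence finite flat—dimension follows from regularity of $A$ through the inequality above.

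The hard part will be this structural claim, and it is exactly where $n\ge 2$ is indispensable. The mechanism is that $\mathfrak n=0\ltimes K$ is \emph{not} finitely generated as an $R$-module: a single element $(0,k)$ generates only $0\ltimes Ak$, and $K$ is not finitely generated over $A$ because $A$ is not a field. Consequently any module whose relations genuinely involve the nilpotent direction is expelled from $\mathsf{FP}_n(R)$ once $n\ge 2$; the test case is $R/(0\ltimes A)\cong A\ltimes(K/A)$, whose first syzygy is $A$ (viewed through $R\to A$), so it lies in $\mathsf{FP}_1(R)\setminus\mathsf{FP}_2(R)$. Making this precise—proving that finiteness of the second syzygy over $R$ forces a finitely $n$-presented module to be induced from $A$—is the technical core. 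I would carry it out using the description of $R$-modules as pairs $(M,\theta\colon K\otimes_A M\to M)$ satisfying $\theta\circ(1_K\otimes\theta)=0$, and tracking which such pairs admit finite free $R$-presentations of length at least $2$.
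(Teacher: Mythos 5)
Your forward implication is correct and is essentially what the paper does: $R\cong A\oplus K$ is faithfully flat over $A$ because $A$ splits off as a direct summand and $K=\mathsf{Q}(A)$ is flat, so the descent statement for faithfully flat extensions (together with Lemma~\ref{lemm:faithfully}) hands you $n$-coherence and regularity of $A$ at once. Your side remarks are also sound: $R$ is not reduced, so by Proposition~\ref{prop: red} it can never be coherent regular, which explains the restriction to $n\ge 2$; and your computation that $0\ltimes A$ has syzygy $\mathfrak n=0\ltimes K\cong K$, which is not finitely generated over $R$ because $\mathsf{Q}(A)$ is not finitely generated over a non-field domain, correctly exhibits $R/(0\ltimes A)$ as a module in $\mathsf{FP}_1(R)\setminus\mathsf{FP}_2(R)$ and pinpoints why the threshold is $n=2$.

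The converse, however, is not proved. Everything you do there is a reduction: granted that every $M\in\mathsf{FP}_n(R)$ is induced from some $N\in\mathsf{FP}_n(A)$, the inequality $\pd_R(N\otimes_A R)\le\pd_A(N)$ and the transfer of $n$-coherence would indeed finish the argument. But that structural claim is the entire technical content of the converse, and you explicitly defer it (``I would carry it out using the description of $R$-modules as pairs\dots''). It is not a routine verification: one has to show that the mere existence of a finitely generated second syzygy over $R=A\ltimes K$ forces a finitely presented $R$-module to split off the nilpotent direction in a controlled way, and the hedge ``up to the relevant finiteness data'' conceals exactly the point at issue. This is precisely what the paper outsources to the cited results of Kabbaj--Mahdou: their Theorem~3.1 gives the equivalence of $n$-coherence of $R$ and of $A$ for $n\ge 2$, and their Lemma~3.3 supplies the description of finitely $(n-1)$-presented submodules of projective $R$-modules needed to apply Proposition~\ref{prop:regular 1} and faithful flatness for the regularity half. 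As it stands, your proposal establishes the easy direction and a correct strategy for the hard one, but leaves the hard one's core unproven.
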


\begin{proof}
By \cite[Theorem 3.1]{KMa}, \( R \) is \( n \)-coherent if and only if \( A \) is. By Proposition \ref{prop:regular 1}, an \( n \)-coherent ring is regular  if and only if every finitely \( (n-1) \)-presented submodule of a projective module has finite projective dimension. 
The result follows from \cite[Lemma 3.3]{KMa} and the faithful flatness of $R$ over $A$.
\end{proof}

Recall that a ring \( R \) is called \newterm{arithmetic} if it is locally a valuation ring. A \newterm{Bézout ring} (also known as an \newterm{\( \mathsf{F}\)-ring}) is one in which every finitely generated ideal is principal. The ring \( R \) is called \newterm{Hermite} if for every pair \( (a,b) \in R^2 \), there exist elements \( d, a', b' \in R \) such that \( a = d a' \), \( b = d b' \), and \( R a' + R b' = R \). Furthermore, \( R \) is a \newterm{Kaplansky ring} (or \newterm{elementary divisor ring}) if for every matrix \( A \) over \( R \), there exist invertible matrices \( P, Q \) and a diagonal matrix \( D \) such that $
P A Q = D.$ These concepts are related by the following implications that are generally not reversible.
$$\begin{tikzcd}[row sep=small, column sep=small]
     & & \mathsf{Kaplansky} \arrow[d, Rightarrow] & & &\\
     & &\mathsf{Hermite} \arrow[d, Rightarrow]  &  & &\\
  \mathsf{von\text{ }Neumann\text{ }regular} \arrow[r, Leftrightarrow]  \arrow[d, Rightarrow]  &\wD(R)=0 \arrow[r, Rightarrow] \arrow[d, Rightarrow] &\mathsf{Bezout} \arrow[d, Rightarrow]  & & & \\
 \mathsf{semihereditary} \arrow[r, Rightarrow] \arrow[d, Rightarrow]
    & \wD(R)\leq 1 \arrow[r, Rightarrow] \arrow[d, Rightarrow]
    & \mathsf{arithmetical} \arrow[r, Rightarrow] \arrow[d, Rightarrow] & \mathsf{Gaussian} \arrow[d, Rightarrow]
    & & \\
    \mathsf{p.p.}\text{-ring} \arrow[d, Rightarrow] 
    & (2,1)\text{-}\mathsf{ring}\footnote{Rings in which every finitely 2-presented module has projective dimension at most one}
\arrow[d, Rightarrow] 
    & 3\text{-}\mathsf{coherent}  \arrow[d, Rightarrow] 
    & \mathsf{Prufer} & &\\
   \mathsf{p.f.}\text{-ring}
    & 2\text{-}\mathsf{coherent} 
    & 4\text{-}\mathsf{coherent} 
    & & &\\
\end{tikzcd}.$$

\begin{ex}  Let \( \Ical \) be a family of pairwise disjoint intervals of the real line with rational endpoints, such that between any two intervals of \( \Ical \), there is at least one other interval of \( \Ical \). Define \( R \) as the ring of continuous maps \( \mathbb{R} \to \mathbb{R} \) that are rational constants on each interval of \( \Ical \), except on finitely many intervals of \( \Ical \), where they are given by a rational polynomial. It follows from \cite[Example 3.3]{Cou1} that \( R \) is arithmetic and reduced (hence \( \wD(R) = 1 \)). Moreover, by \cite[Theorem II.1]{Cou2}, \( R \) is \( 2 \)-coherent. Therefore, \( R \) is a \( 2 \)-coherent regular ring. Finally, note that \( R \) is not coherent.  
\end{ex}

We recall from Proposition \ref{prop:local regula} that local coherent regular rings satisfy \(\mathsf{m}^2 \neq 0\).  In contrast, local rings with \(\mathsf{m}^2 = 0\) may still be \(2\)-coherent, as established in the following proposition.

\begin{prop}\label{prop:local 2 coh}
      Let \((R,\mathsf{m})\) be a local commutative ring with \(\mathsf{m}^2 = 0\). Then \(R\) is a \(2\)-coherent ring. 
\end{prop}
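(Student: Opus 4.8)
The plan is to prove directly that $\mathsf{FP}_2(R) = \mathsf{FP}_\infty(R)$, which by \cite[Proposition~2.5]{ep} is precisely the statement that $R$ is $2$-coherent. Since the inclusion $\mathsf{FP}_\infty(R) \subseteq \mathsf{FP}_2(R)$ always holds, it suffices to show that every finitely $2$-presented module $M$ lies in $\mathsf{FP}_\infty(R)$. The guiding structural observation is that, because $\mathsf{m}^2 = 0$, any submodule of $\mathsf{m}F$ (for $F$ free) is annihilated by $\mathsf{m}$ and is therefore a vector space over $k = R/\mathsf{m}$; this collapses the homological complexity of $R$ into the single question of how $\mathsf{m}$ behaves.

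First I would fix $M \in \mathsf{FP}_2(R)$ and choose a minimal free cover $0 \to K \to F_0 \to M \to 0$ with $F_0$ finitely generated free, which exists over any local ring by Nakayama. Minimality gives $K \subseteq \mathsf{m}F_0$, whence $\mathsf{m}K \subseteq \mathsf{m}^2 F_0 = 0$, so $K$ is a $k$-module. Applying property~(3) of the classes $\mathsf{FP}_n$ (from \cite[Theorem~2.1.2]{Glaz1}) to this sequence with $n = 1$, using $F_0 \in \mathsf{FP}_1(R)$ and $M \in \mathsf{FP}_2(R)$, yields $K \in \mathsf{FP}_1(R)$; in particular $K$ is finitely generated. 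A finitely generated $k$-vector space is finite dimensional, so $K \cong k^t$ for some $t \geq 0$.

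Next I would exploit that $K$ is finitely presented. The minimal cover of $k^t$ is $0 \to \mathsf{m}^t \to R^t \to k^t \to 0$, so the first syzygy of $K$ is $\mathsf{m}^t$, and finite presentation of $K$ forces $\mathsf{m}^t$ to be finitely generated. I then split into two cases. If $t = 0$, then $K = 0$ and $M \cong F_0$ is free, hence $M \in \mathsf{FP}_\infty(R)$. If $t \geq 1$, then $\mathsf{m}$ is a direct summand of the finitely generated module $\mathsf{m}^t$, so $\mathsf{m}$ is finitely generated; since $\mathsf{m}^2 = 0$ this makes $\mathsf{m}$ a finite-dimensional $k$-space, and every ideal of $R$ (being either $R$ or a $k$-subspace of $\mathsf{m}$) is then finitely generated, so $R$ is Noetherian. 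Over a Noetherian ring every finitely generated module lies in $\mathsf{FP}_\infty(R)$, so again $M \in \mathsf{FP}_\infty(R)$. In either case $M \in \mathsf{FP}_\infty(R)$, completing the argument.

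The only real subtlety, and the step I would be most careful about, is the finite-generation bookkeeping for the syzygy: namely that $M \in \mathsf{FP}_2(R)$ genuinely forces the kernel $K$ of a minimal cover to be finitely \emph{presented} (not merely finitely generated), and that the first syzygy of $K$ can be identified with $\mathsf{m}^t$ independently of the chosen resolution. The first point is handled cleanly by the closure property~(3) of $\mathsf{FP}_n$ quoted above; the second is immediate from the explicit minimal presentation of $k^t$. Everything else reduces to the dichotomy ``$\mathsf{m}$ finitely generated, so $R$ is Noetherian, versus $\mathsf{m}$ not finitely generated, so $\mathsf{FP}_2(R)$ consists only of free modules,'' both horns of which land inside $\mathsf{FP}_\infty(R)$.
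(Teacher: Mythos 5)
Your proof is correct, and it takes a genuinely different route from the paper. The paper disposes of this proposition in one line by citing \cite[Lemma~4.6]{AJK11} together with \cite[Theorem~6]{C17}, i.e.\ it routes the statement through the theory of rings whose finitely generated ideals are quasi-projective/quasi-flat: the first reference places local rings with \(\mathsf{m}^2=0\) in that class, and the second supplies the \(2\)-coherence (\(\lambda\)-dimension \(\leq 2\)) of such rings. You instead give a direct, self-contained syzygy analysis: a minimal cover \(0\to K\to F_0\to M\to 0\) has \(K\subseteq\mathsf{m}F_0\) killed by \(\mathsf{m}\), hence \(K\cong k^{t}\), and property~(3) of the \(\mathsf{FP}_n\)-hierarchy forces \(K\) to be finitely presented, so its syzygy \(\mathsf{m}^{\oplus t}\) is finitely generated; this yields the clean dichotomy that either \(\mathsf{m}\) is finitely generated (so \(R\) is Artinian local and \(\mathsf{FP}_0(R)=\mathsf{FP}_\infty(R)\)) or every finitely \(2\)-presented module is free. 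Your approach buys more than the paper's: besides avoiding two external references, it actually computes \(\mathsf{FP}_2(R)\) exactly in both cases, which is strictly more information than \(2\)-coherence. Two small points of hygiene: the notation \(\mathsf{m}^t\) for the direct sum \(\mathsf{m}^{\oplus t}\) risks being read as the \(t\)-th power of the ideal (which is zero for \(t\geq 2\)), so it is worth writing \(\mathsf{m}^{\oplus t}\); and the step ``finite presentation of \(K\) forces its syzygy in the chosen minimal presentation to be finitely generated'' should explicitly invoke Schanuel's lemma (or the standard fact that over any ring the kernel of any surjection from a finitely generated free module onto a finitely presented module is finitely generated), which you flag but do not name.
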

\begin{proof}
    Follows from \cite[Lemma 4.6]{AJK11} and \cite[Theorem 6]{C17}.
\end{proof}

\begin{ex}
Let \(\mathsf{k}\) be a field and let \(\{x_1, x_2, \ldots\}\) be an infinite set of indeterminates over \(\mathsf{k}\). 
Consider the ring 
$$R \;=\; \frac{\mathsf{k}[x_1, x_2, \ldots]}{\mathsf{m}^2}
   \;\cong\; \mathsf{k}[\overline{x}_1,\overline{x}_2,\ldots],$$
where \(\mathsf{m} := (x_1, x_2, \ldots)\). 
Then \(R = \mathsf{k} \,+\, \frac{\mathsf{m}}{\mathsf{m}^2}\) is a local ring with maximal ideal \(\frac{\mathsf{m}}{\mathsf{m}^2}\). 
By Proposition \ref{prop:local 2 coh}, it is \(2\)-coherent. 
However, by \cite[Example 3.13]{AJK11}, \(R\) is not coherent. 
\end{ex}

\begin{cor}
    Let \((R, \mathsf{m})\) be a local commutative coherent ring such that \(\mathsf{m}^2 = 0\). Then \(\wD(R)\) is $\infty$. 
\end{cor}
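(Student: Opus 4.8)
The plan is to argue by contradiction, using two ingredients that are already available: Proposition~\ref{prop:local regula}, which asserts that a local coherent regular ring satisfies $\mathsf{m}^2 \neq 0$, and the classical fact (recalled earlier; see \cite{Glaz1}) that a coherent ring of finite weak global dimension is regular. Concretely, suppose toward a contradiction that $\wD(R) < \infty$. Since $R$ is coherent, finiteness of its weak global dimension forces $R$ to be regular; hence $R$ is a local coherent regular ring, and Proposition~\ref{prop:local regula} yields $\mathsf{m}^2 \neq 0$, contradicting the hypothesis $\mathsf{m}^2 = 0$. Therefore $\wD(R) = \infty$.

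The one point that needs care---and the only real obstacle---is the degenerate case $\mathsf{m} = 0$: then $R$ is a field, $\wD(R) = 0$, and the statement fails. Observe that Proposition~\ref{prop:local regula} already carries this implicit restriction, since its proof selects a nonzero element of $\mathsf{m}$. I would therefore record the standing assumption $\mathsf{m} \neq 0$ (equivalently, that $R$ is not a field), under which the argument above is complete. Everything else is immediate from the two cited results.

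For a self-contained variant that avoids invoking the ``finite weak dimension $\Rightarrow$ regular'' implication, I would compute directly with the residue field $k := R/\mathsf{m}$. Assuming $\mathsf{m} \neq 0$ and $\mathsf{m}^2 = 0$, the ideal $\mathsf{m}$ is annihilated by $\mathsf{m}$, hence is a nonzero $k$-vector space, say $\mathsf{m} \cong k^{(\Lambda)}$ with $\Lambda \neq \emptyset$. Building a minimal free resolution $F_\bullet \to k$, each differential has entries in $\mathsf{m}$, and each syzygy lies in $\mathsf{m}F_{i-1}$, which $\mathsf{m}^2 = 0$ annihilates; thus every nonzero syzygy is a nonzero $k$-vector space, hence not projective, so the resolution never terminates and $F_i \neq 0$ for all $i$. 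Tensoring with $k$ kills all differentials, giving $\Tor^R_i(k,k) \cong F_i \otimes_R k \neq 0$ for every $i \geq 0$. Consequently $\fd_R(k) = \infty$, and so $\wD(R) = \infty$. The main work in this route is the elementary bookkeeping of the minimal resolution; the conclusion is otherwise forced.
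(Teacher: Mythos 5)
Your first argument is essentially the paper's own proof: the paper likewise deduces from Proposition~\ref{prop:local regula} that $R$ cannot be regular, and then concludes $\wD(R)=\infty$ by citing \cite[Theorem~3.11]{AJK11}, which plays exactly the role of your appeal to the fact (recorded in the paper's remark) that a coherent ring of finite weak global dimension is regular. Two of your additional observations are worth keeping. First, the caveat about $\mathsf{m}=0$ is a genuine (if minor) flaw in the statement as written: a field satisfies all the hypotheses and has $\wD(R)=0$, and indeed Proposition~\ref{prop:local regula} itself silently assumes $\mathsf{m}\neq 0$; the standing assumption you propose is the right fix. Second, your self-contained variant via the minimal free resolution of $k=R/\mathsf{m}$ is correct (the syzygies all land in $\mathsf{m}F_{i-1}$, which is a $k$-vector space since $\mathsf{m}^2=0$, so the differentials die after applying $-\otimes_R k$ and $\Tor^R_i(k,k)\neq 0$ for all $i$), and it is strictly more elementary and more general than the paper's route: it never uses coherence, showing that \emph{any} local ring with $0\neq\mathsf{m}$ and $\mathsf{m}^2=0$ has infinite weak global dimension. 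What the paper's (and your first) approach buys instead is brevity and consistency with the regularity machinery already set up; what your direct computation buys is independence from the external citation and from the coherence hypothesis.
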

\begin{proof}
Since \(\mathsf{m}^2 = 0\), \(R\) is not regular by Proposition \ref{prop:local regula}. The claim follows from \cite[Theorem 3.11]{AJK11}.
\end{proof}

    \begin{prop}
Let \( R \) be a commutative \( n \)-coherent regular ring with \( n \geq 2 \). If \( r \in R \) is nonzero and \( r^2 = 0 \), then the ideal \( I := (r) \) satisfies \( I \subsetneq \mathsf{Ann}(I) \).
\end{prop}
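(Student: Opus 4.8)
The plan is to prove the inclusion $I \subseteq \Ann(I)$ outright and then exclude equality by a projective-dimension argument that exploits regularity. The inclusion is immediate: since $r^2 = 0$ we have $I^2 = (r)^2 = (r^2) = 0$, so $I \subseteq \Ann(I)$; equivalently, noting that $\Ann(I) = \Ann(r)$ because $I = Rr$, the relation $r\cdot r = 0$ shows $r \in \Ann(r)$. It remains to show the inclusion is strict, and here I would argue by contradiction, assuming $I = \Ann(I)$, that is, $Rr = \Ann(r)$.

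Under this assumption the first step is to build a free resolution of $I$. The multiplication map $R \xrightarrow{\;\cdot r\;} Rr = I$, $x \mapsto xr$, is surjective with kernel $\Ann(r)$, which by hypothesis equals $I$; splicing the resulting short exact sequence $0 \to I \to R \xrightarrow{\;\cdot r\;} I \to 0$ with itself produces the periodic resolution
\[
\cdots \xrightarrow{\;\cdot r\;} R \xrightarrow{\;\cdot r\;} R \xrightarrow{\;\cdot r\;} R \longrightarrow I \longrightarrow 0,
\]
in which every term is finitely generated free and every syzygy equals $Rr = I$. Exactness at each stage is precisely the identity $\ker(\cdot r) = \Ann(r) = Rr = \operatorname{im}(\cdot r)$, which is where the contradiction hypothesis is used. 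In particular $I \in \mathsf{FP}_\infty(R)$. Since $R$ is $n$-coherent regular, it is in particular regular, so $\pd_R(I) < \infty$; writing $d = \pd_R(I)$ and using that the $d$-th syzygy of the displayed resolution is $\cong I$, I conclude that $I$ is projective.

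The final step derives a contradiction from the projectivity of $I$. Projectivity makes the sequence $0 \to I \to R \to I \to 0$ split, so $I$ is a direct summand of $R$ and hence $I = eR$ for an idempotent $e \in R$. Since $e \in I = rR$ we may write $e = rt$, whence $e = e^2 = r^2 t^2 = 0$; this forces $I = 0$ and $r = 0$, contradicting $r \neq 0$. Therefore $I \neq \Ann(I)$, and combined with the first paragraph this yields $I \subsetneq \Ann(I)$. The one place that genuinely requires care—and the step I expect to be the main obstacle—is the passage from finiteness of $\pd_R(I)$ to the projectivity of $I$: it rests on the periodicity of the resolution (so that the $d$-th syzygy is again $I$) together with the standard syzygy characterization of finite projective dimension, and this is exactly the point at which the regularity hypothesis is indispensable.
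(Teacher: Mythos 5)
Your proof is correct and follows essentially the same route as the paper: the paper also notes the inclusion is immediate and, assuming equality, invokes a cited lemma of Wang--Qiao--Kim asserting that $I$ is then finitely $\infty$-presented of infinite projective dimension, contradicting regularity. Your periodic resolution together with the idempotent argument simply unpacks the content of that citation, so the only difference is that your version is self-contained.
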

  \begin{proof}  
    Let \( I = (r) \), where \( r \in R \setminus \{0\} \) and \( r^2 = 0 \).  Clearly, \( I \subseteq \mathsf{Ann}(I) \). If equality holds, then \( I \) is finitely \( n \)-presented with \( \pd_R(I) = \infty \) by \cite[Lemma 4.5]{Wang}, contradicting the regularity of \( R \). Thus, \( I \subsetneq \mathsf{Ann}(I) \).
\end{proof} 

\begin{prop}
Let \( R \) be a commutative \( n \)-coherent regular ring. Then every \( \mathsf{FP}_n \)-flat \( R \)-module is torsion-free.
\end{prop}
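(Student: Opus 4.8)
The plan is to unwind both notions homologically and collapse the statement to a single $\Tor$-vanishing. By the $\Tor$-theoretic description of torsion-freeness recalled just above, $M$ is torsion-free precisely when $\Tor_1^R(R/rR, M) = 0$ for every $r \in R$, so I fix $r$ and an $\mathsf{FP}_n$-flat module $M$ and try to prove this. The key preliminary observation is that $R/rR$ is always finitely presented: $R \xrightarrow{\cdot r} R \to R/rR \to 0$ is a finite $1$-presentation, whose first syzygy is the cyclic ideal $rR \cong R/\Ann_R(r)$. Applying $-\otimes_R M$ to $0 \to rR \to R \to R/rR \to 0$ identifies $\Tor_1^R(R/rR, M)$ with $\ker(rR \otimes_R M \to M)$, and via $rR \cong R/\Ann_R(r)$ this is $(0:_M r)/\Ann_R(r)M$. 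Thus the whole statement reduces to the inclusion $(0:_M r) \subseteq \Ann_R(r)M$. For $n \le 1$ this is immediate, since $\mathsf{FP}_0$-flat means flat and for $n=1$ we have $R/rR \in \mathsf{FP}_1(R) = \mathsf{FP}_n(R)$, so $\mathsf{FP}_n$-flatness of $M$ applies to $R/rR$ verbatim; flat modules are torsion-free.

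The essential case is $n \ge 2$, and here regularity must genuinely be used. Since $R$ is $n$-coherent, $\mathsf{FP}_n(R) = \mathsf{FP}_\infty(R)$, and by regularity every such module admits a finite resolution by finitely generated projectives; consequently $\mathsf{FP}_n$-flatness of $M$ is equivalent to $\Tor_i^R(F, M) = 0$ for all $i \ge 1$ and all $F \in \mathsf{FP}_\infty(R)$. My plan for the crux is to produce, for the fixed $r$, a finitely $\infty$-presented module (or a perfect complex built from $r$) comparable to $R/rR$ that realizes the subquotient $(0:_M r)/\Ann_R(r)M$ as part of some vanishing $\Tor_1^R(F, M)$; the point is that regularity should allow the principal syzygy $rR$ to be controlled inside $\mathsf{FP}_\infty(R)$ even though $R/rR$ itself need not lie there. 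Equivalently—and perhaps more cleanly—I would pass to the character module $M^+$: one has $M$ $\mathsf{FP}_n$-flat $\iff M^+$ $\mathsf{FP}_n$-injective and, from $\Tor_1^R(N,M)^+ \cong \Ext_R^1(N, M^+)$, $M$ torsion-free $\iff M^+$ divisible. So it suffices to prove that every $\mathsf{FP}_n$-injective module $N$ is divisible, i.e. that $\Ext_R^1(R/rR, N) = (0:_N \Ann_R(r))/rN$ vanishes. This recasts the problem as a lifting statement for homomorphisms out of $rR$ against $\mathsf{FP}_n$-injectives, where the heredity of the complete cotorsion pair $[\mathsf{FP}_n\text{-}\mathsf{Proj}(R), \mathsf{FP}_n\text{-}\mathsf{Inj}(R)]$ (available since $R$ is $n$-coherent) together with regularity is the natural tool.

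The step I expect to be the main obstacle is exactly this passage from $\mathsf{FP}_\infty(R)$ to the merely finitely presented module $R/rR$. The difficulty is that $R/rR$ lies in $\mathsf{FP}_1(R)$ but need not lie in $\mathsf{FP}_n(R) = \mathsf{FP}_\infty(R)$: this requires $\Ann_R(r)$ to be finitely generated, which fails over $n$-coherent regular rings that are not coherent (for instance the arithmetic, reduced, non-coherent $2$-coherent regular ring constructed earlier). Hence one cannot apply the definition of $\mathsf{FP}_n$-flatness to $R/rR$ directly, and the entire content of the argument is to bridge this gap using regularity rather than coherence—for example by exploiting that regularity constrains the homological behaviour of the principal syzygy $rR$ (finite flat and projective dimension, and, in the reduced situation, flatness of $rR$, equivalently purity of $\Ann_R(r)$), and then transferring the vanishing of $\Tor$ from finitely $\infty$-presented modules down to $R/rR$. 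Handling the possibly non-reduced case, where $rR$ need not be flat, is where I expect the argument to be most delicate.
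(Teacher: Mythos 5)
Your proposal sets the problem up correctly but does not prove it: the only nontrivial case is left as a description of what an argument \emph{would} have to do, not an argument. Concretely, your reduction of torsion-freeness to the vanishing of $\Tor_1^R(R/rR,M)\cong (0:_M r)/\Ann_R(r)M$ is fine, as is the disposal of $n\le 1$ (where $\mathsf{FP}_n$-flat just means flat) and the diagnosis that for $n\ge 2$ the module $R/rR$ lies in $\mathsf{FP}_1(R)$ but need not lie in $\mathsf{FP}_n(R)=\mathsf{FP}_\infty(R)$, since $\Ann_R(r)$ need not be finitely generated. But at exactly that point you write that your ``plan for the crux is to produce \dots a finitely $\infty$-presented module \dots comparable to $R/rR$'' and that ``the entire content of the argument is to bridge this gap using regularity''; no such module is produced and no such bridge is built. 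The dualization to character modules only restates the same problem ($\Ext^1_R(R/rR,M^+)=0$ for $M^+$ an $\mathsf{FP}_n$-injective module, with $R/rR$ still outside the test class), and the appeal to heredity of the cotorsion pair $[\mathsf{FP}_n\text{-}\mathsf{Proj}(R),\mathsf{FP}_n\text{-}\mathsf{Inj}(R)]$ does not act on $R/rR$ either. So the central step --- transferring $\Tor$-vanishing from $\mathsf{FP}_\infty(R)$ down to the merely finitely presented module $R/rR$ --- is a genuine gap, and it is precisely the content of the result being claimed; everything you actually establish is formal bookkeeping around it.

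For comparison, the paper does not carry out this step either: its proof is a direct citation of \cite[Proposition 4.2]{WZKXS}, which is exactly the missing statement. So your write-up would be acceptable as a reduction-plus-citation, but as a self-contained proof it stops where the real work begins. To complete it along the lines you sketch, you would need an explicit device --- e.g.\ a bounded complex of finitely generated projectives resolving a finitely $\infty$-presented module that detects the subquotient $(0:_M r)/\Ann_R(r)M$, or an argument that regularity forces $\Ann_R(r)$ to be pure (as the paper proves only in the coherent case, Proposition \ref{prop: red}) --- and you correctly flag that the non-reduced case is where any such device is in danger of failing.
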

\begin{proof}
    Follows from \cite[Proposition 4.2]{WZKXS}.
\end{proof}

Thus, for commutative \( n \)-coherent regular rings, we obtain the following ascending chain of inclusions:  
$$\mathsf{Flat}(R) = \mathsf{FP}_0\text{-}\mathsf{Flat}(R) = \mathsf{FP}_1\text{-}\mathsf{Flat}(R) \subseteq  \cdots \subseteq \mathsf{FP}_{n}\text{-}\mathsf{Flat}(R) \subseteq \mathsf{TFree}(R).$$

If \( R \) is an \( n \)-coherent regular domain, then the equality \( \mathsf{FP}_{n}\text{-}\mathsf{Flat}(R) = \mathsf{TFree}(R) \) holds if and only if \( \sfindim(R) \leq 1 \); see \cite[Theorem 4.13]{WZKXS}. 

\section{Vogel-Regularity}\label{sec:Vogel regularidad}  
A more general notion of regularity for rings and exact categories was introduced by Vogel; see \cite{Vogel}. Let \( \mathcal{C} \) be a class in \( \lMod R \). We say that the class \( \mathcal{C} \) is \newterm{Vogel} if it satisfies the following conditions:
\begin{enumerate}
    \item \( \mathcal{C} \) is closed under direct limits.
    \item \( \mathcal{C} \) has the 2-out-of-3 property with respect to \( \lMod R \).
    \item \( \mathcal{C} \) contains all finitely generated projective \( R \)-modules.
\end{enumerate}

If \(\mathcal{C}\) is a Vogel class, then it is also closed under direct summands; see \cite[Remark 2]{PPT}. Moreover, if \(\mathcal{C}'\) denotes the class of all direct summands of modules in \(\mathcal{C}\), we have $
\mathcal{C} = \varinjlim \mathcal{C} = \varinjlim \mathcal{C}'.$

\begin{dfn}
Let \(R\) be a ring. We say that \(R\) is left \newterm{Vogel-regular} if the only Vogel class in \(\lMod R\) is \(\lMod R\) itself.
\end{dfn}

\begin{prop}  
Let \(R\) be a left \(n\)-coherent regular ring with \(n \ge 0\), and let \(\mathcal{C}\) be a Vogel class of \(R\)-modules.  Then \( \mathsf{FP}_n(R) \subseteq \mathcal{C} \).
\end{prop}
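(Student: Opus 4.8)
The plan is to reduce the statement to the structural characterization of $n$-coherent regular rings established earlier and then to propagate membership in $\mathcal{C}$ along a finite projective resolution using the $2$-out-of-$3$ property. Concretely, since $R$ is left $n$-coherent regular, the characterization theorem for such rings guarantees that every $M \in \mathsf{FP}_n(R)$ admits a finite resolution
\[
0 \longrightarrow P_k \xrightarrow{f_k} P_{k-1} \longrightarrow \cdots \longrightarrow P_0 \xrightarrow{f_0} M \longrightarrow 0,
\]
in which each $P_i$ is finitely generated projective. This is the only place where the hypotheses on $R$ enter: regularity alone would give finite projective dimension, but it is precisely the combination with $n$-coherence that allows the resolving modules to be taken finitely generated.

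Next I would break this resolution into its constituent short exact sequences using the syzygies $\Omega^i(M) = \operatorname{Im}(f_i)$, namely
\[
0 \longrightarrow \Omega^{i+1}(M) \longrightarrow P_i \longrightarrow \Omega^i(M) \longrightarrow 0 \qquad (0 \le i \le k-1),
\]
with the convention $\Omega^0(M) = M$. Because the resolution has length $k$, the map $f_k$ is injective, so the top syzygy $\Omega^k(M) = \operatorname{Im}(f_k) \cong P_k$ is finitely generated projective.

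The argument then proceeds by downward induction on $i$, establishing $\Omega^i(M) \in \mathcal{C}$ for all $0 \le i \le k$. The base case $i = k$ holds because $\Omega^k(M)$ is finitely generated projective, and $\mathcal{C}$ contains all such modules by condition (3) of a Vogel class. For the inductive step, assuming $\Omega^{i+1}(M) \in \mathcal{C}$, the displayed short exact sequence has both of its left-hand terms $\Omega^{i+1}(M)$ and $P_i$ in $\mathcal{C}$ (the latter again by condition (3)), so the $2$-out-of-$3$ property (condition (2)) forces $\Omega^i(M) \in \mathcal{C}$. Taking $i = 0$ yields $M = \Omega^0(M) \in \mathcal{C}$, and since $M$ was an arbitrary module in $\mathsf{FP}_n(R)$, we conclude $\mathsf{FP}_n(R) \subseteq \mathcal{C}$.

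There is no serious obstacle in this argument; the one point that requires care is that membership must be transported through the \emph{finitely generated} projective resolution rather than an arbitrary one, so that each resolving module $P_i$ is recognized as lying in $\mathcal{C}$ via condition (3). Notably, the closure under direct limits (condition (1)) plays no role in this particular proof.
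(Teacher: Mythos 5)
Your proposal is correct and follows essentially the same route as the paper: obtain a finite resolution of $M$ by finitely generated projective modules (available precisely because $R$ is left $n$-coherent regular), split it into short exact sequences on the syzygies, and propagate membership in $\mathcal{C}$ via condition (3) and the $2$-out-of-$3$ property. The paper's proof is just a compressed version of your downward induction, and your observation that closure under direct limits is not needed here is accurate.
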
  

\begin{proof}  
Every finitely \( n \)-presented \( R \)-module \( M \) has a finite projective resolution  
$$0 \longrightarrow P_k \longrightarrow \dots \longrightarrow P_1 \longrightarrow P_0 \longrightarrow M \longrightarrow 0,$$  
with each \( P_i \) finitely generated and projective.  
Since \( \mathcal{C} \) contains all finitely generated projective modules and satisfies the \( 2 \)-out-of-\( 3 \) property, splitting the resolution into short exact sequences implies that \( M \in \mathcal{C} \).
\end{proof}  

In what follows, right $R$-modules are identified with left $R^{\op}$-modules.

\begin{theorem}
    Let \( R \) be a left Vogel-regular ring. Then, for some \( n \geq 2 \), the class $
    [\mathsf{FP}_n\text{-}\mathsf{Flat}(R^{\op})]^{\top_1}$
    satisfies the 2-out-of-3 property in \(\lMod R\) if and only if \( R \) is left coherent.
\end{theorem}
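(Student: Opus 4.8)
The plan is to exploit the special shape of the class in question. Writing $\mathcal{C} \coloneqq [\mathsf{FP}_n\text{-}\mathsf{Flat}(R^{\op})]^{\top_1}$, I would first observe that $\mathcal{C}$ is a right $\Tor$-orthogonal class of left modules, hence automatically closed under direct limits (since $\Tor$ commutes with filtered colimits) and automatically containing every flat, in particular every finitely generated projective, left module. Thus two of the three defining conditions of a Vogel class hold for free, and $\mathcal{C}$ is a Vogel class if and only if it has the 2-out-of-3 property. Invoking left Vogel-regularity, $\mathcal{C}$ satisfies 2-out-of-3 if and only if $\mathcal{C} = \lMod R$. Finally, unwinding the double orthogonal via the $\Tor$-symmetry identity $\mathsf{FP}_n\text{-}\mathsf{Flat}(R^{\op}) = {}^{\top_1}\mathsf{FP}_n(R)$ and the fact that $\Tor_1^R(M,-)\equiv 0$ characterises flat right modules, I would record the key reduction: $\mathcal{C} = \lMod R$ if and only if every $\mathsf{FP}_n$-flat right $R$-module is flat, i.e.\ ${}^{\top_1}\mathsf{FP}_n(R) = \mathsf{Flat}(R^{\op})$. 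The whole theorem then reduces to the equivalence between ${}^{\top_1}\mathsf{FP}_n(R) = \mathsf{Flat}(R^{\op})$ and left coherence of $R$.

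For the direction assuming coherence, I would use that left coherence forces $\mathsf{FP}_n(R) = \mathsf{FP}_1(R)$; since $\mathsf{FP}_1$-flat right modules always coincide with flat right modules (because $R/I$ is finitely presented exactly when the left ideal $I$ is finitely generated), one obtains ${}^{\top_1}\mathsf{FP}_n(R) = \mathsf{Flat}(R^{\op})$ at once, so $\mathcal{C} = \lMod R$ and the 2-out-of-3 property is trivial. This direction does not use Vogel-regularity.

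The substantive direction is the converse. From the 2-out-of-3 hypothesis and the reduction above I get that every $\mathsf{FP}_n$-flat right module is flat, and I would then deduce left coherence through Chase's classical theorem: it suffices to show that each product $R^I$ is flat as a right $R$-module. The plan is to verify that $R^I$ is $\mathsf{FP}_n$-flat, i.e.\ $\Tor_1^R(R^I, F) = 0$ for every $F \in \mathsf{FP}_n(R)$. Presenting $0 \to K \to P_0 \to F \to 0$ with $P_0$ finitely generated free, the long exact sequence identifies $\Tor_1^R(R^I, F)$ with the kernel of $R^I \otimes_R K \to R^I \otimes_R P_0$; because $n \geq 2$ the first syzygy $K$ lies in $\mathsf{FP}_1(R)$, so both $K$ and $P_0$ are finitely presented and the canonical maps $R^I \otimes_R K \to K^I$ and $R^I \otimes_R P_0 \to P_0^I$ are isomorphisms, compatible with the inclusion $K \hookrightarrow P_0$. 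Since $K^I \to P_0^I$ is injective, so is $R^I \otimes_R K \to R^I \otimes_R P_0$, forcing $\Tor_1^R(R^I,F) = 0$. Hence each $R^I$ is flat, and Chase's theorem yields left coherence.

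I expect the main obstacle to be precisely this last computation: obtaining the vanishing $\Tor_1^R(R^I, F) = 0$ for all $F \in \mathsf{FP}_n(R)$ and feeding it into Chase's criterion. This is exactly where the hypothesis $n \geq 2$ is indispensable (for $n=1$ the syzygy $K$ need not be finitely presented and the commutation of tensor with products fails), and where left Vogel-regularity does the real work, upgrading the bare 2-out-of-3 property to the equality $\mathcal{C} = \lMod R$. The remaining bookkeeping—the $\Tor$-symmetry identity $\mathsf{FP}_n\text{-}\mathsf{Flat}(R^{\op}) = {}^{\top_1}\mathsf{FP}_n(R)$ and the automatic closure properties of $\mathcal{C}$—is routine, and the argument goes through for every $n \geq 2$.
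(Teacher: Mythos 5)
Your argument is correct, and it reaches the conclusion by a genuinely different route from the paper in the one step that carries the weight. Both proofs open the same way: the double $\Tor$-orthogonal $\mathcal{C}=[\mathsf{FP}_n\text{-}\mathsf{Flat}(R^{\op})]^{\top_1}$ is automatically closed under direct limits and contains the (finitely generated) projectives, so granting the 2-out-of-3 property makes it a Vogel class, and Vogel-regularity forces $\mathcal{C}=\lMod R$; likewise both treat the coherent direction as immediate. Where you diverge is in extracting coherence from $\mathcal{C}=\lMod R$. The paper first identifies $\mathcal{C}$ with $\varinjlim \mathsf{FP}_n(R)$ via \cite[Lemma 2.1]{PPT} (this is where its hypothesis $n\geq 2$ enters, since that lemma requires the generating class to consist of $\mathsf{FP}_2$-modules), so that $\mathcal{C}=\lMod R$ says every left module is a direct limit of finitely $n$-presented ones, and then quotes \cite[Corollary 5.3]{Li} to conclude coherence. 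You instead unwind $\mathcal{C}=\lMod R$ into the statement that every $\mathsf{FP}_n$-flat right module is flat, verify by the syzygy computation that $R^I$ is $\mathsf{FP}_n$-flat (using that the first syzygy of an $\mathsf{FP}_n$-module is finitely presented when $n\geq 2$, so tensoring with $R^I$ commutes with the relevant products), and invoke Chase's theorem. Your version trades the two external citations for a short, self-contained computation and makes transparent exactly where $n\geq 2$ is indispensable; the paper's version is shorter on the page but opaque without the quoted lemmas. Both are valid proofs of the statement.
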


\begin{proof}
    Suppose that the class $[\mathsf{FP}_n\text{-}\mathsf{Flat}(R^{\op})]^{\top_1}$ satisfies the 2-out-of-3 property for some \( n \geq 2 \). By \cite[Lemma 2.1]{PPT}, the class
    $$\mathcal{L} := \varinjlim \mathsf{FP}_n(R^{\op})$$
    is closed under direct limits and coincides with $[\mathsf{FP}_n\text{-}\mathsf{Flat}(R^{\op})]^{\top_1}$. Since \(\mathcal{L}\) contains all projective left \( R \)-modules, it follows that \(\mathcal{L}\) is a Vogel class. By the Vogel-regularity of \( R \), this implies \(\mathcal{L} = \lMod R\). In particular, every  \( R \)-module is a direct limit of finitely \( n \)-presented modules. Then, by \cite[Corollary 5.3]{Li}, \( R \) is left coherent. The converse is straightforward.
\end{proof}

Over coherent rings, Vogel-regularity coincides with classical regularity \cite[Theorem 1]{Vogel}.

\begin{cor}
    Let \( R \) be a ring. Then \( R \) is left coherent regular if and only if \( R \) is left Vogel-regular and the class $[\mathsf{FP}_n\text{-}\mathsf{Flat}(R^{\op})]^{\top_1}$
    satisfies the 2-out-of-3 property in \(\lMod R\) for some \( n \geq 2 \).
\end{cor}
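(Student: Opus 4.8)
The plan is to derive the corollary directly from the two results that immediately precede it: the theorem characterizing, for a left Vogel-regular ring $R$, the coherence of $R$ by the $2$-out-of-$3$ property of $[\mathsf{FP}_n\text{-}\mathsf{Flat}(R^{op})]^{\top_1}$, and Vogel's observation \cite[Theorem 1]{Vogel} that over left coherent rings Vogel-regularity and classical regularity coincide. The entire argument is a matter of chaining these two equivalences; no new construction is needed.

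First I would treat the forward implication. Assume $R$ is left coherent regular, i.e.\ left coherent and left regular. Applying \cite[Theorem 1]{Vogel} to the coherent ring $R$, its regularity is equivalent to Vogel-regularity, so $R$ is left Vogel-regular. Now that $R$ is known to be Vogel-regular, the preceding theorem becomes applicable: since $R$ is moreover left coherent, the class $[\mathsf{FP}_n\text{-}\mathsf{Flat}(R^{op})]^{\top_1}$ satisfies the $2$-out-of-$3$ property for some $n \ge 2$. This yields both conditions on the right-hand side.

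For the converse I would assume that $R$ is left Vogel-regular and that $[\mathsf{FP}_n\text{-}\mathsf{Flat}(R^{op})]^{\top_1}$ has the $2$-out-of-$3$ property for some $n \ge 2$. Feeding these hypotheses into the preceding theorem---whose standing assumption is precisely that $R$ be left Vogel-regular---gives that $R$ is left coherent. With coherence now in hand, I would invoke \cite[Theorem 1]{Vogel} once more, in the opposite direction: over the coherent ring $R$, Vogel-regularity implies (indeed is equivalent to) classical regularity, so $R$ is left regular. Combining, $R$ is left coherent regular.

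The argument carries no real computational content; the only point requiring care is the bookkeeping of hypotheses. The preceding theorem is stated under the blanket assumption ``$R$ left Vogel-regular,'' so in the forward direction one must first upgrade coherent-plus-regular to Vogel-regular via \cite[Theorem 1]{Vogel} before that theorem becomes applicable, whereas in the converse one must conversely extract coherence before \cite[Theorem 1]{Vogel} can convert Vogel-regularity into regularity. Thus the two cited results are applied in reverse order in the two directions, and the main (modest) obstacle is simply ensuring that Vogel's equivalence is invoked with its coherence hypothesis already verified at each use.
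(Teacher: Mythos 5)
Your argument is correct and is exactly the intended one: the paper leaves this corollary without proof precisely because it follows by chaining the preceding theorem with the quoted equivalence from \cite[Theorem 1]{Vogel}, applied in the two orders you describe. Your care about verifying the coherence hypothesis before invoking Vogel's equivalence, and the Vogel-regularity hypothesis before invoking the theorem, is the only point of substance, and you handle it correctly.
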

\qed

\section{Uniform $n$-regularity}\label{sec: uniformly regu}
The study of module classes with projective dimension bounded  by some positive integer provides large insight to both the module category over the ring, as well as intrinsic properties of the ring itself. Let \( n \) and \( d \) be nonnegative integers. A ring \( R \) is called a \newterm{left \((n, d)\)-ring} if every finitely \( n \)-presented \( R \)-module has projective dimension \(\leq d\). Similarly, \( R \) is called a \newterm{left weak \((n, d)\)-ring} if every finitely \( n \)-presented cyclic  \( R \)-module has projective dimension \(\leq d\), or equivalently, if every finitely \((n-1)\)-presented left ideal of \( R \) has projective dimension \(\leq d - 1\). It follows immediately from this definition that a ring \( R \) is left uniformly \( n \)-regular if and only if \( R \) is a left \((n, d)\)-ring for some \( d \geq 0 \).  The notion of uniform $n$-regularity was introduced due to the poor behavior of regularity with respect to infinite products.

Clearly, there exist uniformly \(n\)-regular rings (for instance, rings of finite global dimension) that are not \(1\)-coherent. Nevertheless, the following proposition shows that uniform \(n\)-regularity implies a \(k\)-coherence condition on the ring for some \(k \geq n\). This fact is well known in the commutative setting \cite[Theorem 2.2]{Costa}.

\begin{prop}\label{prop: sup} 
Let \( R \) be a left uniformly \( n \)-regular ring. Then \( R \) is left \( k \)-coherent for some \( k \geq n \).  
\end{prop}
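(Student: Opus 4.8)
The plan is to exploit the characterization, recorded just above the statement, that left uniform $n$-regularity is equivalent to $R$ being a left $(n,d)$-ring for some $d \geq 0$; that is, every module in $\mathsf{FP}_n(R)$ has projective dimension at most $d$. I would then set $k := \max\{n,d\}$ and prove directly that $\mathsf{FP}_k(R) = \mathsf{FP}_\infty(R)$, which is precisely the assertion that $R$ is left $k$-coherent with $k \geq n$. Since the inclusion $\mathsf{FP}_\infty(R) \subseteq \mathsf{FP}_k(R)$ always holds, only the reverse inclusion requires argument.

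For that reverse inclusion I would fix $M \in \mathsf{FP}_k(R)$ and choose an exact sequence $F_k \to F_{k-1} \to \cdots \to F_0 \to M \to 0$ with each $F_i$ finitely generated projective, and consider the $k$-th syzygy $\Omega^k(M) := \ker(F_{k-1} \to F_{k-2})$. The key observation is that this single module carries two properties at once. First, by exactness at $F_{k-1}$ it equals the image of $F_k \to F_{k-1}$, hence is a quotient of the finitely generated module $F_k$ and is therefore itself finitely generated. Second, because $k \geq n$ we have $\mathsf{FP}_k(R) \subseteq \mathsf{FP}_n(R)$, so $M \in \mathsf{FP}_n(R)$ and the $(n,d)$-ring hypothesis yields $\pd_R(M) \leq d \leq k$; a standard syzygy argument—the generalized Schanuel lemma, comparing the partial resolution above with a projective resolution of $M$ of length at most $d$—then shows that the $k$-th syzygy of a module of projective dimension at most $k$ is projective. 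Thus $\Omega^k(M)$ is finitely generated and projective.

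Consequently the truncated sequence $0 \to \Omega^k(M) \to F_{k-1} \to \cdots \to F_0 \to M \to 0$ is a finite resolution of $M$ by finitely generated projective modules, so $M \in \mathsf{FP}_\infty(R)$; this establishes $\mathsf{FP}_k(R) = \mathsf{FP}_\infty(R)$ and hence the left $k$-coherence of $R$. The one delicate point is the bookkeeping of indices: one must guarantee that $\Omega^k(M)$ is \emph{simultaneously} finitely generated (which uses $M \in \mathsf{FP}_k(R)$, requiring $k \geq n$) and projective (which uses $\pd_R(M) \leq d$ together with $k \geq d$), and it is exactly the choice $k = \max\{n,d\}$ that reconciles these two requirements. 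I expect this index management, rather than any deep homological input, to be the main obstacle; the argument may be regarded as a noncommutative, module-theoretic version of the commutative result \cite[Theorem~2.2]{Costa}.
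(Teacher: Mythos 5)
Your proposal is correct and follows essentially the same route as the paper: both set $k=\max\{n,d\}$, take a finite $k$-presentation of $M\in\mathsf{FP}_k(R)$, and exhibit a syzygy that is simultaneously finitely generated (because it is an image inside the given presentation) and projective (because $\pd_R(M)\le d$), whence $M\in\mathsf{FP}_\infty(R)$. The only cosmetic difference is that the paper singles out the $d$-th syzygy $\ker u_{d-1}=\mathrm{Im}\,u_d$ while you use the $k$-th; this changes nothing of substance.
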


\begin{proof}
Suppose that \( R \) is left uniformly \( n \)-regular. Then there exists an integer \( d \) such that every  \( R \)-module in \( \mathsf{FP}_n(R) \) has projective dimension at most \( d \).  
Set \( k := \max\{n, d\} \). We claim that \( R \) is left \( k \)-coherent.  Let 
$P_k \xrightarrow{u_k} P_{k-1} \to \cdots \to P_0 \to M \to 0$
be a finite \( k \)-presentation, i.e., \( M \in \mathsf{FP}_k(R) \).  
Since \( k \geq n \), we have \( M \in \mathsf{FP}_n(R) \), hence \(\pd_R(M) \leq d\).  
In particular, \(\ker u_{d-1}\) is projective, and because \(d-1 < k\), it follows that \(\ker u_{d-1} = \mathrm{Im}\, u_d\) is finitely generated.  Therefore \( M \in \mathsf{FP}_\infty(R) \).
\end{proof}

Although left uniformly \( n \)-regular rings are left \( k \)-coherent for some \( k \geq n \), this implication does not extend to uniformly regular rings, as shown in the following example.

\begin{ex}
A uniformly regular ring that is not \( n \)-coherent for any \( n \geq 0 \):
Let \( d \geq 0 \) be a fixed integer and let \( k \geq 2 \). Consider the ring \( R_k \) defined as the quotient of the path algebra of \( \mathsf{Q}_{k,d} \) over a field \( \mathsf{k} \), modulo the ideal generated by all paths of length \( \ell \geq 2 \), see \cite[Theorem 2.1]{Liliu}. Here, \( \mathsf{Q}_{k,d} \) is the quiver given by:
\begin{itemize}
    \item \( \mathsf{Q}_{k,d}\) has \( k + d + 1 \) vertices.  
    \item For each \( i \in \{0, 1, \dots, k + d - 1\} \setminus \{d\} \), there is a single arrow \( \alpha_{i+1} \) from vertex \( i+1 \) to vertex \( i \).  
    \item There are infinitely many arrows $\{\beta_s: s\in \mathsf{S} \}$ from vertex \( d+1 \) to vertex \( d \).
    \item There are infinitely many arrows $\{\gamma_s: s\in \mathsf{S} \}$ from vertex \( d \) to vertex \( d+1 \).
\end{itemize}
$$\begin{tikzcd}[row sep=large, column sep=small, ampersand replacement=\&]
\cdots \arrow[r, "\alpha_{k+d+1}"'] \&
\bullet_{k+d} \arrow[r, ""'] \&
\cdots \arrow[r, "\alpha_{d+2}"'] \&
\bullet_{d+1} 
  \arrow[rr, bend left=50, "\alpha_{\beta_s}"] 
  \&
  \&
\bullet_d 
  \arrow[ll, bend left=50, "\alpha_{\gamma_s}"'] 
  \arrow[r, "\alpha_d"'] \&
\cdots \arrow[r, "\alpha_2"'] \&
\bullet_1 \arrow[r, "\alpha_1"'] \&
\bullet_0
\end{tikzcd}$$
Let \( R = \prod_{k=2}^{\infty} R_k \). By \cite[Example 4.8]{Li}, we have $\sup\{\pd_R(M) \mid M \in \mathsf{FP}_\infty(R)\} \leq d.$
Thus, \( R \) is right uniformly regular ring. However, by \cite[Corollary 2.2]{Liliu}, \( R \) is not right \( n \)-coherent for any \( n \geq 0 \).  
\end{ex}

\begin{prop}\label{prop: debil unif}
Let \( R \) be a ring such that \( \fd_R(M) \leq d \) for every \( M \in \mathsf{FP}_k(R) \). 
Then \( R \) is left uniformly \( n \)-regular, where \( n = \max\{k,\, d+1\} \). In particular, $R$ is left  $n$-coherent.
\end{prop}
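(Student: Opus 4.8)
The plan is to prove the stronger statement that every finitely $n$-presented module has projective dimension at most $d$; this makes $R$ a left $(n,d)$-ring, hence left uniformly $n$-regular, and the $n$-coherence will then follow from Proposition~\ref{prop: sup}. The engine of the argument is to exhibit a syzygy of $M$ that is simultaneously finitely presented and flat, and then to invoke the classical principle that a finitely presented flat module is projective.

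First I would note that $n = \max\{k,\,d+1\} \ge k$, so the inclusion $\mathsf{FP}_n(R) \subseteq \mathsf{FP}_k(R)$ guarantees $\fd_R(M) \le d$ for every $M \in \mathsf{FP}_n(R)$. Fixing such an $M$, I would take a projective resolution $\cdots \to P_1 \to P_0 \to M \to 0$ with $P_0,\dots,P_n$ finitely generated and projective, and write $\Omega^i(M)$ for its $i$-th syzygy. Using property~(3) of the classes $\mathsf{FP}_m(R)$ recorded after \cite[Theorem~2.1.2]{Glaz1} (namely: if $B \in \mathsf{FP}_m(R)$ and $C \in \mathsf{FP}_{m+1}(R)$ in a short exact sequence $0 \to A \to B \to C \to 0$, then $A \in \mathsf{FP}_m(R)$), an induction on $i$ applied to $0 \to \Omega^{i+1}(M) \to P_i \to \Omega^i(M) \to 0$ yields $\Omega^i(M) \in \mathsf{FP}_{n-i}(R)$ for all $0 \le i \le n$.

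Specializing to $i = d$ and using $n \ge d+1$, so that $n-d \ge 1$, I conclude that $\Omega^d(M) \in \mathsf{FP}_1(R)$ is finitely presented. Simultaneously, the chosen projective resolution is a flat resolution, so $\fd_R(M) \le d$ forces $\Omega^d(M)$ to be flat. Being finitely presented and flat, $\Omega^d(M)$ is projective, whence $\pd_R(M) \le d$. As $M$ was arbitrary in $\mathsf{FP}_n(R)$, the ring $R$ is a left $(n,d)$-ring and therefore left uniformly $n$-regular.

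For the concluding clause, I would invoke Proposition~\ref{prop: sup} with the uniform projective-dimension bound $d$: it produces left $\max\{n,d\}$-coherence, and since $n \ge d+1 > d$ we have $\max\{n,d\} = n$, so $R$ is left $n$-coherent. I expect the only delicate point to be the syzygy bookkeeping of the second paragraph, that is, verifying that the $d$-th syzygy lands in $\mathsf{FP}_{n-d}(R)$, which is precisely where the hypothesis $n \ge d+1$ is needed to secure finite presentation; the flatness of $\Omega^d(M)$ and the passage from finitely-presented-flat to projective are then immediate.
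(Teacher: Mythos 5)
Your argument is correct, but it follows a genuinely different route from the paper's. The paper first establishes that $R$ is left $n$-coherent by citing \cite[Corollary~5.8]{Liliu}, deduces $\mathsf{FP}_n(R)=\mathsf{FP}_\infty(R)$, and only then obtains the uniform bound via the identity $\pd_R(M)=\fd_R(M)$ on $\mathsf{FP}_\infty(R)$ from Proposition~\ref{prop: regula flat proj}. You reverse the logical order and keep everything internal: you prove the bound $\pd_R(M)\le d$ for $M\in\mathsf{FP}_n(R)$ directly by tracking the presentation degree of the syzygies ($\Omega^i(M)\in\mathsf{FP}_{n-i}(R)$, which is exactly where $n\ge d+1$ is used so that $\Omega^d(M)$ is finitely presented), combining this with the flatness of $\Omega^d(M)$ forced by $\fd_R(M)\le d$, and invoking the classical fact that finitely presented flat modules are projective; the coherence then comes out as a corollary rather than going in as an input. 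What your approach buys is self-containment --- it replaces the external citation by a short computation that is essentially the $\mathsf{FP}_n$-graded refinement of the argument the paper already gives for $\mathsf{FP}_\infty$ in Proposition~\ref{prop: regula flat proj}. One small remark: the statement of Proposition~\ref{prop: sup} only yields $k$-coherence for some $k\ge n$, so you are right to appeal to the quantitative content of its proof ($k=\max\{n,d\}=n$ since $n\ge d+1$); alternatively, at that point you could skip Proposition~\ref{prop: sup} entirely, since $\Omega^d(M)$ being finitely presented and projective is finitely generated projective, so the truncated resolution $0\to\Omega^d(M)\to P_{d-1}\to\cdots\to P_0\to M\to 0$ already exhibits $M\in\mathsf{FP}_\infty(R)$ and hence the left $n$-coherence of $R$ directly.
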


\begin{proof}
By \cite[Corollary 5.8]{Liliu}, \( R \) is left \( n \)-coherent with \( n = \max\{k,\, d+1\} \). 
Hence \(\mathsf{FP}_n(R) = \mathsf{FP}_\infty(R)\). 
Therefore, for every \( M \in \mathsf{FP}_n(R) \) we have \(\pd_R(M) = \fd_R(M) \leq d\).
\end{proof}

It is well-known that \(\wD(R[x]) = \wD(R) + 1 \leq k + 1\). Hence, by Proposition \ref{prop: debil unif} we have:

\begin{cor}
    Let \( R \) be a ring such that \( \wD(R) \leq k \). Then \( R[x] \) is left \((k+2)\)-coherent and uniformly \((k+2)\)-regular.
\end{cor}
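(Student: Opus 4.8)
The plan is to apply Proposition~\ref{prop: debil unif} to the polynomial ring $S := R[x]$. First I would invoke the classical identity $\wD(R[x]) = \wD(R)+1$ recalled just above, which together with the hypothesis $\wD(R)\le k$ gives $\wD(S)\le k+1$. Since the weak dimension of a ring is by definition the supremum of the flat dimensions of all its modules, this bound immediately yields $\fd_S(M)\le k+1$ for \emph{every} $S$-module $M$; in particular the inequality holds for every $M$ in the class $\mathsf{FP}_0(S)$ of finitely generated $S$-modules.

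Next I would feed this into Proposition~\ref{prop: debil unif}, applied to the ring $S$, with the two parameters of that statement chosen as $0$ (in the role of the proposition's $k$) and $k+1$ (in the role of its $d$). The required hypothesis, namely $\fd_S(M)\le k+1$ for all $M\in\mathsf{FP}_0(S)$, is exactly what the previous paragraph established. The proposition then produces an integer $n=\max\{0,\,(k+1)+1\}=k+2$ and asserts that $S$ is left uniformly $n$-regular and, in particular, left $n$-coherent. Substituting $n=k+2$ gives precisely the claim that $R[x]$ is left $(k+2)$-coherent and left uniformly $(k+2)$-regular.

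There is no substantive obstacle in this argument: it is a direct composition of the standard polynomial weak-dimension formula with Proposition~\ref{prop: debil unif}. The only point demanding minimal care is the index bookkeeping, i.e.\ verifying that the flat-dimension bound $d=k+1$ yields $n=k+2$ and not some larger value. This is guaranteed because the chosen $\mathsf{FP}$-index $0$ satisfies $0\le k+2=d+1$, so that the maximum defining $n$ is attained at $d+1$. Should one prefer to emphasize only the flat-dimension side, the same bound $\fd_S(M)\le k+1$ shows a fortiori that every finitely $(k+2)$-presented $S$-module lies in $\mathsf{FP}_\infty(S)$ with projective dimension at most $k+1$, reconfirming uniform $(k+2)$-regularity.
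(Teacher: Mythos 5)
Your proof is correct and follows exactly the paper's route: the identity $\wD(R[x])=\wD(R)+1\le k+1$ combined with Proposition~\ref{prop: debil unif} applied to $S=R[x]$ with $\mathsf{FP}$-index $0$ and flat-dimension bound $d=k+1$, giving $n=\max\{0,k+2\}=k+2$. The index bookkeeping is handled correctly, so there is nothing to add.
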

\qed

The class of \((n,d)\)-rings  provide a framework for exploring the structure of non-Noetherian rings. If \( n \leq n' \) and \( d \leq d' \), then every left \((n, d)\)-ring is also a left \((n', d')\)-ring. In the commutative case, these rings were extensively studied by Costa \cite{Costa}. In particular, \cite[Theorem 1.3]{Costa} provides several fundamental characterizations for commutative rings:

\begin{enumerate}
    \item[(i)] \(R\) is a \((0,0)\)-ring if and only if \(R\) is a finite direct sum of fields.
    \item[(ii)] \(R\) is a \((0,1)\)-ring if and only if \(R\) is hereditary.
    \item[(iii)] \(R\) is a \((0, d)\)-ring if and onnly if \(\mathrm{gl. dim.}(R) \leq d\).
    \item[(iv)] \(R\) is a \((1,0)\)-ring if and only if \(R\) is von Neumann regular.
    \item[(v)] \(R\) is a \((1,1)\)-ring if and only if \(R\) is semihereditary.
    \item[(vi)] For all \(n \geq 0\), \(R\) is an \((n,0)\)-domain if and only if \(R\) is a field.
    \item[(vii)] \(R\) is a \((0,1)\)-domain if and only if \(R\) is a Dedekind domain.
    \item[(viii)] \(R\) is a \((1,1)\)-domain if and only if \(R\) is a Prüfer domain.
    \item[(ix)] If \(R\) is Noetherian, then \(R\) is an \((n, d)\)-ring if and only if \(\mathrm{gl. dim.}(R) \leq d\).
\end{enumerate}

 In the particular cases \( d = 0 \) or \( d = 1 \), we obtain the classes of left \( n \)-von Neumann regular rings and left \( n \)-hereditary rings, respectively.  

\begin{prop}
 Let \( R \) be a left \( n \)-regular ring with \( n \geq 1 \). Then $R$ is left \( n \)-hereditary ring if and only if every finitely \( (n-1) \)-presented submodule of a projective \( R \)-module with finite projective dimension is projective.   
  \end{prop}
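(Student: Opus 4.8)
The plan is to reduce both implications to the standard syzygy description of finitely $n$-presented modules, together with the elementary fact that a module has projective dimension at most one exactly when its first syzygy is projective. Concretely, I would first record the correspondence already exploited in the proofs of Proposition~\ref{prop:regular 0} and Proposition~\ref{prop:regular 1}: a module $M$ lies in $\mathsf{FP}_n(R)$ if and only if there is a short exact sequence
\[
0 \longrightarrow N \longrightarrow F \longrightarrow M \longrightarrow 0
\]
with $F$ finitely generated projective and $N \in \mathsf{FP}_{n-1}(R)$ a submodule of $F$; in that case $N = \Omega^1(M)$ is a first syzygy of $M$, and $\pd_R(M) \le 1$ precisely when $N$ is projective (by Schanuel's lemma). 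Both directions are then obtained by passing between $M$ and $N$ along such a sequence.

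For the forward implication, assume $R$ is left $n$-hereditary and let $N$ be a finitely $(n-1)$-presented submodule of a projective module $P$ with $\pd_R(N) < \infty$. Since $N$ is finitely generated and $P$ embeds into a free module, $N$ is contained in a finitely generated free submodule $F$; splicing a finite $(n-1)$-presentation of $N$ onto $F \to F/N$ shows $F/N \in \mathsf{FP}_n(R)$, so $\pd_R(F/N) \le 1$ by the $n$-hereditary hypothesis. As $N$ is a first syzygy of $F/N$, this forces $N$ to be projective. I would stress that this direction uses neither $n$-regularity nor the finiteness of $\pd_R(N)$.

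For the converse, assume the stated condition and that $R$ is left $n$-regular, and let $M \in \mathsf{FP}_n(R)$. Choosing a sequence $0 \to N \to F \to M \to 0$ as above, $N \in \mathsf{FP}_{n-1}(R)$ is a submodule of the finitely generated projective module $F$. Left $n$-regularity gives $\pd_R(M) < \infty$, hence $\pd_R(N) < \infty$; the hypothesis then applies to $N$ and yields that $N$ is projective, so $\pd_R(M) \le 1$. Since $M$ was arbitrary, $R$ is a left $(n,1)$-ring, i.e.\ left $n$-hereditary.

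The only genuinely delicate points are the bookkeeping in the syzygy correspondence—verifying that splicing a finite $(n-1)$-presentation of $N$ onto $F \to F/N$ yields a finite $n$-presentation, and that a finitely generated submodule of a projective module embeds into a finitely generated free one—and keeping track of exactly where each hypothesis is used. The role of the finite-projective-dimension clause is precisely to let us invoke the condition on $N$ in the backward direction, which is what $n$-regularity supplies; it plays no role in the forward direction. I expect the main obstacle to be stating the correspondence cleanly enough that the phrase \emph{submodule of a projective module} (rather than of a \emph{finitely generated} projective module) is seen to be harmless, via the embedding of the finitely generated module $N$ into a finitely generated free module.
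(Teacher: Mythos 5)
Your proof is correct and follows essentially the same route as the paper: embed the finitely $(n-1)$-presented submodule into a finitely generated free module, use $n$-regularity to get finite projective dimension, and pass back and forth between $M\in\mathsf{FP}_n(R)$ and its first syzygy. The only difference is that where the paper concludes by citing Zhu's characterization of $n$-hereditary rings (and dismisses the forward direction as clear), you inline that syzygy argument and spell out the forward direction explicitly, correctly noting that it uses neither $n$-regularity nor the finite-projective-dimension clause.
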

\begin{proof}
Assume \( R \) is left \( n \)-regular and that every finitely \( (n-1) \)-presented submodule of a projective module with finite projective dimension is projective. Let \( M \) be a finitely \( (n-1) \)-presented submodule of a projective. Then, there exists a finitely generated free module $F$ such that $M$ is a submodule of $F$. Since \( R \) is left \( n \)-regular, we have \( \pd_R(F/M) < \infty \), and hence \( \pd_R(M) < \infty \). By hypothesis, \( M \) is projective. Therefore, by \cite[Theorem 3.2]{Zhu}, \( R \) is left \( n \)-hereditary. The converse is clear.
\end{proof}

\begin{theorem}\label{teo: n-hereditario}
    Let \( R \) be  ring and $n\geq 1$. The following conditions are equivalent:
    \begin{enumerate}
        \item \( R \) is a left \( n \)-hereditary ring.
        \item Homomorphic images of \(\mathsf{FP}_n \)-injective \( R \)-modules are \(\mathsf{FP}_n \)-injective.
        \item Every quotient of an injective module is \(\mathsf{FP}_n \)-injective.
        \item The sum of any two \( \mathsf{FP}_n \)-injective submodules of an \( R \)-module is \(\mathsf{FP}_n \)-injective.
        \item The sum of any two injective submodules of an \( R \)-module is \(\mathsf{FP}_n \)-injective.       
    \end{enumerate}
\end{theorem}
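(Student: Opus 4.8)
The plan is to first reformulate condition (1) homologically and then run a single cycle of implications through all five statements. Recall that $R$ is left $n$-hereditary precisely when $\pd_R(F)\le 1$ for every $F\in\mathsf{FP}_n(R)$, equivalently when $\Ext^j_R(F,-)=0$ for all $j\ge 2$ and all such $F$. Throughout I will use three elementary facts about the class $\mathsf{FP}_n\text{-}\mathsf{Inj}(R)$, all immediate from its being the right-hand class of the cotorsion pair $[\mathsf{FP}_n\text{-}\mathsf{Proj}(R),\mathsf{FP}_n\text{-}\mathsf{Inj}(R)]$: it contains every injective module, and it is closed under finite direct sums and under direct summands (the latter two because $\Ext^1_R(F,A\oplus B)\cong\Ext^1_R(F,A)\oplus\Ext^1_R(F,B)$). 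I would prove the chain $(1)\Rightarrow(2)\Rightarrow(4)\Rightarrow(5)\Rightarrow(3)\Rightarrow(1)$.

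For $(1)\Rightarrow(2)$, given an $FP_n$-injective $M$ and a surjection $M\twoheadrightarrow N$ with kernel $K$, the long exact sequence of $\Ext_R(F,-)$ applied to $0\to K\to M\to N\to 0$ yields $\Ext^1_R(F,M)\to\Ext^1_R(F,N)\to\Ext^2_R(F,K)$; the outer terms vanish ($M$ is $FP_n$-injective and $\pd_R F\le 1$), so $N$ is $FP_n$-injective. For $(2)\Rightarrow(4)$, if $A,B$ are $FP_n$-injective submodules of some module, then $A+B\cong(A\oplus B)/(A\cap B)$ is a homomorphic image of the $FP_n$-injective module $A\oplus B$, hence $FP_n$-injective by (2). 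The implication $(4)\Rightarrow(5)$ is immediate, since injective submodules are in particular $FP_n$-injective submodules.

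The heart of the argument is $(5)\Rightarrow(3)$. Given an injective $E$ and a submodule $M$, I would embed $E/M$ into its injective hull $Q$ and let $\phi\colon E\twoheadrightarrow E/M\hookrightarrow Q$. Inside $E\oplus Q$ consider the two submodules $E_1=E\oplus 0$ and the graph $E_2=\{(e,\phi(e)) : e\in E\}$; both are isomorphic to $E$ and hence injective. A direct computation gives $E_1+E_2=E\oplus\phi(E)=E\oplus(E/M)$ and $E_1\cap E_2\cong M$. By (5) the sum $E_1+E_2=E\oplus(E/M)$ is $FP_n$-injective, so its direct summand $E/M$ is $FP_n$-injective; this is exactly (3). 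Finally, for $(3)\Rightarrow(1)$, choosing for an arbitrary module $M$ an injective embedding $0\to M\to E\to E/M\to 0$ gives $\Ext^1_R(F,E/M)\to\Ext^2_R(F,M)\to\Ext^2_R(F,E)$ with the outer terms zero (by (3) and injectivity of $E$), so $\Ext^2_R(F,M)=0$ for every $M$; thus $\pd_R F\le 1$ for all $F\in\mathsf{FP}_n(R)$, which is (1).

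I expect the graph construction in $(5)\Rightarrow(3)$ to be the main obstacle: the trick of passing to $E\oplus Q$ and using the graph of $\phi$ to manufacture two injective submodules whose sum splits off $E/M$ is the one genuinely non-formal step, whereas the remaining implications are routine long-exact-sequence chases or direct observations about the closure properties of $\mathsf{FP}_n\text{-}\mathsf{Inj}(R)$.
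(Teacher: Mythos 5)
Your proof is correct, but it takes a different route from the paper: the paper disposes of the whole theorem in one line by taking $\mathcal{L}=\mathsf{FP}_n(R)$ in a general result of Zhou on rings characterized by a class of modules (\cite[Theorem 3.3]{Zhou3}), whereas you give a self-contained cycle $(1)\Rightarrow(2)\Rightarrow(4)\Rightarrow(5)\Rightarrow(3)\Rightarrow(1)$. All of your steps check out: the reformulation of left $n$-hereditary as $\pd_R(F)\le 1$ for all $F\in\mathsf{FP}_n(R)$ matches the paper's definition via $(n,1)$-rings; the long-exact-sequence arguments for $(1)\Rightarrow(2)$ and $(3)\Rightarrow(1)$ are standard; $A+B$ as a quotient of $A\oplus B$ handles $(2)\Rightarrow(4)$; and the graph construction inside $E\oplus Q$ correctly produces two injective submodules $E_1,E_2$ with $E_1+E_2\cong E\oplus(E/M)$, so that $(5)$ plus closure under direct summands yields $(3)$. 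This graph trick is exactly the classical device used to characterize hereditary rings by sums of injective submodules, and it is presumably the engine inside Zhou's cited theorem as well. What your approach buys is transparency and independence from the reference; what the paper's approach buys is brevity and the observation that the statement is a formal specialization of a result valid for any suitable class $\mathcal{L}$ in place of $\mathsf{FP}_n(R)$.
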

\begin{proof}
    The result follows by taking \( \mathcal{L} = \mathsf{FP}_n(R) \) in \cite[Theorem 3.3]{Zhou3}.
\end{proof}

\begin{cor}
Let $R$ be a left $n$-coherent ring with $n \geq 1$.  
If the intersection of any two injective submodules of an $R$-module is $\mathsf{FP}_n$-injective, then $R$ is left $n$-hereditary.
\end{cor}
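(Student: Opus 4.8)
The plan is to verify condition~(5) of Theorem~\ref{teo: n-hereditario}---that the sum of any two injective submodules of an $R$-module is $FP_n$-injective---which by that theorem is equivalent to $R$ being left $n$-hereditary. Accordingly, let $A$ and $B$ be injective submodules of an $R$-module $M$.

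First I would write down the classical short exact sequence relating the sum and the intersection,
\[
0 \longrightarrow A \cap B \xrightarrow{\ x \mapsto (x,-x)\ } A \oplus B \xrightarrow{\ (a,b)\mapsto a+b\ } A + B \longrightarrow 0.
\]
The middle term $A \oplus B$ is a finite direct sum of injectives, hence injective and a fortiori $FP_n$-injective; the left-hand term $A \cap B$ is $FP_n$-injective by hypothesis. Thus $A + B$ is realized as the cokernel of a monomorphism between two $FP_n$-injective modules, and the problem reduces to showing that $\mathsf{FP}_n\text{-}\mathsf{Inj}(R)$ is closed under such cokernels.

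To establish this closure I would apply $\Hom_R(F, -)$, for an arbitrary $F \in \mathsf{FP}_n(R)$, to the displayed sequence and examine the segment
\[
\Ext^1_R(F, A \oplus B) \longrightarrow \Ext^1_R(F, A + B) \longrightarrow \Ext^2_R(F, A \cap B).
\]
The left group vanishes because $A \oplus B$ is $FP_n$-injective, so it suffices to kill the right group. Here the $n$-coherence of $R$ is decisive: it gives $\mathsf{FP}_n(R) = \mathsf{FP}_\infty(R)$, so $F$ admits a finite $n$-presentation whose first syzygy $\Omega^1(F)$ again lies in $\mathsf{FP}_\infty(R) = \mathsf{FP}_n(R)$. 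Dimension shifting then yields $\Ext^2_R(F, A\cap B) \cong \Ext^1_R(\Omega^1(F), A \cap B) = 0$, the last equality holding because $A \cap B$ is $FP_n$-injective and $\Omega^1(F) \in \mathsf{FP}_n(R)$. Consequently $\Ext^1_R(F, A + B) = 0$ for every $F \in \mathsf{FP}_n(R)$, so $A + B$ is $FP_n$-injective, and condition~(5) is confirmed.

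The main obstacle is exactly this higher-Ext vanishing $\Ext^2_R(F, A\cap B) = 0$: the defining $\mathsf{Ext}^1$-property of $FP_n$-injectivity does not by itself survive passage to a cokernel, and one genuinely needs that syzygies of finitely $n$-presented modules stay finitely $n$-presented. This is precisely what left $n$-coherence provides, and it is the only place the hypothesis on $R$ is used; the intersection hypothesis feeds in solely through the $FP_n$-injectivity of $A \cap B$.
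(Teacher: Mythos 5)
Your proof is correct and follows essentially the same route as the paper: both reduce to the short exact sequence $0 \to A\cap B \to A\oplus B \to A+B \to 0$ and conclude via condition (5) of Theorem~\ref{teo: n-hereditario}. The only difference is that you prove the closure of $\mathsf{FP}_n\text{-}\mathsf{Inj}(R)$ under cokernels of monomorphisms by dimension shifting, whereas the paper simply quotes this standard consequence of left $n$-coherence.
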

\begin{proof}
Assume that $R$ is left $n$-coherent and that the intersection of any two injective submodules of an $R$-module is $\mathsf{FP}_n$-injective.  
Let $A$ and $B$ be injective submodules of some $R$-module.  
Consider the canonical short exact sequence
$$0 \longrightarrow A \cap B \longrightarrow A \oplus B \longrightarrow A + B \longrightarrow 0.$$
Since $A \oplus B$ is injective, and the class of $\mathsf{FP}_n$-injective modules is closed under cokernels of monomorphisms whenever $R$ is left $n$-coherent, it follows that $A+B$ is $\mathsf{FP}_n$-injective.  Therefore, by Theorem~\ref{teo: n-hereditario}, $R$ is left $n$-hereditary.
\end{proof}

If \( R \) is a left \( n \)-von Neumann regular ring, then \( \mathsf{FP}_n(R) = \mathsf{proj}(R) \); hence \( R \) is left \( n \)-coherent.  In particular, a \( 1 \)-von Neumann regular ring coincides with the classical notion of a von Neumann regular ring.  Moreover, it is clear that every left $n$-von Neumann regular ring is left self-$\mathsf{FP}_n$-injective.

\begin{prop}\label{prop: coherencia self fp}
Let \( R \) be a left self \( \mathsf{FP}_n \)-injective ring with \( n \geq 1 \). The following conditions are equivalent:
\begin{enumerate}
    \item \( R \) is left \( n \)-von Neumann regular.
    \item \( R \) is left \( n \)-hereditary.
    \item  \( R \) is a left \((n,d)\)-ring with \( 1\leq d\leq n\).
    \item \( R \) is left \( n \)-coherent regular.
\end{enumerate}
\end{prop}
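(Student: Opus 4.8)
The plan is to establish the cycle of implications $(1)\Rightarrow(2)\Rightarrow(3)\Rightarrow(4)\Rightarrow(1)$, with the self-$\mathsf{FP}_n$-injectivity hypothesis entering decisively only in the final step. The implications $(1)\Rightarrow(2)\Rightarrow(3)$ are formal: if $R$ is left $n$-von Neumann regular then every $M\in\mathsf{FP}_n(R)$ is projective, so $\pd_R(M)=0\le 1$, which exhibits $R$ as an $(n,1)$-ring, i.e.\ left $n$-hereditary; and since $n\ge 1$, an $(n,1)$-ring is in particular an $(n,d)$-ring with $1\le d\le n$, which is exactly $(3)$.

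For $(3)\Rightarrow(4)$ I would argue directly at the level of syzygies. Assume $R$ is an $(n,d)$-ring with $d\le n$, so every $M\in\mathsf{FP}_n(R)$ satisfies $\pd_R(M)\le d<\infty$; in particular $R$ is left $n$-regular. To obtain $n$-coherence, fix $M\in\mathsf{FP}_n(R)$ and a partial resolution $P_n\to\cdots\to P_0\to M\to 0$ by finitely generated projectives, so that each syzygy $\Omega^k(M)$ is finitely generated for $k\le n$. Because $d\le n$ the syzygy $\Omega^d(M)$ is finitely generated, and because $\pd_R(M)\le d$ it is projective; hence $\Omega^d(M)$ is finitely generated projective and $M\in\mathsf{FP}_\infty(R)$. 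Thus $\mathsf{FP}_n(R)=\mathsf{FP}_\infty(R)$, so $R$ is left $n$-coherent, and therefore left $n$-coherent regular by Proposition~\ref{prop:regularity-hierarchy-unified}.

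The crux is $(4)\Rightarrow(1)$, where I would combine $n$-coherence, regularity and self-$\mathsf{FP}_n$-injectivity. Since $R$ is $n$-coherent, $\mathsf{FP}_n(R)=\mathsf{FP}_\infty(R)$ is closed under syzygies (by the $2$-out-of-$3$ property of $\mathsf{FP}_\infty(R)$), so every syzygy of an $M\in\mathsf{FP}_n(R)$ again lies in $\mathsf{FP}_n(R)$; combined with self-$\mathsf{FP}_n$-injectivity, i.e.\ $\Ext^1_R(F,R)=0$ for all $F\in\mathsf{FP}_n(R)$, and dimension shifting, this yields $\Ext^j_R(M,R)=0$ for all $j\ge 1$ and all $M\in\mathsf{FP}_n(R)$. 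By regularity $M$ admits a finite resolution $0\to P_s\to\cdots\to P_0\to M\to 0$ by finitely generated projectives with $s=\pd_R(M)$. Applying $(-)^{*}=\Hom_R(-,R)$, the vanishing of the $\Ext^j_R(M,R)$ makes the dual complex $0\to M^{*}\to P_0^{*}\to\cdots\to P_s^{*}\to 0$ exact; being a bounded exact complex of finitely generated projective right modules it splits, so $M^{*}$ is finitely generated projective. Dualizing once more—legitimate on exactness precisely because the sequence splits—returns the original resolution up to the reflexivity isomorphisms $P_i\cong P_i^{**}$, so comparison of cokernels gives $M\cong M^{**}$. As $M^{**}=(M^{*})^{*}$ is finitely generated projective, so is $M$; hence every finitely $n$-presented module is projective and $R$ is left $n$-von Neumann regular.

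The main obstacle is exactly this last double-duality step. The naive route—mimicking the commutative argument of Proposition~\ref{prop: pair projectivo}, where $\pd_R(M)=s$ forces $\Ext^s_R(M,M)\ne 0$—is unavailable here, since that self-$\Ext$ detection of projective dimension fails over noncommutative rings (e.g.\ for the non-projective simple module over an upper-triangular matrix algebra). I therefore expect to replace it with the reflexivity argument above, whose delicate point is to justify that the \emph{second} application of $\Hom_R(-,R)$ preserves exactness; this is where the splitting of the dual complex (a consequence of $M^{*}$, and hence every term, being projective) is essential.
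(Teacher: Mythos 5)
Your proof is correct, but for the decisive implication it takes a genuinely different route from the paper. The paper declares $(1)\Rightarrow(2)\Rightarrow(3)\Rightarrow(4)$ clear (your syzygy argument for $(3)\Rightarrow(4)$ is exactly the content of Proposition~\ref{prop: sup}, which is what makes ``clear'' legitimate there) and disposes of the converse in one line by citation: if some $M\in\mathsf{FP}_n(R)$ were non-projective it would satisfy $0<\pd_R(M)<\infty$, contradicting \cite[Corollary 4.6]{gp2}. The content behind that citation is the classical descent argument: since $R$ is left self-$\mathsf{FP}_n$-injective and $n$-coherent, $\Ext^s_R(M,R)=0$ for $s=\pd_R(M)$, hence $\Ext^1_R(\Omega^{s-1}(M),P_s)\cong\Ext^s_R(M,P_s)=0$ for the top finitely generated projective term $P_s$ of a length-$s$ resolution, so the last short exact sequence splits and $\pd_R(M)\le s-1$, forcing $s=0$. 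You instead run the total-reflexivity argument: $\Ext^{j}_R(M,R)=0$ for $j\ge1$ makes the dualized finite projective resolution exact, it splits as a bounded exact complex of projectives, $M^{*}$ is finitely generated projective, and double dualization plus the five lemma gives $M\cong M^{**}$ projective. Both are valid. One remark: the noncommutative obstruction you flag only affects the self-$\Ext$ test $\Ext^s_R(M,M)\neq 0$ used in Proposition~\ref{prop: pair projectivo}; the variant $\Ext^s_R(M,R)\neq 0$ for $M\in\mathsf{FP}_\infty(R)$ of finite projective dimension $s$ holds over arbitrary rings, so the shorter descent argument was in fact available to you. Your version buys self-containedness (no appeal to \cite{gp2}) at the cost of the extra care about why the second application of $\Hom_R(-,R)$ preserves exactness, which you handle correctly via the splitting of the dual complex.
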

\begin{proof} The implications \((1) \Rightarrow (2) \Rightarrow (3)  \Rightarrow (4)\) are clear.  
    $(3)\Rightarrow (1)$ Suppose \( R \) is left \( n \)-coherent regular. To prove that \( R \) is left \( n \)-von Neumann regular, it suffices to show that every finitely \( n \)-presented \( R \)-module is projective. If some \( M \in \mathsf{FP}_n(R) \) is non-projective, then \( 0 < \pd_R(M) < \infty \), contradicting \cite[Corollary 4.6]{gp2}. Thus, \( \mathsf{FP}_n(R) \subseteq \mathsf{proj}(R) \). 
\end{proof}

Recall that a cotorsion pair  \( (\mathcal{A}, \mathcal{B}) \) in \( \lMod R \) is called \newterm{projective} if it is hereditary, complete, and $\mathcal{A} \cap \mathcal{B} = \mathsf{Proj}(R).$ 

\begin{cor}\label{cor: von}
Let \( R \) be a left \( n \)-coherent regular ring with $n\geq 1$. Then the cotorsion pair 
$[\mathsf{FP}_n\text{-}\mathsf{Proj}(R), \ \mathsf{FP}_n\text{-}\mathsf{Inj}(R)]$
is projective in \( \lMod R \) if and only if \( R \) is left \( n \)-von Neumann regular.
\end{cor}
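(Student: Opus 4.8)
The plan is to reduce both directions to Proposition~\ref{prop: coherencia self fp}, after first observing that for a left $n$-coherent ring the pair $[\mathsf{FP}_n\text{-}\mathsf{Proj}(R),\mathsf{FP}_n\text{-}\mathsf{Inj}(R)]$ is automatically complete and hereditary, so that only the clause $\mathsf{FP}_n\text{-}\mathsf{Proj}(R)\cap\mathsf{FP}_n\text{-}\mathsf{Inj}(R)=\mathsf{Proj}(R)$ carries any content. Completeness holds for every ring by \cite[Corollary~4.2]{BP}, and hereditariness follows because over an $n$-coherent ring $\mathsf{FP}_n\text{-}\mathsf{Inj}(R)$ is closed under cokernels of monomorphisms (a fact already used above) and contains the injectives, hence is coresolving; a cotorsion pair whose right-hand class is coresolving is hereditary.

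For the forward direction I would assume the pair is projective and extract self $\mathsf{FP}_n$-injectivity of $R$ essentially for free. The left-hand class of any cotorsion pair contains $\mathsf{Proj}(R)$, so the defining equality forces $\mathsf{Proj}(R)=\mathsf{FP}_n\text{-}\mathsf{Proj}(R)\cap\mathsf{FP}_n\text{-}\mathsf{Inj}(R)\subseteq\mathsf{FP}_n\text{-}\mathsf{Inj}(R)$; in particular $R$, viewed as a module over itself, is $\mathsf{FP}_n$-injective, i.e.\ $R$ is left self $\mathsf{FP}_n$-injective. Since $R$ is also left $n$-coherent regular by hypothesis, Proposition~\ref{prop: coherencia self fp}---which for a left self $\mathsf{FP}_n$-injective ring identifies $n$-coherent regularity with $n$-von Neumann regularity---then yields that $R$ is left $n$-von Neumann regular.

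For the converse I would start from $n$-von Neumann regularity, which gives $\mathsf{FP}_n(R)=\mathsf{Proj}(R)$. Then $\Ext^1_R(F,M)=0$ for every $F\in\mathsf{FP}_n(R)$ and every module $M$, so $\mathsf{FP}_n\text{-}\mathsf{Inj}(R)=\lMod R$, and consequently $\mathsf{FP}_n\text{-}\mathsf{Proj}(R)={}^{\perp_1}(\lMod R)=\mathsf{Proj}(R)$. Thus the pair degenerates to $(\mathsf{Proj}(R),\lMod R)$, whose intersection is exactly $\mathsf{Proj}(R)$ and which is plainly complete and hereditary (the class $\mathsf{Proj}(R)$ is resolving, since a kernel of an epimorphism between projectives is a direct summand of a projective). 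Hence the cotorsion pair is projective.

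I expect the only genuinely delicate point to be the correct reading of the definition of a projective cotorsion pair: the condition is the full equality $\mathcal{A}\cap\mathcal{B}=\mathsf{Proj}(R)$, not merely the inclusion $\mathcal{A}\cap\mathcal{B}\subseteq\mathsf{Proj}(R)$. It is the reverse inclusion $\mathsf{Proj}(R)\subseteq\mathcal{B}=\mathsf{FP}_n\text{-}\mathsf{Inj}(R)$ that forces $R\in\mathsf{FP}_n\text{-}\mathsf{Inj}(R)$ and thereby unlocks Proposition~\ref{prop: coherencia self fp}; once this is recognized, every remaining step is routine bookkeeping with orthogonality classes.
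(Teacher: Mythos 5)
Your argument is correct and follows essentially the same route as the paper: extract left self $\mathsf{FP}_n$-injectivity from the equality $\mathsf{FP}_n\text{-}\mathsf{Proj}(R)\cap\mathsf{FP}_n\text{-}\mathsf{Inj}(R)=\mathsf{Proj}(R)$ together with $\mathsf{Proj}(R)\subseteq\mathsf{FP}_n\text{-}\mathsf{Proj}(R)$, invoke Proposition~\ref{prop: coherencia self fp} for the forward direction, and for the converse note that $n$-von Neumann regularity makes every module $\mathsf{FP}_n$-injective so the pair degenerates to $(\mathsf{Proj}(R),\lMod R)$. Your explicit verification of completeness and hereditariness is a welcome addition the paper leaves implicit; the only cosmetic slip is writing $\mathsf{FP}_n(R)=\mathsf{Proj}(R)$ where only the inclusion $\mathsf{FP}_n(R)\subseteq\mathsf{Proj}(R)$ holds (and is all that is used).
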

\begin{proof}
Suppose that $[\mathsf{FP}_n\text{-}\mathsf{Proj}(R), \ \mathsf{FP}_n\text{-}\mathsf{Inj}(R)]$ is projective. Then \( R \) is self \( \mathsf{FP}_n \)-injective. By Proposition \ref{prop: coherencia self fp}, it follows that \( R \) is left \( n \)-von Neumann regular. Conversely, assume that \( R \) is left \( n \)-von Neumann regular. Then every \( R \)-module is \( \mathsf{FP}_n \)-injective, and consequently every \( \mathsf{FP}_n \)-projective module is projective. Therefore,
$\mathsf{FP}_n\text{-}\mathsf{Proj}(R) \cap \mathsf{FP}_n\text{-}\mathsf{Inj}(R) \;=\; \mathsf{Proj}(R).$
\end{proof}

In particular, under the assumptions of Corollary \ref{cor: von}, this cotorsion pair coincides with the canonical projective cotorsion pair $(\mathsf{Proj}(R), \lMod R)$.

Equivalently, a cotorsion pair \( (\mathcal{A}, \mathcal{B}) \) is projective if and only if it is complete, and the class \(\mathcal{B}\)  contains all projective modules, and  satisfies the 2-out-of-3 property in \( \lMod R\). In this context we obtain the following proposition. 

\begin{prop}
Let \( R \) be a ring and let \( n \geq 1 \). The following statements are equivalent:
\begin{enumerate}
    \item The cotorsion pair $[ \mathsf{FP}_n\text{-}\mathsf{Proj}(R), \ \mathsf{FP}_n\text{-}\mathsf{Inj}(R) ]$
    is projective.
    \item Every projective \( R \)-module is \( \mathsf{FP}_n \)-injective, and the class 
    \(\mathsf{FP}_n\text{-}\mathsf{Inj}(R)\) satisfies the 2-out-of-3 property in \( \lMod R \).
    \item The ring \( R \) is left self-\( \mathsf{FP}_n \)-injective and left \( n \)-coherent, and the class 
    \(\mathsf{FP}_n\text{-}\mathsf{Inj}(R)\) is closed under kernels of epimorphisms. 
\end{enumerate}
\end{prop}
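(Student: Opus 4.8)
The plan is to leverage the equivalent description of projective cotorsion pairs recalled just before the statement: a cotorsion pair $(\mathcal{A},\mathcal{B})$ is projective precisely when it is complete, satisfies $\mathsf{Proj}(R)\subseteq \mathcal{B}$, and $\mathcal{B}$ has the $2$-out-of-$3$ property in $\lMod R$. For the pair $[\mathsf{FP}_n\text{-}\mathsf{Proj}(R),\mathsf{FP}_n\text{-}\mathsf{Inj}(R)]$ completeness is automatic by \cite[Corollary 4.2]{BP}, and the containment $\mathsf{Proj}(R)\subseteq \mathsf{FP}_n\text{-}\mathsf{Inj}(R)$ is exactly the assertion that every projective module is $\mathsf{FP}_n$-injective. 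Hence $(1)\Leftrightarrow(2)$ is immediate: statement $(2)$ simply spells out the two remaining conditions of the characterization. So the genuine content lies in the equivalence $(2)\Leftrightarrow(3)$.

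To handle $(2)\Leftrightarrow(3)$ I would first decompose the $2$-out-of-$3$ property. Since the right-hand class of any cotorsion pair is closed under extensions, $\mathsf{FP}_n\text{-}\mathsf{Inj}(R)$ satisfies $2$-out-of-$3$ if and only if it is simultaneously closed under cokernels of monomorphisms (equivalently coresolving, as it already contains the injectives) and closed under kernels of epimorphisms. The kernels-of-epimorphisms clause then matches verbatim the corresponding clause of $(3)$, so the equivalence reduces to matching the two remaining pairs of conditions. For this I would isolate two facts. First (Fact~A): for $n\ge 1$, $R$ is left $n$-coherent if and only if $\mathsf{FP}_n\text{-}\mathsf{Inj}(R)$ is closed under cokernels of monomorphisms. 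Second (Fact~B): if $R$ is left $n$-coherent, then $R$ being self-$\mathsf{FP}_n$-injective forces every projective module to be $\mathsf{FP}_n$-injective.

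Granting these, both implications become routine. For $(3)\Rightarrow(2)$: the $n$-coherence hypothesis yields the coresolving property by the forward direction of Fact~A (already used in the corollary following Theorem~\ref{teo: n-hereditario}), and together with the kernels-of-epimorphisms hypothesis this gives the full $2$-out-of-$3$ property; Fact~B then upgrades self-$\mathsf{FP}_n$-injectivity to $\mathsf{Proj}(R)\subseteq \mathsf{FP}_n\text{-}\mathsf{Inj}(R)$. For $(2)\Rightarrow(3)$: self-$\mathsf{FP}_n$-injectivity is read off from $R\in\mathsf{Proj}(R)\subseteq\mathsf{FP}_n\text{-}\mathsf{Inj}(R)$, the kernels-of-epimorphisms clause is one half of $2$-out-of-$3$, and $n$-coherence follows from the coresolving half via the converse direction of Fact~A. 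Fact~B itself I would prove by noting that $n$-coherence gives $\mathsf{FP}_n(R)=\mathsf{FP}_\infty(R)$, so each $F\in\mathsf{FP}_n(R)$ admits a resolution by finitely generated projectives and hence $\Ext^i_R(F,-)$ commutes with arbitrary direct sums; consequently $\mathsf{FP}_n\text{-}\mathsf{Inj}(R)$ is closed under direct sums, and since it is also closed under direct summands, every free module, and therefore every projective module, lies in $\mathsf{FP}_n\text{-}\mathsf{Inj}(R)$.

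The main obstacle is the converse direction of Fact~A, namely that coresolvability of $\mathsf{FP}_n\text{-}\mathsf{Inj}(R)$ implies $n$-coherence; the forward direction is already available in the excerpt, but the converse is the genuinely new input. For $n=1$ this is the classical theorem that $R$ is left coherent if and only if the class of absolutely pure (that is, $\mathsf{FP}_1$-injective) modules is closed under cokernels of monomorphisms, and the general statement is its $\mathsf{FP}_n$-analogue, which I would cite from the $n$-coherence literature (for instance \cite{Zhu} or \cite{Zhou3}); a self-contained argument would proceed by dimension shifting to establish that the cotorsion pair is hereditary, and then translate the resulting higher $\Ext$-vanishing into finite generation of the relevant syzygies of modules in $\mathsf{FP}_n(R)$. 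Every other step is a formal manipulation of the closure properties of the two classes.
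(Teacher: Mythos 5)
Your architecture coincides with the paper's. The paper also disposes of $(1)\Leftrightarrow(2)$ via the characterization of projective cotorsion pairs stated immediately before the proposition (completeness coming from \cite[Corollary~4.2]{BP}), and its proof of $(3)\Rightarrow(2)$ is exactly your combination of ``left $n$-coherent implies $\mathsf{FP}_n\text{-}\mathsf{Inj}(R)$ is closed under cokernels of monomorphisms'' with the automatic closure under extensions and the hypothesized closure under kernels of epimorphisms. For the statement that all projectives are $\mathsf{FP}_n$-injective the paper cites \cite[Proposition~2.4]{Zhu} rather than arguing directly, but your Fact~B (via $\mathsf{FP}_n(R)=\mathsf{FP}_\infty(R)$ and commutation of $\Ext$ with direct sums) is a correct substitute under the $n$-coherence hypothesis available in $(3)$.

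The problem is $(2)\Rightarrow(3)$, which the paper declares ``clear'' without argument. You correctly isolate the only non-formal point --- extracting left $n$-coherence --- and reduce it to the converse of your Fact~A: coresolvability of $\mathsf{FP}_n\text{-}\mathsf{Inj}(R)$ implies left $n$-coherence. You neither prove this nor give a reference that demonstrably contains it. For $n=1$ it is indeed the classical fact (recorded in \S3 of the paper) that $[\mathsf{FP}_1\text{-}\mathsf{Proj}(R),\mathsf{FP}_1\text{-}\mathsf{Inj}(R)]$ is hereditary iff $R$ is left coherent; but for $n\ge 2$ the $\mathsf{FP}_n$-analogue is not obviously in \cite{Zhu} or \cite{Zhou3}, and one should be wary, since the closure-property characterizations of coherence generally do not survive the passage to $n\ge 2$ (for instance, $\mathsf{FP}_n\text{-}\mathsf{Inj}(R)$ is closed under direct limits over \emph{every} ring when $n\ge 2$, so the direct-limit characterization of coherence has no higher analogue). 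Your fallback sketch also begs the question at its final step: coresolvability yields, by dimension shifting, only that the first syzygy $K$ of $F\in\mathsf{FP}_n(R)$ with respect to a finitely generated projective presentation lies in $\mathsf{FP}_n\text{-}\mathsf{Proj}(R)$, whereas $n$-coherence demands $K\in\mathsf{FP}_n(R)$; the passage from ``finitely $(n{-}1)$-presented and $\mathsf{FP}_n$-projective'' to ``finitely $n$-presented'' is exactly what must be proved and is not a formal consequence of $\Ext$-vanishing. Until this implication is supplied with a proof or a precise citation, the direction $(2)\Rightarrow(3)$ is incomplete --- a defect your write-up shares with, but at least makes visible in, the paper's one-line treatment.
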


\begin{proof}
The implications \((1)\Leftrightarrow (2)\Rightarrow (3)\) are clear.\\
\((3)\Rightarrow (2)\).  
If \( R \) is left self-\( \mathsf{FP}_n \)-injective, it follows from \cite[Proposition~2.4]{Zhu} that every projective \( R \)-module is \( \mathsf{FP}_n \)-injective.  On the other hand, if \( R \) is left \( n \)-coherent, then the class \(\mathsf{FP}_n\text{-}\mathsf{Inj}(R)\) is closed under cokernels of epimorphisms. Since this class is always closed under extensions, it follows that \(\mathsf{FP}_n\text{-}\mathsf{Inj}(R)\) satisfies the 2-out-of-3 property in \( \lMod R \). 
\end{proof}

\begin{lemma}\label{lem:proj}
Let \( R \) be a ring, \( n \geq 1 \), and \( M \) a finitely \( n \)-presented \( R \)-module. Then:
\begin{enumerate}
    \item \( M \) is projective if and only if \( \Ext_R^1(M,N) = 0 \) for every finitely \( (n-1) \)-presented \( R \)-module \( N \).
    \item If \( R \) is left \( n \)-coherent, then \( M \) is projective if and only if \( \Ext_R^1(M,N) = 0 \) for every finitely \( n \)-presented \( R \)-module \( N \).
\end{enumerate}
\end{lemma}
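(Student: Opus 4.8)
The forward implications in both parts are immediate: projectivity of $M$ forces $\Ext^1_R(M,N)=0$ for every $R$-module $N$. The plan is therefore to treat the two converse directions through a single device, namely the first syzygy. Fix a short exact sequence
\[
0 \longrightarrow K \longrightarrow F_0 \longrightarrow M \longrightarrow 0,
\]
with $F_0$ finitely generated free. The whole argument reduces to showing that $K$ belongs to the class against which $\Ext^1_R(M,-)$ is assumed to vanish; once that is in hand, the vanishing of $\Ext^1_R(M,K)$ splits the displayed sequence, exhibiting $M$ as a direct summand of $F_0$, hence projective.

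For part (1), I would first record that $K$ is automatically finitely $(n-1)$-presented. Starting from a finite $n$-presentation $F_n \to \cdots \to F_1 \to F_0 \to M \to 0$ of $M$, exactness at $F_0$ identifies $K$ with the image of $F_1 \to F_0$, so the truncated complex $F_n \to \cdots \to F_2 \to F_1 \to K \to 0$ is a finite $(n-1)$-presentation of $K$ with projective terms $F_1,\dots,F_n$. Thus $K \in \mathsf{FP}_{n-1}(R)$, the hypothesis yields $\Ext^1_R(M,K)=0$, and the splitting argument above finishes the proof. For $n=1$ this specializes correctly, since then $K$ is merely finitely generated.

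For part (2), the test class shrinks from $\mathsf{FP}_{n-1}(R)$ to $\mathsf{FP}_n(R)$, so the syzygy $K$, which a priori lies only in $\mathsf{FP}_{n-1}(R)$, need no longer belong to the class we are permitted to test against. This is exactly where the $n$-coherence hypothesis enters, and I expect it to be the crux. Invoking \cite[Proposition~2.5]{ep}, left $n$-coherence gives $\mathsf{FP}_n(R) = \mathsf{FP}_\infty(R)$, so in particular $M \in \mathsf{FP}_\infty(R)$. Since $F_0 \in \mathsf{FP}_\infty(R)$ as well and $\mathsf{FP}_\infty(R)$ satisfies the $2$-out-of-$3$ property with respect to short exact sequences (see \cite[Proposition~1.7 and Theorem~1.8]{BP}), the kernel $K$ also lies in $\mathsf{FP}_\infty(R) = \mathsf{FP}_n(R)$. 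Hence $K$ is finitely $n$-presented, the hypothesis applies to give $\Ext^1_R(M,K)=0$, and the same splitting concludes that $M$ is projective. The only genuine obstacle is this promotion of $K$ into $\mathsf{FP}_n(R)$ in part (2); everything else is the standard syzygy-and-splitting mechanism.
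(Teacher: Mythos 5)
Your proposal is correct and follows essentially the same route as the paper: split off the first syzygy $K$, observe that $K$ is finitely $(n-1)$-presented (respectively, finitely $n$-presented under $n$-coherence, which the paper phrases as closure of $\mathsf{FP}_n(R)$ under kernels of epimorphisms and you derive via $\mathsf{FP}_n(R)=\mathsf{FP}_\infty(R)$ and the 2-out-of-3 property), and conclude from $\Ext^1_R(M,K)=0$ that the sequence splits.
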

\begin{proof} $(1)$ The "only if" direction is straightforward. For the converse, consider a short exact sequence $ 0 \to K \to F \to M \to 0,$ where \( F \) is finitely generated free and \( K \) is finitely \( (n-1) \)-presented. By hypothesis, \( \Ext_R^1(M,K) = 0 \), so the sequence splits and \( M \) is projective.

$(2)$ If \( R \) is left \( n \)-coherent, the class of finitely \( n \)-presented modules is closed under taking kernels of epimorphisms. Therefore, the argument from (1) applies with \( K \) and \( N \) finitely \( n \)-presented, proving that \( M \) is projective.
\end{proof}

\begin{theorem}
    Let \( R \) be a ring and $n\geq 1$. The following conditions are equivalent:
    \begin{enumerate}
        \item \( R \) is left \( n \)-von Neumann regular.
        \item $\mathsf{FP}_n(R)\subseteq {}^{\perp{_1}}\mathsf{FP}_{n-1}(R)$. 
        \item $R$ is left $n$-coherent and $\mathsf{FP}_n(R)\subseteq {}^{\perp{_1}}\mathsf{FP}_n(R)$. 
        \item $R$ is left $n$-coherent regular and every finitely generated projective submodule of a free $R$ module $F$ is a direct summand of $F$.
    \end{enumerate}
\end{theorem}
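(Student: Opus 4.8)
The plan is to use condition (1) as a hub and prove $(1)\Leftrightarrow(2)$, $(1)\Leftrightarrow(3)$, and $(1)\Leftrightarrow(4)$ separately, since the first two equivalences are essentially bookkeeping on top of Lemma~\ref{lem:proj} while the genuine content lies in $(1)\Leftrightarrow(4)$. For $(1)\Leftrightarrow(2)$, I would simply unwind the inclusion: $\mathsf{FP}_n(R)\subseteq{}^{\perp_1}\mathsf{FP}_{n-1}(R)$ says exactly that $\Ext_R^1(M,N)=0$ for every $M\in\mathsf{FP}_n(R)$ and every $N\in\mathsf{FP}_{n-1}(R)$, which is precisely the criterion of Lemma~\ref{lem:proj}(1); hence (2) holds if and only if every finitely $n$-presented module is projective, i.e. if and only if $R$ is left $n$-von Neumann regular. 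For $(1)\Leftrightarrow(3)$, the forward direction is immediate, since an $n$-von Neumann regular ring is left $n$-coherent and its finitely $n$-presented modules, being projective, lie in ${}^{\perp_1}\mathsf{FP}_n(R)$; the converse is a direct application of Lemma~\ref{lem:proj}(2), whose $n$-coherence hypothesis is supplied by (3).

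For $(1)\Rightarrow(4)$, note first that left $n$-von Neumann regularity forces every finitely $n$-presented module to be projective, hence of finite projective dimension, so $R$ is left $n$-coherent regular. To verify the summand condition, let $P\subseteq F$ be a finitely generated projective submodule of a free module. Since $P$ is finitely generated, a finite generating set involves only finitely many coordinates, so $P\subseteq F_0$ for a finitely generated free direct summand $F_0$ of $F$, say $F=F_0\oplus F_1$. As $P\in\mathsf{proj}(R)\subseteq\mathsf{FP}_\infty(R)$ and $F_0\in\mathsf{FP}_\infty(R)$, the $2$-out-of-$3$ property of $\mathsf{FP}_\infty(R)$ gives $F_0/P\in\mathsf{FP}_\infty(R)\subseteq\mathsf{FP}_n(R)$; by (1) this quotient is projective, so $0\to P\to F_0\to F_0/P\to 0$ splits and $P$ is a direct summand of $F_0$, hence of $F$.

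The main obstacle is $(4)\Rightarrow(1)$. I would first reduce to projective dimension one: for $M\in\mathsf{FP}_n(R)$, left $n$-coherent regularity gives $\pd_R(M)=k<\infty$ and $M\in\mathsf{FP}_\infty(R)=\mathsf{FP}_n(R)$; splitting a finite projective resolution into short exact sequences and applying the $2$-out-of-$3$ property shows that, when $k\geq1$, the $(k-1)$-st syzygy $N=\Omega^{k-1}(M)$ again lies in $\mathsf{FP}_\infty(R)=\mathsf{FP}_n(R)$ and has $\pd_R(N)=1$. It therefore suffices to prove that a finitely $n$-presented module $N$ with $\pd_R(N)\leq1$ is projective. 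Choosing $0\to P_1\to P_0\to N\to 0$ with $P_0$ finitely generated projective, the term $P_1$ is finitely generated (as $N\in\mathsf{FP}_1(R)$, using $n\geq1$) and projective (as $\pd_R(N)\leq1$). The crucial step is a modular-law argument: embed $P_0$, and with it $P_1$, into a finitely generated free module $F$; since $P_1$ is then a finitely generated projective submodule of $F$, hypothesis (4) makes it a direct summand, $F=P_1\oplus C$. Because $P_1\subseteq P_0\subseteq F$, the modular law gives $P_0=P_0\cap F=P_0\cap(P_1+C)=P_1+(P_0\cap C)$, and the sum is direct since $P_1\cap(P_0\cap C)\subseteq P_1\cap C=0$; thus $P_0=P_1\oplus(P_0\cap C)$ and $P_1$ is a direct summand of $P_0$. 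Consequently the presentation of $N$ splits, so $N$ is projective with $\pd_R(N)=0$, contradicting $\pd_R(N)=1$; hence the case $k\geq1$ cannot occur, every finitely $n$-presented module is projective, and $R$ is left $n$-von Neumann regular. The delicate points to execute carefully are the finite generation and projectivity of $P_1$, the passage from $P_0$ to a finitely generated free overmodule so that (4) applies, and the refinement of the modular-law equality into a genuine direct-sum decomposition.
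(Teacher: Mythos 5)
Your proposal is correct, and its overall architecture (hub at (1), Lemma~\ref{lem:proj} for $(1)\Leftrightarrow(2)$ and $(1)\Leftrightarrow(3)$, the finitely generated free summand reduction for $(1)\Rightarrow(4)$) matches the paper's. The one place where you genuinely diverge is $(4)\Rightarrow(1)$. The paper takes a finite resolution of $M$ by finitely generated \emph{free} modules ending in a finitely generated projective, splits it into short exact sequences, and observes that each image is a finitely generated projective submodule of the next free term, so hypothesis (4) splits every sequence and an induction from the top of the resolution gives projectivity of $M$ directly. You instead reduce to the projective-dimension-one case via the $(k-1)$-st syzygy and then run a modular-law argument to show that $P_1$ is a direct summand of the merely projective module $P_0$, deriving a contradiction with $\pd_R(N)=1$. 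Your argument is valid — the finite generation and projectivity of $P_1$, the modular-law identity $P_0 = P_1 + (P_0\cap C)$, and the directness of that sum all check out — but the modular-law detour is avoidable: if you present $N$ by a finitely generated \emph{free} module $P_0$ (always possible), then $P_1$ is a finitely generated projective submodule of a free module and (4) applies to it immediately, which is exactly what the paper's induction exploits at every stage. The paper's version is therefore a bit more economical and constructive (no argument by contradiction), while yours isolates the pd-one case as the essential obstruction, which some readers may find clarifying.
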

\begin{proof}
The equivalences \((1) \Leftrightarrow (2)\) and \((1) \Leftrightarrow (3)\) follow from Lemma \ref{lem:proj}.\\
($1 \Rightarrow 4$): Assume that \( R \) is left \( n \)-von Neumann regular. Then \( R \) is, in particular, left \( n \)-coherent regular. Let \( P \subseteq F \) be a finitely generated projective submodule of a free \( R \)-module \( F \). If \( F \) is finitely generated, the short exact sequence $ 0 \to P \to F \to F/P \to 0$
implies that \( F/P \) is finitely \( n \)-presented, hence projective by assumption. Therefore, \( P \) is a direct summand of \( F \). If \( F \) is not finitely generated, write \( F = F_1 \oplus F_2 \) with \( F_1 \) finitely generated free containing \( P \). By the previous argument, \( P \) is a direct summand of \( F_1 \), so $F = P \oplus P_0 \oplus F_2$ for some module \( P_0 \).\\
($4 \Rightarrow 1$):  Let \( M \) be a finitely \( n \)-presented \( R \)-module. Since \( R \) is left \( n \)-coherent regular, there exists a finite projective resolution of \( M \) of the form
$$0 \to P_k \xrightarrow{d_k} F_{k-1} \xrightarrow{d_{k-1}} \cdots \xrightarrow{d_2} F_1 \xrightarrow{d_1} F_0 \xrightarrow{d_0} M \to 0,$$
where each \( F_i \) is finitely generated free and \( P_n \) is finitely generated projective.

Decompose this resolution into short exact sequences:
$$0 \to P_k \xrightarrow{d_k} F_{k-1} \to \operatorname{Im} d_{k-1} \to 0,$$
$$0 \to \operatorname{Im} d_i \to F_{i-1} \xrightarrow{d_{i-1}} \operatorname{Im} d_{i-1} \to 0 \quad \text{for } 1 \leq i \leq k-1,$$
and
$$0 \to \operatorname{Im} d_0 \to F_0 \xrightarrow{d_0} M \to 0.$$

Since every finitely generated projective submodule of a free module is a direct summand, each sequence splits. Hence, by induction, \( M \) is projective.\\
\end{proof}

\begin{rmk} Let \( n \ge 1 \) and let \( I \) be any injective cogenerator in \( \lMod R\) (for example, \( I= \Hom_{\mathbb{Z}}(R, \mathbb{Q}/\mathbb{Z}) \)). Then \( R \) is left \( n \)-von Neumann regular if and only if, for every finitely \( n \)-presented \( R \)-module \( M \), the module $\Hom_R(M, I)$ is injective. This equivalence follows from the fact that \( R \) is left \( n \)-von Neumann regular if and only if every finitely \( n \)-presented \( R \)-module is flat, and from the characterization of flat modules via injectivity with respect to injective cogenerators, as established in \cite[Proposition 3.1]{CKWZ}.
\end{rmk}

Recall that an element \( m \) of an \( R \)-module \( M \) is called a \newterm{singular element} if the left ideal \( \mathsf{Ann}_R(m) \) is essential in \( R \), viewed as a left \( R \)-module. The set of all singular elements of \( M \) is denoted by \( \mathsf{Z}_S(M) \). We say that \( M \) is a \newterm{singular module} if \( \mathsf{Z}_S(M) = M \).

\begin{theorem}
    Let \( R \) be a ring and $n\geq 1$. Then \( R \) is left \( n \)-von Neumann regular if and only if every nonzero cyclic singular right \( R \)-module contains a nonzero \( \mathsf{FP}_n \)-flat submodule.
\end{theorem}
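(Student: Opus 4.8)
The plan is to recast both conditions in terms of $\mathsf{FP}_n$-flatness of \emph{right} modules. By the Remark preceding the statement, $R$ is left $n$-von Neumann regular if and only if every finitely $n$-presented left $R$-module is flat. Since a right $R$-module $N$ is $\mathsf{FP}_n$-flat exactly when $\Tor_1^R(N,F)=0$ for every $F\in\mathsf{FP}_n(R)$ (left), the flatness of all such $F$ amounts to the vanishing of $\Tor_1^R(N,F)$ for every right module $N$. Hence
\[
R \text{ is left } n\text{-von Neumann regular} \iff \text{every right } R\text{-module is } \mathsf{FP}_n\text{-flat.}
\]
With this reformulation the direct implication is immediate: if every right module is $\mathsf{FP}_n$-flat, then any nonzero cyclic singular right module is a nonzero $\mathsf{FP}_n$-flat submodule of itself.

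For the converse I would first upgrade the hypothesis, by a Zorn's Lemma argument, to the assertion that \emph{every} singular right $R$-module is $\mathsf{FP}_n$-flat. The two facts that drive this are: (i) the class $\mathsf{FP}_n\text{-}\mathsf{Flat}(R)$, being of the form $\mathcal{X}^{\top_1}$, is closed under extensions and under directed unions; and (ii) submodules and quotients of singular modules are again singular. Given a singular right module $S$, I would pick a submodule $W\subseteq S$ maximal among its $\mathsf{FP}_n$-flat submodules (the zero module is $\mathsf{FP}_n$-flat, and the union of a chain of $\mathsf{FP}_n$-flat submodules is $\mathsf{FP}_n$-flat by directed-union closure). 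If $W\neq S$, then $S/W$ is a nonzero singular module, so it contains a nonzero cyclic singular submodule; the hypothesis furnishes a nonzero $\mathsf{FP}_n$-flat submodule $\overline{V}$ of it, and its preimage $V\supsetneq W$ sits in an extension $0\to W\to V\to \overline{V}\to 0$ with both ends $\mathsf{FP}_n$-flat, forcing $V\in\mathsf{FP}_n\text{-}\mathsf{Flat}(R)$ and contradicting maximality. Thus $W=S$.

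It then remains to pass from ``all singular right modules are $\mathsf{FP}_n$-flat'' to ``all right modules are $\mathsf{FP}_n$-flat''. Using the extension and directed-union closure of $\mathsf{FP}_n\text{-}\mathsf{Flat}(R)$, it suffices to treat cyclic modules $R/I$, and for each such module the sequence $0\to \mathsf{Z}_S(R/I)\to R/I\to (R/I)/\mathsf{Z}_S(R/I)\to 0$ reduces the problem to the nonsingular cyclic quotient, the singular submodule being already $\mathsf{FP}_n$-flat. \textbf{The main obstacle is exactly this nonsingular part}: one must show that nonsingular (cyclic) right modules are $\mathsf{FP}_n$-flat. I expect to overcome it by first proving that the hypothesis forces $R$ to be right nonsingular, and then embedding a nonsingular right module, via its injective hull, into a module over the maximal right ring of quotients $Q$, which is von Neumann regular, so that the vanishing $\Tor_1^R(-,F)=0$ for $F\in\mathsf{FP}_n(R)$ transfers from $Q$ back to $R$; alternatively, one argues the contrapositive, showing that a failure of left $n$-von Neumann regularity must already be witnessed by a cyclic \emph{singular} right module with no nonzero $\mathsf{FP}_n$-flat submodule. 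In either route, Lemma~\ref{lem:proj} together with the characterization of flatness through injectivity of character modules (used in the Remark above) is what converts the $\mathsf{FP}_n$-flatness of all right modules back into the projectivity of every finitely $n$-presented left module.
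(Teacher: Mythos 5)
Your forward direction is exactly the paper's: left $n$-von Neumann regularity is equivalent to every right $R$-module being $\mathsf{FP}_n$-flat, so a nonzero cyclic singular module is its own $\mathsf{FP}_n$-flat submodule. Your Zorn's Lemma upgrade is also correct as far as it goes: since $\mathsf{FP}_n\text{-}\mathsf{Flat}(R^{\mathrm{op}})$ is of the form $\Xcal^{\top_1}$, it is closed under extensions and directed unions, and the singular modules are closed under submodules and quotients, so the hypothesis does force every singular right module to be $\mathsf{FP}_n$-flat.

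The genuine gap is the step you yourself flag as the main obstacle, and it is not a routine one. Knowing that all \emph{singular} right modules are $\mathsf{FP}_n$-flat tells you $\Tor_1^R(R/I,F)=0$ for $F\in\mathsf{FP}_n(R)$ only when $I$ is an \emph{essential} right ideal; to get flatness of $F$ via Baer's criterion applied to $F^+=\Hom_{\mathbb{Z}}(F,\mathbb{Q}/\mathbb{Z})$ you need this for \emph{all} right ideals, and the reduction from a non-essential $I$ to its essential closure $I\oplus I'$ only exhibits $\Tor_1^R(R/I,F)$ as a quotient of $\Tor_1^R(I',F)$, which circles back to the same problem. Neither of your two suggested repairs closes this: the maximal-quotient-ring route requires first proving $R$ is right nonsingular (not established) and, even then, restriction of $Q$-modules to $R$ does not control $\Tor_1^R(-,F)$; and "arguing the contrapositive" as you phrase it is just a restatement of the theorem. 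The paper's proof uses the hypothesis differently: by a Baer-type extension argument on essential right ideals (\cite[Lemma 3.1]{FD10}), the hypothesis implies that every right module in $[\mathsf{FP}_n\text{-}\mathsf{Flat}(R^{\mathrm{op}})]^{\perp_1}$ is injective; since $\Ext^1_R(N,F^+)\cong\Tor_1^R(N,F)^+=0$ for every $\mathsf{FP}_n$-flat right module $N$ and every $F\in\mathsf{FP}_n(R)$, each $F^+$ lies in that orthogonal class, hence is injective, so $F$ is flat and therefore projective (being finitely presented, as $n\geq 1$). This is the mechanism your sketch is missing; I recommend you rebuild the converse around the orthogonal class $[\mathsf{FP}_n\text{-}\mathsf{Flat}(R^{\mathrm{op}})]^{\perp_1}$ rather than around the singular modules themselves.
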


\begin{proof}
    Suppose that every nonzero cyclic singular right \( R \)-module has a nonzero \( \mathsf{FP}_n \)-flat submodule. Then, by \cite[Lemma 3.1]{FD10}, we have that \( \mathsf{FP}_n\text{-}\mathsf{Flat}(R^{\mathrm{op}})^{\perp_1} \subseteq \mathsf{Inj}(R^{op}) \). Consequently, by \cite[Theorem 3.7]{Zhu17}, \( R \) is left \( n \)-von Neumann regular. For the converse, note that if \( R \) is left \( n \)-von Neumann regular, then every right \( R \)-module is \( \mathsf{FP}_n \)-flat.
\end{proof}
    
\subsection{Commutative case} 
Following \cite[Theorem 2.1]{Mahdou},  a commutative ring \( R \) is \( n \)-von Neumann regular if and only if the following conditions hold: 
\begin{itemize}
    \item[a)] \( R \) is \( n \)-coherent,
    \item[b)] \( R \) is \( n \)-regular,
   \item[c)] Every finitely generated proper ideal of \( R \) has a non-zero annihilator.
\end{itemize}

 Condition (c)—that every finitely generated proper ideal of \( R \) has a non-zero annihilator—has been extensively studied by various authors \cite{Glaz1, HKLR, WZKXS}. For example, one fundamental property of commutative Noetherian rings is that the annihilator of an ideal \( I \), consisting entirely of zero-divisors, is always nonzero. However, this property does not generally hold for some non-Noetherian rings, even when \( I \) is finitely generated.  Many commutative rings satisfy condition (c). The following list, adapted from \cite{HKLR}, highlights some important examples:
\begin{itemize}
    \item Noetherian rings,  
    \item Rings whose prime ideals are maximal,  
    \item Polynomial rings of the form \( R[x] \), and  
    \item Rings whose classical rings of quotients are von Neumann regular.  
\end{itemize}

Condition (c) is equivalent to the property that every finitely generated projective submodule of a projective \(R\)-module \(P\) is a direct summand of \(P\); see \cite[Lemma 2.2]{Mahdou}. In particular, for a commutative local regular ring \((R, \mathsf{m})\), we have the following result:

\begin{prop} Let $(R,\mathsf{m})$ be a commutative local regular ring. Then \( \mathsf{FP}_{\infty}(R) = \mathsf{proj}(R) \) if and only if  $R$ satisfies condition (c).
\end{prop}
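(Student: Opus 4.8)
The plan is to translate condition (c) into the equivalent form recalled just above from \cite[Lemma~2.2]{Mahdou}, namely that every finitely generated projective submodule of a projective $R$-module is a direct summand. Once this reformulation is available, the proof reduces to a clean interplay between the closure properties of $\mathsf{FP}_\infty(R)$ and the regularity hypothesis. Throughout I read the equality $\mathsf{FP}_\infty(R)=\mathsf{Proj}(R)$ as the assertion that every finitely $\infty$-presented $R$-module is projective, the reverse containment $\mathsf{proj}(R)\subseteq\mathsf{FP}_\infty(R)$ being automatic. The local hypothesis will play no direct role, whereas regularity is essential for the converse.

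For the forward implication I would assume that every finitely $\infty$-presented module is projective and take a finitely generated projective submodule $P\subseteq Q$ of a projective module $Q$. Choosing $Q'$ with $F:=Q\oplus Q'$ free, the finitely many generators of $P$ involve only finitely many basis vectors of $F$, so $P\subseteq F_0$ for some finitely generated free direct summand $F_0$ of $F$. Since $P$ and $F_0$ both lie in $\mathsf{FP}_\infty(R)$, the $2$-out-of-$3$ property of $\mathsf{FP}_\infty(R)$ gives $F_0/P\in\mathsf{FP}_\infty(R)$, whence $F_0/P$ is projective by hypothesis and the sequence $0\to P\to F_0\to F_0/P\to 0$ splits. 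Thus $P$ is a direct summand of $F_0$, hence of $F$; restricting the resulting projection $F\to P$ to $Q$ produces a retraction $Q\to P$, because this projection is the identity on $P\subseteq Q$ and already has image inside $P$. Therefore $P$ is a direct summand of $Q$, which by \cite[Lemma~2.2]{Mahdou} is exactly condition (c).

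For the converse I would assume (c), equivalently that finitely generated projective submodules of projectives split off, and show by induction on projective dimension that every $M\in\mathsf{FP}_\infty(R)$ is projective; this is where regularity enters, since it guarantees $\pd_R M<\infty$. If $\pd_R M=0$ there is nothing to prove. If $\pd_R M=k\ge 1$, choose a short exact sequence $0\to K\to F_0\to M\to 0$ with $F_0$ finitely generated free; then $K\in\mathsf{FP}_\infty(R)$ by the $2$-out-of-$3$ property and $\pd_R K=k-1$, so $K$ is projective by the inductive hypothesis. As $K$ is then a finitely generated projective submodule of the free module $F_0$, condition (c) forces it to be a direct summand, the sequence splits, and $M$ is projective.

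The step I expect to be the main obstacle is the reduction in the forward direction from an arbitrary (possibly infinitely generated) projective $Q$ to a finitely generated free module: one must verify that a splitting of $P$ off the ambient free module $F$ descends to a splitting off $Q$. This works precisely because the projection $F\to P$ lands inside $P\subseteq Q$, so its restriction to $Q$ is a genuine retraction; everything else is a routine application of the closure properties of $\mathsf{FP}_\infty(R)$ and of the cited reformulation of (c).
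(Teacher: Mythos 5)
Your proof is correct, and since the paper itself only says ``It is straightforward,'' your argument is precisely the natural elaboration via the reformulation of condition (c) from \cite[Lemma~2.2]{Mahdou}: the 2-out-of-3 property of $\mathsf{FP}_\infty(R)$ for the forward direction, and induction on projective dimension (where regularity enters) for the converse. Your reading of the equality $\mathsf{FP}_\infty(R)=\mathsf{Proj}(R)$ as ``every finitely $\infty$-presented module is projective'' is the right one, and your observation that locality plays no role is accurate.
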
 
\begin{proof}
    It is straightforward.
\end{proof}
\qed

If \( R \) is an \( n \)-von Neumann regular ring, every finitely \( n \)-presented ideal is projective. In the local case, such a ring has no nonzero proper finitely \( (n-1) \)-presented ideals (and hence no finitely \( n \)-presented ones); see \cite[Theorem 3.4]{Mahdou10}. The following result characterizes such rings.

\begin{prop}
    Let $(R,\mathsf{m})$ be a  commutative local ring and $n\geq 2.$ The following conditiosn are equivalent
    \begin{enumerate}
        \item $R$ is $n$-von Neumann regular.
        \item $\Ext_R^1(M, R/\mathsf{m})=0$ for every $M\in \mathsf{FP}_n(R)$.
        \item $\Tor^R_1(M, R/\mathsf{m})=0$ for every $M\in \mathsf{FP}_n(R)$.
    \end{enumerate}
\end{prop}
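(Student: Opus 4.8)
The plan is to dispatch $(1)\Rightarrow(2)$ and $(1)\Rightarrow(3)$ immediately and to handle the two substantial converses $(2)\Rightarrow(1)$ and $(3)\Rightarrow(1)$ by a single Nakayama-type argument applied to a minimal presentation. For the easy direction, if $R$ is $n$-von Neumann regular then by definition every $M\in\mathsf{FP}_n(R)$ is projective, hence flat; therefore $\Ext_R^1(M,R/\mathsf{m})=0$ and $\Tor_1^R(M,R/\mathsf{m})=0$, which gives both (2) and (3).

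For the converses, fix an arbitrary $M\in\mathsf{FP}_n(R)$. Since $n\geq 2$, the module $M$ is in particular finitely presented. I would choose a minimal generating system of $M$, producing a short exact sequence
\[
0\longrightarrow K\longrightarrow R^{k}\longrightarrow M\longrightarrow 0,
\]
where $k=\dim_{R/\mathsf{m}}(M/\mathsf{m}M)$ and, by finite presentation of $M$, the submodule $K$ is finitely generated. The decisive point is that minimality forces the induced map $R^{k}/\mathsf{m}R^{k}\to M/\mathsf{m}M$ to be an isomorphism of $R/\mathsf{m}$-vector spaces.

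From here I extract two identifications. Applying $-\otimes_R R/\mathsf{m}$ and using flatness of $R^{k}$ yields an exact sequence $0\to\Tor_1^R(M,R/\mathsf{m})\to K/\mathsf{m}K\to (R/\mathsf{m})^{k}\to M/\mathsf{m}M\to 0$; since the final map is an isomorphism, $\Tor_1^R(M,R/\mathsf{m})\cong K/\mathsf{m}K$. Dually, applying $\Hom_R(-,R/\mathsf{m})$ and using that the induced map on $\Hom$ is an isomorphism (again by minimality) gives $\Ext_R^1(M,R/\mathsf{m})\cong\Hom_{R/\mathsf{m}}(K/\mathsf{m}K,R/\mathsf{m})$. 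Under hypothesis (3) the first identification forces $K/\mathsf{m}K=0$; under hypothesis (2) the second forces the $R/\mathsf{m}$-dual of the finite-dimensional space $K/\mathsf{m}K$ to vanish, hence $K/\mathsf{m}K=0$ as well. In either case $K=\mathsf{m}K$, and since $K$ is finitely generated, Nakayama's lemma gives $K=0$. Thus $M\cong R^{k}$ is free, hence projective, and as $M\in\mathsf{FP}_n(R)$ was arbitrary, $R$ is $n$-von Neumann regular.

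I expect the only delicate points to be verifications rather than ideas: that minimal generation makes both $R^{k}/\mathsf{m}R^{k}\to M/\mathsf{m}M$ and its $\Hom(-,R/\mathsf{m})$-dual isomorphisms, and that finite presentation of $M$ is exactly what guarantees $K$ finitely generated so that Nakayama is applicable. The rest is the standard local criterion for freeness applied twice, once homologically and once cohomologically.
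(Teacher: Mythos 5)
Your proof is correct, but it takes a genuinely different route from the paper's. The paper proves $(1)\Leftrightarrow(3)$ by citing Glaz's criterion that a finitely presented module over a local ring is projective if and only if $\Tor_1^R(M,R/\mathsf{m})=0$, and then proves $(2)\Leftrightarrow(3)$ by invoking the natural isomorphism $\Tor_1^R(M,R/\mathfrak{m})\cong \Hom_R\bigl(\Ext_R^1(M,R/\mathfrak{m}),E(R/\mathfrak{m})\bigr)$ for finitely $n$-presented $M$, so that the two vanishing conditions are interchanged by Matlis-type duality. You instead give a single self-contained Nakayama argument: a minimal presentation $0\to K\to R^k\to M\to 0$ identifies $\Tor_1^R(M,R/\mathsf{m})$ with $K/\mathsf{m}K$ and $\Ext_R^1(M,R/\mathsf{m})$ with its $R/\mathsf{m}$-dual, and either vanishing hypothesis kills $K/\mathsf{m}K$, whence $K=0$ and $M$ is free. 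Your route is more elementary and makes the paper's cited lemma from Glaz essentially transparent (it is the same Nakayama computation); the paper's route via the duality isomorphism is less hands-on but more conceptual, since the same isomorphism in all homological degrees immediately gives the analogous equivalence for higher $\Ext$ and $\Tor$, which your degree-one identification does not. All the delicate points you flag do check out: minimality gives $K\subseteq\mathsf{m}R^k$, hence the connecting identifications; finite presentation (available since $n\geq 2\geq 1$) gives $K$ finitely generated; and a nonzero finite-dimensional vector space has nonzero dual, so hypothesis (2) does force $K/\mathsf{m}K=0$.
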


\begin{proof}
    \((1) \Leftrightarrow (3)\)  By \cite[Lemma 2.5.8]{Glaz1}, $M\in \mathsf{FP}_n(R)$ is projective if and only if $\Tor^R_1(M, R/\mathsf{m})=0$. \\
    \((2) \Leftrightarrow (3)\): By \cite[Theorem 3.4(2)]{BP}, for every finitely \( n \)-presented \( R \)-module \( M \), there is a natural isomorphism:
    \[
        \Tor_1^R(M, R/\mathfrak{m}) 
        \cong \Tor_1^R\left(M, \Hom_R(R/\mathfrak{m}, E(R/\mathfrak{m}))\right)
        \cong \Hom_R\left(\Ext_R^1(M, R/\mathfrak{m}), E(R/\mathfrak{m})\right),
    \]
    where \( E(R/\mathfrak{m}) \) denotes the injective envelope of \( R/\mathfrak{m} \).  Therefore, \( \Tor_1^R(M, R/\mathfrak{m}) = 0 \) if and only if \( \Ext_R^1(M, R/\mathfrak{m}) = 0 \).
\end{proof}

A ring \( R \) is \( n \)-von Neumann regular if and only if every  \( R \)-module is \( \mathsf{FP}_n \)-flat, and this is equivalent to \( \gfd(R) = 0 \). For \( n \)-coherent regular rings, we have the following characterization as stated in \cite[Corollary 4.10]{WZKXS}:

\begin{prop}
Let \( R \) be a commutative \( n \)-coherent regular ring with \( n \geq 1 \). The following are equivalent:
\begin{enumerate}
    \item \( R \) is  \( n \)-von Neumann regular.
    \item Every finitely generated \( R \)-module  is \( \mathsf{FP}_n \)-flat.
    \item Every cyclic \( R \)-module is \( \mathsf{FP}_n \)-flat.
    \item Every cyclic finitely presented \( R \)-module  is \( \mathsf{FP}_n \)-flat.
    \item \( \sfindim (R) = 0 \).
\end{enumerate}
\end{prop}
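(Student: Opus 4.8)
The plan is to run the cycle $(1)\Rightarrow(2)\Rightarrow(3)\Rightarrow(4)\Rightarrow(1)$ and to treat the equivalence $(1)\Leftrightarrow(5)$ separately. The first three implications are immediate. For $(1)\Rightarrow(2)$ I would invoke the fact recalled just before the statement, namely that over an $n$-von Neumann regular ring every $R$-module is $\mathsf{FP}_n$-flat; in particular every finitely generated module is. The implications $(2)\Rightarrow(3)$ and $(3)\Rightarrow(4)$ are the trivial observations that cyclic modules are finitely generated and that cyclic finitely presented modules are cyclic.

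The only substantive step is $(4)\Rightarrow(1)$. Here I would first record that a cyclic finitely presented $R$-module is exactly $R/I$ with $I$ a finitely generated ideal, and that, since $R$ is commutative, the class of finitely $n$-presented right modules coincides with $\mathsf{FP}_n(R)$. Thus hypothesis $(4)$ asserts precisely that $\Tor_1^R(F,R/I)=0$ for every $F\in\mathsf{FP}_n(R)$ and every finitely generated ideal $I$. The key move is then to fix a single $F\in\mathsf{FP}_n(R)$ and read this family of vanishings as $I$ ranges over all finitely generated ideals: this is exactly the classical criterion for flatness of $F$. Hence every $F\in\mathsf{FP}_n(R)$ is flat; since $\mathsf{FP}_n(R)\subseteq\mathsf{FP}_1(R)$ for $n\ge 1$, such an $F$ is finitely presented, and a finitely presented flat module is projective. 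Therefore every finitely $n$-presented module is projective, which is the definition of $n$-von Neumann regularity.

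For $(1)\Leftrightarrow(5)$ I would unwind the small finitistic dimension under the standing hypotheses. Since $R$ is $n$-coherent we have $\mathsf{FP}_\infty(R)=\mathsf{FP}_n(R)$, and since $R$ is regular every module in this class has finite projective dimension; hence the finiteness restriction in the definition of $\sfindim(R)$ is automatic and $\sfindim(R)=\sup\{\pd_R M : M\in\mathsf{FP}_n(R)\}$. Consequently $\sfindim(R)=0$ if and only if every finitely $n$-presented module is projective, which is condition $(1)$.

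The main (and really only) obstacle is to apply the $\Tor$-vanishing in $(4)\Rightarrow(1)$ in the correct direction: the hypothesis supplies the vanishing with $R/I$ in the second argument and $F\in\mathsf{FP}_n(R)$ in the first, and the point is to keep $F$ fixed and view $\{R/I : I \text{ finitely generated}\}$ as the test family detecting flatness of $F$. Once this is seen, the remaining arguments are formal and rely only on the equivalences already established in this section.
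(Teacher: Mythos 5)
Your proof is correct. Note, however, that the paper does not actually prove this proposition: it is stated as a quotation of \cite[Corollary 4.10]{WZKXS} and closed with a \(\qed\), so there is no internal argument to compare against. Your write-up supplies a complete, self-contained proof, and the key step \((4)\Rightarrow(1)\) is handled correctly: condition (4) gives \(\Tor_1^R(F,R/I)=0\) for every \(F\in\mathsf{FP}_n(R)\) and every finitely generated ideal \(I\) (using commutativity to identify left and right finitely \(n\)-presented modules and the symmetry of \(\Tor\)), and fixing \(F\) while letting \(I\) vary is exactly the classical finitely-generated-ideal criterion for flatness of \(F\); since \(n\geq 1\) forces \(F\) to be finitely presented, flat implies projective. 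Your treatment of \((1)\Leftrightarrow(5)\) via \(\mathsf{FP}_\infty(R)=\mathsf{FP}_n(R)\) (from \(n\)-coherence) and the automatic finiteness of projective dimension (from regularity) is also the natural one. A small observation worth recording: your argument for the cycle \((1)\Rightarrow(2)\Rightarrow(3)\Rightarrow(4)\Rightarrow(1)\) never uses the \(n\)-coherence or regularity hypotheses, so the equivalence of (1)--(4) in fact holds over an arbitrary commutative ring for \(n\geq 1\); the standing hypotheses are needed only to bring \(\sfindim\) into the picture in (5).
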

\qed

\begin{rmk}\label{rmk: dim weak}
    Let \( R \) be a commutative ring. If \( \wD(R) \leq n \), then \( R \) is \((n+1)\)-coherent and uniformly \((n+1)\)-regular, with the inclusion \( \mathsf{FP}_{n+1}(R) \subseteq \mathcal{P}_{n}(R) \); see Proposition \ref{prop: debil unif}. Moreover, if \( R \) is an integral domain, then \( R \) is \( n \)-coherent and uniformly \( n \)-regular, and we have \( \mathsf{FP}_{n}(R) \subseteq \mathcal{P}_{n}(R) \); see \cite[Theorem 4.5]{Costa}. Furthermore, by \cite[Theorem 4.1]{Costa}, if $\fd_R(\mathsf{m}) \leq n$ for every maximal ideal $ \mathsf{m}$ of $ R $, then \( R \) is $(n+2)$-coherent ring and uniformly \((n+2)\)-regular, with \( \mathsf{FP}_{n+2}(R) \subseteq \mathcal{P}_{n+1}(R) \). In particular, any commutative ring whose maximal ideals are flat is \( 2 \)-coherent and uniformly \( 2 \)-regular, and satisfies \( \mathsf{FP}_{2}(R) \subseteq \mathcal{P}_{1}(R) \). However, the converse does not hold: there exist commutative (even local) rings \( R \) such that \( \mathsf{FP}_{2}(R) \subseteq \mathcal{P}_{1}(R) \) but whose maximal ideal has infinite flat dimension; see \cite[Remark 3.5]{Mahdou}.
\end{rmk}

\section{Finiteness Conditions and $\mathsf{K}_0$-Regularity}\label{sec: aplicaciones}
In 1973, Quillen introduced the concept of $\mathsf{K}_i$-groups for exact categories. By considering the specific case of the category of finitely generated projective modules, one obtains the $\mathsf{K}_i$-groups of a ring. For any ring $R$, let $\mathbf{P}(R)$ denote the set of isomorphism classes of finitely generated projective $R$-modules. This set, equipped with the direct sum operation $\oplus$ and the identity element $0$, forms an abelian monoid. The Grothendieck group $\mathsf{K}_0(R)$ is defined as the quotient of the free abelian group generated by the isomorphism classes $[P]$ of finitely generated projective modules $P \in \mathsf{proj}(R)$, by the subgroup generated by elements of the form $[P \oplus Q] - [P] - [Q]$ for all $P, Q \in \mathsf{proj}(R)$. When $R$ is a commutative ring, the tensor product of two $R$-modules is again an $R$-module, with the property that $r \cdot (x \otimes y) = rx \otimes y = x \otimes ry$ for all $r \in R$, $x \in M$, and $y \in N$, where $M$ and $N$ are $R$-modules. Since $R^m \otimes_R R^n \cong R^{mn}$, it follows that $\mathsf{proj}(R)$ is closed under the tensor product $\otimes_R$. Consequently, the operation $[P][Q] = [P \otimes_R Q]$ induces a multiplication on $\mathsf{K}_0(R)$, making $\mathsf{K}_0(R)$ into a commutative ring, with identity element $[R]$.

For \( n \geq 0 \) and \( R \) a left \( n \)-coherent regular ring, the natural inclusion   $\mathsf{proj}(R) \hookrightarrow \mathsf{FP}_n(R)$
induces isomorphisms $\mathsf{K}_i(R) \cong \mathsf{K}_i(\mathsf{FP}_n(R)) \quad \text{for all } i \geq 0,$
see \cite[Theorem 3.2]{ep}. In the case where \( R \) is a left coherent regular ring, Swan \cite{Swan} proved that the natural map 
$
[M] \mapsto [R[t] \otimes_R M]$
induces an isomorphism 
$$\mathsf{K}_0(R) \cong \mathsf{K}_0(R[t]).$$
However, as Swan remarks in \cite[Remark 5.4]{Swan}, since \( R[t] \) need not be coherent even if \( R \) is, it remains unclear whether this result can be extended to \( R[t_1, \dots, t_n] \) for \( n > 1 \). 

A ring \( R \) is called \newterm{\( \mathsf{K}_0 \)-regular} if, for every integer \( n \geq 1 \), the natural homomorphism induced by the inclusion
$$\mathsf{K}_0(R) \longrightarrow \mathsf{K}_0(R[x_1, \dots, x_n])$$
is an isomorphism. In the commutative case, \( \mathsf{K}_0 \)-regularity holds under a regularity assumption:

\begin{prop}\label{prop:commutative-K0-regular}
\cite[Theorem 5.7]{Wang} Every commutative regular ring is \( \mathsf{K}_0 \)-regular.  
\end{prop}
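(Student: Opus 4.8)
The plan is to reduce the statement to the single–variable case and then prove $\mathsf{K}_0(R)\cong\mathsf{K}_0(R[x])$ for an arbitrary commutative regular ring $R$. By Proposition~\ref{prop:reg poli}, if $R$ is regular then so is $R[x_1,\dots,x_k]$ for every $k$; writing $R[x_1,\dots,x_n]=\bigl(R[x_1,\dots,x_{n-1}]\bigr)[x_n]$, an induction on $n$ shows that the one–variable case suffices. The first step is to reinterpret $\mathsf{K}_0$ of a regular ring in terms of finitely $\infty$-presented modules. For any regular ring $S$ the class $\mathsf{FP}_\infty(S)$ is an exact category (closed under extensions and direct summands by the $2$-out-of-$3$ property recalled in Section~\ref{s:preliminaries}), it contains $\mathsf{proj}(S)$, and by regularity every object of $\mathsf{FP}_\infty(S)$ admits a \emph{finite} resolution by finitely generated projective modules (the finite projective dimension forces a high syzygy to be projective and finitely generated). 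Quillen's Resolution Theorem then yields
\[
\mathsf{K}_0(S)=\mathsf{K}_0(\mathsf{proj}(S))\;\cong\;\mathsf{K}_0(\mathsf{FP}_\infty(S)),
\]
and I would apply this to $S=R$ and to $S=R[x]$.

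Next I would analyse the base-change functor $e\colon M\mapsto R[x]\otimes_R M$. Since $R[x]$ is free, hence flat, over $R$, this functor is exact, and by Lemma~\ref{lemm:faithfully}(1) (applied for all $n$) it sends $\mathsf{FP}_\infty(R)$ into $\mathsf{FP}_\infty(R[x])$; thus it induces a homomorphism $\mathsf{K}_0(\mathsf{FP}_\infty(R))\to\mathsf{K}_0(\mathsf{FP}_\infty(R[x]))$ which, under the identifications above, is exactly the map $[M]\mapsto[R[x]\otimes_R M]$ of the statement. Surjectivity is immediate from the structural result \cite[Lemma~3.3]{Wang}: every $M\in\mathsf{FP}_\infty(R[x])$ fits into a short exact sequence $0\to A[x]\to B[x]\to M\to 0$ with $A,B\in\mathsf{FP}_\infty(R)$, so that $[M]=[B[x]]-[A[x]]$ lies in the image of $e$.

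For injectivity I would construct an explicit left inverse, following the pattern of the Fundamental Theorem of $\mathsf{K}$-theory. Using the length–one resolution $0\to R[x]\xrightarrow{\,\cdot x\,}R[x]\to R\to 0$ of $R$ as an $R[x]$-module, set
\[
\theta([M])\;=\;[M/xM]-[\ker(x\colon M\to M)]\;=\;\sum_{i\ge 0}(-1)^i\bigl[\Tor^{R[x]}_i(R,M)\bigr].
\]
The long exact sequence of $\Tor^{R[x]}(R,-)$, together with the vanishing $\Tor^{R[x]}_{\ge 2}(R,-)=0$, shows that $\theta$ is additive on short exact sequences, hence descends to a homomorphism $\mathsf{K}_0(\mathsf{FP}_\infty(R[x]))\to\mathsf{K}_0(\mathsf{FP}_\infty(R))$. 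On an extended module one computes $N[x]/xN[x]\cong N$ and $\ker(x\colon N[x]\to N[x])=0$, whence $\theta\circ e=\mathrm{id}$; combined with the surjectivity above, $e$ is an isomorphism. The main obstacle I anticipate is verifying that $\theta$ is well defined on $\mathsf{FP}_\infty$: one must check that $M/xM$ and $\ker(x\mid_M)$ again lie in $\mathsf{FP}_\infty(R)$ (note that $M$ itself need not be finitely generated over $R$) and that the alternating sum is independent of choices. Both points follow by computing these $\Tor$-modules either from the finite projective $R[x]$-resolution furnished by regularity or directly from the sequence $0\to A[x]\to B[x]\to M\to 0$ of \cite[Lemma~3.3]{Wang}, using the $2$-out-of-$3$ property of $\mathsf{FP}_\infty(R)$; this bookkeeping is the only genuinely technical part of the argument.
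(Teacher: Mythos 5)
The paper offers no argument of its own here (the statement is quoted from \cite[Theorem 5.7]{Wang}), so I can only assess your proposal on its merits. The reduction to one variable via Proposition~\ref{prop:reg poli}, the identification $\mathsf{K}_0(S)\cong \mathsf{K}_0(\mathsf{FP}_\infty(S))$ by Quillen's Resolution Theorem for regular $S$, and the surjectivity of $e_*$ via the sequence $0\to A[x]\to B[x]\to M\to 0$ are all sound (for the last point, if \cite[Lemma~3.3]{Wang} is stated only for submodules of extended free modules, first write $0\to M'\to F[x]\to M\to 0$ and apply it to $M'$; one still gets $[M]=[F[x]]-[B[x]]+[A[x]]$).

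The genuine gap is in your transfer map $\theta$: for $M\in\mathsf{FP}_\infty(R[x])$ the $R$-modules $M/xM$ and $\ker(x\colon M\to M)$ need \emph{not} lie in $\mathsf{FP}_\infty(R)$, so $\theta$ does not take values in $\mathsf{K}_0(\mathsf{FP}_\infty(R))$, and the $2$-out-of-$3$ property cannot repair this. Concretely, for any $a\in R$ the polynomial $a+x$ is a non-zerodivisor of $R[x]$ by McCoy's theorem, so $M:=R[x]/(a+x)$ has the finite free resolution $0\to R[x]\xrightarrow{a+x}R[x]\to M\to 0$ and lies in $\mathsf{FP}_\infty(R[x])$; but $M/xM\cong R/aR$ and $\ker(x|_M)\cong \mathsf{Ann}_R(a)$. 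A regular ring need not be coherent (e.g.\ a $(2,1)$-ring such as $\mathsf{C(X)}$ for a connected compact $\mathsf{F}$-space), and over such a ring $\mathsf{Ann}_R(a)$ can fail to be finitely generated, so $R/aR\notin\mathsf{FP}_1(R)\supseteq\mathsf{FP}_\infty(R)$. The four-term sequence $0\to\ker(x|_M)\to A\to B\to M/xM\to 0$ only places the two \emph{middle} terms in $\mathsf{FP}_\infty(R)$, which says nothing about the outer ones. Fortunately this entire step is unnecessary: injectivity of $\mathsf{K}_0(R)\to\mathsf{K}_0(R[x])$ holds for \emph{every} ring, because the augmentation $x\mapsto 0$ sends a finitely generated projective $R[x]$-module $P$ to the finitely generated projective $R$-module $P/xP$ and hence induces a retraction $\mathsf{K}_0(R[x])\to\mathsf{K}_0(R)$ of $\iota_*$ already at the level of $\mathsf{proj}$. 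Replacing your $\theta$ by this retraction, and keeping your surjectivity argument, yields a correct proof.
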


As a consequence of Propositions \ref{prop:regularity-hierarchy-unified}, \ref{prop: equi regu}, and \ref{prop: sup}, together with Remark \ref{rmk: dim weak}, we obtain the following result.

\begin{cor}\label{ref:K0}
Let \( R \) be a commutative ring. If any of the following conditions holds, then \( R \) is \( \mathsf{K}_0 \)-regular:
\begin{enumerate}
    \item \( R \) is uniformly regular.
    \item \( R \) is uniformly \( n \)-regular for some \( n \geq 0 \).
    \item \( R \) is \( n \)-coherent regular for some \( n \geq 0 \).
    \item \( \mathsf{FP}_n(R) \subseteq \mathcal{P}_k(R) \) for some \( n, k \geq 0 \).
    \item \( \mathsf{FP}_n(R) \subseteq \mathcal{F}_k(R) \) for some \( n, k \geq 0 \).
    \item \( \mathsf{Max}(R) \subseteq \mathcal{F}_k(R) \) for some \( k \geq 0 \).
    \item \( R \) has finite weak dimension or finite global dimension.
    \item \( R \) is coherent and every cyclic finitely presented \( R \)-module has finite flat dimension.
\end{enumerate}
\end{cor}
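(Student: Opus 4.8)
The plan is to reduce each of the eight hypotheses to the single assertion that $R$ is regular in the sense of Definition~\ref{def:regular}, and then to apply Proposition~\ref{prop:commutative-K0-regular}, which asserts that every commutative regular ring is $\mathsf{K}_0$-regular. No further $\mathsf{K}$-theoretic machinery is needed: the whole proof is a collection of implications of the form ``(hypothesis)~$\Rightarrow$~$R$ is regular.''

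The conditions expressed through projective dimension reduce at once. Condition~(4), $\mathsf{FP}_n(R)\subseteq\mathcal{P}_k(R)$, is precisely left uniform $n$-regularity, so Proposition~\ref{prop:regularity-hierarchy-unified}(2) promotes it to uniform regularity and hence to regularity; condition~(2) is uniform $n$-regularity verbatim and is handled identically; condition~(1) is uniform regularity, which implies regularity by definition; and condition~(3) already contains regularity. For the global-dimension half of~(7), finite global dimension forces every module, in particular every finitely $\infty$-presented one, to have finite projective dimension, so $R$ is regular.

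The conditions phrased through flat dimension require a preliminary conversion. For~(5), the inclusion $\mathsf{FP}_n(R)\subseteq\mathcal{F}_k(R)$ says $\fd_R(M)\le k$ for all $M\in\mathsf{FP}_n(R)$; applying Proposition~\ref{prop: debil unif} with presentation level $n$ and flat bound $k$ yields that $R$ is left uniformly $\max\{n,k+1\}$-regular, hence regular by Proposition~\ref{prop:regularity-hierarchy-unified}(2). For~(6), $\mathsf{Max}(R)\subseteq\mathcal{F}_k(R)$ means $\fd_R(\mathsf{m})\le k$ for every maximal ideal $\mathsf{m}$, and Remark~\ref{rmk: dim weak} then gives uniform $(k+2)$-regularity; the weak-dimension half of~(7), $\wD(R)\le n$, similarly yields uniform $(n+1)$-regularity via the same remark. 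Proposition~\ref{prop: sup} plays the supporting role of guaranteeing that each of these uniformly $m$-regular rings is moreover $k$-coherent for some $k\ge m$, so that the passage through the $n$-coherent regular terminology is consistent. Finally, for~(8), coherence lets me invoke Proposition~\ref{prop: equi regu}, whose condition~(4)---every cyclic finitely presented module has finite flat dimension---is equivalent to regularity.

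The argument presents no genuine obstacle; the only delicate point is the index bookkeeping in~(5) and~(6), where a bound on the flat dimension of $\mathsf{FP}_n$-modules (or of maximal ideals) produces uniform regularity at a possibly larger index. One must align the presentation level and the dimension bound with the precise hypotheses of Proposition~\ref{prop: debil unif} and Remark~\ref{rmk: dim weak}, but since regularity is an index-free property, the inflated index is harmless: in every case the chain terminates at ``$R$ is regular,'' and Proposition~\ref{prop:commutative-K0-regular} delivers $\mathsf{K}_0$-regularity.
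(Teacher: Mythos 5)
Your proposal is correct and follows essentially the same route as the paper: the paper's proof is precisely a citation of Propositions~\ref{prop:regularity-hierarchy-unified}, \ref{prop: equi regu}, and \ref{prop: sup}, together with Remark~\ref{rmk: dim weak} (which itself rests on Proposition~\ref{prop: debil unif}), all funnelled into Proposition~\ref{prop:commutative-K0-regular}. Your only deviation is cosmetic: for condition~(5) one could also conclude uniform regularity directly from Proposition~\ref{prop: regula flat proj} via $\mathsf{FP}_\infty(R)\subseteq\mathsf{FP}_n(R)\subseteq\mathcal{F}_k(R)$, and, as you note, the coherence supplied by Proposition~\ref{prop: sup} is not actually needed since Wang's theorem requires only regularity.
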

\qed

\begin{rmk}\label{rmk: k-1}
    For coherent regular rings it is well known that \( \mathsf{K}_{-1}(R)=0 \). We now present an example of a \( 2 \)-coherent  regular ring, and hence \( K_0 \)-regular, for which  \( \mathsf{K}_{-1}(R) = \mathbb{Z} \). Let \( \mathsf{k} \) be an arbitrary field, and define
    $$A = \mathsf{k}[x_i \mid i \in [0, 1] \cap \mathbb{Z}[1/2]] \Big/ \left\langle (x_i - 1)x_j \mid i < j \right\rangle.$$
    Now let $R = A / \langle x_0 - x_1 \rangle,$ as in \cite[Example 5.20]{Aoki}. Since \( R \) has global dimension at most \( 2 \), it is a \( (0,2) \)-ring and, in particular, a \( 2 \)-coherent regular ring. Aoki further proves that \( \mathsf{K}_{-1}(R) = \mathbb{Z} \).
\end{rmk}

The \( \mathsf{K} \)-theory of arithmetical and valuation rings was studied in \cite[Propositions 6.1--6.2]{ep}. We now extend these results. For this purpose, recall that we write \( \mathsf{Z} \subseteq R \) for the set of zero divisors of the ring \( R \).

\begin{prop}
Let \( R \) be an arithmetical ring. If either of the following conditions holds, then \( R \) is \( \mathsf{K}_0 \)-regular:
\begin{enumerate}
    \item For every maximal ideal \( \mathsf{m} \) of \( R \) and every nonzero element \( x \in \mathsf{Z}(R_{\mathsf{m}}) \), the annihilator ideal $\mathsf{Ann}_R(x)$ is not finitely generated; 
    \item \( R \) is reduced.
\end{enumerate}
\end{prop}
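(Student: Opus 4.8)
The plan is to reduce both cases to the regularity criteria collected in \cref{ref:K0}. Since \( R \) is arithmetical it is in particular \( 3 \)-coherent, so \( \mathsf{FP}_3(R) = \mathsf{FP}_\infty(R) \); hence it suffices to prove that \( R \) is left regular, after which it is \( 3 \)-coherent regular and \( \mathsf{K}_0 \)-regularity follows from \cref{ref:K0}(3). By \cref{prop: regula flat proj} I may instead verify that every finitely \( \infty \)-presented \( R \)-module \( M \) has finite flat dimension, and since flat dimension is local, \( \fd_R(M) = \sup_{\mathsf{m}} \fd_{R_{\mathsf{m}}}(M_{\mathsf{m}}) \), with localization being flat and so preserving membership in \( \mathsf{FP}_\infty \) by \cref{lemm:faithfully}, I can argue one maximal ideal at a time.

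For case (2) I would argue directly. An arithmetical ring is Gaussian, and a reduced Gaussian ring satisfies \( \wD(R) \le 1 \). Thus \( R \) has finite weak dimension, and \cref{ref:K0}(7) yields \( \mathsf{K}_0 \)-regularity with no further work.

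For case (1) I fix \( \mathsf{m} \) and pass to the valuation ring \( V := R_{\mathsf{m}} \). Given \( M \in \mathsf{FP}_\infty(R) \), the localization \( M_{\mathsf{m}} \in \mathsf{FP}_\infty(V) \) is in particular finitely presented, so by the classical cyclic-decomposition theorem for finitely presented modules over a valuation ring I can write \( M_{\mathsf{m}} \cong \bigoplus_i V/(a_i) \). As \( \mathsf{FP}_\infty \) is closed under direct summands, each \( V/(a_i) \) lies in \( \mathsf{FP}_\infty(V) \). The annihilator hypothesis now enters: from the presentation \( 0 \to \mathsf{Ann}_V(a_i) \to V \xrightarrow{\cdot a_i} V \to V/(a_i) \to 0 \), the module \( V/(a_i) \) is finitely \( 2 \)-presented only if \( \mathsf{Ann}_V(a_i) \) is finitely generated, so if \( a_i \) were a nonzero zero divisor of \( V \) then condition (1) would be contradicted. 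Hence every \( a_i \) is a unit, zero, or a non-zerodivisor, and the sequence \( 0 \to V \xrightarrow{\cdot a_i} V \to V/(a_i) \to 0 \) gives \( \pd_V(V/(a_i)) \le 1 \). Consequently \( \fd_V(M_{\mathsf{m}}) \le 1 \) for every \( \mathsf{m} \), so \( \fd_R(M) \le 1 \); by \cref{prop: regula flat proj} this forces \( \pd_R(M) < \infty \), whence \( R \) is regular and \( \mathsf{K}_0 \)-regular as above.

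The step I expect to be the main obstacle is the transfer of the annihilator hypothesis from \( R \) to \( V \): I must confirm that the non-finite generation of annihilators of local zero divisors genuinely excludes the cyclic summands \( V/(a_i) \) with \( a_i \) a zero divisor from \( \mathsf{FP}_\infty(V) \), using that in the valuation ring \( V \) a finitely generated ideal is necessarily principal. A secondary point requiring care is that the cyclic decomposition of finitely presented modules must be invoked in the form valid for valuation rings with zero divisors, and not merely for valuation domains, since condition (1) does not force \( R \) to be reduced.
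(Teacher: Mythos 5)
Your proof is correct, and for case (1) it takes a genuinely different route from the paper. The paper disposes of case (1) in one line by citing \cite[Proposition~3.8]{KMa2}, which states that under that annihilator hypothesis an arithmetical ring is a $(3,1)$-ring, and of case (2) by combining the $2$-coherence of arithmetical rings \cite[Theorem~II.1]{Cou2} with the classical fact that reduced arithmetical rings have weak dimension at most $1$, so that $R$ is a $(2,1)$-ring; both cases then feed into \cref{ref:K0}. Your case (2) reaches the same endpoint more directly through \cref{ref:K0}(7), without needing the coherence input. Your case (1) essentially reproves the cited result of Kabbour--Mahdou from scratch: localize, invoke Warfield's cyclic decomposition of finitely presented modules over a valuation ring (correctly in the form valid with zero divisors), observe that a summand $V/(a)$ of an $\mathsf{FP}_\infty$-module lies in $\mathsf{FP}_2(V)$ and hence forces $\mathsf{Ann}_V(a)$ to be finitely generated, and conclude that the hypothesis rules out zero-divisor generators, giving $\mathsf{FP}_\infty(R)\subseteq \mathcal{F}_1(R)$. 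This is longer but makes the mechanism transparent --- it shows exactly where the annihilator condition enters --- and it recovers the stronger uniform bound $\mathsf{FP}_3(R)=\mathsf{FP}_\infty(R)\subseteq\mathcal{P}_1(R)$ rather than mere regularity. The one point you should state explicitly is the reading of the hypothesis: as written it refers to $\mathsf{Ann}_R(x)$ for $x\in\mathsf{Z}(R_{\mathsf{m}})$, which does not literally parse, and your argument requires the annihilator computed in $R_{\mathsf{m}}$ (non-finite generation of $\mathsf{Ann}_R(x)$ for a preimage $x\in R$ would not by itself prevent $(\mathsf{Ann}_R(x))_{\mathsf{m}}$ from being finitely generated); this local reading is the one used in \cite{KMa2} and is clearly the intended one, but it is the hinge of your contradiction and deserves a sentence.
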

\begin{proof}
In case (1), it follows from \cite[Proposition 3.8]{KMa2} that \( R \) is a \( (3,1) \)-ring.  In case (2), \( R \) is \( 2 \)-coherent by \cite[Theorem II.1]{Cou2}, and it is well known that reduced arithmetical rings are precisely those with \(\wD(R) \le 1\). Therefore, \( R \) is a \( (2,1) \)-ring.  In both cases, the claim then follows directly from Corollary \ref{ref:K0}.
\end{proof}

Recall that if \( A \subseteq B \subseteq \mathsf{Q}(A) \), where \( \mathsf{Q}(A) \) denotes the total ring of fractions of \( A \), and \( A \) is arithmetical, then \( B \) is also arithmetical. Using this observation, we obtain the following corollary:

\begin{cor}
Let \( R \) be a commutative reduced ring such that \( S \subseteq R \subseteq \mathsf{Q}(S) \) for some arithmetical ring \( S \). Then \( R \) is \( \mathsf{K}_0 \)-regular.
\end{cor}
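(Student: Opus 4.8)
The plan is to reduce the statement directly to the preceding proposition on arithmetical rings. The only genuine work is to verify that $R$ itself is arithmetical, after which the reducedness hypothesis lets me invoke part~(2) of that proposition and conclude immediately.

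First I would note that, by the observation recalled immediately before this statement, whenever $A$ is arithmetical and $A \subseteq B \subseteq \mathsf{Q}(A)$, the intermediate ring $B$ is again arithmetical. I would apply this with $A = S$ and $B = R$: the hypothesis $S \subseteq R \subseteq \mathsf{Q}(S)$ places $R$ in exactly the required sandwich, and $S$ is arithmetical by assumption, so the stability statement yields that $R$ is arithmetical. This is the one step where something must be checked rather than merely cited, but it is a direct instantiation of the recalled fact.

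Then, since $R$ is reduced by hypothesis, the second criterion of the preceding proposition applies verbatim: an arithmetical reduced ring is $\mathsf{K}_0$-regular. This gives the conclusion. There is no real obstacle here; the content of the corollary lies entirely in recognizing that the sandwich condition transfers the arithmetical property from $S$ to $R$, and the remainder is a direct appeal to the reduced case of the earlier proposition. The only point worth double-checking is that the total ring of fractions $\mathsf{Q}(S)$ appearing in the hypothesis coincides with the one used in the recalled stability statement, which it does by definition, so no compatibility issue arises.
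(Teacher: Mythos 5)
Your proposal is correct and is exactly the argument the paper intends: the sentence preceding the corollary recalls that any intermediate ring $S \subseteq R \subseteq \mathsf{Q}(S)$ over an arithmetical ring $S$ is again arithmetical, so $R$ is a reduced arithmetical ring and part~(2) of the preceding proposition applies. Nothing further is needed.
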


\begin{prop}
Let \( R \) be a valuation ring such that, for every nonzero \( x \in \mathsf{Z}(R) \), the annihilator ideal \( \mathsf{Ann}_R(x) \) is not finitely generated. Then \( R \) is \( \mathsf{K}_0 \)-regular.
\end{prop}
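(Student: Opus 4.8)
The plan is to recognize this statement as the local instance of the preceding proposition on arithmetical rings. First I would observe that every valuation ring $R$ is arithmetical: by definition $R$ is local and uniserial, and the localization of a uniserial ring at any prime ideal is again uniserial, hence a valuation ring; thus $R$ is locally a valuation ring and the previous proposition is applicable to it.

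Since $R$ is local with unique maximal ideal $\mathsf{m}$, the localization $R_{\mathsf{m}}$ equals $R$, so that $\mathsf{Z}(R_{\mathsf{m}}) = \mathsf{Z}(R)$. Under this identification, the standing hypothesis that $\mathsf{Ann}_R(x)$ is not finitely generated for every nonzero $x \in \mathsf{Z}(R)$ is precisely condition (1) of the previous proposition, in which the quantification over maximal ideals collapses to the single ideal $\mathsf{m}$. Applying that proposition directly then gives that $R$ is $\mathsf{K}_0$-regular.

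If one prefers to argue without invoking the previous proposition, the route is the same at a finer level: the annihilator hypothesis together with \cite[Proposition~3.8]{KMa2} shows that $R$ is a $(3,1)$-ring, i.e. $\mathsf{FP}_3(R) \subseteq \mathcal{P}_1(R)$, and Corollary~\ref{ref:K0}(4) then yields $\mathsf{K}_0$-regularity. I do not anticipate a serious obstacle here, since the result is essentially a specialization; the only points requiring care are the reduction $R_{\mathsf{m}} = R$, which is immediate for local rings, and the verification that valuation rings are arithmetical, which is standard.
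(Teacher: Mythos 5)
Your proposal is correct. The paper's own proof is a one-line citation of \cite[Theorem~3.1]{KMa2} (a statement specific to valuation rings) followed by Corollary~\ref{ref:K0}, whereas you specialize the preceding arithmetical-ring proposition (which rests on \cite[Proposition~3.8]{KMa2}) to the local case via $R_{\mathsf{m}} = R$; both routes produce an $(n,1)$-ring and finish with the same corollary, so the argument is essentially the same, and your reduction --- that a valuation ring is arithmetical and that the quantification over maximal ideals collapses --- is sound.
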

\begin{proof}
Follows from \cite[Theorem 3.1]{KMa2} and Corollary \ref{ref:K0}.
\end{proof}

\subsection{Semi-valuation rings} For basic terminology, we refer to \cite{Dahl}. A \newterm{semi-valuation ring} is a ring \( R^+ \) equipped with a valuation map \( |{-}| \colon R^+ \to \Gamma \cup \{0\} \), where \( \Gamma \) is a totally ordered abelian group, satisfying the following conditions:
\begin{itemize}
    \item The set of zero divisors of \( R^+ \) is contained in the kernel \( \mathfrak{p} := \ker(|{-}|) \);
    \item For all \( x, y \in R^+ \) with \( |y| \neq 0 \) and \( |x| \leq |y| \), it holds that \( y \mid x \).
\end{itemize}
Associated to \( R^+ \), we define:
\begin{itemize}
    \item the \newterm{semifraction ring} \( R := R^+_{\mathfrak{p}} \),
    \item the \newterm{valuation ring} \( V := R^+ / \mathfrak{p} \),
    \item the residue field \( \mathsf{k} := R^+ / \mathfrak{p} \).
\end{itemize}

These give rise to the following commutative diagram:
$$\begin{array}{ccc}
R^+ & \longrightarrow & R \\
\downarrow & & \downarrow \\
V & \longrightarrow & \mathsf{k}
\end{array}$$
which is a \newterm{Milnor square}, that is, a bicartesian diagram with surjective parallel arrows; see \cite[Lemma 2.2]{Dahl}. There exist semi-valuation rings that are not coherent. For example, let \( R := \mathbb{Q}_p[[X,Y]] \) and let $
R^+ := \{ f \in R \mid f(0,0) \in \mathbb{Z}_p \},
$ be defined to be the pullback in the Milnor square:
$$\begin{array}{ccc}
R^+ & \longrightarrow & R \\
\downarrow & & \downarrow \text{ev}_{0,0} \\
\mathbb{Z}_p & \longrightarrow & \mathbb{Q}_p
\end{array}$$

More generally, if \( R^+ \) is a semi-valuation ring such that the localization \( R = R^+_{\mathfrak{p}} \) is not finitely generated as an \( R^+ \)-module, and \( \mathfrak{p} \) contains a regular sequence \( X, Y \), then \( R^+ \) fails to be coherent; see \cite[Lemma 3.11]{Dahl}.

\begin{prop}
Let \( R^+ \) be a semi-valuation ring with non-trivial valuation. Then \( R^+ \) is \(\mathsf{K}_0 \)-regular.
\end{prop}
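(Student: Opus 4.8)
The plan is to exploit the Milnor square attached to $R^+$ and to propagate $\mathsf{K}_0$-regularity from its three corners by Milnor patching. By \cite[Lemma~2.2]{Dahl} the ring $R^+$ is the pullback $R \times_{\mathsf{k}} V$, where $R = R^+_{\mathfrak{p}}$ is the semifraction ring, $V = R^+/\mathfrak{p}$ is a valuation domain (the induced valuation on $V$ has trivial kernel), and $\mathsf{k}$ is their common residue field, with both parallel arrows $R \to \mathsf{k}$ and $V \to \mathsf{k}$ surjective. The first step is to note that this square is preserved under polynomial extension: localization and passage to a quotient both commute with the flat base change $-\otimes_{\mathbb{Z}}\mathbb{Z}[x_1,\dots,x_n]$, and surjectivity is inherited by the induced maps, so writing $\underline{x}=(x_1,\dots,x_n)$ the square
\[
R^+[\underline{x}] \;=\; R[\underline{x}] \times_{\mathsf{k}[\underline{x}]} V[\underline{x}]
\]
is again a Milnor square for every $n \geq 1$.

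Next I would invoke Milnor's Mayer--Vietoris exact sequence in low degrees, which is functorial in ring homomorphisms. Applied to the square above it reads
\[
\mathsf{K}_1(R)\oplus \mathsf{K}_1(V) \longrightarrow \mathsf{K}_1(\mathsf{k}) \longrightarrow \mathsf{K}_0(R^+) \longrightarrow \mathsf{K}_0(R)\oplus \mathsf{K}_0(V) \longrightarrow \mathsf{K}_0(\mathsf{k}),
\]
and the inclusions $A \hookrightarrow A[\underline{x}]$ produce a commutative ladder from this sequence to the corresponding one for $R^+[\underline{x}]$. A five-lemma argument then reduces the $\mathsf{K}_0$-regularity of $R^+$ to the simultaneous $\mathsf{K}_0$- and $\mathsf{K}_1$-regularity of the three corner rings $R$, $V$, and $\mathsf{k}$.

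It then remains to treat the corners. The residue field $\mathsf{k}$ is regular, hence $\mathsf{K}_i$-regular in every degree. The valuation domain $V$ has weak global dimension at most $1$, so it is coherent regular and $\mathsf{K}_0$-regular by \cref{ref:K0}; combined with the $\mathsf{K}$-regularity of valuation rings (see \cite{Dahl}) this also gives its $\mathsf{K}_1$-regularity. For the semifraction ring $R$ I would use its explicit description from \cite{Dahl} to conclude that it is $\mathsf{K}_0$- and $\mathsf{K}_1$-regular. Substituting these three facts into the ladder and applying the five lemma yields $\mathsf{K}_0(R^+)\cong \mathsf{K}_0(R^+[\underline{x}])$ for all $n\geq 1$.

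The main obstacle is the semifraction ring $R$. Unlike $V$ and $\mathsf{k}$ it is a genuinely non-trivial local ring --- indeed $R$ is a field precisely when $\mathfrak{p}=0$, i.e.\ when $R^+$ is already a valuation domain --- so its $\mathsf{K}_1$-regularity in particular cannot be deduced from a weak-dimension bound and must be extracted from the semi-valuation structure. A secondary technical point is checking that Milnor's Mayer--Vietoris sequence is natural with respect to the polynomial inclusions, so that the comparison ladder commutes and the five lemma applies; the hypothesis that the valuation is non-trivial enters here to guarantee that the corners have the asserted form, namely a valuation domain surjecting onto its residue field alongside the local semifraction ring.
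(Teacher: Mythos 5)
Your route is genuinely different from the paper's, but it has a gap that you yourself flag and that cannot be closed with the tools you invoke. The whole argument funnels through the five lemma applied to Milnor's Mayer--Vietoris ladder, and for that you need the semifraction ring $R=R^+_{\mathfrak{p}}$ to be $\mathsf{K}_0$-regular and (at the $\mathsf{K}_1$-spot, at least surjectively) $\mathsf{K}_1$-regular. Nothing in the definition of a semi-valuation ring gives you this: $R$ is merely a local ring whose maximal ideal contains the zero divisors, with no coherence, Noetherianity, or regularity hypothesis, so there is no weak-dimension bound, no appeal to \cite{Dahl}'s ``explicit description,'' and no general theorem that would make $N\mathsf{K}_0(R)$ or $N\mathsf{K}_1(R)$ vanish. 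Establishing homotopy invariance of $\mathsf{K}_0$ and $\mathsf{K}_1$ for $R$ is at least as hard as the original statement, so the proof is not complete. There is also a misreading of the square: the surjective parallel arrows in the Milnor square of \cite[Lemma~2.2]{Dahl} are $R^+\twoheadrightarrow V$ and $R\twoheadrightarrow \mathsf{k}$; the map $V\to\mathsf{k}$ is the inclusion of the valuation ring into its fraction field and is surjective only when the valuation is trivial, which is exactly the case excluded by hypothesis. This does not destroy the Mayer--Vietoris sequence (one surjection onto the corner suffices), but it should be stated correctly.

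For comparison, the paper's proof uses no $\mathsf{K}$-theoretic patching at all: by \cite[Lemma~3.7]{Dahl}, every finitely $2$-presented $R^+$-module has projective dimension at most $1$, so $R^+$ is a $(2,1)$-ring, hence uniformly $2$-regular, hence $2$-coherent and regular, and $\mathsf{K}_0$-regularity follows from \cref{ref:K0} (ultimately from the Wang--Qiao--Kim theorem \cite{Wang}). That argument pushes all the work into a module-theoretic finiteness statement about $R^+$ itself, which is precisely the kind of input your approach is missing for the corner $R$. If you want to rescue the Mayer--Vietoris strategy, you would need to first prove a regularity or $N\mathsf{K}_*$-vanishing statement for the semifraction ring, and at that point the direct $(2,1)$-ring argument is both shorter and strictly less demanding.
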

\begin{proof}
By \cite[Lemma 3.7]{Dahl}, every finitely \( 2 \)-presented \( R^+ \)-module has projective dimension at most \( 1 \), hence \( R^+ \) is a \( (2,1) \)-ring. Thus, \( R^+ \) is \( \mathsf{K}_0 \)-regular by Corollary \ref{ref:K0}.
\end{proof}

\subsection{Completed group algebras} For the basic terminology used here, we refer to \cite{Burns}, \cite{Burns2}. Let $R$ be a commutative ring and $G$ a profinite group. The \newterm{completed group algebra} is defined as the inverse limit 
$$ R[[G]] := \varprojlim_{U} R[G/U], $$
where $U$ runs over the open normal subgroups of $G$, and the transition map for $U \subseteq U'$ is the group ring homomorphism $R[G/U] \to R[G/U']$ induced by the natural projection $G/U \to G/U'$.  As noted in \cite{Burns}, these algebras arise naturally in arithmetic contexts. For example, when \( R = \mathbb{Z} \), the algebra \( \mathbb{Z}[[G]] \) acts on inverse limits of modules, such as class groups and Selmer groups, over a tower of fields within a given Galois extension of number fields of group $G$.
 In particular, if $G$ has a countable basis of neighborhoods of the identity and a non-torsion Sylow subgroup, then $\mathbb{Z}[[G]]$ is neither left nor right coherent, as shown in \cite[Theorem 1.1]{Burns}.  More precisely, fixing a prime \( p \), \cite[Corollary 2.4]{Burns} shows that \( \mathbb{Z}[[G]] \) is neither left nor right coherent in each of the following cases:
\begin{itemize}
    \item[(i)] $G$ is a compact $p$-adic analytic group of positive rank.
    \item[(ii)] $G$ is the Galois group of an extension of number fields, or of $p$-adic fields, that contains a $\mathbb{Z}_p$-subextension for any prime $p$.
    \item[(iii)] $G$ is a Sylow $p$-subgroup of the absolute Galois group of a number field.
\end{itemize}

Furthermore, when \( R = \mathbb{Z} \) and \( G \) is a compact \( p \)-adic analytic group of rank \( d \), it follows from \cite[Theorem 1.2]{Burns} that \( \mathbb{Z}[[G]] \) is \( (d+3) \)-coherent. In the particular case \( G = \mathbb{Z}_p \), it follows from \cite[Theorem~1.1]{Burns2} that the completed group ring  
$$\mathbb{Z}[[\mathbb{Z}_p]] \cong \varprojlim_{n} \, \mathbb{Z}[\mathbb{Z}/p^n],$$ 
where the transition maps 
\(\mathbb{Z}[\mathbb{Z}/p^{n+1}] \to \mathbb{Z}[\mathbb{Z}/p^n]\) 
are induced by the canonical projections 
\(\mathbb{Z}/p^{n+1} \twoheadrightarrow \mathbb{Z}/p^n\), 
is not a finite conductor domain and hence not coherent. Moreover, if \( p \) is non-exceptional, then \(\mathbb{Z}[[\mathbb{Z}_p]]\) is a \( (2,2) \)-ring. Consequently, \(\mathbb{Z}[[\mathbb{Z}_p]]\) is uniformly \( 2 \)-regular, and therefore \( \mathsf{K}_0 \)-regular.

\begin{prop}
    For any non-exceptional rational prime \( p \), the completed group ring \( \mathbb{Z}[[\mathbb{Z}_p]] \) is \( \mathsf{K}_0 \)-regular.  
\end{prop}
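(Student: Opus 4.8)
The plan is to deduce the statement directly from Corollary~\ref{ref:K0}, whose commutative form requires only that the ring be commutative and uniformly $n$-regular for some $n \geq 0$. First I would verify commutativity: since $\mathbb{Z}[[\mathbb{Z}_p]] \cong \varprojlim_n \mathbb{Z}[\mathbb{Z}/p^n]$ and each $\mathbb{Z}[\mathbb{Z}/p^n]$ is a commutative ring (the group $\mathbb{Z}/p^n$ being abelian), the inverse limit is commutative as well. Thus the commutative hypotheses of Corollary~\ref{ref:K0} are available.

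The substantive input is the result recorded in the paragraph preceding the statement, namely \cite[Theorem~1.1]{Burns2}, which asserts that for a non-exceptional prime $p$ the ring $\mathbb{Z}[[\mathbb{Z}_p]]$ is a $(2,2)$-ring: every finitely $2$-presented module over it has projective dimension at most $2$. By the equivalence noted at the start of Section~\ref{sec: uniformly regu}---that a ring is left uniformly $n$-regular exactly when it is a left $(n,d)$-ring for some $d \geq 0$---the $(2,2)$ property, read with $n = d = 2$, shows that $\mathbb{Z}[[\mathbb{Z}_p]]$ is uniformly $2$-regular.

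Finally I would invoke Corollary~\ref{ref:K0}(2) with $n = 2$ to conclude that $\mathbb{Z}[[\mathbb{Z}_p]]$ is $\mathsf{K}_0$-regular. There is no genuine obstacle internal to this argument: the deep content is imported from \cite{Burns2}, and the remaining steps amount to an unwinding of definitions together with the commutativity check. The only point demanding care is to confirm that the non-exceptionality hypothesis on $p$ is precisely the assumption under which \cite{Burns2} furnishes the projective-dimension bound, so that the hypotheses transfer cleanly and no additional condition is silently introduced.
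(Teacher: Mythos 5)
Your argument is correct and is essentially identical to the paper's: the paper likewise cites \cite[Theorem~1.1]{Burns2} for the $(2,2)$-ring property of $\mathbb{Z}[[\mathbb{Z}_p]]$ when $p$ is non-exceptional, identifies this with uniform $2$-regularity via the equivalence stated at the start of \cref{sec: uniformly regu}, and concludes by Corollary~\ref{ref:K0}. Your explicit commutativity check is a detail the paper leaves implicit but is needed for Corollary~\ref{ref:K0} to apply.
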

\qed

\subsection{Group Algebras}  
For basic terminology, we refer to \cite{Kro1} and \cite{CK}. Let \( G \) be a group in the class \( \mathsf{H}\mathfrak{F} \) of hierarchically decomposable groups, defined in \cite{Kro1} as the smallest class containing all finite groups and closed under the following property: if \( G \) admits a finite-dimensional contractible \( G \)-CW-complex with stabilizers in \( \mathsf{H}\mathfrak{F} \), then \( G \) itself belongs to \( \mathsf{H}\mathfrak{F} \).  Moreover, if \( G \) is torsion-free (including torsion-free linear and soluble-by-finite groups), then every \( \mathbb{Z}G \)-module of type $\mathsf{FP}_{\infty}$ has finite projective dimension; see \cite[Corollary]{Kro1}. As a consequence, we obtain the following result:

\begin{prop}
    If $G$ is a torsion-free \( \mathsf{H}\mathfrak{F} \)-group then $\mathbb{Z}G$ is $\mathsf{K}_0$-regular. 
\end{prop}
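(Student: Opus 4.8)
The plan is to reduce $\mathsf{K}_0$-regularity to a single-variable homotopy-invariance statement and to arrange, unlike in Swan's coherent setting, that this statement can be iterated. Set $S_n:=\mathbb{Z}[x_1,\dots,x_n]$ and note the ring isomorphism $\mathbb{Z}G[x_1,\dots,x_n]\cong S_nG$, the group algebra of $G$ over the coefficient ring $S_n$ (the central indeterminates $x_i$ commute with $G$). Since $\gD(S_n)=n+1<\infty$, every $S_n$-module has projective dimension at most $n+1$; and because $G$ is torsion-free its only finite subgroup is trivial, so Kropholler's theorem --- which holds over any commutative coefficient ring of finite global dimension, torsion-freeness reducing its hypothesis to the trivial subgroup --- shows that every finitely $\infty$-presented $S_nG$-module has finite projective dimension over $S_nG$. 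Thus $\mathbb{Z}G[x_1,\dots,x_n]$ is left regular for every $n\ge 0$; the case $n=0$ is precisely the regularity of $\mathbb{Z}G$ recalled before the statement.

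Via the augmentation $x\mapsto 0$, the inclusion $R\hookrightarrow R[x]$ makes $\mathsf{K}_0(R)\to\mathsf{K}_0(R[x])$ a split monomorphism; write $NK_0(R)$ for its cokernel. Then $\mathbb{Z}G$ is $\mathsf{K}_0$-regular exactly when $NK_0\big(\mathbb{Z}G[x_1,\dots,x_j]\big)=0$ for all $j\ge 0$, equivalently when $\mathsf{K}_0\big(\mathbb{Z}G[x_1,\dots,x_j]\big)\to\mathsf{K}_0\big(\mathbb{Z}G[x_1,\dots,x_{j+1}]\big)$ is an isomorphism for every $j$. It therefore suffices to prove the single-variable statement: for a left regular ring $S$ whose polynomial extension $S[x]$ is again left regular, the map $\mathsf{K}_0(S)\to\mathsf{K}_0(S[x])$ is an isomorphism. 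Applying this with $S=\mathbb{Z}G[x_1,\dots,x_j]$, which is legitimate because the regularity established above holds for $S[x]=\mathbb{Z}G[x_1,\dots,x_{j+1}]$ as well, and telescoping over $j$, yields the proposition. The conceptual gain over Swan is exactly that his coherence hypothesis is lost upon adjoining a variable, whereas the regularity furnished by Kropholler survives in every number of variables, so the single-variable statement may be chained.

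For the single-variable statement I would pass to the $\mathsf{K}$-theory of $\mathsf{FP}_\infty$. For any left regular ring $R$ the class $\mathsf{FP}_\infty(R)$ is an exact category --- closed under extensions, kernels of epimorphisms and cokernels of monomorphisms by the $2$-out-of-$3$ property recorded in Section~\ref{s:preliminaries} --- and every object admits a finite resolution by finitely generated projectives, since its syzygies stay in $\mathsf{FP}_\infty(R)$ and, by regularity, eventually become projective. Quillen's Resolution Theorem then gives $\mathsf{K}_0(R)\cong\mathsf{K}_0(\mathsf{FP}_\infty(R))$, exactly as in \cite[Theorem~3.2]{ep} but now requiring only regularity rather than $n$-coherence. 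Since base change $-\otimes_S S[x]$ sends $\mathsf{FP}_\infty(S)$ into $\mathsf{FP}_\infty(S[x])$ by Lemma~\ref{lemm:faithfully}, the single-variable statement becomes the homotopy invariance $\mathsf{K}_0(\mathsf{FP}_\infty(S))\cong\mathsf{K}_0(\mathsf{FP}_\infty(S[x]))$. This is the main obstacle: in the commutative case it is precisely Wang's theorem (Proposition~\ref{prop:commutative-K0-regular}), and the remaining work is to confirm that the d\'evissage/filtration argument giving surjectivity there invokes commutativity only through the regularity of $S$ and of $S[x]$ --- both already secured over the non-commutative base $\mathbb{Z}G$. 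Granting this, every relevant $NK_0$ vanishes and the telescoping of the previous paragraph completes the proof.
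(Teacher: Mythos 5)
Your overall strategy is the same one the paper intends: Kropholler's corollary makes $\mathbb{Z}G$ left regular, and one then invokes ``regular $\Rightarrow$ $\mathsf{K}_0$-regular.'' The paper's own proof is essentially a bare Q.E.D.\ after that observation, whereas you go considerably further: you correctly point out that one also needs regularity of every $\mathbb{Z}G[x_1,\dots,x_n]\cong\mathbb{Z}[x_1,\dots,x_n]G$ (the paper's \cref{prop:reg poli} is stated only for commutative base rings, so it cannot be applied here), and you supply this by rerunning Kropholler--Cornick over the coefficient ring $\mathbb{Z}[x_1,\dots,x_n]$, which has finite global dimension and whose torsion-free hypothesis reduces the finite-subgroup condition to the trivial group. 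Your reduction to a single-variable statement and the identification $\mathsf{K}_0(R)\cong\mathsf{K}_0(\mathsf{FP}_\infty(R))$ via the Resolution Theorem (valid under regularity alone, since syzygies of $\mathsf{FP}_\infty$-modules stay in $\mathsf{FP}_\infty$) are both sound.

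The genuine gap is the one you flag yourself and then wave through with ``granting this'': the statement that $\mathsf{K}_0(S)\to\mathsf{K}_0(S[x])$ is an isomorphism for a \emph{noncommutative} left regular $S$ with $S[x]$ left regular. The only tools the paper provides for ``regular $\Rightarrow$ $\mathsf{K}_0$-regular'' are \cref{prop:commutative-K0-regular} and \cref{ref:K0}, both stated exclusively for commutative rings, and Wang--Qiao--Kim's argument is written in the commutative setting; since $\mathbb{Z}G$ is noncommutative for nonabelian $G$, your proof is conditional on an unverified transfer of that theorem. You should be aware that this is precisely the step the paper itself leaves unaddressed --- its one-line proof silently applies a commutative result to a noncommutative ring --- so your proposal does not so much fail as make explicit (and honestly label) the missing ingredient. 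To close the gap you would need either a noncommutative version of \cite[Theorem~5.7]{Wang}, or a direct argument (e.g.\ via the characteristic exact sequence $0\to M[x]\to M[x]\to M\to 0$ and the filtration of $\mathsf{FP}_\infty(S[x])$ by modules extended from $S$) checked to work without commutativity.
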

\qed

\subsection{Perfect closure} For basic terminology, we refer to \cite{Glaz1} and \cite{Asgh}. Let \( R \) be a commutative ring of prime characteristic \( p \), that is $R$ contains a field of characteristic $p$, and let \( F : R \to R \) denote the \newterm{Frobenius map}, the ring homomorphism defined by \( F(x) = x^p \). Following \cite{Glaz1}, the \newterm{perfect closure} of \( R \), denoted \( R^\infty \), is given by the direct limit
$$R^\infty := \varinjlim \left( R \xrightarrow{F} R \xrightarrow{F} \cdots \right).$$
Note that \( R^\infty \) is always reduced. However, unless \( R \) is a field, the perfect ring \( R^\infty \) is almost never Noetherian. For example, if \( R \) is a Noetherian local ring of prime characteristic \( p \), then \( R^\infty \) is Noetherian if and only if \( \dim R = 0 \); see \cite[Observation 3.5]{Asgh}. If \(R\) is a one-dimensional complete local domain of prime characteristic, then by \cite[Corollary 3.14]{Asgh} we have \(\gD(R^\infty)=2\) if and only if \(R^\infty\) is stably coherent. When \(R^\infty\) is not coherent, it still holds that \(\gD(R^\infty)=3\), which implies that \(R^\infty\) is \( \mathsf{K}_0 \)-regular. In fact, by \cite[Fact 3.8]{Asgh} we obtain the following proposition. 
\begin{prop}
Let $(R, \mathsf{m})$ be a complete local ring of prime characteristic. Then its perfect closure \( R^{\infty} \) is \( \mathsf{K}_0 \)-regular.
\end{prop}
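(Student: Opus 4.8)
The plan is to reduce $\mathsf{K}_0$-regularity of the perfect closure $R^\infty$ to the finite-homological-dimension criterion already recorded in Corollary~\ref{ref:K0}. The essential structural input is \cite[Fact 3.8]{Asgh}, which controls the global dimension of the perfect closure of a complete local ring of prime characteristic.

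First, I would invoke \cite[Fact 3.8]{Asgh} to conclude that $\gD(R^\infty) < \infty$. This is the substantive step: although $R^\infty$ is typically non-Noetherian (indeed, by \cite[Observation 3.5]{Asgh} it is Noetherian precisely when $\dim R = 0$), the Frobenius direct-limit presentation of $R^\infty$ forces its global dimension to be finite. This phenomenon is already visible in the one-dimensional complete local domain case discussed just above the proposition, where $\gD(R^\infty)$ equals $2$ or $3$ according to whether $R^\infty$ is coherent.

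Once finite global dimension is in hand, the conclusion is immediate. Since $R^\infty$ is a commutative ring satisfying $\gD(R^\infty) < \infty$, condition~(7) of Corollary~\ref{ref:K0} applies verbatim and yields that $R^\infty$ is $\mathsf{K}_0$-regular.

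The main obstacle is therefore entirely concentrated in the first step, namely extracting finite global dimension from \cite[Fact 3.8]{Asgh}. The difficulty is that the classical Serre--Zariski regularity machinery is unavailable in the non-Noetherian setting, so one cannot simply read off finite global dimension from regularity of $R$. Instead, the bound must be obtained by exploiting the perfectness of $R^\infty$ together with the behaviour of projective resolutions under the Frobenius colimit, as carried out in \cite{Asgh}. After this, $\mathsf{K}_0$-regularity is a purely formal consequence of the criterion established earlier, with no additional computation required.
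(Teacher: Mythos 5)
Your proposal matches the paper's own (essentially unwritten) argument: the paper likewise obtains the result by citing \cite[Fact 3.8]{Asgh} for a finiteness-of-homological-dimension statement about $R^\infty$ and then applying condition (7) of Corollary~\ref{ref:K0}. The only caveat is that the cited fact may bound the weak dimension rather than the global dimension of $R^\infty$, but condition (7) accepts either, so your reduction is exactly the one the paper intends.
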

\qed

\subsection{ Ring of continous functions}
Let $\mathsf{C(X)}=\mathsf{C(X,\mathbb{R})}$ denote the ring of all continuous real-valued functions defined on a completely regular Hausdorff space $\mathsf{X}$, i.e., a Tychonoff space\footnote{Given \( f \in \mathsf{C}(X) \), the \newterm{zero set} of \( f \), denoted \( \operatorname{zer}(f) \), is the closed set 
$\operatorname{zer}(f) = \{x \in \mathsf{X} \mid f(x) = 0\},$
while the \newterm{cozero set} of \( f \), \( \operatorname{coz}(f) \), is the open set 
$\operatorname{coz}(f) = \{x \in \mathsf{X} \mid f(x) \neq 0\}.$
Since \( \mathsf{X} \) is Tychonoff, cozero sets form a base for the open topology on \( \mathsf{X} \).}. The set $\mathsf{C(X)}$ has a ring structure:  where both the sum and product of functions are defined pointwise, and the constant function equal to $1$ acts as the unity element.  Following \cite{GH}, a compactum $\mathsf{X}$ is called an \newterm{$\mathsf{F}$-space} if $\mathsf{C(X)}$ is Bézout ring. Every extremely disconnected compactum is an $\mathsf{F}$-space.  More generally, any basically disconnected compactum (i.e., a compactum in which the closure of every cozero set is open) is also an $\mathsf{F}$-space, as shown in \cite[Example 5.5]{Aoki}. For example, \( \mathsf{C([0,1])} \) is not an $\mathsf{F}$-space; see \cite[Remark 9.9]{MPR}.

\begin{prop}
Let $\mathsf{X}$ be an $\mathsf{F}$-space. Then $\mathsf{C(X)}$ is a $2$-coherent regular ring, and consequently, $\mathsf{C(X)}$ is $\mathsf{K}_0$-regular.
\end{prop}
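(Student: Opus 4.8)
The plan is to reduce the statement to the structural results already obtained for reduced arithmetical rings and then invoke the $\mathsf{K}_0$-regularity criterion of Corollary~\ref{ref:K0}. First I would record the two features of $\mathsf{C(X)}$ that make the earlier machinery applicable. Since $\mathsf{X}$ is an $\mathsf{F}$-space, by definition $\mathsf{C(X)}$ is a Bézout ring, and hence arithmetical by the implications recorded in the hierarchy diagram. Moreover $\mathsf{C(X)}$ is reduced: if $f \in \mathsf{C(X)}$ satisfies $f^k = 0$, then $f(x)^k = 0$ for every $x \in \mathsf{X}$, so $f(x) = 0$ pointwise and $f = 0$. Thus $\mathsf{C(X)}$ is a reduced arithmetical ring.

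Next I would pin down the weak dimension. Because arithmetical rings are Gaussian, $\mathsf{C(X)}$ is a reduced Gaussian ring, and by the characterization recalled earlier (a Gaussian reduced ring is exactly one with $\wD(R) \le 1$) we obtain $\wD(\mathsf{C(X)}) \le 1$. Applying Remark~\ref{rmk: dim weak} (equivalently Proposition~\ref{prop: debil unif}) with $n = 1$ then shows that $\mathsf{C(X)}$ is $2$-coherent and uniformly $2$-regular, with $\mathsf{FP}_2(\mathsf{C(X)}) \subseteq \mathcal{P}_1(\mathsf{C(X)})$; in particular $\mathsf{C(X)}$ is a $(2,1)$-ring. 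Being uniformly $2$-regular it is $2$-regular, and combined with $2$-coherence this gives that $\mathsf{C(X)}$ is $2$-coherent regular, using the identification of ``$n$-coherent regular'' with ``$n$-coherent and $n$-regular'' from Proposition~\ref{prop:regularity-hierarchy-unified}.

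Finally, $\mathsf{K}_0$-regularity follows at once from Corollary~\ref{ref:K0}(3), since $\mathsf{C(X)}$ is a commutative $n$-coherent regular ring with $n = 2$ (one could equally invoke part~(7), as $\wD(\mathsf{C(X)}) \le 1 < \infty$). The argument is essentially an assembly of earlier results, so there is no genuine obstacle; the only points requiring care are the observation that $\mathsf{C(X)}$ is reduced and arithmetical, and the invocation of the standard fact that reduced (Gaussian) arithmetical rings have weak global dimension at most one, which is what feeds the $n$-coherent regularity conclusion.
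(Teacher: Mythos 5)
Your argument is correct and follows essentially the same route as the paper: reduce to the fact that $\mathsf{C(X)}$ is a reduced arithmetical (Bézout) ring, deduce $\wD(\mathsf{C(X)})\le 1$, conclude $2$-coherence and regularity, and finish with Corollary~\ref{ref:K0}. The only cosmetic difference is that the paper obtains $2$-coherence by citing Couchot's theorem on arithmetical rings directly, whereas you derive it from the weak-dimension bound via Proposition~\ref{prop: debil unif}/Remark~\ref{rmk: dim weak}; both are valid within the paper's framework, and you additionally spell out the reducedness of $\mathsf{C(X)}$, which the paper takes for granted.
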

\begin{proof}
Since $\mathsf{C(X)}$ is a reduced arithmetical ring, we have $\wD(\mathsf{C(X)}) \leq 1$. Moreover, by \cite[Theorem II.1]{Cou2}, $\mathsf{C(X)}$ is $2$-coherent. Therefore, $\mathsf{C(X)}$ is a $2$-coherent regular ring, and $\mathsf{K}_0$-regular by Corollary \ref{ref:K0}.
\end{proof}

A completely regular space \( \mathsf{X} \) is called a \newterm{$\mathsf{T}$-space} if the ring \( \mathsf{C(X)} \) is a Hermite ring. Consequently, every $\mathsf{T}$-space is an $\mathsf{F}$-space. However, it is known that there exist $\mathsf{F}$-spaces that are not $\mathsf{T}$-spaces; see \cite[Example 3.4]{GH}.  

\begin{prop}
If $\mathsf{X}$ is basically disconnected, then $\mathsf{X}$ is a $\mathsf{T}$-space. In particular, $\mathsf{C(X)}$ is $\mathsf{K}_0$-regular.
\end{prop}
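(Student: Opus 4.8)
The plan is to treat the two assertions separately, the second being immediate. Since a basically disconnected compactum is an $\mathsf{F}$-space, the ring $\mathsf{C(X)}$ is Bézout, so the previous proposition applies verbatim and yields that $\mathsf{C(X)}$ is $\mathsf{K}_0$-regular; this disposes of the ``in particular'' clause without further work. The real content is the first assertion, that $\mathsf{C(X)}$ is a Hermite ring. Here mere Bézoutness is insufficient --- the excerpt records that there exist $\mathsf{F}$-spaces which are not $\mathsf{T}$-spaces --- so basic disconnectedness must be used beyond the $\mathsf{F}$-space conclusion. I would exploit it through the extra structural property that $\mathsf{C(X)}$ is a $\mathsf{p.p.}$-ring, and then prove the purely algebraic statement that a commutative Bézout $\mathsf{p.p.}$-ring is Hermite.

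First I would verify the $\mathsf{p.p.}$ property. Given $f \in \mathsf{C(X)}$, basic disconnectedness makes $\overline{\operatorname{coz}(f)}$ clopen, so its complement $V := \mathsf{X}\setminus\overline{\operatorname{coz}(f)}$ is clopen and $e := \chi_V$ is a continuous idempotent of $\mathsf{C(X)}$. A short computation with cozero sets, using that $fg=0$ iff $\operatorname{coz}(g)\subseteq\operatorname{zer}(f)$ iff $\operatorname{coz}(g)\subseteq V$, shows $\mathsf{Ann}_{\mathsf{C(X)}}(f)=\{\,g : \operatorname{coz}(g)\subseteq V\,\}=e\,\mathsf{C(X)}$. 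Hence the annihilator of every element is generated by an idempotent, i.e. $\mathsf{C(X)}$ is a $\mathsf{p.p.}$-ring.

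Next I would establish the algebraic lemma. Fix $a,b \in \mathsf{C(X)}$. Bézoutness gives a generator $d$ with $(a,b)=(d)$, so $a=df'$, $b=dg'$ for some $f',g'$, and $d=ax+by$ for some $x,y$; substituting yields $d\bigl(1-(f'x+g'y)\bigr)=0$, so $1-(f'x+g'y)\in\mathsf{Ann}(d)=e\,\mathsf{C(X)}$ for an idempotent $e$. From this one reads off $d(1-e)=d$ and $(1-e)(f'x+g'y)=1-e$. I would then set $f^{*} := (1-e)f'+e$ and $g^{*} := (1-e)g'$. Using $de=0$ one checks $df^{*}=a$ and $dg^{*}=b$, while the combination $f^{*}\bigl((1-e)x+e\bigr)+g^{*}(1-e)y=(1-e)(f'x+g'y)+e=(1-e)+e=1$ shows $(f^{*},g^{*})=\mathsf{C(X)}$. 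This is exactly a Hermite decomposition of the pair $(a,b)$, so $\mathsf{C(X)}$ is Hermite and $\mathsf{X}$ is a $\mathsf{T}$-space.

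The only genuinely delicate point is the passage from Bézout to Hermite: it fails for Bézout rings in general and for arbitrary $\mathsf{F}$-spaces, and the idempotent supplied by basic disconnectedness is precisely what repairs it. Once the $\mathsf{p.p.}$ property is in hand, the remaining verifications are routine idempotent bookkeeping, so I expect the main obstacle to be organizing the cozero-set computation that identifies $\mathsf{Ann}_{\mathsf{C(X)}}(f)$ with $e\,\mathsf{C(X)}$ cleanly, rather than any substantial difficulty.
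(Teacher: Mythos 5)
Your argument is correct, but it is organized quite differently from the paper's. The paper's proof is a three-citation chain: basic disconnectedness gives that $\mathsf{C(X)}$ is semihereditary \cite[Fact 9.16]{MPR}, hence B\'ezout \cite[Lemma 5.12]{AGG}, and therefore Hermite \cite[Corollary 3.2]{Cou1}; the $\mathsf{K}_0$-regularity then follows from the preceding proposition exactly as you say. You instead extract from basic disconnectedness only the $\mathsf{p.p.}$ property of $\mathsf{C(X)}$ (via the cozero-set computation $\mathsf{Ann}(f)=\chi_V\,\mathsf{C(X)}$ with $V=\mathsf{X}\setminus\overline{\operatorname{coz}(f)}$ clopen), take B\'ezoutness from the $\mathsf{F}$-space fact, and then prove by hand that a commutative B\'ezout $\mathsf{p.p.}$-ring is Hermite. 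Your idempotent bookkeeping checks out: $d(1-e)=d$, $df^{*}=a$, $dg^{*}=b$, and the displayed combination does equal $1$, so $(f^{*},g^{*})=\mathsf{C(X)}$ is a genuine Hermite decomposition. Note that for a commutative ring, B\'ezout plus $\mathsf{p.p.}$ is the same as B\'ezout plus semihereditary (every finitely generated ideal is principal and every principal ideal is projective), so your lemma is essentially the content of the Couchot corollary the paper invokes; what your route buys is a self-contained, reference-free proof with an explicit topological verification of the $\mathsf{p.p.}$ property, at the cost of being slightly longer than the paper's citation chain.
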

\begin{proof}
If $\mathsf{X}$ is basically disconnected, then $\mathsf{C(X)}$ is semihereditary \cite[Fact 9.16]{MPR}, hence a Bézout ring \cite[Lemma 5.12]{AGG}, and therefore Hermite \cite[Corollary 3.2]{Cou1}. This shows that $\mathsf{X}$ is a $\mathsf{T}$-space.
\end{proof}

\section*{Acknowledgements}

The author would like to thank the anonymous referee for the careful reading of the manuscript and for the valuable comments and suggestions, which significantly improved the presentation of the paper.

The author was partially supported by the Agencia Nacional de Investigación e Innovación (ANII) and the Programa de Desarrollo de las Ciencias Básicas (PEDECIBA).


\begin{thebibliography}{99}

%A


\bibitem{AJK11}
Abuhlail J., Jarrar M. and Kabbaj S., 
\newblock Commutative rings in which every finitely generated ideal is quasi-projective. 
\emph{Journal of Pure and Applied Algebra}, 215(10), 2504-2511 2011.


\bibitem{AK}
Adarbeh K. and Kabbaj S.,
\newblock  Matlis’ semi-regularity in trivial ring extensions of integral domains.
\emph{Colloquium Mathematicum}, 150, 229-241, 2017.

\bibitem{Amzil}
Amzil1 H., Bennis D., Garc\'ia Rozas J.R., Ouberka H. and Oyonarte L.
\newblock Subprojectivity in Abelian Categories.
\newblock {\em Categories. Appl Categor Struct}, 29, 889-913, 2021.


\bibitem{AM}
Angeleri L. and  Mendoza O.,
\newblock Homological dimensions in cotorsion pairs.
\emph{Illinois J. Math.}, 53(1), 251-263, 2009.



\bibitem{ant2}  Antieau B., Mathew A. and Morrow M.,  
\newblock The K-Theory of Perfectoid Rings. 
\emph{Documenta Mathematica}, 27, 1923–1951, 2022.


\bibitem{Aoki} Aoki K.,
\newblock K-Theory of rings of continuous functions. 
\emph{arXiv:2402.05257v1}, 2024.


\bibitem{Asgh}
Asgharzadeh M.,
\newblock Homological aspects of perfect algebras.
\emph{arXiv:1005.1462v5,} 2010.


\bibitem{Asc}
Aschenbrenner M., 
\newblock Bounds and definability in polynomial rings.
\emph{The Quarterly Journal of Mathematics}, 56(3), 263-300, 2005. 

\bibitem{AB69}
Auslander M. and Bridger M.,
\newblock Stable module theory.
\emph{Memoirs of the American Mathematical Society, No. 94, American Mathematical Society, Providence, R.I.}, 1969 


\bibitem{AGG}
Azarpanah F.,  Ghashghaei E. and  Ghoulipour M.,
\newblock C(X): Something old and something new.
\emph{Communications in Algebra}, 49(1), 185-206, 2020.

%B

\bibitem{BM}
Bakkari C. and  Mahdou N. 
\newblock On weakly coherent rings.
\emph{The Rocky Mountain Journal of Mathematics}, 44(3), 743-752, 2014.


\bibitem{BG}
 Bazzoni S. and  Glaz S.,
\newblock Gaussian properties of total rings of quotients.
\emph{Journal of Algebra}, 310(1), 180-193, 2007.

\bibitem{BCE}
Bazzoni S.,  Cortés-Izurdiaga M. and Estrada S., 
\newblock Periodic Modules and Acyclic Complexes.
\emph{ Algebras and Representation Theory}, 23(5), 1861-1883, 2020.


\bibitem{BCS}
Behboodi M., Couchot F. and  Shojaee S.H., 
\newblock $\sum$-semi-compact rings and modules.
\emph{Journal of Algebra and Its Applications}, 13(8), 1450069, 2014.



\bibitem{AT}
Ben Yassine A. and J. Trlifaj., 
\newblock Flat relative Mittag-Leffler modules and approximations.
\emph{Journal of Algebra and Its Applications}, 23(13), 2450219,  2024.


\bibitem{BC}
Bennis D. and Couchot F.,
\newblock On 1-semiregular and 2-semiregular rings.
\emph{Journal of Algebra and Its Applications}, 20(12), 2150221, 2021.

\bibitem{Bertin71}
Bertin J., 
\newblock Anneaux cohérents réguliers.
\emph{CR Acad. Sci. Paris Sér. AB}, 273, A590-A591, 1971.

\bibitem{BGH14}
Bravo D., Gillespie J. and  Hovey  M.,
\newblock The stable module category of a general ring.
\emph{arXiv:1405.5768.v1}, 2014.


\bibitem{BP}
Bravo D. and  P\'erez M.,
\newblock Finiteness conditions and cotorsion pairs.
\emph{Journal of Pure and Applied Algebra}, 221, 1249-1267, 2017.


\bibitem{Burns}  Burns D., Kuang Y. and  Liang D.,  
\newblock On the coherency of completed group algebras. 
\emph{Journal of Algebra}, 661, 757–777, 2025.

\bibitem{Burns2}  Burns D. and  Daoud A.,  
\newblock On Iwasawa Theory over $\mathbb{Z}[[\mathbb{Z}_p]]$. 
\emph{Submitted.}



%C
\bibitem{CKWZ} Chen M., Kim H., Wang F. and  Zhang X.,  
\newblock Some characterizations of coherent rings in terms of strongly FP-injective modules. 
\emph{Communications in Algebra}, 48(7),  2857–2871, 2020.



\bibitem{CK}
Cornick J. and  Kropholler P.H.,
\newblock Homological finiteness conditions for modules over group algebras.
\emph{Journal of the London Mathematical Society}, 58, 49–62, 1998.

% \bibitem{Cortez}
% Cort\'es-Izurdiaga M.,
% \newblock Products of flat modules and global dimension relative to $\mathcal{F}$-mittag-leffler modules.
% \emph{ Proceedings of the American Mathematical Society}, 144(11), 4557-4571, 2016.

\bibitem{Costa}
Costa D.L.,
\newblock Parameterizing families of non-Noetherian rings. 
\emph{Communications in Algebra}, 22(10), 3997–4011, 1994. 



\bibitem{C17}
Couchot F.,
\newblock Commutative Rings Whose Finitely Generated Ideals are Quasi-Flat.
\emph{Rings, Polynomials, and Modules. Springer International Publishing, Cham.}, 129-143, 2017.


\bibitem{Cou5}
Couchot F.
\newblock Finitely generated powers of prime ideals. 
\emph{Palestine Journal of Mathematics, 6(2),2017}.

\bibitem{Cou2}
Couchot F.,
\newblock The $\lambda$-Dimension of Commutative Arithmetic Rings.
\emph{Communications in Algebra}, 31(7), 3143–3158, 2003.

\bibitem{Cou1}
Couchot F.,
\newblock Almost clean rings and arithmetical rings.
\emph{Commutative algebra and its applications. Proceedings of the Fifth International Fez Conference on Commutative Algebra and Applications}, 135-154, 2009. 


\bibitem{Cou3}
Couchot F.,
\newblock Flat modules over valuation rings.
\emph{Journal of Pure and Applied Algebra} 211 (1), 235-247, 2007.



%D

\bibitem{Dahl} Dahlhausen C.,  
\newblock Regularity of semi-valuation rings and homotopy invariance of algebraic K-theory. \emph{arXiv:2403.02413v1}, 2024.



\bibitem{DT21}
Dube T. and  Taherifar A.,
\newblock On the lattice of annihilator ideals and its applications.
\emph{Communications in Algebra}, 49(6), 2444-2456, 2021.


%E

%\bibitem{ep2} E. Ellis and R. Parra.  
%\newblock On the K-theory of $\mathbb{Z}$-categories. \emph{Journal of Homotopy and Related Structures, 18:455–476, 2023.}

\bibitem{ep}  Ellis E. and Parra R.,  
\newblock K-theory of n-coherent rings.
\emph{Journal of Algebra and Its Applications}, 21(12), 2350007, 2022.

\bibitem{EJ95} Enochs E. E. and  Jenda O. M. G.,  
\newblock Gorenstein injective and projective modules. 
\emph{Math. Z.}, 220(4), 611-633, 1995.


%F
\bibitem{FD10}
Fu X. and Ding N.,
\newblock Torsion theories and essential flat envelopes.
\emph{Taiwanese Journal of Mathematics}, 14(4), 1249-1270, 2010.



\bibitem{Gao}
Gao Z. and Wang F.,
\newblock Weak Injective and Weak Flat Modules.
\emph{Communications in Algebra}, 43(9), 3857-3868, 2015.


\bibitem{Glaz4} Glaz S. 
\newblock The weak dimensions of Gaussian rings.
\emph{Proceedings of the American Mathematical Society}, 133(9), 2507-2513, 2004

\bibitem{Glaz1} Glaz S.,  
\newblock Commutative coherent rings. 
\emph{Lecture Notes in Mathematics}, vol. 1371, Springer-Verlag, Berlin, 1989.

\bibitem{Glaz6}
Glaz S.,
\newblock Finite conductor rings. 
\emph{Proceedings of the American Mathematical Society}, 129(10), 2833-2843, 2001.

\bibitem{Trlifaj}
R. G\"obel and J. Trlifaj,
\newblock Approximations and Endomorphism Algebras of Modules.
\emph{ Volume 1, volume 41 of de Gruyter Expositions in Mathematics. Walter de Gruyter GmbH  Co. KG, Berlin}, 2012.

\bibitem{GL}
Guo N. and  Liu F.,
\newblock Purity and quasi-split torsors over Prüfer bases.
\emph{Journal de l’École polytechnique Mathématiques}, 11, 187-246, 2024.


\bibitem{GI}
Gillespie J. and Iacob A.,
\newblock Homological dimensions of complexes over coherent regular rings.
\emph{arXiv:2409.08393}, 2024. 

\bibitem{GH}
Gillman L. and Henriksen M.,
\newblock Rings of continuous functions in which every finitely generated ideal is principal.
\emph{Transactions of the American Mathematical Society}, 82(2), 366-391, 1956.

\bibitem{gp2} 
Gubitosi V. and Parra R.   
\newblock Some remarks about $FP_n$-projective and $FP_n$-injective  modules.
\emph{arXiv:2409.08334}, 2024.
%H

\bibitem{HKLR} Hong C. Y., Kim N. K, Lee Y. and  Ryu S. J.,
\newblock Rings with Property (A) and their extensions.
\emph{Journal of Algebra}, 315(2), 612-628. 2007.

\bibitem{HM}
Hummel L and  Marley T., 
\newblock The Auslander-Bridger formula and the Gorenstein property for coherent rings.
\emph{J. Commut. Algebra}, 1(2), 283-314, 2009.

\bibitem{Hov}
Hovey M.,
\newblock Classifying subcaterories of modules. 
\emph{Transactions of the American Mathematical Society}, 353 (8), 3181-3191, 2001.


%I

%J


%K

\bibitem{KMa2}
Kabbour M. and  Mahdou N.,
\newblock On Valuation Rings.
\emph{Communications in Algebra}, 39(1), 176–183, 2010.


\bibitem{KMa}
Kabbaj S.E. and  Mahdou N.,
\newblock Trivial Extensions Defined by Coherent-like Conditions.
\emph{Communications in Algebra}, 32(10), 3937–3953, 2004.



\bibitem{Kro1}
 Kropholler P.H.,
\newblock On groups of type $\mathsf{FP}_{\infty}$.
\emph{Journal of Pure and Applied Algebra}, 90, 55–67, 1993.


%L

\bibitem{Lam12}
Lam T. Y.,
\newblock Lectures on modules and rings.
\emph{ Springer- Verlag New York Berlin Heidelberg}, 2012


\bibitem{Liliu}
Li W. and  Liu D., 
\newblock A non-commutative analogue of Costa’s first conjecture. 
\emph{Journal of Algebra and Its Applications}, 19 (1). 2050007, 2020. 

\bibitem{Li}  Li W.,  
\newblock On G-$(n,d)$-Rings and $n$-coherent rings. 
\emph{International Electronic Journal of Algebra}, 37(37), 147-178, 2024. 


\bibitem{LGO}
Li W., Guan J. and Ouyang B., 
\newblock Strongly FP-injective modules. 
\emph{Communications in Algebra}, 45(9), 3816–3824, 2017.

%M




\bibitem{Mahdou}
Mahdou N.,
\newblock On Costa's conjecture. 
\emph{Communications in Algebra}, 29(7), 2775-2785, 2001.

\bibitem{Mahdou10}
Mahdou N.,
\newblock  On $n$-flat modules and $n$-Von Neumann regular rings.
\emph{International Journal of Mathematics and Mathematical Sciences}, 1–6 (2006).


\bibitem{MD4}
L. Mao and N. Ding,
\newblock Relative projective modules and relative injective modules. 
\emph{Communications in Algebra}, 34(7):2403--2418, 2006.


\bibitem{MD5} Mao L. and Ding N.,  
\newblock On Divisible and Torsionfree Modules. 
\emph{Communications in Algebra}, 36, 708–731, 2008.

\bibitem{MD6}
Mao L. and Ding N.
\newblock $FP$-projective dimensions.
\emph{Communications in Algebra}, 33, 1153-1170, 2005.



\bibitem{MD1} Mao L. and Ding N.,
\newblock New characterizations of pseudo-coherent rings.
\emph{Forum Mathematicum}, 22(5), 993-1008, 2010.


\bibitem{MPR}
McGovern W., Puninski G. and Rothmaler P.,
\newblock When every projective module is a direct sum of finitely generated modules.
\emph{Journal of Algebra}, 315, 454-481, 2007.




%N


%O


%P



\bibitem{PT}
 Posp\'išil D. and Trlifaj J.,
\newblock Tilting for regular rings of Krull dimension two.
\emph{Journal of Algebra}, 336, 184-199, 2011.

\bibitem{PPT}
Positselski L., Príhoda P. and Trlifaj J.,
\newblock Closure properties of $\varinjlim C$.
\emph{ Journal of Algebra}, 606, 30-103, 2022.

\bibitem{Pos}
 Positselski L.,
\newblock Coherent rings, fp-injective modules, dualizing complexes, and covariant Serre–Grothendieck duality.
\emph{Sel. Math. New Ser.}, 23, 1279–1307, 2017.

\bibitem{PR04}
Puninski G. and Rothmaler P.,
\newblock When every finitely generated flat module is projective.
\emph{Journal of Algebra}, 277(2), 542-558, 2004.
%Q


%R



%S


\bibitem{S74}
Simson D.,
\newblock A remark on projective dimension of flat modules.
\emph{Math. Ann.} 209, 181–182, 1974.


\bibitem{Swan} Swan R.G., 
\newblock K-theory of coherent rings. 
\emph{Journal of Algebra and Its Applications}, 18(9), 1950161, 2019.
%T

\bibitem{Ryo}
Takahashi R.,
\newblock Classifying subcategories of modules over a commutative noetherian ring.
\emph{J. London Math. Soc.} 78(2), 767–782, 2008.

\bibitem{Tar}
Tarizadeh A.,
\newblock Some Results on Pure Ideals and Trace Ideals of Projective Modules.
\emph{Acta Math Vietnam}, 47, 475–481, 2022.

\bibitem{Tar20}
Tarizadeh A.,
\newblock Cardinality of groups and rings via the idempotency of infinite
cardinals.
\emph{arXiv:2409.02488}, 2024.


%U



%V

\bibitem{Vas}
 Vasconcelos W.V.,
\newblock The rings of dimension two.
\emph{Dekker, New York}, 1976.


\bibitem{Vogel}
Vogel P.,
\newblock Regular exact categories and algebraic K-theory.
\emph{arXiv:2504.08488v5}, 2025.


%W



\bibitem{Wang} Wang F., Qiao L. and Kim H.,  
\newblock Super finitely presented modules and Gorenstein projective modules. 
\emph{Communications in Algebra}, 44(9), 4056–4072, 2016.


\bibitem{WZKXS} 
Wang F. G., Zhou D. C., Kim H., Xiong T., and Sun X. W., 
\newblock Every Prüfer ring does not have small finitistic dimension at most one.
\emph{Communications in Algebra}, 48(12), 5311–5320. 2020.



\bibitem{Wang93}
 Wang M.,
\newblock Some studies on $\Pi$-coherent rings.
\emph{Proceedings of the American Mathematical Society}, 119(1), 71–76, 1993.


%X


%Y


\bibitem{Yicai}
 Yicai Z.,
\newblock A note on coherent rings of dimension two.
\emph{Proceedings of the American Mathematical Society}, 115(4), 935-937, 1992.

%Z



\bibitem{Zhou3}
Zhou D.,
\newblock Rings characterized by a class of modules.
\emph{Communications in Algebra}, 33(9), 2941-2955, 2005.




\bibitem{Zhu} Zhu Z.,
\newblock On $n$-coherent rings, $n$-hereditary rings and $n$-regular rings.
\emph{ Bulletin of the Iranian Mathematical Society}, 37(4), 251-267, 2011.

\bibitem{Zhu17} Zhu Z.,
\newblock Some results on n-coherent rings, n-hereditary rings and n-regular rings.
\emph{Boletín de la Sociedad Matemática Mexicana}, 24, 81-94, 2017.




















































\end{thebibliography}
\end{document}